\numberwithin{equation}{section}
\newcommand{\rHom}{\mathrm{RHom}}
\newcommand{\BDC}{{\mathbf{D}}^{\mathrm{b}}}
\newcommand{\Mod}{\mathrm{Mod}}
\newcommand{\Hom}{\mathrm{Hom}}
\newcommand{\CC}{\mathbb{C}}
\newcommand{\RR}{\mathbb{R}}
\newcommand{\ZZ}{\mathbb{Z}}
\newcommand{\D}{\mathcal{D}}
\newcommand{\F}{\mathcal{F}}
\newcommand{\M}{\mathcal{M}}
\newcommand{\N}{\mathcal{N}}
\newcommand{\Ker}{\operatorname{Ker}}
\newcommand{\Coker}{\operatorname{Coker}}
\newcommand{\Image}{\operatorname{Im}}
\newcommand{\an}{{\rm an}}
\newcommand{\id}{{\rm id}}
\newcommand{\supp}{{\rm supp}}
\newcommand{\Perv}{{\rm Perv}}
\newcommand{\Sol}{{\rm Sol}}
\newcommand{\DR}{{\rm DR}}
\newcommand{\tl}[1]{\widetilde{#1}}
\newcommand{\simto}{\overset{\sim}{\longrightarrow}}
\newcommand{\op}{{\mbox{\scriptsize op}}}
\newcommand{\SD}{\mathcal{D}}
\newcommand{\SO}{\mathcal{O}}
\newcommand{\SM}{\mathcal{M}}
\newcommand{\SN}{\mathcal{N}}
\newcommand{\SL}{\mathcal{L}}
\newcommand{\SF}{\mathcal{F}}
\newcommand{\SH}{\mathcal{H}}
\newcommand{\calI}{\mathcal{I}}
\newcommand{\Conn}{\mathrm{Conn}}
\newcommand{\Modcoh}{\mathrm{Mod}_{\mbox{\rm \scriptsize coh}}}
\newcommand{\Modhol}{\mathrm{Mod}_{\mbox{\rm \scriptsize hol}}}
\newcommand{\Modrh}{\mathrm{Mod}_{\mbox{\rm \scriptsize rh}}}
\newcommand{\BDCcoh}{{\mathbf{D}}^{\mathrm{b}}_{\mbox{\rm \scriptsize coh}}}
\newcommand{\BDChol}{{\mathbf{D}}^{\mathrm{b}}_{\mbox{\rm \scriptsize hol}}}
\newcommand{\BDCrh}{{\mathbf{D}}^{\mathrm{b}}_{\mbox{\rm \scriptsize rh}}}
\newcommand{\BDCmero}{{\mathbf{D}}^{\mathrm{b}}_{\mbox{\rm \scriptsize mero}}}
\newcommand{\DChol}{\mathbf{D}_{\mbox{\rm \scriptsize hol}}}
\newcommand{\DD}{\mathbb{D}}
\newcommand{\Dotimes}{\overset{D}{\otimes}}
\newcommand{\Dboxtimes}{\overset{D}{\boxtimes}}
\newcommand{\Potimes}{\overset{+}{\otimes}}
\newcommand{\Pboxtimes}{\overset{+}{\boxtimes}}
\newcommand{\rhom}{{\bfR}{\mathcal{H}}om}
\newcommand{\rihom}{{\bfR}{\mathcal{I}}hom}
\newcommand{\Prihom}{{\bfR}{\mathcal{I}}hom^+}
\newcommand{\I}{{\rm I}}
\newcommand{\che}[1]{\check{#1}}
\newcommand{\var}[1]{\overline{#1}}
\newcommand{\BEC}{{\mathbf{E}}^{\mathrm{b}}}
\newcommand{\ZEC}{{\mathbf{E}}^{\mathrm{0}}}
\newcommand{\ZECmero}{{\mathbf{E}}^{\mathrm{0}}_{\mbox{\rm \scriptsize mero}}}
\newcommand{\BECmero}{{\mathbf{E}}^{\mathrm{b}}_{\mbox{\rm \scriptsize mero}}}
\newcommand{\Q}{\mathbf{Q}}
\newcommand{\EE}{\mathbb{E}}
\newcommand{\bs}{\backslash}
\newcommand{\bfR}{\mathbf{R}}
\newcommand{\bfL}{\mathbf{L}}
\newcommand{\bfD}{\mathbf{D}}
\newcommand{\bfX}{\mathbf{X}}
\newcommand{\bfY}{\mathbf{Y}}
\newcommand{\rmR}{{\rm R}}
\newcommand{\rmE}{{\rm E}}
\newcommand{\rmD}{{\rm D}}
\newcommand{\bfE}{\mathbf{E}}
\renewcommand{\Re}{\operatorname{Re}}
\newcommand{\reg}{{\rm reg}}
\newcommand{\sh}{{\rm sh}}
\newcommand{\RH}{{\rm RH}}
\newcommand{\ord}{\operatorname{ord}}
\newcommand{\AC}{(\mathbf{AC})}
\newcommand{\rami}{{\rm rm}}
\newcommand{\modi}{{\rm md}}
\newcommand{\blow}{{\rm bl}}
\newcommand{\pt}{{\rm pt}}
\newtheorem{theorem}{Theorem}[section]
\newtheorem{corollary}[theorem]{Corollary}
\newtheorem{lemma}[theorem]{Lemma}
\newtheorem{sublemma}[theorem]{Sublemma}
\newtheorem{proposition}[theorem]{Proposition}
\newtheorem{notation}[theorem]{Notatiton}
\theoremstyle{definition}
\newtheorem{definition}[theorem]{Definition}
\theoremstyle{remark}
\newtheorem{remark}[theorem]{\sc Remark}
\title{Note On The Algebraic Irregular Riemann--Hilbert Correspondence
\footnote{{\bf 2010 Mathematics 
Subject Classification: }32C38, 32S60, 35A27}}
\author{Yohei ITO
\footnote{Graduate School of Mathematical Science, The University of 
Tokyo, 3-8-1, Komaba, 
Meguro, Tokyo, 153-8914, Japan. 
E-mail: yitoh@ms.u-tokyo.ac.jp }}
\date{}
\begin{document}
\maketitle

\begin{abstract}
The subject of this paper is an algebraic version of
the irregular Riemann--Hilbert correspondence which was mentioned in \cite{Ito19}.
In particular, we prove an equivalence of categories between
the triangulated category $\BDChol(\D_X)$ of holonomic $\D$-modules
on a smooth algebraic variety $X$ over $\CC$
and the one $\BEC_{\CC-c}(\I\CC_{X_\infty})$ of
algebraic $\CC$-constructible enhanced ind-sheaves
on a bordered space $X^\an_\infty$.
Moreover we show that there exists a t-structure on the triangulated category
$\BEC_{\CC-c}(\I\CC_{X_\infty})$
whose heart is equivalent to the abelian category of holonomic $\D$-modules on $X$.
Furthermore we shall consider simple objects of its heart and minimal extensions of objects of its heart.
\end{abstract}

\section{Introduction}
After the appearance of the regular Riemann--Hilbert correspondence of Kashiwara \cite{Kas84},
Beilinson and Bernstein developed systematically
a theory of regular holonomic $\D$-modules on smooth algebraic varieties
over the complex number field $\CC$ and
obtained an algebraic version of the Riemann--Hilbert correspondence stated as follows:
Let $X$ be a smooth algebraic variety over $\CC$.
We denote by $X^\an$ the underling complex analytic manifold of $X$,
by $\BDCrh(\D_X)$ the triangulated category of regular holonomic $\D_X$-modules on $X$ and 
by $\BDC_{\CC-c}(\CC_X)$ the one of algebraic $\CC$-constructible sheaves on $X^\an$.
Then there exists an equivalence of triangulated categories
\[\Sol_X \colon \BDCrh(\D_X)^{\op}\simto\BDC_{\CC-c}(\CC_X),
\hspace{17pt}
\Sol_X(\M) := \Sol_{X^\an}(\M^\an),\]
see \cite{Be, Bor} and also \cite{Sai89} for the details.
Here $\Sol_{X^\an}(\cdot) := \rhom_{\D_{X^\an}}(\ \cdot\ , \SO_{X^\an})$
$\colon\BDC(\D_{X^\an})\to\BDC(\CC_{X^\an})$ is the solution functor
on the complex analytic manifold $X^\an$
and $\M^\an$ is the analytification of $\M$ (see \S \ref{sec-alg} for the definition).
The triangulated category $\BDC_{\CC-c}(\CC_X)$ has a t-structure
$\big({}^p\bfD^{\leq0}_{\CC-c}(\CC_X), {}^p\bfD^{\geq0}_{\CC-c}(\CC_X)\big)$
which is called the perverse t-structure by \cite{BBD}, see also \cite[Theorem 8.1.27]{HTT08}.
Let us denote by $$\Perv(\CC_X) :=
{}^p\bfD^{\leq0}_{\CC-c}(\CC_X)\cap {}^p\bfD^{\geq0}_{\CC-c}(\CC_X)$$ its heart
and call an object of $\Perv(\CC_X) $ an algebraic perverse sheaf on $X^\an$.
The above equivalence induces an equivalence of categories between
the abelian category $\Modrh(\D_X)$ of regular holonomic $\D$-modules on $X$
and the one $\Perv(\CC_X)$ of algebraic perverse sheaves on $X^\an$.

On the other hand,
the problem of extending the analytic regular Riemann--Hilbert correspondence of Kashiwara to cover
the case of analytic holonomic $\D$-modules with irregular singularities had been open for 30 years.
After a groundbreaking development in the theory of irregular meromorphic connections by 
Kedlaya \cite{Ked10, Ked11} and Mochizuki \cite{Mochi09, Mochi11},
D'Agnolo and Kashiwara established the Riemann--Hilbert correspondence
for analytic irregular holonomic $\SD$-modules in \cite{DK16}.
For this purpose, they introduced enhanced ind-sheaves extending the classical notion
of ind-sheaves introduced by Kashiwara and Schapira in \cite{KS01}. 
Let $\bfX$ be a complex analytic manifold
(In this paper,
we use bold letters for complex manifolds to avoid confusion with algebraic varieties).
We denote by $\BDChol(\D_{\bfX})$ the triangulated category of
holonomic $\D_\bfX$-modules on $\bfX$ and by $\BEC_{\RR-c}(\I\CC_\bfX)$
the one of $\RR$-constructible enhanced ind-sheaves on $\bfX$ (see Definition \ref{def2.6}).
We set $\Sol_\bfX^{\rmE}(\M) := \rihom_{\D_\bfX}(\M, \SO_\bfX^{\rmE})$.
Here $\SO_\bfX^{\rmE}$ is the enhanced ind-sheaf of tempered holomorphic functions,
see \cite[Definition 8.2.1]{DK16} and also \S\ref{sec-2} for the details.
Then D'Agnolo and Kashiwara proved that
the enhanced solution functor $\Sol_\bfX^{\rmE}$ induces a fully faithful embedding
\[\Sol_\bfX^{\rmE} \colon \BDChol(\D_\bfX)^{\op}\hookrightarrow\BEC_{\RR-c}(\I\CC_\bfX).\]
Moreover, in \cite{DK16-2}
they gave a generalized t-structure
$\big({}^{\frac{1}{2}}\bfE_{\RR-c}^{\leq c}(\I\CC_\bfX),
{}^{\frac{1}{2}}\bfE_{\RR-c}^{\geq c}(\I\CC_\bfX)\big)_{c\in\RR}$
on $\BEC_{\RR-c}(\I\CC_\bfX)$
and proved that
the enhanced solution functor induces a fully faithful embedding
of the abelian category $\Modhol(\D_\bfX)$ of holonomic $\D$-modules on $\bfX$
into its heart ${}^{\frac{1}{2}}\bfE_{\RR-c}^{\leq 0}(\I\CC_\bfX)\cap
{}^{\frac{1}{2}}\bfE_{\RR-c}^{\geq 0}(\I\CC_\bfX)$.
On the other hand, Mochizuki proved that
the image of $\Sol_{\bfX}^{\rmE}$ can be characterized  by the curve test in \cite{Mochi16}.
In \cite{Ito19}, the author defined $\CC$-constructability for enhanced ind-sheaves on $\bfX$
and proved that they are nothing but objects of the image of $\Sol_\bfX^{\rmE}$,
see \S \ref{sec2.8} for the details.
Namely, we obtain an equivalence of categories
between the triangulated category $\BDChol(\D_\bfX)$ of holonomic $\D$-modules on $\bfX$
and the one $\BEC_{\CC-c}(\I\CC_\bfX)$ of $\CC$-constructible enhanced ind-sheaves on $\bfX$:
\[\Sol_\bfX^{\rmE} \colon \BDChol(\D_\bfX)^{\op}\simto \BEC_{\CC-c}(\I\CC_\bfX).\]
Remark that Kuwagaki introduced another approach
to the irregular Riemann--Hilbert correspondence
via irregular constructible sheaves which are defined by $\CC$-constructible sheaf
with coeficients in a finite version of the Novikov ring and special gradings in \cite{Kuwa18}.

Therefore, it seems to be important to establish
an algebraic irregular Riemann--Hilbert correspondence 
on an a smooth algebraic variety.
Although it may be known by experts, it is not in the literature to our knowledge.
Thus, we want to prove the algebraic irregular Riemann--Hilbert correspondence in this paper.
Let $X$ be a smooth algebraic variety over $\CC$ and
denote by $\BDChol(\D_X)$ the triangulated category of holonomic $\D_X$-modules on $X$.
\begin{definition}[Definition \ref{def-AC}]
We say that an enhanced ind-sheaf $K\in\ZEC(\I\CC_{X^\an})$ satisfies the condition $\AC$
if there exists an algebraic stratification $\{X_\alpha\}_{\alpha\in A}$ of $X$ such that
$\pi^{-1}\CC_{(\var{X}^\blow_\alpha)^\an\setminus D_\alpha^\an}
\otimes \bfE (b^\an_\alpha)^{-1}K$
has a modified quasi-normal form along $D_\alpha^\an$
for any $\alpha\in A$ (see Definition \ref{def-modi}),
where $b_\alpha \colon \var{X}^\blow_\alpha \to X$ is a blow-up of $\var{X_\alpha}$
along $\partial X_\alpha \colon= \var{X_\alpha}\setminus X_\alpha$,
$D_\alpha := b_\alpha^{-1}(\partial X_\alpha)$
and $D_\alpha^\an := \big(\var{X}_\alpha^\blow\big)^\an\setminus
\big(\var{X}_\alpha^\blow\setminus D_\alpha\big)^\an$.
\end{definition}
Let us denote by $\BEC_{\CC-c}(\I\CC_X)$ the full triangulated subcategory
of $\BEC(\I\CC_{X^\an})$ consisting of objects whose cohomologies
satisfy the condition $\AC$, see \S\ref{sec-AC} for the details.
Then we obtain an essential surjective functor
\[\Sol_X^\rmE \colon \BDChol(\D_X)^\op\to\BEC_{\CC-c}(\I\CC_X)\]
by Propositions \ref{prop3.4} and \ref{prop3.5}.
This is not fully faithful in general.
However if $X$ is complete it is fully faithful.
\begin{theorem}[Theorem \ref{thm-main-1}]
Let $X$ be a smooth complete algebraic variety over $\CC$.
There exists an equivalence of triangulated categories
\[\Sol_X^{\rmE} \colon \BDChol(\D_X)^{\op}\simto \BEC_{\CC-c}(\I\CC_X).\]
\end{theorem}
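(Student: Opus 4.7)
The plan is to factor the algebraic enhanced solution functor through the analytification of holonomic $\D$-modules and combine the already-established analytic irregular Riemann--Hilbert correspondence with a GAGA-type statement available when $X$ is complete. Essential surjectivity of $\Sol_X^{\rmE}$ is already provided by Propositions \ref{prop3.4} and \ref{prop3.5}, so the only task is to establish full faithfulness under the completeness hypothesis.

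First I would write, tautologically from the definition $\Sol_X^{\rmE}(\M)=\Sol_{X^\an}^{\rmE}(\M^\an)$,
\[
\Sol_X^{\rmE}\colon \BDChol(\D_X)^{\op} \xrightarrow{(\cdot)^{\an}} \BDChol(\D_{X^\an})^{\op} \xrightarrow{\Sol_{X^\an}^{\rmE}} \BEC_{\CC-c}(\I\CC_{X^\an}).
\]
Since $X$ is complete, $X^\an$ is a compact complex manifold, hence the bordered space $X^\an_\infty$ coincides with $X^\an$ and the target above is $\BEC_{\CC-c}(\I\CC_{X_\infty})$. Full faithfulness of the second arrow is precisely the analytic irregular Riemann--Hilbert correspondence of D'Agnolo--Kashiwara, Mochizuki, and \cite{Ito19} recalled in the introduction, and holds without any properness hypothesis. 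Hence full faithfulness of $\Sol_X^{\rmE}$ is reduced to that of the analytification functor on holonomic $\D$-modules.

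The key remaining point, where completeness enters, is the GAGA statement that $(\cdot)^{\an}\colon\BDChol(\D_X)\to\BDChol(\D_{X^\an})$ is fully faithful when $X$ is proper. To see this, I would compute $\rHom_{\D_X}(\M,\N)=\rsect\bigl(X,\rhom_{\D_X}(\M,\N)\bigr)$ and its analogue on $X^\an$; the internal $\rhom_{\D}$ between holonomic modules has $\sho$-coherent cohomology and is compatible with analytification, so Serre's GAGA applied cohomology by cohomology, together with the convergence of the hypercohomology spectral sequence in the bounded setting, identifies the two sides. Alternatively one resolves locally by induced $\D$-modules of the form $\D_X\otimes_{\sho_X}\E$ with $\E$ coherent, which reduces the question directly to classical coherent $\sho$-GAGA.

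The main obstacle I expect is organising the GAGA step cleanly at the level of the bounded derived category, rather than only page-by-page on cohomology sheaves, and verifying that the identifications are compatible with composition of morphisms so that $\rHom$ complexes (not merely their cohomology groups) are isomorphic. Once this holonomic-$\D$-module GAGA is in place, the composition of two fully faithful functors, combined with the essential surjectivity already at hand, yields the claimed equivalence of triangulated categories.
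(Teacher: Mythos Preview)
Your overall strategy---factor $\Sol_X^\rmE$ as $(\cdot)^\an$ followed by $\Sol_{X^\an}^\rmE$, invoke the analytic irregular Riemann--Hilbert correspondence for the second arrow, and reduce to holonomic $\D$-module GAGA for the first---is sound and genuinely different from the paper's route. The paper instead computes $\rHom^\rmE\big(\Sol_X^\rmE(\N),\Sol_X^\rmE(\M)\big)$ directly on the enhanced side: using the functorial compatibilities of $\Sol^\rmE$ with duality and tensor product (Theorem~\ref{thm-DK}) it rewrites this as $\bfD p_{X\ast}(\N\Dotimes\DD_X\M)[-d_X]$, which equals $\rHom_{\D_X}(\M,\N)$ by \cite[Corollary 2.6.15]{HTT08}. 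In fact the paper then \emph{deduces} full faithfulness of $(\cdot)^\an$ on $\BDChol$ as a corollary, so your logical flow is the reverse of theirs.

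However, the specific GAGA argument you sketch has a real gap. The complex $\rhom_{\D_X}(\M,\N)$ for $\M,\N$ holonomic does \emph{not} have $\SO$-coherent cohomology; its cohomology sheaves are $\CC$-constructible (already $\rhom_{\D_X}(\SO_X,\SO_X)\simeq\CC_X$). So Serre's GAGA does not apply to it. Your alternative via induced resolutions fails for the same reason: $\rhom_{\D_X}(\D_X\otimes_{\SO_X}\SE,\N)\simeq\rhom_{\SO_X}(\SE,\N)$, but a holonomic $\N$ is not $\SO_X$-coherent, so one is not reduced to coherent $\SO$-GAGA. The correct way to establish holonomic GAGA on a complete $X$ is to use the isomorphism $\rHom_{\D_X}(\M,\N)\simeq\bfD p_{X\ast}(\N\Dotimes\DD_X\M)[-d_X]$ together with the fact that $\bfD p_\ast$ commutes with analytification for proper morphisms (Proposition~\ref{prop2.12}(3))---which is essentially the computation the paper performs, only without the detour through enhanced ind-sheaves. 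With that fix your argument goes through; as written, the GAGA step is not justified.
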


Now, let us come back to the general case. 
Thanks to Hironaka's desingularization theorem \cite{Hiro} (see also \cite[Theorem 4.3]{Naga}),
for any smooth algebraic variety $X$ over $\CC$
we can take a smooth complete algebraic variety $\tl{X}$ such that $X\subset \tl{X}$
and $D \colon= \tl{X}\setminus X$ is a normal crossing divisor of $\tl{X}$.
Let us consider a bordered space $X^\an_\infty = (X^\an, \tl{X}^\an)$,
see \S\ref{subsec2.2} for the definition.
We denote by $\BEC(\I\CC_{X^\an_\infty})$
the triangulated category of enhanced ind-sheaves on $X^\an_\infty$,
see \S \ref{subsec2.5} for the details.
Note that $\BEC(\I\CC_{X^\an_\infty})$ does not depend on the choice of $\tl{X}$
and there exists an equivalence of triangulated categories
\[\xymatrix@C=55pt{
\BEC(\I\CC_{X^\an_\infty})\ar@<0.7ex>@{->}[r]^-{\bfE j_{!!}}\ar@{}[r]|-{\sim}
&
\{K\in\BEC(\I\CC_{\tl{X}^\an})\ |\ \pi^{-1}\CC_{X^\an}\otimes K\simto K\}
\ar@<0.7ex>@{->}[l]^-{\bfE j^{-1}},
}\]
where $j\colon X^\an_\infty \to \tl{X}^\an$ is a morphism of bordered spaces 
given by the open embedding $X\hookrightarrow \tl{X}$,
see \S \ref{subsec2.5} for the details.
We shall denote the open embedding $X\hookrightarrow \tl{X}$ by the same symbol $j$
and set 
\[\Sol_{X_\infty}^\rmE(\M) := 
\bfE j^{-1}\Sol_{\tl{X}}^\rmE(\bfD j_\ast\M)
\in\BEC(\I\CC_{X^\an_\infty})\]
for any $\M\in\BDC(\D_X)$.

\begin{definition}[Definition \ref{def3.11}]
We say that an enhanced ind-sheaf $K\in\BEC(\I\CC_{X^\an_\infty})$ is
algebraic $\CC$-constructible on $X_\infty^\an$
if $\bfE j_{!!}K \in\BEC(\I\CC_{\tl{X}^\an})$ is an object of $\BEC_{\CC-c}(\I\CC_{\tl{X}})$.

Let us denote by $\BEC_{\CC-c}(\I\CC_{X_\infty})$ the triangulated category of them.
\end{definition}

The following result is the main theorem of this paper:
\begin{theorem}[Theorem \ref{thm-main-2}]
For any $\M\in\BDChol(\D_X)$, the enhanced solution complex $\Sol_{X_\infty}^\rmE(\M)$
of $\M$ is an algebraic $\CC$-constructible enhanced ind-sheaf.
On the other hand, 
for any algebraic $\CC$-constructible enhanced ind-sheaf $K\in\BEC_{\CC-c}(\I\CC_{X_\infty})$,
there exists $\M\in\BDChol(\D_X)$
such that $$K\simeq \Sol_{X_\infty}^{\rmE}(\M).$$
Moreover, we obtain an equivalence of triangulated categories
\[\Sol_{X_\infty}^{\rmE} \colon \BDChol(\D_X)^{\op}\simto \BEC_{\CC-c}(\I\CC_{X_\infty}).\]
\end{theorem}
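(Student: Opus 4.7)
The plan is to reduce the theorem entirely to Theorem \ref{thm-main-1} on the smooth complete variety $\tl X$, exploiting the equivalence
\[\bfE j_{!!}\colon \BEC(\I\CC_{X^\an_\infty}) \simto \{L\in\BEC(\I\CC_{\tl X^\an})\mid \pi^{-1}\CC_{X^\an}\otimes L\simto L\}\]
recalled in the excerpt, together with the standard identity $\bfE j_{!!}\bfE j^{-1} L\simeq \pi^{-1}\CC_{X^\an}\otimes L$ and the fact that the algebraic $\D$-module pushforward $\bfD j_\ast$ along an open embedding preserves holonomy.

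\textbf{Forward direction.} For $\M\in\BDChol(\D_X)$, $\bfD j_\ast\M\in\BDChol(\D_{\tl X})$ by the preservation of holonomy along open embeddings, so Theorem \ref{thm-main-1} gives $\Sol_{\tl X}^\rmE(\bfD j_\ast\M)\in\BEC_{\CC-c}(\I\CC_{\tl X})$. From the adjunction identity above one computes
\[\bfE j_{!!}\Sol_{X_\infty}^\rmE(\M)\simeq \pi^{-1}\CC_{X^\an}\otimes \Sol_{\tl X}^\rmE(\bfD j_\ast\M),\]
and by refining the algebraic stratification witnessing condition $\AC$ for $\Sol_{\tl X}^\rmE(\bfD j_\ast\M)$ so that it is compatible with the decomposition $\tl X = X\sqcup D$, I verify that tensoring by $\pi^{-1}\CC_{X^\an}$ preserves condition $\AC$. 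Hence $\Sol_{X_\infty}^\rmE(\M)\in\BEC_{\CC-c}(\I\CC_{X_\infty})$.

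\textbf{Quasi-inverse and equivalence.} Given $K\in\BEC_{\CC-c}(\I\CC_{X_\infty})$, Theorem \ref{thm-main-1} applied to $\bfE j_{!!}K\in\BEC_{\CC-c}(\I\CC_{\tl X})$ produces $\tl\M\in\BDChol(\D_{\tl X})$, unique up to isomorphism, with $\Sol_{\tl X}^\rmE(\tl\M)\simeq \bfE j_{!!}K$. Define $\Phi(K):= j^\ast\tl\M\in\BDChol(\D_X)$. The cone of the unit $\tl\M\to \bfD j_\ast j^\ast\tl\M$ vanishes after $j^\ast$, hence is supported on $D$; the enhanced solution of a holonomic $\D_{\tl X}$-module supported on $D$ is annihilated by $\bfE j^{-1}$, so applying $\bfE j^{-1}\circ \Sol_{\tl X}^\rmE$ to this triangle gives
\[\Sol_{X_\infty}^\rmE(\Phi(K)) = \bfE j^{-1}\Sol_{\tl X}^\rmE(\bfD j_\ast j^\ast\tl\M)\simeq \bfE j^{-1}\Sol_{\tl X}^\rmE(\tl\M)\simeq \bfE j^{-1}\bfE j_{!!}K\simeq K.\]
Conversely, starting with $\M\in\BDChol(\D_X)$, the object $\tl\M'$ produced by $\Phi(\Sol_{X_\infty}^\rmE(\M))$ satisfies $\Sol_{\tl X}^\rmE(\tl\M')\simeq \pi^{-1}\CC_{X^\an}\otimes \Sol_{\tl X}^\rmE(\bfD j_\ast\M)$, and the fully faithful part of Theorem \ref{thm-main-1} together with $j^\ast\bfD j_\ast\simeq \id$ yields $\M\simeq j^\ast\tl\M' = \Phi(\Sol_{X_\infty}^\rmE(\M))$. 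Thus $\Phi$ is quasi-inverse to $\Sol_{X_\infty}^\rmE$, giving the desired equivalence.

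\textbf{Main obstacle.} The technical heart is the precise handling of cones (and their enhanced solutions) supported on $D$: one must establish rigorously that for a holonomic $\D_{\tl X}$-module $C$ with $\supp C\subset D$, the enhanced ind-sheaf $\bfE j^{-1}\Sol_{\tl X}^\rmE(C)$ vanishes in $\BEC(\I\CC_{X_\infty})$. This rests on the compatibility of $\Sol^\rmE$ with pushforward along the closed immersion $D\hookrightarrow \tl X$, and on the compatibility of algebraic stratifications from Definition $\AC$ with the open/closed decomposition $\tl X=X\sqcup D$. Once these compatibilities are nailed down, the rest of the argument is formal from the bordered-space equivalence and Theorem \ref{thm-main-1}.
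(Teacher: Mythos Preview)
Your overall strategy—reduce to the complete case via $\bfE j_{!!}$ and Theorem~\ref{thm-main-1}, then pull back along $\bfD j^\ast$—is exactly the paper's, and your argument is correct. The difference lies in one observation you are missing, which the paper uses to eliminate both detours you take.

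The paper invokes Theorem~\ref{thm-DK}\,(5): for any holonomic $\N$ on $\tl X$ one has $\Sol_{\tl X}^\rmE\big(\N(\ast D)\big)\simeq\pi^{-1}\CC_{X^\an}\otimes\Sol_{\tl X}^\rmE(\N)$. Since $(\bfD j_\ast\M)(\ast D)\simeq\bfD j_\ast\M$, this gives directly $\pi^{-1}\CC_{X^\an}\otimes\Sol_{\tl X}^\rmE(\bfD j_\ast\M)\simeq\Sol_{\tl X}^\rmE(\bfD j_\ast\M)$, so $\bfE j_{!!}\Sol_{X_\infty}^\rmE(\M)$ is literally $\Sol_{\tl X}^\rmE(\bfD j_\ast\M)$, already in $\BEC_{\CC-c}(\I\CC_{\tl X})$; no stratification refinement is needed. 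In the other direction, if $\bfE j_{!!}K\simeq\Sol_{\tl X}^\rmE(\N)$, the same formula plus $\N(\ast D)\simeq\bfD j_\ast\bfD j^\ast\N$ yields $\Sol_{\tl X}^\rmE(\N)\simeq\Sol_{\tl X}^\rmE(\bfD j_\ast\bfD j^\ast\N)$ immediately, after which full faithfulness on $\tl X$ finishes. Thus what you flag as the ``main obstacle''—controlling $\bfE j^{-1}\Sol_{\tl X}^\rmE(C)$ for cones $C$ supported on $D$—never arises in the paper's route: the localization formula absorbs it in one line. Your cone argument works too, but the paper's shortcut is worth knowing.
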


Moreover we show that there exists a t-structure
on the triangulated category $\BEC_{\CC-c}(\I\CC_{X_\infty})$
whose heart is equivalent to the abelian category $\Modhol(\D_X)$
of holonomic $\D$-modules on $X$ as follows.
Let us denote by 
$\sh_{X^\an_\infty} \colon\BEC(\I\CC_{X^\an_\infty}) \to \BDC(\CC_{X^\an})$
the sheafification functor on $X^\an_\infty$ and
by $\rmD_{X^\an_\infty}^{\rmE} \colon
\BEC(\I\CC_{X^\an_\infty})^{\op}\to\BEC(\I\CC_{X^\an_\infty})$
the duality functor for enhanced ind-sheaves on $X^\an_\infty$,
see \S\ref{subsec2.4} for the details.
We set 
\begin{align*}
{}^p\bfE^{\leq0}_{\CC-c}(\I\CC_{X_\infty}) & :=
\{K\in\BEC_{\CC-c}(\I\CC_{X_\infty})\ |\
\sh_{X^\an_\infty}(K)\in{}^p\bfD^{\leq0}_{\CC-c}(\CC_X)\},\\
{}^p\bfE^{\geq0}_{\CC-c}(\I\CC_{X_\infty}) & :=
\{K\in\BEC_{\CC-c}(\I\CC_{X_\infty})\ |\
\rmD_{X^\an_\infty}^{\rmE}(K)\in{}^p\bfE^{\leq0}_{\CC-c}(\I\CC_{X_\infty})\}\\
&\ =
\{K\in\BEC_{\CC-c}(\I\CC_{X_\infty})\ |\
\sh_{X^\an_\infty}(K)\in{}^p\bfD^{\geq0}_{\CC-c}(\CC_X)\},
\end{align*}
where the pair $\big({}^p\bfD^{\leq0}_{\CC-c}(\CC_X),
{}^p\bfD^{\geq0}_{\CC-c}(\CC_X)\big)$
is the perverse t-structure on $\BDC_{\CC-c}(\CC_X)$.
Then we obtain the second main theorem of this paper:
\begin{theorem}[Theorem \ref{thm-main-3}]
The pair $\big({}^p\bfE^{\leq0}_{\CC-c}(\I\CC_{X_\infty}),
{}^p\bfE^{\geq0}_{\CC-c}(\I\CC_{X_\infty})\big)$
is a t-structure on $\BEC_{\CC-c}(\I\CC_{X_\infty})$
and
its heart $$\Perv(\I\CC_{X_\infty}) :=
{}^p\bfE^{\leq0}_{\CC-c}(\I\CC_{X_\infty})\cap{}^p\bfE^{\geq0}_{\CC-c}(\I\CC_{X_\infty})$$
is equivalent to the abelian category $\Modhol(\D_X)$
of holonomic $\D$-modules on $X$.
\end{theorem}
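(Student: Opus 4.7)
The plan is to transport the natural t-structure on $\BDChol(\D_X)^\op$ (whose heart is $\Modhol(\D_X)^\op$) across the equivalence $\Sol_{X_\infty}^\rmE$ of Theorem \ref{thm-main-2}, and then identify the transported t-structure with the pair given in the statement. Once this identification is done, the equivalence of the heart with $\Modhol(\D_X)$ follows by composing with the holonomic duality $\bfD_X$, which gives an anti-equivalence $\Modhol(\D_X)^\op \simeq \Modhol(\D_X)$.

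The key technical ingredient I would first establish is the compatibility, for any $\M\in\BDChol(\D_X)$, of the form
\[
\sh_{X^\an_\infty}\bigl(\Sol_{X_\infty}^\rmE(\M)\bigr) \simeq \Sol_{X^\an}(\M^\an)
\]
in $\BDC_{\CC-c}(\CC_X)$. To prove it, I would use the definition $\Sol_{X_\infty}^\rmE(\M) = \bfE j^{-1}\Sol_{\tl{X}}^\rmE(\bfD j_\ast \M)$ to reduce to the smooth complete case of Theorem \ref{thm-main-1}, where it follows from the known analytic statement (from \cite{DK16}) that sheafification of the enhanced solution complex recovers the classical solution complex for holonomic $\D$-modules. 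The nontrivial checks are that $\sh$ commutes with $\bfE j^{-1}$ in the relevant sense on $\CC$-constructible objects, and that $\Sol_{X^\an}(\M^\an)$ is recovered from $\Sol_{\tl{X}^\an}((\bfD j_\ast\M)^\an)$ by restriction along the open embedding $j$.

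For the $\geq 0$ part I would verify the duality compatibilities
$\rmD_{X^\an_\infty}^\rmE\circ\Sol_{X_\infty}^\rmE\simeq \Sol_{X_\infty}^\rmE\circ \bfD_X$
and $\sh_{X^\an_\infty}\circ\rmD_{X^\an_\infty}^\rmE\simeq \rmD_{X^\an}\circ \sh_{X^\an_\infty}$ on algebraically $\CC$-constructible objects. Combined with Verdier duality exchanging the two halves of the perverse t-structure on $\BDC_{\CC-c}(\CC_X)$, this justifies the second description of ${}^p\bfE^{\geq 0}_{\CC-c}(\I\CC_{X_\infty})$ stated in the definition. Together with the first step and Kashiwara's constructibility theorem (which says that the shifted classical solution functor is t-exact between the natural t-structure on $\BDChol(\D_X)^\op$ and the perverse t-structure on $\BDC_{\CC-c}(\CC_X)$), this shows that, under the equivalence $\Sol_{X_\infty}^\rmE$, the pair $\big({}^p\bfE^{\leq 0}_{\CC-c}(\I\CC_{X_\infty}), {}^p\bfE^{\geq 0}_{\CC-c}(\I\CC_{X_\infty})\big)$ corresponds exactly to the natural t-structure on $\BDChol(\D_X)^\op$. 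Hence the pair is indeed a t-structure on $\BEC_{\CC-c}(\I\CC_{X_\infty})$, and its heart $\Perv(\I\CC_{X_\infty})$ is equivalent to $\Modhol(\D_X)$ as required.

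The main obstacle will be the sheafification compatibility of the first step in the algebraic, non-complete setting: one has to handle the bordered-space formalism carefully, verify the commutation of $\sh_{X^\an_\infty}$ with $\bfE j^{-1}$ and $\bfE j_{!!}$, and exploit the compactification $\bfD j_\ast$ on the $\D$-module side to reduce to the already-known complete case. Once this ingredient is in place, the remaining verifications reduce to standard manipulations with Verdier duality on constructible sheaves and the regular Riemann--Hilbert correspondence, together with the equivalence given by Theorem \ref{thm-main-2}.
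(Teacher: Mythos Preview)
Your proposal is correct and follows essentially the same approach as the paper. The paper's proof also hinges on the sheafification compatibility $\sh_{X^\an_\infty}\bigl(\Sol_{X_\infty}^\rmE(\M)\bigr)\simeq\Sol_X(\M)$ (proved as Lemma \ref{lem3.16} exactly by the reduction-to-the-complete-case argument you sketch), combines it with the t-exactness of the classical solution functor (Theorem \ref{thm-perv}), and transports the standard t-structure on $\BDChol(\D_X)^{\op}$ across the equivalence of Theorem \ref{thm-main-2}; the duality--sheafification compatibility you mention is precisely Lemma \ref{lem3.18}, used to justify the second description of ${}^p\bfE^{\geq0}_{\CC-c}(\I\CC_{X_\infty})$.
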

Furthermore the pair $\big({}^p\bfE^{\leq0}_{\CC-c}(\I\CC_{X_\infty}),
{}^p\bfE^{\geq0}_{\CC-c}(\I\CC_{X_\infty})\big)$
is related to the generalized t-structure
$\big({}^{\frac{1}{2}}\bfE_{\RR-c}^{\leq c}(\I\CC_{X^\an_\infty}),
{}^{\frac{1}{2}}\bfE_{\RR-c}^{\geq c}(\I\CC_{X^\an_\infty})\big)_{c\in\RR}$
on $\BEC_{\RR-c}(\I\CC_{X^\an_\infty})$ as follows:
\begin{align*}
{}^p\bfE_{\CC-c}^{\leq 0}(\I\CC_{X_\infty}) &=
{}^{\frac{1}{2}}\bfE_{\RR-c}^{\leq 0}(\I\CC_{X^\an_\infty})\cap
\BEC_{\CC-c}(\I\CC_{X_\infty}),\\
{}^p\bfE_{\CC-c}^{\geq 0}(\I\CC_{X_\infty}) &=
{}^{\frac{1}{2}}\bfE_{\RR-c}^{\geq 0}(\I\CC_{X^\an_\infty})\cap
\BEC_{\CC-c}(\I\CC_{X_\infty}).
\end{align*}

\begin{remark}
In the case of quasi-projective variety, Kuwagaki \cite{Kuwa18} established an algebraic version
of the irregular Riemann--Hilbert correspondence.
\end{remark}

Furthermore, let us consider simple objects on $\Perv(\I\CC_{X_\infty})$
for a smooth algebraic variety $X$ over $\CC$.
Thanks to the theory of minimal extensions of holonomic $\D$-modules,
by using Theorem 1.5 (Theorem \ref{thm-main-3})
we can describe simple objects of $\Perv(\I\CC_{X_\infty})$ as below.

\begin{definition}[Definition \ref{def-simple}]
Let $X$ be a smooth algebraic variety over $\CC$.
A non-zero algebraic enhanced ind-sheaf $K\in\Perv(\I\CC_{X_\infty})$ is called simple
if it contains no subobjects in $\Perv(\I\CC_{X_\infty})$ other than $K$ or $0$.
\end{definition}

Note that for any simple algebraic perverse sheaf $\SF\in\Perv(\CC_X)$
the natural embedding $e_{X^\an_\infty}(\SF)\in\Perv(\I\CC_{X_\infty})$
is also simple by Proposition \ref{prop3.35}.

In this paper, we shall say that $K\in\ZEC(\I\CC_{X^\an_\infty})$ is an enhanced local system on $X_\infty$
if for any $x\in X$ there exist an open neighborhood $U\subset X$ of $x$ and a non-negative integer $k$
such that $K|_{U_\infty^\an}\simeq(\CC_{U^\an_\infty}^\rmE)^{\oplus k}.$
Note that for any enhanced local system $K$ on $X_\infty$, 
we have $K[d_X]\in\Perv(\I\CC_{X_\infty})$. 

\begin{proposition}[Proposition \ref{prop3.37} (2)]
For any simple object $K$ of $\Perv(\I\CC_{X_\infty})$,
there exist a locally closed smooth connected subvariety $Z$ of $X$
and a simple enhanced local system $L$ on $Z_\infty$ such that
the natural embedding $i_Z\colon Z\hookrightarrow X$ is affine 
and $$K\simeq\Image\big(\bfE i_{Z^\an_\infty!!}L[d_Z]\to\bfE i_{Z^\an_\infty\ast}L[d_Z]\big),$$
where a morphism $\bfE i_{Z^\an_\infty!!}L[d_Z]\to\bfE i_{Z^\an_\infty\ast}L[d_Z]$ in $\Perv(\I\CC_{X_\infty})$
is induced by a canonical morphism $\bfE i_{Z^\an_\infty!!}\to\bfE i_{Z^\an_\infty\ast}$ of functors
$\bfE i_{Z^\an_\infty!!}, \bfE i_{Z^\an_\infty\ast}\colon \Perv(\I\CC_{Z_\infty})\to\Perv(\I\CC_{X_\infty})$.
\end{proposition}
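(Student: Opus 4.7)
The plan is to transport the statement through the anti-equivalence $\Sol_{X_\infty}^\rmE\colon\Modhol(\D_X)^\op\simto\Perv(\I\CC_{X_\infty})$ supplied by Theorem \ref{thm-main-3}, and then invoke the classical structure theorem for simple holonomic $\D$-modules. Because any equivalence of abelian categories preserves simple objects, picking the unique $\M\in\Modhol(\D_X)$ with $K\simeq\Sol_{X_\infty}^\rmE(\M)$ reduces the problem to classifying the simple $\D_X$-module $\M$.

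For this, I would invoke the classical theory of minimal (intermediate) extensions of holonomic $\D$-modules (cf.\ e.g.\ \cite{HTT08}): every simple object of $\Modhol(\D_X)$ is of the form
\[
\M\simeq i_{Z!*}\L := \Image\bigl(\bfD i_{Z!}\L\to\bfD i_{Z*}\L\bigr),
\]
where $Z\subset X$ is a locally closed smooth connected subvariety whose embedding $i_Z$ is affine, $\L$ is a simple integrable connection on $Z$, and the image is taken in $\Modhol(\D_X)$ (affineness of $i_Z$ guaranteeing that both $\bfD i_{Z!}\L$ and $\bfD i_{Z*}\L$ lie in $\Modhol(\D_X)$). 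Setting $L := \Sol^\rmE_{Z_\infty}(\L)$, the fact that $\L$ is an $\sho_Z$-coherent locally free integrable connection ensures that $L$ is an enhanced local system on $Z^\an_\infty$ in the sense of the paragraph preceding the proposition, and simplicity of $\L$ as a $\D_Z$-module translates to simplicity of $L$. The compatibility of $\Sol^\rmE$ with the direct image functors along $i_Z$, which in this algebraic bordered-space setting follows from the analytic statements of \cite{DK16} combined with the definition of $\Sol_{X_\infty}^\rmE$ via the adjoint pair $(\bfE j_{!!},\bfE j^{-1})$ recalled in Section \ref{subsec2.5}, then yields isomorphisms
\[
\Sol^\rmE_{X_\infty}(\bfD i_{Z*}\L)\simeq\bfE i_{Z^\an_\infty!!}L[d_Z]
\quad\text{and}\quad
\Sol^\rmE_{X_\infty}(\bfD i_{Z!}\L)\simeq\bfE i_{Z^\an_\infty\ast}L[d_Z],
\]
the canonical morphism between the two D-module direct images corresponding under $\Sol_{X_\infty}^\rmE$ to the canonical morphism $\bfE i_{Z^\an_\infty!!}L[d_Z]\to\bfE i_{Z^\an_\infty\ast}L[d_Z]$. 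Since $\Sol_{X_\infty}^\rmE$ restricts to a contravariant exact equivalence on perverse hearts, it interchanges $\Image$ and $\Coimage$; combining this with the above gives
\[
K\simeq\Sol_{X_\infty}^\rmE(i_{Z!*}\L)\simeq\Image\bigl(\bfE i_{Z^\an_\infty!!}L[d_Z]\to\bfE i_{Z^\an_\infty\ast}L[d_Z]\bigr),
\]
as required.

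The main obstacle I expect is establishing the compatibility of $\Sol_{X_\infty}^\rmE$ with the four D-module functors along the (not necessarily proper) affine embedding $i_Z$, together with the identification of the canonical D-module morphism $\bfD i_{Z!}\L\to\bfD i_{Z*}\L$ with the canonical enhanced ind-sheaf morphism $\bfE i_{Z^\an_\infty!!}\to\bfE i_{Z^\an_\infty\ast}$ under $\Sol_{X_\infty}^\rmE$. One handles this by factoring $i_Z$ into an open embedding followed by a closed embedding: the closed-embedding case is covered by \cite{DK16}, while the open-embedding case is handled by the bordered-space formalism of Section \ref{subsec2.5} together with the t-exactness, with respect to the perverse t-structure of Theorem \ref{thm-main-3}, of $\bfE j_{!!}$ and $\bfE j_{\ast}$ along an affine open embedding $j$, mirroring the classical t-exactness of $j_!$ and $j_*$ in the constructible perverse setting.
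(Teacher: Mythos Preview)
Your proposal is correct and follows essentially the same route as the paper: transport the classical classification of simple holonomic $\D$-modules (Theorem~\ref{thm-simple-D}) through the equivalence $\Sol_{X_\infty}^\rmE(\cdot)[d_X]$ of Theorem~\ref{thm-main-3}, using the compatibility of $\Sol_{X_\infty}^\rmE$ with direct images to identify $\Sol_{X_\infty}^\rmE(\SL(Z;\M))[d_X]$ with the image of $\bfE i_{Z^\an_\infty!!}L[d_Z]\to\bfE i_{Z^\an_\infty\ast}L[d_Z]$. The only point worth noting is that the obstacle you anticipate in your final paragraph---the compatibility of $\Sol_{X_\infty}^\rmE$ with $\bfD i_{Z!}$ and $\bfD i_{Z\ast}$ along a not-necessarily-proper affine embedding---is already handled in the paper as Proposition~\ref{prop-comm}~(3), proved uniformly for \emph{any} morphism of smooth algebraic varieties via the bordered-space machinery and Sublemma~\ref{sublem2.2}, so no ad~hoc open/closed factorisation is needed at this stage.
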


Moreover we shall consider the image of a canonical morphism
\[{}^p\bfE i_{Z^\an_\infty!!}K\to{}^p\bfE i_{Z^\an_\infty\ast}K\]
for $K\in\Perv(\I\CC_{Z_\infty})$ and a locally closed smooth subvariety $Z$ of $X$
(not necessarily the natural embedding $i_Z\colon Z\hookrightarrow X$ is affine)
where we set 
\begin{align*}
{}^p\bfE i_{Z^\an_\infty\ast} := 
{}^p\SH^0\circ\bfE i_{Z^\an_\infty\ast}\colon\Perv(\I\CC_{Z_\infty})\to\Perv(\I\CC_{X_\infty}).\\
{}^p\bfE i_{Z^\an_\infty!!} := 
{}^p\SH^0\circ\bfE i_{Z^\an_\infty!!}\colon\Perv(\I\CC_{Z_\infty})\to\Perv(\I\CC_{X_\infty}).
\end{align*}

In this paper,
we shall define a minimal extension of $K\in\Perv(\I\CC_{Z_\infty})$ along $Z$ as follows.
\begin{definition}[Definition \ref{def-minimal}]
In the situation as above,
for any $K\in\Perv(\I\CC_{Z_\infty})$,
we call the image of the canonical morphism $${}^p\bfE i_{Z^\an_\infty!!}K\to{}^p\bfE i_{Z^\an_\infty\ast}K$$
the minimal extension of $K$ along $Z$,
and denote it by ${}^p\bfE i_{Z^\an_\infty!!\ast}K$.
\end{definition}

Note that if $Z$ is closed then
there exists an isomorphism ${}^p\bfE i_{Z^\an_\infty!!\ast}K
\simeq\bfE i_{Z^\an_\infty\ast}K\simeq\bfE i_{Z^\an_\infty!!}K$
in $\Perv(\I\CC_{X_\infty})$
and there exists an equivalence of categories:
\[\xymatrix@C=75pt{
\Perv_{Z}(\I\CC_{X_\infty})\ar@<0.7ex>@{->}[r]^-{{}^p\bfE i_{Z^\an_\infty}^{-1}}\ar@{}[r]|-{\sim}
&
\Perv(\I\CC_{Z_\infty})
\ar@<0.7ex>@{->}[l]^-{\bfE i_{Z^\an_\infty!!}},
}\]
where $\Perv_{Z}(\I\CC_{X_\infty})$ is a full subcategory of $\Perv(\I\CC_{X_\infty})$
consisting of objects whose support is contained in $Z^\an$.
This is nothing but a counter part of the Kashiwara's equivalence of categories of a holonomic case,
see Proposition \ref{prop3.29} and Remark \ref{rem3.31} for details.
Thus the minimal extension ${}^p\bfE i_{Z^\an_\infty!!\ast}K$
of a simple object $K$ of $\Perv(\I\CC_{Z_\infty})$ along a closed smooth subvariety $Z$ is also simple.

On the other hand, if $U$ is open whose complement $X\setminus U$ is a smooth subvariety,
then the minimal extension along $U$ is characterized as follows:
\begin{proposition}[Proposition \ref{prop3.41}]
In the situation as above,
the minimal extension ${}^p\bfE i_{UZ^\an_\infty!!\ast}K$ of $K\in\Perv(\I\CC_{U_\infty})$ along $U$
is characterized as the unique algebraic enhanced perverse ind-sheaf $L$ on $X_\infty$ satisfying the conditions
\begin{itemize}
\setlength{\itemsep}{-2pt}
\item[\rm(1)]
$\bfE i_{U^\an_\infty}^{-1}L\simeq K$,
\item[\rm(2)]
$\bfE i_{W^\an_\infty}^{-1}L\in\bfE^{\leq-1}_{\CC-c}(\I\CC_{W_\infty})$,
\item[\rm(3)]
$\bfE i_{W^\an_\infty}^{!}L\in\bfE^{\geq1}_{\CC-c}(\I\CC_{W_\infty})$.
\end{itemize}
\end{proposition}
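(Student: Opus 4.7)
The plan is to transport the classical characterization of the middle (or minimal) extension of a perverse sheaf along an open subset into the enhanced ind-sheaf framework, using the t-structure of Theorem \ref{thm-main-3}. The argument splits into two parts: verifying that $L_0 := {}^p\bfE i_{U^\an_\infty!!\ast}K$ itself satisfies the three conditions, and then deducing uniqueness by an adjunction/recollement argument. Throughout I shall write $i_W\colon W \hookrightarrow X$ for the closed embedding complementary to $i_U$.

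For the first stage, I would start from the two short exact sequences in $\Perv(\I\CC_{X_\infty})$,
$$0 \to N' \to {}^p\bfE i_{U^\an_\infty!!}K \to L_0 \to 0,
\qquad 0 \to L_0 \to {}^p\bfE i_{U^\an_\infty\ast}K \to N'' \to 0,$$
with $N', N''$ supported on $W$. Applying the t-exact functor $\bfE i_{U^\an_\infty}^{-1}$, which kills $N'$ and $N''$, gives condition (1). For condition (2), the vanishing $\bfE i_{W^\an_\infty}^{-1}\bfE i_{U^\an_\infty!!} = 0$ applied to the canonical triangle $\tau^{<0}\bfE i_{U^\an_\infty!!}K \to \bfE i_{U^\an_\infty!!}K \to {}^p\bfE i_{U^\an_\infty!!}K \xrightarrow{+1}$ forces $\bfE i_{W^\an_\infty}^{-1}({}^p\bfE i_{U^\an_\infty!!}K) \in \bfE^{\leq -1}_{\CC-c}(\I\CC_{W_\infty})$, and right t-exactness of $\bfE i_{W^\an_\infty}^{-1}$ propagates this bound to the quotient $L_0$. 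Condition (3) is handled dually from $\bfE i_{W^\an_\infty}^{!}\bfE i_{U^\an_\infty\ast} = 0$ and left t-exactness of $\bfE i_{W^\an_\infty}^{!}$.

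For uniqueness, let $L \in \Perv(\I\CC_{X_\infty})$ satisfy (1)--(3). The isomorphism $K \simto \bfE i_{U^\an_\infty}^{-1} L$, together with the adjunctions $(\bfE i_{U^\an_\infty!!},\bfE i_{U^\an_\infty}^{-1})$ and $(\bfE i_{U^\an_\infty}^{-1},\bfE i_{U^\an_\infty\ast})$, yields morphisms $\bfE i_{U^\an_\infty!!}K \to L$ and $L \to \bfE i_{U^\an_\infty\ast}K$; passing to ${}^p\SH^0$ produces $\alpha_L\colon {}^p\bfE i_{U^\an_\infty!!}K \to L$ and $\beta_L\colon L \to {}^p\bfE i_{U^\an_\infty\ast}K$ whose composite is exactly the canonical morphism defining $L_0$ (both correspond under $\bfE i_{U^\an_\infty}^{-1}$ to the identity of $K$). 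Using the recollement triangle $\bfE i_{U^\an_\infty!!}\bfE i_{U^\an_\infty}^{-1}L \to L \to \bfE i_{W^\an_\infty\ast}\bfE i_{W^\an_\infty}^{-1}L \xrightarrow{+1}$ together with condition (2), the perverse long exact sequence shows that $\alpha_L$ is an epimorphism in $\Perv(\I\CC_{X_\infty})$; dually, condition (3) and the triangle $\bfE i_{W^\an_\infty\ast}\bfE i_{W^\an_\infty}^{!}L \to L \to \bfE i_{U^\an_\infty\ast}\bfE i_{U^\an_\infty}^{-1}L \xrightarrow{+1}$ show that $\beta_L$ is a monomorphism. Hence $L = \Image(\alpha_L) = \Image(\beta_L \circ \alpha_L) = L_0$.

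I expect the main obstacle to lie not in the categorical skeleton of the argument — which is a word-for-word translation of the classical story for perverse sheaves — but in confirming that the requisite structural properties hold in the enhanced setting. In particular, one needs the base-change vanishings $\bfE i_{W^\an_\infty}^{-1}\bfE i_{U^\an_\infty!!}=0$ and $\bfE i_{W^\an_\infty}^{!}\bfE i_{U^\an_\infty\ast}=0$, the t-exactness of $\bfE i_{U^\an_\infty}^{-1}$, and the appropriate one-sided t-exactness of the push/pull functors along $i_U$ and $i_W$, all with respect to the t-structure $({}^p\bfE^{\leq 0}_{\CC-c}(\I\CC_{X_\infty}),{}^p\bfE^{\geq 0}_{\CC-c}(\I\CC_{X_\infty}))$ of Theorem \ref{thm-main-3}. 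The smoothness hypothesis on $W$ should be precisely what guarantees these properties transfer cleanly from the classical constructible setting, and will likely be used to invoke the Kashiwara-type equivalence implicit in the recollement for bordered spaces.
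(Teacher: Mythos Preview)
Your proposal is correct and follows essentially the same recollement-style argument as the paper. The only cosmetic difference is in how you verify conditions (2) and (3) for $L_0$: you use the base-change vanishing $\bfE i_{W^\an_\infty}^{-1}\bfE i_{U^\an_\infty!!}=0$ on the truncation triangle and then propagate through the quotient map, whereas the paper applies the recollement triangles from \cite[Lemma~2.7.7]{DK16-2} directly to $L_0$ and uses that ${}^p\bfE i_{U^\an_\infty!!}K\twoheadrightarrow L_0$ and $L_0\hookrightarrow {}^p\bfE i_{U^\an_\infty\ast}K$; for the uniqueness direction the paper phrases the epi/mono check via $\Coker\alpha$, $\Ker\beta$ and the Kashiwara-type equivalence of Proposition~\ref{prop3.29}, which is exactly the step you anticipated needing the smoothness of $W$ for.
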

Furthermore in the situation as above, the minimal extension ${}^p\bfE i_{U^\an_\infty!!\ast}K$
of a simple object $K$ of $\Perv(\I\CC_{U_\infty})$ along $U$ is also simple by Corollary \ref{cor3.45}.

Therefore by using Lemma \ref{lem3.41}, we obtain the following results.
\begin{theorem}[Theorem \ref{thm-main-4} (2)]
Let $X$ be a smooth algebraic variety over $\CC$ and $Z$ a locally closed smooth subvariety of $X$
$($not necessarily the natural embedding $i_Z\colon Z\hookrightarrow X$ is affine$)$.
We assume that $Z = U\cap W$
where $U\subset X$ is an open subset whose complement  $X\setminus U$ is smooth
and $W\subset X$ is a closed subvariety.

Then the minimal extension ${}^p\bfE i_{Z^\an_\infty!!\ast}K$
of a simple object $K$ of $\Perv(\I\CC_{Z_\infty})$ along $Z$ is also simple.
\end{theorem}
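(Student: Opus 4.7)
The plan is to factor the embedding $i_Z\colon Z\hookrightarrow X$ through $U$ and reduce the theorem to the two special cases already established in the paper, namely the closed case (where minimal extension is compatible with Kashiwara's equivalence) and the open case with smooth complement (Corollary \ref{cor3.45}). Since $W$ is closed in $X$ and $Z = U\cap W$, the natural map $i_{ZU}\colon Z\hookrightarrow U$ is a closed embedding of smooth subvarieties, and we have $i_Z = i_U \circ i_{ZU}$ where $i_U\colon U\hookrightarrow X$ is the open embedding whose complement $X\setminus U$ is smooth by hypothesis.

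First I would set $L := \bfE i_{ZU^\an_\infty\ast}K$. Because $i_{ZU}$ is a closed embedding of smooth varieties, the Kashiwara-type equivalence (Proposition \ref{prop3.29} and Remark \ref{rem3.31}) gives $\bfE i_{ZU^\an_\infty!!}K \simeq \bfE i_{ZU^\an_\infty\ast}K$, which already lies in $\Perv(\I\CC_{U_\infty})$, and the functor induces an equivalence between $\Perv(\I\CC_{Z_\infty})$ and $\Perv_Z(\I\CC_{U_\infty})$; in particular $L$ is a simple object of $\Perv(\I\CC_{U_\infty})$ supported on $Z^\an$. Next, since $i_U$ is an open embedding with smooth complement, Corollary \ref{cor3.45} tells us that the minimal extension ${}^p\bfE i_{U^\an_\infty!!\ast}L$ is a simple object of $\Perv(\I\CC_{X_\infty})$.

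It remains to identify the two-step extension with the one-step minimal extension appearing in the statement, namely to prove
\[{}^p\bfE i_{Z^\an_\infty!!\ast}K \simeq {}^p\bfE i_{U^\an_\infty!!\ast}L.\]
Using the factorizations $\bfE i_{Z^\an_\infty!!}\simeq \bfE i_{U^\an_\infty!!}\circ \bfE i_{ZU^\an_\infty!!}$ and $\bfE i_{Z^\an_\infty\ast}\simeq \bfE i_{U^\an_\infty\ast}\circ \bfE i_{ZU^\an_\infty\ast}$ together with the fact that $L$ already lies in the heart, applying ${}^p\SH^0$ gives ${}^p\bfE i_{Z^\an_\infty!!}K \simeq {}^p\bfE i_{U^\an_\infty!!}L$ and ${}^p\bfE i_{Z^\an_\infty\ast}K \simeq {}^p\bfE i_{U^\an_\infty\ast}L$, so the canonical morphism matches in both formulations and the images coincide.

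The main obstacle will be making this compatibility of perverse truncation with the factorization rigorous in the enhanced ind-sheaf setting. I expect Lemma \ref{lem3.41} to be the essential technical ingredient here: it should either provide the t-exactness of $\bfE i_{ZU^\an_\infty!!}$ relative to the heart, or equivalently allow a direct verification of the three conditions of Proposition \ref{prop3.41} for ${}^p\bfE i_{Z^\an_\infty!!\ast}K$ (namely, $\bfE i_{U^\an_\infty}^{-1}$-restriction equals $L$, with the required perverse bounds on $X\setminus U$). Once this is in hand, the uniqueness clause of Proposition \ref{prop3.41} yields the desired isomorphism, and the simplicity of ${}^p\bfE i_{Z^\an_\infty!!\ast}K$ follows from that of ${}^p\bfE i_{U^\an_\infty!!\ast}L$.
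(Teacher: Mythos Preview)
Your proposal is correct and follows essentially the same route as the paper: factor $i_Z$ as the closed embedding $k\colon Z\hookrightarrow U$ followed by the open embedding $i_U\colon U\hookrightarrow X$, use Proposition~\ref{prop3.29} to see that $\bfE k_{!!}K\in\Perv(\I\CC_{U_\infty})$ is simple, apply Corollary~\ref{cor3.45}, and invoke Lemma~\ref{lem3.41}(2) to identify ${}^p\bfE i_{Z^\an_\infty!!\ast}K$ with ${}^p\bfE i_{U^\an_\infty!!\ast}({}^p\bfE k_{!!\ast}K)$. The paper does exactly this, and Lemma~\ref{lem3.41}(2) already gives the factorization of minimal extensions you need directly, so there is no need to pass through Proposition~\ref{prop3.41}.
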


\section*{Acknowledgement}
I would like to thank Dr. Tauchi of Kyushu University
for many discussions and giving many comments.

\section{Preliminary Notions and Results}\label{sec-2}
In this section,
we briefly recall some basic notions
and results which will be used in this paper. 

\subsection{Ind-Sheaves}
Let us recall some basic notions on ind-sheaves.
References are made to Kashiwara-Schapira \cite{KS01, KS06}. 

Let $M$ be a good topological space
(i.e., a locally compact Hausdorff space
which is countable at infinity and has finite soft dimension). 
We denote by $\Mod(\CC_M)$
the abelian category of sheaves of $\CC$-vector spaces on $M$
and by $\I\CC_M$ that of ind-sheaves on it.
Then there exists
a natural exact embedding $\iota_M \colon \Mod(\CC_M)\hookrightarrow\I\CC_M$,
which we sometimes omit.
It has an exact left adjoint $\alpha_M$, 
that has in turn an exact fully faithful
left adjoint functor $\beta_M$.
Let us denote by $\BDC(\I\CC_M)$ the derived category of ind-sheaves on $M$.
For a continuous map $f \colon M\to N$,
we have the Grothendieck operations
$\otimes$, $\rihom$, $f^{-1}$, $\rmR f_\ast$, $f^!$, $\rmR f_{!!}$.  
We also set $\rhom := \alpha_M\circ\rihom$.

\subsection{Ind-Sheaves on Bordered Spaces}\label{subsec2.2}
We shall recall a notion of ind-sheaves on a bordered space.
For the details, we refer to D'Agnolo-Kashiwara \cite{DK16}.

A bordered space is a pair $M_{\infty} = (M, \che{M})$ of
a good topological space $\che{M}$ and an open subset $M\subset\che{M}$.
A morphism $f \colon (M, \che{M})\to (N, \che{N})$ of bordered spaces
is a continuous map $f \colon M\to N$ such that the first projection
$\che{M}\times\che{N}\to\che{M}$ is proper on
the closure $\var{\Gamma}_f$ of the graph $\Gamma_f$ of $f$ 
in $\che{M}\times\che{N}$. 
We say that
the morphism $f \colon (M, \che{M})\to (N, \che{N})$ of bordered spaces is semi-proper
if the second projection $\che{M}\times\che{N}\to\che{N}$ is proper on $\var{\Gamma}_f$.
Moreover the morphism $f \colon (M, \che{M})\to (N, \che{N})$ of bordered spaces is proper
if it is semi-proper and $f\colon M\to N$ is proper.
Note that if a continuous map $f \colon M\to N$ extends
to a continuous map $\che{f} \colon \che{M}\to\che{N}$,
then $f$ induces a morphism $(M, \che{M})\to(N, \che{N})$ of bordered spaces.

The category of good topological spaces is embedded into that
of bordered spaces by the identification $M = (M, M)$. 
Note that we have the morphism $j_{M_\infty} \colon M_\infty\to \che{M}$
of bordered spaces given by the embedding $M\hookrightarrow \che{M}$.
We sometimes denote $j_{M_\infty}$ by $j$ for short.
For a locally closed subset $Z\subset M$ of $M$,
we set $Z_\infty \colon= (Z, \var{Z})$ where $\var{Z}$ is the closure of $Z$ in $\che{M}$
and denote by $i_{Z_\infty} \colon Z_\infty\to \var{Z}$ the morphism of bordered spaces
given by the embedding $Z\hookrightarrow \var{Z}$.

Let us denote by $\I\CC_{M_\infty}$ the abelian category of 
 ind-sheaves on a bordered spaces $M_{\infty} = (M, \che{M})$
 and denote by $\BDC(\I\CC_{M_\infty})$ the triangulated category of them.
Note that there exists the standard t-structure
$\big(\bfD^{\leq 0}(\I\CC_{M_\infty}), 
\bfD^{\geq 0}(\I\CC_{M_\infty})\big)$
 of $\BDC(\I\CC_{M_\infty})$
which is induced by the standard t-structure on $\BDC(\I\CC_{\che{M}})$. 
For a morphism $f \colon M_\infty\to N_\infty$ 
of bordered spaces, 
we have the Grothendieck operations 
$ \otimes, \rihom, \rmR f_\ast, \rmR f_{!!}, f^{-1}, f^! $.
Note that there exists an embedding functor 
$\iota_{M_\infty} \colon \BDC(\CC_M) \hookrightarrow \BDC(\I\CC_{M_\infty})$.
We sometimes write $\BDC(\CC_{M_\infty})$ for $\BDC(\CC_{M})$,
when considered as a full subcategory of $\BDC(\I\CC_{M_\infty})$.
Note also that the embedding functor $\iota_{M_\infty}$ has a left adjoint functor
$\alpha_{M_\infty} \colon \BDC(\I\CC_{M_\infty})\to \BDC(\CC_M)$.
Remark that if $f\colon M_\infty\to N_\infty$ is semi-proper then there exists an isomorphism
$\bfR f_{!!}\circ\iota_{M_\infty}\simto\iota_{N_\infty}\circ\bfR f_!$ of functors
$\BDC(\CC_{M})\to\BDC(\I\CC_{N_\infty})$.

\subsection{Enhanced Ind-Sheaves on Bordered Spaces I}\label{subsec2.4}
We shall recall some basic notions of enhanced ind-sheaves on bordered spaces and results on it.
Reference are made to \cite{KS16-2} and \cite{DK16-2}.
Moreover we also refer to D'Agnolo-Kashiwara \cite{DK16} and Kashiwara-Schapira \cite{KS16}
for the notions of enhanced ind-sheaves on good topological spaces.

Let $M_\infty = (M, \che{M})$ be a bordered space.
We set $\RR_\infty := (\RR, \var{\RR})$ for 
$\var{\RR} := \RR\sqcup\{-\infty, +\infty\}$,
and let $t\in\RR$ be the affine coordinate. 
We consider the morphisms of bordered spaces
\[M_\infty\times\RR^2_\infty\xrightarrow{p_1,\ p_2,\ \mu}M_\infty
\times\RR_\infty\overset{\pi}{\longrightarrow}M_\infty\]
given by the maps $p_1(x, t_1, t_2) := (x, t_1)$, $p_2(x, t_1, t_2) := (x, t_2)$,
$\mu(x, t_1, t_2) := (x, t_1+t_2)$ and $\pi (x,t) := x$. 
Then the convolution functors for 
ind-sheaves on $M_\infty \times \RR_\infty$ are defined by
\begin{align*}
F_1\Potimes F_2 & := \rmR\mu_{!!}(p_1^{-1}F_1\otimes p_2^{-1}F_2),\\
\Prihom(F_1, F_2) & := \rmR p_{1\ast}\rihom(p_2^{-1}F_1, \mu^!F_2).
\end{align*}
Now we define the triangulated category 
of enhanced ind-sheaves on a bordered space $M_\infty$ by 
$$\BEC(\I\CC_{M_\infty}) :=
\BDC(\I\CC_{M_\infty \times\RR_\infty})/\pi^{-1}\BDC(\I\CC_{M_\infty}).$$
The quotient functor
\[\Q_{M_\infty} \colon \BDC(\I\CC_{M_\infty\times\RR_\infty})\to\BEC(\I\CC_{M_\infty})\]
has fully faithful left and right adjoints 
$\bfL_{M_\infty}^\rmE,\bfR_{M_\infty}^\rmE \colon
\BEC(\I\CC_{M_\infty}) \to\BDC(\I\CC_{M_\infty\times\RR_\infty})$
 defined by 
\[\bfL_{M_\infty}^\rmE(\Q_{M_\infty}(F)) := (\CC_{\{t\geq0\}}\oplus\CC_{\{t\leq 0\}})
\Potimes F ,\hspace{10pt} \bfR_{M_\infty}^\rmE(\Q_{M_\infty}(F)) 
:=\Prihom(\CC_{\{t\geq0\}}\oplus\CC_{\{t\leq 0\}}, F).\]
We sometimes denote $\Q_{M_\infty}$ (resp.\ $\bfL_{M_\infty}^\rmE, \bfR_{M_\infty}^\rmE$ )
by $\Q$ (resp.\ $\bfL^\rmE, \bfR^\rmE$) for short.
Moreover they induce equivalences of categories
\begin{align*}
\bfL_{M_\infty}^\rmE &\colon \BEC(\I\CC_{M_\infty})\simto
\{F\in\BDC(\I\CC_{M_\infty\times\RR_\infty})\ |\
(\CC_{\{t\geq0\}}\oplus\CC_{\{t\leq0\}})\Potimes F\simto F\},\\
\bfR_{M_\infty}^\rmE &\colon \BEC(\I\CC_{M_\infty})\simto
\{F\in\BDC(\I\CC_{M_\infty\times\RR_\infty})\ |\ F\simto
\Prihom(\CC_{\{t\geq0\}}\oplus\CC_{\{t\leq0\}}, F)\},
\end{align*}
respectively,
where $\{t\geq0\}$ stands for $\{(x, t)\in M\times\RR\ |\ t\geq0\}\subset \che{M}\times\var{\RR}$ 
and $\{t\leq0\}$ is defined similarly.
Then we have the following standard t-structure on $\BEC(\I\CC_{M_\infty})$
which is induced by the standard t-structure on $\BDC(\I\CC_{M_\infty\times\RR_\infty})$
\begin{align*}
\bfE^{\leq 0}(\I\CC_{M_\infty}) & = \{K\in \BEC(\I\CC_{M_\infty})\ | \ 
\bfL_{M_\infty}^{\rmE}K\in \bfD^{\leq 0}(\I\CC_{M_\infty\times\RR_\infty})\},\\
\bfE^{\geq 0}(\I\CC_{M_\infty}) & = \{K\in \BEC(\I\CC_{M_\infty})\ | \ 
\bfL_{M_\infty}^{\rmE}K\in \bfD^{\geq 0}(\I\CC_{M_\infty\times\RR_\infty})\}.
\end{align*}
We denote by 
\[\SH^n \colon \BEC(\I\CC_{M_\infty})\to\bfE^0(\I\CC_{M_\infty})\]
the $n$-th cohomology functor, where we set 
$\bfE^0(\I\CC_{M_\infty}) :=
\bfE^{\leq 0}(\I\CC_{M_\infty})\cap\bfE^{\geq 0}(\I\CC_{M_\infty})$.

The convolution functors are also defined for enhanced ind-sheaves on bordered spaces.
We denote them by the same symbols $\Potimes$, $\Prihom$. 
For a morphism $f \colon M_\infty \to N_\infty $ of bordered spaces, we 
can define also the operations 
$\bfE f^{-1}$, $\bfE f_\ast$, $\bfE f^!$, $\bfE f_{!!}$ 
for enhanced ind-sheaves on bordered spaces. 
For example, 
by the natural morphism $f_{\RR_\infty}\colon M_\infty \times \RR_{\infty} \to 
N_\infty \times \RR_{\infty}$ of bordered spaces associated to 
$f$ we set $\bfE f_\ast \big( \Q_{M_\infty}F\big)=
\Q_{N_\infty}\big(\rmR f_{\RR_\infty\ast}F\big)$
for $F\in\BDC(\I\CC_{M_\infty\times\RR_\infty})$. 
The other operations are defined similarly. 
We thus obtain the six operations $\Potimes$, $\Prihom$,
$\bfE f^{-1}$, $\bfE f_\ast$, $\bfE f^!$, $\bfE f_{!!}$ 
for enhanced ind-sheaves on bordered spaces.
Note that there exists a morphism $\bfE f_{!!}\to\bfE f_\ast$
of functors
$\BEC(\I\CC_{M_\infty})\to\BEC(\I\CC_{N_\infty})$
and it is an isomorphism if $f$ is proper.
Moreover we have outer-hom functors
$\rihom^\rmE(K_1, K_2)$,
$\rhom^\rmE(K_1, K_2) := \alpha_{M_\infty}\rihom^\rmE(K_1, K_2)$,
$\rHom^\rmE(K_1, K_2) := \bfR\Gamma\big(M; \rhom^\rmE(K_1, K_2)\big)$
with values in $\BDC(\I\CC_{M_\infty})$, $\BDC(\CC_M)$ and $\BDC(\CC)$, respectively. 
Here, $\BDC(\CC)$ is the derived category of $\CC$-vector spaces.

For $F\in\BDC(\I\CC_{M_\infty})$ and $K\in\BEC(\I\CC_{M_\infty})$ the objects 
\begin{align*}
\pi^{-1}F\otimes K & :=\Q_{M_\infty}(\pi^{-1}F\otimes \bfL_{M_\infty}^\rmE K),\\
\rihom(\pi^{-1}F, K) & :=\Q_{M_\infty}\big(\rihom(\pi^{-1}F, \bfR_{M_\infty}^\rmE K)\big). 
\end{align*}
in $\BEC(\I\CC_{M_\infty})$ are well defined. 
We set $$\CC_{M_\infty}^\rmE := \Q_{M_\infty} 
\Bigl(``\underset{a\to +\infty}{\varinjlim}"\ \CC_{\{t\geq a\}}
\Bigr)\in\BEC(\I\CC_{M_\infty}).$$
Note that there exists an isomorphism
$\CC_{M_\infty}^\rmE\simeq\bfE j^{-1}\CC_{\che{M}}^\rmE$
in $\BEC(\I\CC_{M_\infty})$.
Then we have a natural embedding
$e_{M_\infty} \colon \BDC(\I\CC_{M_\infty}) \to \BEC(\I\CC_{M_\infty})$
defined by \[e_{M_\infty}(F) :=  \CC_{M_\infty}^\rmE\otimes\pi^{-1}F.\]
Note also that for a morphism $f\colon M_\infty\to N_\infty$ of bordered spaces
and objects $F\in\BDC(\I\CC_{M_\infty})$, $G\in\BDC(\I\CC_{N_\infty})$
we obtain 
\begin{align*}
\bfE f_{!!}(e_{M_\infty}F)
&\simeq
e_{N_\infty}(\bfR f_{!!}F),\\
\bfE f^{-1}(e_{N_\infty}G)
&\simeq
e_{M_\infty}(f^{-1}G),\\
\bfE f^{!}(e_{N_\infty}G)
&\simeq
e_{M_\infty}(f^{!}G)
\end{align*}
by using \cite[Proposition 2.18]{KS16-2}.
In particular we have an isomorphism in $\BEC(\I\CC_{M_\infty})$
\[\bfE j^{-1}(e_{\che{M}}F)\simeq e_{M_\infty}(j^{-1}F)\]
for $F\in\BDC(\I\CC_{\che{M}})$.
Let us define $\omega_{M_\infty}^\rmE := e_{M_\infty}(\omega_M)\in\BEC(\I\CC_{M_\infty})$
where $\omega_M\in\BDC(\CC_{M_\infty}) ( = \BDC(\CC_M))$ is the dualizing complex,
see \cite[Definition 3.1.16]{KS90} for the details.
Then we have the Verdier duality functor
$\rmD_{M_\infty}^\rmE \colon\BEC(\I\CC_{M_\infty})^{\op}\to\BEC(\I\CC_{M_\infty}) $
 for enhanced ind-sheaves on bordered spaces which is defined by
 \[\rmD_{M_\infty}^\rmE(K) := \Prihom(K, \omega_{M_\infty}^\rmE).\]
 Note that we have an isomorphism in $\BEC(\I\CC_{M_\infty})$
\[\rmD_{M_\infty}^\rmE(\bfE j^{-1}K)\simeq\bfE j^{-1}(\rmD_{\che{M}}^\rmE(K))\]
for any $K\in\BEC(\I\CC_{\che{M}})$.
Note also that there exists an isomorphism in $\BEC(\I\CC_{M_\infty})$
\[\rmD_{M_\infty}^\rmE(e_{M_\infty}\SF)
\simeq
e_{M_\infty}(\rmD_M\SF)\]
for any $\SF\in\BDC(\CC_M)$.
Let $i_0 \colon M_\infty\to M_\infty\times\RR_\infty$ be the inclusion map of bordered spaces
induced by $x\mapsto (x, 0)$.
We set
\[\sh_{M_\infty} := \alpha_{M_\infty}\circ i_0^!\circ \bfR_{M_\infty}^{\rmE} \colon
\BEC(\I\CC_{M_\infty}) \to \BDC(\CC_{M}) \]
and call it the sheafification functor for enhanced ind-sheaves on bordered spaces.
Note that we have an isomorphism in $\BDC(\CC_{M})$
\[\sh_{M_\infty}(\bfE j^{-1}K)\simeq j^{-1}(\sh_{\che{M}}(K))\]
for $K\in\BEC(\I\CC_{\che{M}})$.
Note also that there exists an isomorphism $\SF\simto \sh_{M_\infty}(e_{M_\infty}(\SF))$
for $\SF\in\BDC(\CC_M)$.

For a continuous function $\varphi \colon U\to \RR$ defined on an open subset $U\subset M$,
we set the exponential enhanced ind-sheaf by 
\[\EE_{U|M_\infty}^\varphi := 
\CC_{M_\infty}^\rmE\Potimes
\Q_{M_\infty}\big(\CC_{\{t+\varphi\geq0\}}\big)
, \]
where $\{t+\varphi\geq0\}$ stands for 
$\{(x, t)\in \che{M}\times\var{\RR}\ |\ t\in\RR, x\in U, 
t+\varphi(x)\geq0\}$.

\subsection{Enhanced Ind-Sheaves on Bordered Spaces II}\label{subsec2.5}
The aim of this subsection is to prepare some auxiliary results
on enhanced ind-sheaves on bordered spaces which will be used in \S \ref{sec3}.
In particular we shall prove that for any smooth algebraic variety $X$
the triangulated category $\BEC(\I\CC_{(X^\an, \tl{X}^\an)})$
does not depend on the choice of $\tl{X}$.
Although this is known by experts, we give two proofs.

Let $M_\infty = (M, \che{N})$, $N_\infty = (N, \che{N})$ be two bordered spaces.
\begin{sublemma}\label{sublem2.1}
Let $f \colon M\to N$ be a continuous map and assume that $\che{M}$ and $\che{N}$ are compact.
Then the map $f$ induces a semi-proper morphism from $M_\infty$ to $N_\infty$.
\end{sublemma}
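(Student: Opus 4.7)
The plan is to reduce the claim to a compactness observation: when both $\check{M}$ and $\check{N}$ are compact (Hausdorff, since they are good topological spaces), the ambient product $\check{M}\times\check{N}$ is compact Hausdorff, so $\overline{\Gamma}_f$ is a closed subset of a compact Hausdorff space, hence itself compact. Any continuous map from a compact space to a Hausdorff space is proper (preimages of compact sets are closed in a compact space and therefore compact), so the restrictions to $\overline{\Gamma}_f$ of both the first projection $\check{M}\times\check{N}\to\check{M}$ and the second projection $\check{M}\times\check{N}\to\check{N}$ are automatically proper.

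More concretely, first I would verify that $f\colon M\to N$ really induces a morphism of bordered spaces by checking the defining condition, i.e.\ that the first projection is proper on $\overline{\Gamma}_f$. This is immediate from the compactness of $\overline{\Gamma}_f$ as explained above. Then semi-properness, which asks the same condition but for the second projection, follows by the identical argument.

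There is essentially no obstacle here; the only point to be careful about is that the definition of properness used throughout the paper (following Kashiwara--Schapira) is the one via preimages of compact sets, for which the compact-domain-to-Hausdorff-codomain lemma applies cleanly. Since the statement is purely point-set and no sheaf-theoretic input is needed, this should be the entire argument, and the sublemma will be invoked later to justify that, for $X$ a smooth algebraic variety embedded into a smooth complete $\tilde{X}$, any two such compactifications induce canonical morphisms between the associated bordered spaces $X^{\an}_\infty$, which in turn will yield the independence of $\BEC(\I\CC_{X^{\an}_\infty})$ from the choice of $\tilde{X}$.
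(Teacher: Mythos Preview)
Your proposal is correct and follows essentially the same approach as the paper: both arguments hinge on the observation that $\overline{\Gamma}_f$ is a closed subset of the compact Hausdorff space $\check{M}\times\check{N}$, hence compact, so the restriction of either projection to it is automatically proper. The paper spells this out by intersecting $\overline{\Gamma}_f$ with the preimage of a compact set, whereas you invoke the general fact that continuous maps from compact to Hausdorff spaces are proper, but these are the same reasoning.
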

\begin{proof}
Let us denote by $\Gamma_f$ the graph of $f$
and by $\var{\Gamma}_f$ the closure of $\Gamma_f$ in $\che{M}\times\che{N}$.

First we shall prove that the map $f$ induces a morphism of bordered spaces.
Namely let us prove that 
the restriction of the first projection ${\rm pr}_1\colon \che{M}\times\che{N}\to\che{M}$
to $\var{\Gamma}_f$ is proper.
It is enough to show that
$\var{\Gamma}_f\cap{\rm pr}_1^{-1}(K)$ is compact
for any compact subset $K$ of $\che{M}$.
By the assumption,
$\che{M}\times\che{N}$ is compact and hence $\var{\Gamma}_f$ is also compact.
Moreover since $\che{N}$ is compact,
${\rm pr}_1^{-1}(K) = K\times \che{N}$ is also compact
for any compact subset $K$ of $\che{M}$.
Therefore $\var{\Gamma}_f\cap{\rm pr}_1^{-1}(K)$ is compact
for any compact subset $K$ of $\che{M}$.

In a similar way, we can prove that 
the restriction of the second projection ${\rm pr}_2\colon \che{M}\times\che{N}\to\che{N}$
to $\var{\Gamma}_f$ is proper.
Hence the morphism induced by the continuous map $f$ is semi-proper
(see \S \ref{subsec2.2} for definition).
\end{proof}

\begin{lemma}\label{lem2.2}
In the situation of Sublemma \ref{sublem2.1},
we assume that the continuous map $f$ is an isomorphism.
Then the morphism induced by the map $f$ is also isomorphism
between $M_\infty$ and $N_\infty$.
\end{lemma}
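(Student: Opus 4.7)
The plan is to construct an explicit two-sided inverse in the category of bordered spaces by applying Sublemma~\ref{sublem2.1} symmetrically to $f$ and to its set-theoretic inverse, and then check that the induced morphisms are mutually inverse.

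First, since $f\colon M\to N$ is a homeomorphism, the inverse map $g := f^{-1}\colon N\to M$ is a well-defined continuous map. By hypothesis $\che{M}$ and $\che{N}$ are both compact, so Sublemma~\ref{sublem2.1} applies to $g$ with the roles of $M$ and $N$ interchanged and yields a semi-proper morphism of bordered spaces $g\colon N_\infty\to M_\infty$. The same sublemma already produces the semi-proper morphism $f\colon M_\infty\to N_\infty$.

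Next, I would verify that these two morphisms are mutually inverse in the category of bordered spaces. The key point is that, by the definition recalled in \S\ref{subsec2.2}, a morphism of bordered spaces $(M,\che{M})\to(N,\che{N})$ is specified by a continuous map $M\to N$ satisfying a properness condition on the closure of its graph; composition of such morphisms is induced by the composition of the underlying continuous maps on the open parts, and two morphisms agree as soon as they agree as continuous maps $M\to N$. Therefore $g\circ f$, viewed as a morphism $M_\infty\to M_\infty$, coincides with the morphism induced by the continuous map $f^{-1}\circ f = \id_M$, which is $\id_{M_\infty}$. Dually, $f\circ g = \id_{N_\infty}$. Hence $f$ is an isomorphism of bordered spaces.

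No step here presents a genuine difficulty; the only point requiring care is the categorical bookkeeping in the last paragraph, namely that a morphism of bordered spaces is determined by its underlying continuous map on the open parts, so that checking the inverse relations amounts to the purely set-theoretic identities $f^{-1}\circ f = \id_M$ and $f\circ f^{-1} = \id_N$. Once Sublemma~\ref{sublem2.1} has been applied in both directions, there is nothing left to do.
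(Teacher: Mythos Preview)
Your proof is correct and follows essentially the same approach as the paper: apply Sublemma~\ref{sublem2.1} to the inverse map $g=f^{-1}$ to obtain a morphism $N_\infty\to M_\infty$, and then check that the two compositions are the identities. The paper invokes \cite[Lemma~3.2.3]{DK16} for the fact that composition of morphisms of bordered spaces is determined by composition of the underlying maps, whereas you argue this directly from the definition; otherwise the arguments are identical.
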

\begin{proof}
Let $g \colon N\to M$ be the inverse map of the isomorphism $f \colon M\simto N$.
By Sublemma \ref{sublem2.1},
we obtain morphisms $f_\infty \colon M_\infty\to N_\infty$ (resp.\ $g_\infty \colon N_\infty\to M_\infty$)
of bordered spaces which is induced by the continuous map $f \colon M\to N$ (resp.\ $g \colon N\to M$).
Then it is obvious $g_\infty\circ f_\infty = (g\circ f)_\infty = \id_{M_\infty}$
and $f_\infty\circ g_\infty = (f\circ g)_\infty = \id_{N_\infty}$,
where we used \cite[Lemma 3.2.3]{DK16}.
\end{proof}

The following result is proved by D'Agnolo and Kashiwara.
\begin{lemma}[{\cite[Lemma 2.7.6]{DK16-2}}]
Let $Z\subset M$ be a locally closed subset of $M$
and consider the morphism $i_{Z_\infty} \colon Z_\infty\to M_\infty$ of bordered spaces
given by the embedding $Z\hookrightarrow M$.
For $K\in\BEC(\I\CC_{M_\infty})$ we have isomorphisms in $\BEC(\I\CC_{M_\infty})$
\begin{align*}
\pi^{-1}\CC_Z\otimes K &\simeq \bfE i_{Z_\infty!!}\bfE i_{Z_\infty}^{-1}K,\\
\rihom(\pi^{-1}\CC_Z, K) &\simeq \bfE i_{Z_\infty\ast}\bfE i_{Z_\infty}^{!}K.
\end{align*}
\end{lemma}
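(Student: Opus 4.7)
The plan is to lift the identities to representatives in $\BDC(\I\CC_{M_\infty\times\RR_\infty})$ via the adjoints $\bfL_{M_\infty}^\rmE$ and $\bfR_{M_\infty}^\rmE$, and then invoke the classical ind-sheaf formulas for a locally closed embedding on the product bordered space. Concretely, write $\tilde\imath := i_{Z_\infty}\times\id_{\RR_\infty}\colon Z_\infty\times\RR_\infty\to M_\infty\times\RR_\infty$ for the base-change of $i_{Z_\infty}$ along $\pi$. Under the identification $\pi^{-1}\CC_Z\simeq \CC_{Z\times\RR}$, the classical adjunction isomorphisms for ind-sheaves on bordered spaces (which follow from the standard formulas $\CC_Z\otimes F\simeq \rmR\tilde\imath_{!!}\tilde\imath^{-1}F$ and $\rihom(\CC_Z,F)\simeq \rmR\tilde\imath_\ast\tilde\imath^! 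F$, recorded in \cite{DK16}) are available.

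For the first isomorphism, I would start from the definition
\[
\pi^{-1}\CC_Z\otimes K \;=\; \Q_{M_\infty}\bigl(\pi^{-1}\CC_Z\otimes \bfL_{M_\infty}^\rmE K\bigr),
\]
apply the classical formula on $M_\infty\times\RR_\infty$ to obtain
\[
\pi^{-1}\CC_Z\otimes \bfL_{M_\infty}^\rmE K \;\simeq\; \rmR\tilde\imath_{!!}\,\tilde\imath^{-1}\bfL_{M_\infty}^\rmE K,
\]
and then apply $\Q_{M_\infty}$. By the definitions of $\bfE i_{Z_\infty!!}$ and $\bfE i_{Z_\infty}^{-1}$ through the morphism $\tilde\imath$ (together with the identity $\Q_{M_\infty}\bfL_{M_\infty}^\rmE=\id$, which legitimizes using $\bfL_{M_\infty}^\rmE K$ as a representative), the right-hand side becomes $\bfE i_{Z_\infty!!}\bfE i_{Z_\infty}^{-1}K$.

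The second isomorphism proceeds in the dual fashion: replace $\bfL_{M_\infty}^\rmE$ by $\bfR_{M_\infty}^\rmE$, use
\[
\rihom(\pi^{-1}\CC_Z,K) \;=\; \Q_{M_\infty}\bigl(\rihom(\pi^{-1}\CC_Z,\bfR_{M_\infty}^\rmE K)\bigr),
\]
apply the classical $\rihom(\CC_{Z\times\RR},-)\simeq \rmR\tilde\imath_\ast\tilde\imath^!$ identity, and then quotient by $\Q_{M_\infty}$ to recognize $\bfE i_{Z_\infty\ast}\bfE i_{Z_\infty}^!K$.

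The only nontrivial point is justifying the passage from classical ind-sheaf formulas to their bordered counterparts; this is however standard once one notes that $\tilde\imath$ is a locally closed embedding of bordered spaces and that the operations $\rmR\tilde\imath_{!!},\tilde\imath^{-1},\rmR\tilde\imath_\ast,\tilde\imath^!$ behave on $M_\infty\times\RR_\infty$ exactly as in the non-bordered setting, so no genuine obstacle appears beyond bookkeeping of the bordered structure.
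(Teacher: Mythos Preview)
The paper does not supply a proof of this lemma; it is quoted verbatim from \cite[Lemma~2.7.6]{DK16-2} as a known result of D'Agnolo and Kashiwara. Your sketch is correct and is essentially the argument one finds in that reference: unwind the definitions of $\pi^{-1}\CC_Z\otimes(-)$ and $\rihom(\pi^{-1}\CC_Z,-)$ via $\bfL^\rmE$ and $\bfR^\rmE$, apply the standard ind-sheaf identities for the locally closed embedding $\tilde\imath=i_{Z_\infty}\times\id_{\RR_\infty}$, and descend through $\Q$ using the definitions of the enhanced six operations. One minor comment: in the second half you also need $\Q_{M_\infty}\bfR_{M_\infty}^\rmE=\id$ (dual to the identity you invoke for $\bfL^\rmE$), which holds for the same reason, so that $\bfR_{M_\infty}^\rmE K$ is a legitimate representative of $K$.
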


In particular, for any $K\in \BEC(\I\CC_{M_\infty})$
we have $\pi^{-1}\CC_{M}\otimes K \simeq \bfE j_{!!}\bfE j^{-1}K$,
where $j \colon M_\infty\to \che{M}$ is the morphism of bordered spaces
given by the embedding $M\hookrightarrow \che{M}$.
Hence we have an equivalence of triangulated categories
\[\xymatrix@C=55pt{
\BEC(\I\CC_{M_\infty})\ar@<0.7ex>@{->}[r]^-{\bfE j_{!!}}\ar@{}[r]|-{\sim}
&
\{K\in\BEC(\I\CC_{\che{M}})\ |\ \pi^{-1}\CC_{M}\otimes K\simto K\}
\ar@<0.7ex>@{->}[l]^-{\bfE j^{-1}}.
}\]

We shall denote $j_{M_\infty} \colon M_\infty\to \che{M}$, $j_{N_\infty} \colon N_\infty\to \che{N}$
by $j_M, j_N$ for simplicity.
Then Sublemma \ref{sublem2.2} below follows from \cite[Lemma 3.3.12]{DK16}.
\begin{sublemma}\label{sublem2.2}
Let $f \colon M_\infty \to N_\infty $ be  the morphism of bordered spaces
associated with a continuous map $\che{f} \colon\che{M}\to \che{N}$ such that $\che{f}(M)\subset N$.
\begin{itemize}
\item[\rm(1)]
For any $K\in\BEC(\I\CC_{M_\infty})$ there exist isomorphisms in $\BEC(\I\CC_{N_\infty})$
\begin{align*}
\bfE f_{!!}K &\simeq \bfE j_N^{-1}\bfE \che{f}_{!!}\bfE j_{M!!}K,\\
\bfE f_{\ast}K &\simeq \bfE j_N^{-1}\bfE \che{f}_{\ast}\bfE j_{M\ast}K.
\end{align*}

\item[\rm(2)]
For any $L\in\BEC(\I\CC_{N_\infty})$ there exist isomorphisms in $\BEC(\I\CC_{M_\infty})$
\begin{align*}
\bfE f^{-1}L &\simeq \bfE j_M^{-1}\bfE \che{f}^{-1}\bfE j_{N!!}L
\simeq \bfE j_M^{-1}\bfE \che{f}^{-1}\bfE j_{N\ast}L,\\
\bfE f^{!}L &\simeq \bfE j_M^{-1}\bfE \che{f}^{!}\bfE j_{N!!}L
\simeq \bfE j_M^{-1}\bfE \che{f}^{!}\bfE j_{N\ast}L.
\end{align*}
\end{itemize}
\end{sublemma}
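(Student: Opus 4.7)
The plan is to exploit the commutative square of morphisms of bordered spaces
\[
j_N\circ f \ = \ \che{f}\circ j_M \colon M_\infty\longrightarrow (\che{N}, \che{N}),
\]
which holds because on underlying spaces both compositions coincide with $\che{f}|_M\colon M\to\che{N}$, so functoriality of composition in the category of bordered spaces forces the equality. From functoriality of the six operations for enhanced ind-sheaves on bordered spaces, one then has canonical isomorphisms of functors
\[
\bfE j_{N!!}\circ\bfE f_{!!}\simeq\bfE\che{f}_{!!}\circ\bfE j_{M!!}, \qquad
\bfE j_{N\ast}\circ\bfE f_{\ast}\simeq\bfE\che{f}_{\ast}\circ\bfE j_{M\ast},
\]
\[
\bfE f^{-1}\circ\bfE j_N^{-1}\simeq\bfE j_M^{-1}\circ\bfE\che{f}^{-1}, \qquad
\bfE f^{!}\circ\bfE j_N^{!}\simeq\bfE j_M^{!}\circ\bfE\che{f}^{!}.
\]

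The key auxiliary fact is that $\bfE j_{M}^{-1}$ is a retraction for both $\bfE j_{M!!}$ and $\bfE j_{M\ast}$: the first, $\bfE j_M^{-1}\bfE j_{M!!}\simeq\id$, is precisely the content of the equivalence of categories via the adjoint pair $(\bfE j_{M!!},\bfE j_M^{-1})$ stated just after Lemma \ref{lem2.2}, while $\bfE j_M^{-1}\bfE j_{M\ast}\simeq\id$ follows in the same vein, as $j_M$ is a bordered open embedding, so restricting the $\ast$-extension returns the original object. The same applies with $M$ replaced by $N$. Moreover, for the $!$-pullback version one additionally uses the standard isomorphism $\bfE j^{-1}\simeq \bfE j^{!}$, which is valid because $j_M$ and $j_N$ are bordered open embeddings.

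Granting these, part (1) is immediate: applying $\bfE j_N^{-1}$ to the first composition identity and using $\bfE j_N^{-1}\bfE j_{N!!}\simeq\id$ yields
\[
\bfE f_{!!}K\simeq \bfE j_N^{-1}\bfE j_{N!!}\bfE f_{!!}K\simeq \bfE j_N^{-1}\bfE\che{f}_{!!}\bfE j_{M!!}K,
\]
and the $\ast$-version is completely analogous. For (2), precomposing the inverse-image identity with $\bfE j_{N!!}L$ (respectively $\bfE j_{N\ast}L$) and using $\bfE j_N^{-1}\bfE j_{N!!}\simeq\id\simeq\bfE j_N^{-1}\bfE j_{N\ast}$ gives
\[
\bfE f^{-1}L\simeq\bfE f^{-1}\bfE j_N^{-1}\bfE j_{N!!}L\simeq \bfE j_M^{-1}\bfE\che{f}^{-1}\bfE j_{N!!}L,
\]
and likewise with $\bfE j_{N\ast}$; the $!$-pullback case follows identically after invoking $\bfE j^{-1}\simeq\bfE j^{!}$.

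The main obstacle is not conceptual but bookkeeping: one must carefully track how the various restrictions cancel the various extensions. The only substantive technical input beyond pure functoriality is $\bfE j^{-1}\simeq\bfE j^{!}$ for bordered open embeddings together with $\bfE j^{-1}\bfE j_\ast\simeq \id$, both of which are standard in the bordered-space formalism; the sublemma can thereby be viewed as a direct reformulation of \cite[Lem.~3.3.12]{DK16}, as the author indicates.
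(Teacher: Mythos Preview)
Your argument is correct and is essentially what the paper has in mind: the paper does not spell out a proof but simply records that the sublemma follows from \cite[Lemma~3.3.12]{DK16}, and your write-up unpacks exactly that, using the factorization $j_N\circ f=\che{f}\circ j_M$ together with the standard identities $\bfE j^{-1}\bfE j_{!!}\simeq\id\simeq\bfE j^{-1}\bfE j_{\ast}$ and $\bfE j^{-1}\simeq\bfE j^{!}$ for bordered open embeddings.
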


Remark that $\che{M}$ and $\che{N}$ are not necessary compact. 
Hence  we obtain:
\begin{lemma}\label{lem2.3}
In the situation of Sublemma \ref{sublem2.2}
(not necessarily $\che{M}$ and $\che{N}$ are compact),
we assume that the restriction $\che{f}|_M$ of $\che{f}$ to $M$ induces
an isomorphism $M \simto N$.
Then there exists an equivalence of triangulated categories
\[\xymatrix@C=55pt{
\BEC(\I\CC_{M_\infty})\ar@<0.7ex>@{->}[r]^-{\bfE f_{!!}}\ar@{}[r]|-{\sim}
&
\BEC(\I\CC_{N_\infty})
\ar@<0.7ex>@{->}[l]^-{\bfE f^{-1}}.
}\]
\end{lemma}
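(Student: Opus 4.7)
The plan is to apply Sublemma \ref{sublem2.2} to translate everything into the ambient spaces $\che{M}$ and $\che{N}$, and then exploit that $\che{f}|_M \colon M \simto N$ is a homeomorphism. Concretely, I would verify separately that the two compositions $\bfE f^{-1} \circ \bfE f_{!!}$ and $\bfE f_{!!} \circ \bfE f^{-1}$ are isomorphic to the identity functors.

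For the first composition, Sublemma \ref{sublem2.2}\,(1) and (2) give
\[
\bfE f^{-1} \bfE f_{!!} K
\simeq \bfE j_M^{-1} \bfE\che{f}^{-1} \bfE j_{N!!} \bfE j_N^{-1} \bfE\che{f}_{!!} \bfE j_{M!!} K.
\]
Using the identity $\bfE j_{N!!} \bfE j_N^{-1}(-) \simeq \pi^{-1}\CC_N \otimes (-)$ from the paragraph preceding Sublemma \ref{sublem2.2}, together with the compatibility of $\bfE\che{f}^{-1}$ with $\otimes$ and the identification $\bfE\che{f}^{-1}\pi^{-1}\CC_N \simeq \pi^{-1}\CC_{\che{f}^{-1}(N)}$, this rewrites as
\[
\bfE j_M^{-1}\bigl(\pi^{-1}\CC_{\che{f}^{-1}(N)} \otimes \bfE\che{f}^{-1} \bfE\che{f}_{!!} \bfE j_{M!!} K\bigr).
\]
Since $\che{f}(M) \subset N$ forces $M \subset \che{f}^{-1}(N)$, the prefactor $\pi^{-1}\CC_{\che{f}^{-1}(N)}$ is absorbed by $\bfE j_M^{-1}$ (which amounts to restriction to $M$). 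The remaining isomorphism $\bfE j_M^{-1} \bfE\che{f}^{-1} \bfE\che{f}_{!!} \bfE j_{M!!} K \simeq K$ then follows from proper base change: the enhanced ind-sheaf $\bfE j_{M!!} K$ is concentrated on $M$, and over points of $N$ the fibers of $\che{f}$ meeting $M$ are singletons because $\che{f}|_M$ is a homeomorphism.

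The composition $\bfE f_{!!} \bfE f^{-1} L$ I would treat symmetrically: after unfolding via Sublemma \ref{sublem2.2}, I would invoke the projection formula $\bfE\che{f}_{!!}(A \otimes \bfE\che{f}^{-1} B) \simeq \bfE\che{f}_{!!} A \otimes B$ together with the stalk computation $\bfE\che{f}_{!!}(\pi^{-1}\CC_M)\otimes \bfE j_{N!!}L \simeq \pi^{-1}\CC_N \otimes \bfE j_{N!!}L \simeq \bfE j_{N!!}L$, the first isomorphism again resting on the singleton-fiber property over $N$. The main obstacle I anticipate is rigorously invoking proper base change and the projection formula in the enhanced ind-sheaf setting on bordered spaces: both are standard, but one must carefully track supports so that the hypothesis on $\che{f}|_M$ really forces the simplifications to happen on the relevant locus of $\che{M}$ and $\che{N}$.
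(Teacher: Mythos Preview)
Your proposal is correct and follows essentially the same route as the paper: both arguments reduce to Sublemma~\ref{sublem2.2} together with the observation that, since $\che{f}|_M \colon M \simto N$ is an isomorphism, the functors $\bfE\che{f}_{!!}$ and $\bfE\che{f}^{-1}$ are mutually inverse equivalences when restricted to objects supported on $M$ (resp.\ $N$). The paper compresses this into a single sentence rather than unfolding the base-change and projection-formula verifications, but the underlying content is the same.
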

\begin{proof}
This follows from Sublemma \ref{sublem2.2} and 
the fact that the functor $\bfE\che{f}_{!!}$ (resp. $\bfE\che{f}^{-1}$)
is an isomorphism over $M$ (resp.\ $N$)
by the assumption $M\simto N$.
\end{proof}

At the end of this subsection, we shall apply the above results to our situation.
Let $X$ be a smooth algebraic variety over $\CC$ and denote by $X^\an$
the underlying complex manifold of $X$.
Then we can obtain a smooth complete algebraic variety $\tl{X}$ such that
$X\subset \tl{X}$ and $D := \tl{X}\setminus X$ is a normal crossing divisor of $\tl{X}$
by Hironaka's desingularization theorem \cite{Hiro} (see also \cite[Theorem 4.3]{Naga}).
Hence we obtain a bordered space $(X^\an, \tl{X}^\an)$ and
an equivalence of triangulated categories:
\[\xymatrix@C=55pt{
\BEC(\I\CC_{(X^\an, \tl{X}^\an)})\ar@<0.7ex>@{->}[r]^-{\bfE j_{!!}}\ar@{}[r]|-{\sim}
&
\{K\in\BEC(\I\CC_{\tl{X}^\an})\ |\ \pi^{-1}\CC_{X^\an}\otimes K\simto K\}
\ar@<0.7ex>@{->}[l]^-{\bfE j^{-1}}.
}\]

Let $\tl{X}_i\ (i = 1, 2)$ be smooth complete algebraic varieties over $\CC$
such that $X\subset \tl{X}_i \ (i = 1, 2)$,
then the identity map $\id_{X^\an}$ of $X^\an$ induces an isomorphism of bordered spaces
\[(X^\an, \tl{X}_1^\an)\simeq(X^\an, \tl{X}_2^\an)\]
by Lemma \ref{lem2.2}.
Hence we have an equivalence of triangulated categories
\[\BEC(\I\CC_{(X^\an, \tl{X}_1^\an)})\simeq\BEC(\I\CC_{(X^\an, \tl{X}_2^\an)}).\]

On the other hand, this equivalence can be proved by Lemma \ref{lem2.3} as follows.
\begin{proof}
We regard $X$ as a subset of $\tl{X}_1\times\tl{X}_2$
by the diagonal embedding $X\hookrightarrow \tl{X}_i\times\tl{X}_2$.
By Hironaka's desingularization theorem,
we obtain a blow-up of the closure $\var{X}$ in $\tl{X}_1\times \tl{X}_2$.
Namely there exist a smooth complete algebraic variety $\tl{X}$ over $\CC$ and
a projective map $\rho \colon \tl{X} \to \tl{X}_1\times \tl{X}_2$ such that
the restriction $\rho|_{\rho^{-1}(X)}$
induces an isomorphism $\rho^{-1}(X)\simto X$.
Now we can regard $X$ as an open subset of $\tl{X}$
 and hence consider a bordered space $(X^\an, \tl{X}^\an)$. 
Then we have two morphisms of bordered spaces
$$f_i \colon \big(X^\an, \tl{X}^\an\big)\to \big(X^\an, \tl{X}_i^\an\big)\ (i=1, 2 )$$
associated with smooth maps $f_i := {\rm pr}_i^\an\circ\rho^\an \colon \tl{X}^\an\to\tl{X}_i^\an$
such that the restrictions $(f_i)|_{X^\an}$
are equal to the identity map of $X^\an$,
where ${\rm pr}_i^\an \colon \tl{X}_1^\an\times\tl{X}_2^\an\to \tl{X}_i^\an$
is the $i$-th projection ($i=1, 2$) and
$\rho^\an \colon \tl{X}^\an\to \tl{X}_1^\an\times\tl{X}_2^\an$
is the corresponding morphism of $\rho$. 
Therefore by Lemma \ref{lem2.3} there exist equivalences of categories
\[\bfE (f_i)_{!!} \colon \BEC(\I\CC_{(X^\an, \tl{X}^\an)})\simto\BEC(\I\CC_{(X^\an, \tl{X}_i^\an)})\ (i=1,2 )\]
and hence the proof is completed.
\end{proof}
Therefore the bordered space $(X^\an, \tl{X}^\an)$ and 
the triangulated category $\BEC(\I\CC_{(X^\an, \tl{X}^\an)})$
is independent of the choice of a smooth complete variety $\tl{X}$.
 Hence we can write
$$X_\infty^\an := (X^\an, \tl{X}^\an)$$
and
$$\BEC(\I\CC_{X^\an_\infty}) := \BEC(\I\CC_{(X^\an, \tl{X}^\an)})$$
for a smooth algebraic variety $X$.

For a morphism $f \colon X\to Y$ of smooth algebraic varieties,
we obtain a semi-proper morphism of bordered spaces
from $X^\an_\infty$ to $Y^\an_\infty$ by Sublemma \ref{sublem2.1}.
We denote it by $f^\an_\infty \colon X^\an_\infty\to Y^\an_\infty$.
On the other hand,
since there exists a morphism of complete algebraic varieties $\tl{f} \colon \tl{X}\to\tl{Y}$
such that $\tl{f}|_X = f$,
we obtain a morphism of bordered spaces from $X^\an_\infty$ to $Y^\an_\infty$,
see \S \ref{subsec2.2} or \cite[\S 3.2]{DK16} for the details.
It is clear that this morphism is equal to $f_\infty$
and hence we can apply Sublemma \ref{sublem2.2} to $f_\infty$.
This fact will be used in the proof of Proposition \ref{prop-comm}.

\subsection{$\RR$-Constructible Enhanced Ind-Sheaves} 
We shall recall a notion of the $\RR$-constructability for enhanced ind-sheaves and results on it.
References are made to D'Agnolo-Kashiwara \cite{DK16, DK16-2}.

In this subsection,
we assume that a bordered space $M_\infty = (M, \che{M})$ is a subanalytic bordered space.
Namely, $\che{M}$ is a subanalytic space and $M$ is an open subanalytic subset of $\che{M}$.
See \cite[Definition 3.1.1]{DK16-2} for the details.

\begin{definition}[{\cite[Definition 3.1.2]{DK16-2}}]
We denote by $\BDC_{\RR-c}(\CC_{M_\infty})$
the full subcategory of $\BDC(\CC_{M_\infty})\ (=\BDC(\CC_{M}))$
consisting of objects $\SF$ satisfying
$\rmR j_{M_\infty!}\SF$ is an $\RR$-constructible sheaf on $\che{M}$.
We regard $\BDC_{\RR-c}(\CC_{M_\infty})$
as a full subcategory of $\BDC(\I\CC_{M_\infty})$.
\end{definition}

\begin{definition}[{\cite[Definition 3.3.1]{DK16-2}}]\label{def2.6}
We say that $K\in\BEC(\I\CC_{M_\infty})$ is $\RR$-constructible
if for any relatively compact subanalytic open subset $U\subset M$
there exists an isomorphism $\bfE i_{U_\infty}^{-1}K\simeq \CC_{U_\infty}^{\rmE}\Potimes \SF$
for some $\SF\in\BDC_{\RR-c}(\CC_{U_\infty\times\RR_\infty})$.
We denote by $\BEC_{\RR-c}(\I\CC_{M_\infty})$
the full triangulated subcategory of $\BEC(\I\CC_{M_\infty})$
consisting of $\RR$-constructible enhanced ind-sheaves.
\end{definition}

\begin{lemma}[{\cite[Lemma 3.3.2]{DK16-2}}]\label{lem2.7}
Let $K$ be an object of $\BEC(\I\CC_{M_\infty})$.
Then $K$ is $\RR$-constructible if and only if
$\bfE j_{M_\infty!!}K\in\BEC(\I\CC_{\che{M}})$ is $\RR$-constructible.
\end{lemma}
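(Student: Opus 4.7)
The plan is to reduce both implications to a base-change argument combined with the fundamental identity $\bfE j^{-1}\bfE j_{!!}\simeq\id$ for an open embedding $j$ of bordered spaces, recalled just before the lemma.

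For the ``if'' direction, assume $\bfE j_{M_\infty!!}K$ is $\RR$-constructible on $\che{M}$. Let $U\subset M$ be a relatively compact subanalytic open subset. Since $M$ is open subanalytic in $\che{M}$, the same $U$ is also a relatively compact subanalytic open subset of $\che{M}$, and by hypothesis there exists $\SF\in\BDC_{\RR-c}(\CC_{U_\infty\times\RR_\infty})$ with $\bfE i_{U_\infty,\che{M}}^{-1}\bfE j_{M_\infty!!}K\simeq\CC_{U_\infty}^{\rmE}\Potimes\SF$, where $i_{U_\infty,\che{M}}\colon U_\infty\to\che{M}$. Since $i_{U_\infty,\che{M}}=j_{M_\infty}\circ i_{U_\infty,M_\infty}$, one gets
$$\bfE i_{U_\infty,M_\infty}^{-1}K\simeq\bfE i_{U_\infty,M_\infty}^{-1}\bfE j_{M_\infty}^{-1}\bfE j_{M_\infty!!}K\simeq\bfE i_{U_\infty,\che{M}}^{-1}\bfE j_{M_\infty!!}K\simeq\CC_{U_\infty}^{\rmE}\Potimes\SF,$$
which is exactly the representation required in Definition \ref{def2.6}.

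For the ``only if'' direction, let $V\subset\che{M}$ be a relatively compact subanalytic open subset and set $U:=V\cap M$, a relatively compact subanalytic open subset of $M$. By hypothesis there exists $\SF\in\BDC_{\RR-c}(\CC_{U_\infty\times\RR_\infty})$ with $\bfE i_{U_\infty,M_\infty}^{-1}K\simeq\CC_{U_\infty}^{\rmE}\Potimes\SF$. The square with corners $U_\infty,V_\infty,M_\infty,\che{M}$ formed by the open embeddings is Cartesian, so base change gives
$$\bfE i_{V_\infty}^{-1}\bfE j_{M_\infty!!}K\simeq\bfE j'_{!!}\bfE i_{U_\infty,M_\infty}^{-1}K\simeq\bfE j'_{!!}\bigl(\CC_{U_\infty}^{\rmE}\Potimes\SF\bigr),$$
where $j'\colon U_\infty\to V_\infty$ is the induced open embedding. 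Combining $\CC_{U_\infty}^{\rmE}\simeq\bfE j'^{-1}\CC_{V_\infty}^{\rmE}$ with the projection formula for $\Potimes$ and $\bfE j'_{!!}$, the right-hand side becomes $\CC_{V_\infty}^{\rmE}\Potimes\bfR(j'\times\id_{\RR_\infty})_{!!}\SF$. Since pushforward by a subanalytic open embedding preserves $\RR$-constructibility on bordered products, this is the desired representation on $V$.

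The main technical obstacle is establishing the projection formula $\bfE j'_{!!}\bigl(\bfE j'^{-1}L\Potimes N\bigr)\simeq L\Potimes\bfE j'_{!!}N$ and the base-change isomorphism $\bfE i_{V_\infty}^{-1}\bfE j_{M_\infty!!}\simeq\bfE j'_{!!}\bfE i_{U_\infty,M_\infty}^{-1}$ at the level of enhanced ind-sheaves on bordered spaces. Both are formal consequences of the analogous identities for ind-sheaves, but they must be checked by passing through the fully faithful embeddings $\bfL^{\rmE}_{\bullet}$ and $\bfR^{\rmE}_{\bullet}$ and verifying compatibility of $p_1,p_2,\mu,\pi$ with the bordered structures; the remaining $\RR$-constructibility bookkeeping is routine once the definitions are unwound.
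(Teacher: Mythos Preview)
The paper does not give its own proof of this lemma; it simply cites \cite[Lemma 3.3.2]{DK16-2} and moves on. So there is no in-paper argument to compare against.

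Your argument follows the natural route and is essentially correct. A couple of small points worth tightening. In the ``only if'' direction you assert that $U=V\cap M$ is a relatively compact subanalytic open subset of $M$; this is true because $\overline{U}\subset\overline{V}$ in $\che{M}$ and $\overline{V}$ is compact, but it is worth saying explicitly since ``relatively compact'' here is taken with respect to $\che{M}$. Second, the projection-formula step really produces $\CC_{V_\infty}^{\rmE}\Potimes\Q_{V_\infty}\bigl(\bfR(j'\times\id_{\RR_\infty})_{!}\SF\bigr)$, so you should justify that $\bfR(j'\times\id_{\RR_\infty})_{!}\SF\in\BDC_{\RR-c}(\CC_{V_\infty\times\RR_\infty})$; this follows because $j'\times\id_{\RR_\infty}$ is semi-proper (here $\overline{U}$ is compact) and semi-proper pushforward preserves $\RR$-constructibility on bordered spaces. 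With those two points made precise, the proof is complete.
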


Note that the triangulated category $\BEC_{\RR-c}(\I\CC_{M_\infty})$
has the following standard t-structure
which is induced by the standard t-structure on $\BEC(\I\CC_{M_\infty})$:
\begin{align*}
\bfE^{\leq 0}_{\RR-c}(\I\CC_{M_\infty})
& := \bfE^{\leq 0}(\I\CC_{M_\infty})\cap \BEC_{\RR-c}(\I\CC_{M_\infty}),\\
\bfE^{\geq 0}_{\RR-c}(\I\CC_{M_\infty})
& := \bfE^{\geq 0}(\I\CC_{M_\infty})\cap \BEC_{\RR-c}(\I\CC_{M_\infty}).
\end{align*}
We set $\ZEC_{\RR-c}(\I\CC_{M_\infty}) :=
\bfE^{\leq 0}_{\RR-c}(\I\CC_{M_\infty})\cap\bfE^{\geq 0}_{\RR-c}(\I\CC_{M_\infty})$.

Recall that the Verdier duality functor for enhanced ind-sheaves on $M_\infty$ is defined by
\[\rmD_{M_\infty}^\rmE \colon \BEC(\I\CC_{M_\infty})^{\op}\to\BEC(\I\CC_{M_\infty}),
\hspace{5pt} K\mapsto \Prihom(K, \omega_{M_\infty}^{\rmE}),\]
see \S \ref{subsec2.4} for the details.
\begin{proposition}[{\cite[Proposition 3.3.3 (ii), (iii), (iv)]{DK16-2}}]\label{prop2.8}
Let $f \colon M_\infty \to N_\infty$ be a morphism of bordered spaces
and $K\in \BEC_{\RR-c}(\I\CC_{M_\infty})$, $L\in \BEC_{\RR-c}(\I\CC_{N_\infty})$.
Then we have$:$
\begin{itemize}
\item[\rm (1)]
$\rmD_{M_\infty}^\rmE(K)\in\BEC_{\RR-c}(\I\CC_{M_\infty})$
and $K\simto\rmD_{M_\infty}^\rmE\rmD_{M_\infty}^\rmE K$,

\item[\rm(2)]
$\bfE f^{-1}L,\ \bfE f^{!}L\in\BEC_{\RR-c}(\I\CC_{M_\infty})$
and $\bfE f^{!}L\simeq \rmD_{M_\infty}^\rmE\bfE f^{-1}\rmD_{N_\infty}^\rmE(L)$,

\item[\rm(3)]
if $f \colon M_\infty \to N_\infty$ is semi-proper,
$\bfE f_\ast K,\ \bfE f_{!!}K\in\BEC_{\RR-c}(\I\CC_{N_\infty})$
and $\bfE f_{\ast}K\simeq \rmD_{N_\infty}^\rmE\bfE f_{!!}\rmD_{M_\infty}^\rmE(K)$.
\end{itemize}
\end{proposition}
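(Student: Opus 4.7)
The plan is to reduce each of the three assertions to the corresponding known statement for $\RR$-constructible enhanced ind-sheaves on good topological spaces (i.e., the non-bordered case), which is available in \cite{DK16}. The two key reduction tools are Lemma \ref{lem2.7}, which transports $\RR$-constructibility from $M_\infty$ to $\che{M}$ via $\bfE j_{M_\infty!!}$, and Sublemma \ref{sublem2.2}, which rewrites the six operations on bordered spaces in terms of the analogous operations on the compactifications through $\bfE j_{M_\infty!!}$ and $\bfE j_{N_\infty}^{-1}$ (or $\bfE j_{N_\infty!!}$).

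For (1), the commutation $\rmD^\rmE_{M_\infty}\circ \bfE j^{-1} \simeq \bfE j^{-1}\circ \rmD^\rmE_{\che{M}}$ recalled in \S\ref{subsec2.4}, together with the isomorphism $K \simeq \bfE j^{-1}\bfE j_{!!}K$, gives $\rmD^\rmE_{M_\infty}K \simeq \bfE j^{-1}\rmD^\rmE_{\che{M}}(\bfE j_{!!}K)$. Since $\bfE j_{!!}K$ is $\RR$-constructible on $\che{M}$ by Lemma \ref{lem2.7} and $\rmD^\rmE_{\che{M}}$ preserves $\RR$-constructibility on the good topological space $\che{M}$, one then checks via Lemma \ref{lem2.7} (using that $\bfE j_{!!}\bfE j^{-1}(-) \simeq \pi^{-1}\CC_M \otimes (-)$ preserves $\RR$-constructibility) that $\rmD^\rmE_{M_\infty}K$ is $\RR$-constructible on $M_\infty$; the biduality assertion descends from the same statement on $\che{M}$. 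For (2), Sublemma \ref{sublem2.2}(2) expresses $\bfE f^{-1}L$ as $\bfE j_M^{-1}\bfE \che{f}^{-1}\bfE j_{N!!}L$, and the same recipe as in (1), combined with the known preservation of $\RR$-constructibility by $\bfE \che{f}^{-1}$ on good topological spaces, yields the claim; the case of $\bfE f^{!}$ and the duality identity $\bfE f^{!} \simeq \rmD^\rmE_{M_\infty}\bfE f^{-1}\rmD^\rmE_{N_\infty}$ follow by combining (1) with the corresponding duality identity on $\che{M}$ and $\che{N}$.

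Part (3) is the most delicate, and this is where I expect the main obstacle to lie: it is precisely the semi-properness hypothesis that makes the reduction work. Using Sublemma \ref{sublem2.2}(1), $\bfE f_{!!}K \simeq \bfE j_N^{-1}\bfE \che{f}_{!!}\bfE j_{M!!}K$, so one needs $\bfE \che{f}_{!!}$ applied to an $\RR$-constructible object on $\che{M}$ to remain $\RR$-constructible on $\che{N}$. On good topological spaces this is known when the map is proper on the support, and semi-properness of $f$ is exactly the condition that, through the properness of the projection $\var{\Gamma}_f \to \che{N}$, controls the effective support of $\bfE \che{f}_{!!}\bfE j_{M!!}K$ so that no pathological behavior appears at the boundary $\che{N}\setminus N$. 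Without this assumption the pushforward of an $\RR$-constructible object could fail to be $\RR$-constructible. Once the assertion for $\bfE f_{!!}$ is established, the statement for $\bfE f_\ast$ and the duality identity $\bfE f_\ast \simeq \rmD^\rmE_{N_\infty}\bfE f_{!!}\rmD^\rmE_{M_\infty}$ follow by combining (1) with the corresponding duality for semi-proper pushforward on the compactifications.
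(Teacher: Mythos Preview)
The paper does not supply its own proof of this proposition: it is simply quoted from \cite[Proposition~3.3.3~(ii),~(iii),~(iv)]{DK16-2}. So there is nothing to compare against at the level of argument, only the question of whether your sketch is itself sound.

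There is a genuine gap in your reduction. Sublemma~\ref{sublem2.2} is stated only for a morphism $f\colon M_\infty\to N_\infty$ that is \emph{induced by a continuous map $\che{f}\colon\che{M}\to\che{N}$ with $\che{f}(M)\subset N$}. A general morphism of bordered spaces (as defined in \S\ref{subsec2.2}) is merely a continuous map $f\colon M\to N$ whose graph has proper first projection onto $\che{M}$; it need not extend to any $\che{f}$ on the compactifications. Thus your appeal to Sublemma~\ref{sublem2.2} in parts~(2) and~(3) does not cover the proposition as stated. The standard remedy, and the route taken in \cite{DK16,DK16-2}, is to factor an arbitrary morphism through its graph: one uses the closed embedding $\Gamma_f\hookrightarrow\var{\Gamma}_f\subset\che{M}\times\che{N}$ and the two projections $\var{\Gamma}_f\to\che{M}$, $\var{\Gamma}_f\to\che{N}$, which \emph{do} extend and to which the properness/semi-properness hypotheses apply directly. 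Once you insert this graph factorization in place of the nonexistent $\che{f}$, the rest of your strategy (reduce to the non-bordered case via Lemma~\ref{lem2.7}, then invoke the known preservation and duality statements on good topological spaces) goes through; but as written the argument presupposes a structure that is not available in general.
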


\subsection{$\D$-Modules}
In this subsection we recall some basic notions and results on $\SD$-modules. 
References are made to 
\cite{Bjo93},
\cite[\S\S 8, 9]{DK16},
\cite[\S 7]{KS01},
\cite[\S\S 3, 4, 7]{KS16} for analytic $\D$-modules,
to \cite{Be}, \cite{Bor}, \cite{HTT08} for algebraic ones.  

\subsubsection{Analytic $\D$-Modules}
In this paper,
we use bold letters for complex manifolds to avoid confusion with algebraic varieties.

For a complex manifold $\bfX$ we denote by $d_{\bfX}$ its complex dimension. 
Denote by $\SO_{\bfX}$ and $\D_{\bfX}$ the sheaves of holomorphic functions 
and holomorphic differential operators on $\bfX$, respectively. 
Let $\BDC(\D_{\bfX})$ be the bounded derived category of left $\D_{\bfX}$-modules. 
Moreover we denote by $\BDCcoh(\D_{\bfX})$,
$\BDChol(\D_{\bfX})$ and $\BDCrh(\D_{\bfX})$ the full triangulated subcategories
of $\BDC(\D_{\bfX})$ consisting of objects with analytic coherent,
analytic holonomic and analytic regular holonomic cohomologies, respectively.
For a morphism $f \colon \bfX\to \bfY$ of complex manifolds, 
denote by $\Dotimes$, $\Dboxtimes$, $\bfD f_\ast$, $\bfD f^\ast$, 
$\DD_{\bfX} \colon \BDCcoh(\SD_{\bfX})^{\op} \simto \BDCcoh(\SD_{\bfX})$  
the standard operations for analytic $\SD$-modules. 

For an analytic hypersurface $D$ in $\bfX$ we denote by $\SO_{\bfX}(\ast D)$ 
the sheaf of meromorphic functions on $\bfX$ with poles in $D$. 
Then for $\M\in\BDC(\D_{\bfX})$ we set 
$\M(\ast D) := \M\Dotimes\SO_{\bfX}(\ast D)$.
We say that a $\D_{\bfX}$-module is an analytic meromorphic connection on $\bfX$ along $D$
if it is isomorphic as an $\SO_{\bfX}$-module to a coherent $\SO_{\bfX}(\ast D)$-module.
We denote by $\Conn({\bfX}; D)$ the category of meromorphic connections along $D$.
Note that it is a full abelian subcategory of $\Modhol(\D_{\bfX})$.
Moreover, we set
\[\BDCmero(\D_{\bfX(D)}) :=\{\M\in\BDChol(\D_{\bfX})\
|\ \SH^i(\M)\in\Conn(\bfX; D) \mbox{ for any }i\in\ZZ \}.\]

The classical solution functor on $\bfX$ is defined by  
\begin{align*}
\Sol_{\bfX} &\colon \BDCcoh (\D_{\bfX})^{\op}\to\BDC(\CC_{\bfX}),
\hspace{10pt}\M \longmapsto \rhom_{\D_{\bfX}}(\M, \SO_{\bfX}).
\end{align*}
We denote by $\SO_{\bfX}^{\rmE}$ the enhanced ind-sheaf of tempered holomorphic functions
\cite[Definition 8.2.1]{DK16}
and by $\Sol_{\bfX}^{\rmE}$ the enhanced solution functor on $\bfX$:
\[
\Sol_{\bfX}^\rmE \colon \BDCcoh (\D_{\bfX})^{\op}\to\BEC(\I\CC_{\bfX}), 
\hspace{10pt} 
\M \longmapsto \rihom_{\D_{\bfX}}(\M, \SO_{\bfX}^\rmE) ,
\]
\cite[Definition 9.1.1]{DK16}.
Note that for $\M\in\BDCcoh(\D_{\bfX})$, 
we have an isomorphism
\[\sh_{\bfX}\big( \Sol_{\bfX}^{\rmE}(\M)\big)\simeq \Sol_{\bfX}(\M)\]
by \cite[Lemma 9.5.5]{DK16}.

Let us recall the results of \cite{DK16}.
We note that (3) of Theorem \ref{thm-DK} below was proved in \cite{DK16}
under the assumption that $\M$ has a globally good filtration.
However, any holonomic $\D$-module on $\bfX$ has a globally defined good filtration
by \cite{Mal94, Mal94-2, Mal96} (see also \cite[Theorem 4.3.4]{Sab11}).
\begin{theorem}[{\cite[\S 9.4]{DK16}}]\label{thm-DK}
\begin{enumerate}
\item[\rm{(1)}]
 For $\M\in\BDChol(\D_{\bfX})$ there exists an isomorphism in $\BEC(\I\CC_{\bfX})$
\[\Sol_{\bfX}^\rmE(\DD_{\bfX}\M)[2d_{\bfX}]
\simeq\rmD_{\bfX}^\rmE\Sol_{\bfX}^\rmE(\M).\]

\item[\rm{(2)}] 
Let $f \colon \bfX\to \bfY$ be a morphism of complex manifolds.
Then for $\N\in\BDChol(\D_{\bfY})$ there exists an isomorphism in $\BEC(\I\CC_{\bfX})$
\[\Sol_{\bfX}^\rmE({\bfD} f^\ast\N)\simeq\bfE f^{-1}\Sol_{\bfY}^\rmE(\N).\]

\item[\rm{(3)}] 
Let $f \colon \bfX\to \bfY$ be a proper morphism of complex manifolds.
For $\M\in\BDChol(\D_{\bfX})$ there exists an isomorphism in $\BEC(\I\CC_{\bfY})$
\[\Sol_{\bfY}^\rmE({\bfD} f_\ast\M)[d_{\bfY}]
\simeq\bfE f_\ast \Sol_{\bfX}^\rmE(\M )[d_{\bfX}].\]

\item[\rm{(4)}]
For $\M_1, \M_2\in\BDChol(\D_{\bfX})$,
there exists an isomorphism in $\BEC(\I\CC_{\bfX})$
\[\Sol_{\bfX}^\rmE(\M_1\Dotimes\M_2)\simeq
\Sol_{\bfX}^\rmE(\M_1)\Potimes \Sol_{\bfX}^\rmE(\M_2).\]

\item[\rm{(5)}]
Let $\M\in\BDChol(\D_{\bfX})$ and $D\subset \bfX$ be an analytic hypersurface,
then there exists an isomorphism in $\BEC(\I\CC_{\bfX})$
\[
\Sol_{\bfX}^\rmE\big(\M(\ast D)\big) \simeq
\pi^{-1}\CC_{\bfX\bs D}\otimes \Sol_{\bfX}^\rmE(\M). 
\]
\end{enumerate}
\end{theorem}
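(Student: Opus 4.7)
The plan is to prove the five compatibility statements by reducing each to structural properties of the tempered holomorphic enhanced ind-sheaf $\SO_\bfX^\rmE$, then moving the $\SD$-module operations across $\rihom_{\D_\bfX}(-,\SO_\bfX^\rmE)$ via standard adjunctions and projection formulas. The statements feed into each other, so I would tackle them in the order $(2),(4),(5),(1),(3)$. Holonomicity and Malgrange's theorem on the existence of globally defined good filtrations for analytic holonomic modules are used only where essential.

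For (2), the inverse-image compatibility, the key input is a base-change type identification of $\bfE f^{-1}\SO_\bfY^\rmE$ with a tempered variant of the transfer bimodule $\D_{\bfX\to\bfY}$; combining this with the definition $\bfD f^{\ast}\N=\D_{\bfX\to\bfY}\Lotimes{f^{-1}\D_\bfY}f^{-1}\N$ and the tensor-Hom adjunction yields the isomorphism. For (4), I would first prove the external Künneth version $\Sol_{\bfX\times\bfX}^\rmE(\M_1\Dboxtimes\M_2)\simeq\Sol_\bfX^\rmE(\M_1)\Pboxtimes\Sol_\bfX^\rmE(\M_2)$, which reduces to a Künneth formula for $\SO^\rmE$ on products, and then pull back along the diagonal via (2). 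Part (5) is then immediate from $\M(\ast D)=\M\Dotimes\SO_\bfX(\ast D)$ together with (4) and the basic computation $\Sol_\bfX^\rmE(\SO_\bfX(\ast D))\simeq\pi^{-1}\CC_{\bfX\setminus D}\otimes\CC_\bfX^\rmE$.

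For (1), the Verdier duality compatibility, I would pass through the enhanced de Rham functor $\DR_\bfX^\rmE$. The Spencer resolution of $\DD_\bfX\M$ gives $\Sol_\bfX^\rmE(\DD_\bfX\M)[2d_\bfX]\simeq\DR_\bfX^\rmE(\M)[d_\bfX]$, while the self-duality of the tempered holomorphic de Rham complex gives $\rmD_\bfX^\rmE\Sol_\bfX^\rmE(\M)\simeq\DR_\bfX^\rmE(\M)[d_\bfX]$. Matching these two identifications yields (1); holonomicity is used to ensure that no higher cohomological obstructions appear in the comparison.

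The main obstacle is (3), the proper direct image compatibility. The strategy is to invoke Malgrange's theorem to equip $\M$ with a globally defined good filtration whose associated graded is a coherent $\SO_\bfX$-module, and then reduce to an enhanced Grauert-type coherence statement expressing $\bfE f_\ast$ of $\SO^\rmE$ in terms of $\SO_\bfY^\rmE$ up to a relative dualizing twist; the shift $d_\bfY-d_\bfX$ traces back to this twist. The hard technical content is controlling tempered growth of holomorphic sections along the fibers of a proper morphism, which requires a delicate resolution-of-singularities argument along the boundary and is the tempered/enhanced analogue of Grauert's coherence theorem. The globally good filtration hypothesis — satisfied only conditionally in \cite{DK16} but holding unconditionally for holonomic modules by Malgrange — is precisely what makes the reduction to the $\SO_\bfX$-coherent case go through.
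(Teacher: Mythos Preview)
The paper does not prove this theorem at all: it is stated as a recollection of results from \cite[\S 9.4]{DK16}, with the sole additional remark that the good-filtration hypothesis in (3) is automatic for holonomic modules by Malgrange \cite{Mal94, Mal94-2, Mal96}. There is no argument in the paper to compare against.

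That said, your outline is a faithful sketch of how D'Agnolo--Kashiwara actually establish these compatibilities in \cite{DK16}, and you have correctly identified both the logical dependencies among the five parts and the one place (part (3)) where the original proof carries a hypothesis that Malgrange's theorem discharges. The order $(2),(4),(5),(1),(3)$ is sensible, the reduction of (5) to (4) plus the explicit computation of $\Sol_\bfX^\rmE(\SO_\bfX(\ast D))$ is exactly right, and your description of (3) as an enhanced Grauert-type argument requiring global good filtrations matches the structure of \cite[Theorem 9.4.8]{DK16}. One minor point: your route to (1) via $\DR_\bfX^\rmE$ and the Spencer resolution is correct in spirit, but in \cite{DK16} the duality statement (Theorem 9.4.8 / Corollary 9.4.9) is in fact deduced from the direct-image theorem applied to the projection to a point, so (1) depends on (3) there rather than being independent of it as your ordering suggests.
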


We also recall the following theorems \cite[Theorems 9.6.1 and 9.1.3]{DK16}.
\begin{theorem}\label{thm2.6}
\begin{itemize}
\item[\rm(1)]
The enhanced solution functor induces an embedding
\[ \Sol_{\bfX}^\rmE \colon \BDChol(\D_{\bfX})^{\rm op}
\hookrightarrow\BEC_{\RR-c}(\I\CC_{\bfX}).\]
Moreover for any $\M\in\BDChol(\D_{\bfX})$ there exists an isomorphism
\[\M\simto \RH_{\bfX}^{\rmE}\big(\Sol_{\bfX}^{\rmE}(\M)\big),\]  
where $\RH_{\bfX}^{\rmE}(K) := \rhom^{\rmE}(K, \SO_{\bfX}^{\rmE})$.

\item[\rm(2)]
For any $\M\in\BDCrh(\D_{\bfX})$ there exists an isomorphism
$$\Sol_{\bfX}^{\rmE}(\M)\simeq e_{\bfX}\big(\Sol_{\bfX}(\M)\big)$$
and hence we have a commutative diagram
\[\xymatrix@C=30pt@M=5pt{
\BDChol(\D_{\bfX})^{\op}\ar@{^{(}->}[r]^-{\Sol_{\bfX}^{\rmE}}
\ar@{}[rd]|{\rotatebox[origin=c]{180}{$\circlearrowright$}}
 & \BEC_{\RR-c}(\I\CC_{\bfX})\\
\BDCrh(\D_{\bfX})^{\op}\ar@{->}[r]_-{\Sol_{\bfX}}^-{\sim}\ar@{}[u]|-{\bigcup}
&\BDC_{\CC-c}(\CC_{\bfX}).\ar@{^{(}->}[u]_-{e_{\bfX}}
}\]
\end{itemize}
\end{theorem}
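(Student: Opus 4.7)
My plan is to prove the reconstruction formula $\M\simto\RH_\bfX^\rmE\bigl(\Sol_\bfX^\rmE(\M)\bigr)$ first, and then deduce both the fully faithful embedding and the $\RR$-constructibility from it. The reconstruction morphism is induced by the canonical pairing $\M\Dotimes\Sol_\bfX^\rmE(\M)\to\SO_\bfX^\rmE$ together with the adjunction defining $\rhom^\rmE$; the content is to show it is an isomorphism. Since this is a local property on $\bfX$, I would reduce to a local model via Hironaka's resolution of singularities and the Kedlaya--Mochizuki theorem on good formal structures. Concretely, around any point of $\bfX$ there exist a proper modification $f\colon\bfX'\to\bfX$ and a normal crossing divisor $D\subset\bfX'$ such that $\bfD f^*\M$ has good formal structure along $D$; combining this with Theorem \ref{thm-DK} (3) (compatibility with proper pushforward) and standard d\'evissage on the coherent cohomology of $\M$, the problem reduces to the case of a meromorphic connection with good formal structure on $(\bfX,D)$.

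For such a meromorphic connection, after a suitable ramified cover $p$ along $D$, Kedlaya--Mochizuki yields a formal decomposition
\[(\bfD p^*\M)^\wedge \;\simeq\; \bigoplus_i \bigl(\E^{\varphi_i}\otimes\R_i\bigr)^{\wedge},\]
with $\varphi_i$ meromorphic along $p^{-1}(D)$ and $\R_i$ regular meromorphic connections. Using parts (2), (4), (5) of Theorem \ref{thm-DK} to propagate compatibility of $\Sol_\bfX^\rmE$ with inverse image, tensor product and localization, and part (3) for the proper morphism $p$, this further reduces the reconstruction problem to two cases: (i) the exponential building block $\E^\varphi$, for which $\Sol_\bfX^\rmE(\E^\varphi)$ is computed explicitly as an enhanced exponential of the form $\EE^{\mathrm{Re}\,\varphi}_{\bfX\setminus D|\bfX}$, and the claim follows from the theory of tempered holomorphic functions of \cite{KS01, DK16}; and (ii) regular meromorphic connections, where the statement reduces to Kashiwara's classical regular Riemann--Hilbert correspondence. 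The main obstacle is this reduction: keeping track of the exponential factors, descending through the ramification $p$ and the modification $f$, and verifying that the reconstruction morphism commutes with all the equivalences of Theorem \ref{thm-DK} is the deep content imported from Kedlaya--Mochizuki.

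Once reconstruction is established, the fully faithful embedding follows formally, for by adjunction
\[\rHom^\rmE\bigl(\Sol_\bfX^\rmE\M_1,\,\Sol_\bfX^\rmE\M_2\bigr)
\simeq \rHom_{\D_\bfX}\bigl(\M_2,\,\RH_\bfX^\rmE\Sol_\bfX^\rmE\M_1\bigr)
\simeq \rHom_{\D_\bfX}(\M_2,\M_1).\]
The $\RR$-constructibility $\Sol_\bfX^\rmE(\M)\in\BEC_{\RR-c}(\I\CC_\bfX)$ is likewise local; the same Hironaka--Kedlaya--Mochizuki reduction, combined with subanalyticity of the epigraphs $\{t+\mathrm{Re}\,\varphi\geq 0\}$ and the stability of $\BEC_{\RR-c}$ under the six operations (Proposition \ref{prop2.8}), yields the claim.

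For part (2), when $\M$ is regular holonomic, the canonical morphism $\pi^{-1}\SO_\bfX\to\SO_\bfX^\rmE$ induces a natural morphism $e_\bfX\bigl(\Sol_\bfX(\M)\bigr)\to\Sol_\bfX^\rmE(\M)$, and I would prove it is an isomorphism by a local computation. After reduction via Hironaka to the case of a regular meromorphic connection on $(\bfX,D)$, the statement becomes that tempered holomorphic solutions coincide with ordinary holomorphic solutions, which is the essence of regularity (no exponential factors appear in the Kedlaya--Mochizuki decomposition, so the enhancement is trivial). Given this isomorphism, the commutative diagram identifying $\Sol_\bfX^\rmE$ with $e_\bfX\circ\Sol_\bfX$ on $\BDCrh(\D_\bfX)$ follows from functoriality of $\Sol_\bfX^\rmE$ and of $e_\bfX$.
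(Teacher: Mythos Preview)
The paper does not give its own proof of this theorem: it is stated as a recalled result, with the line ``We also recall the following theorems [DK16, Theorems 9.6.1 and 9.1.3]'' and no argument supplied. So there is nothing in the paper to compare your proposal against; your sketch is in effect a sketch of the proof in the cited reference \cite{DK16}, and at that level of detail it is essentially the strategy used there (Hironaka resolution, Kedlaya--Mochizuki reduction to good formal structure, d\'evissage to exponential factors tensored by regular pieces, explicit computation for $\E^\varphi$, and the regular Riemann--Hilbert correspondence for the regular pieces).

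One small caution on your derivation of full faithfulness: the displayed ``adjunction''
\[
\rHom^\rmE\bigl(\Sol_\bfX^\rmE\M_1,\Sol_\bfX^\rmE\M_2\bigr)\simeq \rHom_{\D_\bfX}\bigl(\M_2,\RH_\bfX^\rmE\Sol_\bfX^\rmE\M_1\bigr)
\]
is not a formal adjunction between a pair of adjoint functors; both $\Sol_\bfX^\rmE$ and $\RH_\bfX^\rmE$ are contravariant ``hom into $\SO_\bfX^\rmE$'' functors, and the identification requires the specific duality/compatibility statements proved in \cite[\S9]{DK16} (e.g.\ the analogue of Lemma~\ref{lem3.6} in this paper). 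The cleaner way to phrase it is that the natural reconstruction isomorphism $\M\simto\RH_\bfX^\rmE\Sol_\bfX^\rmE(\M)$ exhibits $\RH_\bfX^\rmE$ as a left quasi-inverse to $\Sol_\bfX^\rmE$, from which faithfulness is immediate and fullness follows once one checks naturality of the reconstruction morphism. This is how \cite{DK16} organizes it.
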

See \S \ref{sec2.8} for the essential image of the enhanced solution functor $\Sol_{\bfX}^\rmE$.

At the end of this subsection, let us recall the notion of $\M_{\reg}$.
We denote by $\D_{\bfX}^\infty$
the sheaf of rings of differential operators of infinite order on $\bfX$
and set $\M^\infty := \D_{\bfX}^\infty\otimes _{\D_{\bfX}}\M$.

\begin{proposition}[{\cite[Theorem 5.5.22]{Bjo93}},
{\cite[Theorem 5.2.1]{KK}} and {\cite[Proposition 5.7]{Kas84}}]\label{prop2.7}~\\
\vspace{-20pt}
\begin{itemize}
\item[\rm (1)]
Let $\M$ be an analytic holonomic $\D_{\bfX}$-module.
Then there exists a unique analytic regular holonomic $\D_{\bfX}$-module $\M_{\reg}$
such that
\begin{itemize}
\item[\rm (i)]
$\M_{\reg}^\infty\simeq \M^\infty$,

\item[\rm (ii)]
$\M_\reg$ contains every analytic regular holonomic $\D_{\bfX}$-submodule of $\M^\infty$,

\item[\rm (iii)]
$\Sol_{\bfX}(\M_\reg)\simeq \Sol_{\bfX}(\M)$.
\end{itemize}

\item[\rm (2)]
There exists an isomorphism
\[\M_{\reg}\simeq
\{s\in\M^\infty\ |\ \D_{\bfX}\cdot s\in\Modrh(\D_{\bfX})\}.\]
\end{itemize}
\end{proposition}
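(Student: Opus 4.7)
The plan is to follow the classical construction of the regularization due to Kashiwara--Kawai. I would first define, on a sufficiently small open set, the candidate
\[
\M_{\reg} := \{s\in\M^\infty\ |\ \D_{\bfX}\cdot s \in \Modrh(\D_{\bfX})\},
\]
so that property (2) becomes the definition. The first task is to verify that $\M_{\reg}$ is a $\D_{\bfX}$-submodule of $\M^\infty$: closure under the $\D_{\bfX}$-action is clear, while closure under addition follows from the fact that the sum of two regular holonomic submodules inside an ambient module is again regular holonomic (regular holonomicity is stable under taking subquotients and extensions within $\Modhol(\D_{\bfX})$).

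The central step is to show that $\M_{\reg}$ is $\D_{\bfX}$-coherent, hence an honest regular holonomic $\D_{\bfX}$-module. Here one uses the noetherianity of the category of regular holonomic $\D$-modules together with the fact that the characteristic variety of the holonomic module $\M$ bounds the characteristic varieties of all regular holonomic submodules that can sit inside $\M^\infty$; thus the family of such submodules is filtered and locally bounded, so it admits a maximal element. This maximal element coincides with $\M_{\reg}$ and is coherent. Property (ii) is then essentially built in: any regular holonomic $\D_{\bfX}$-submodule $\N\subset\M^\infty$ consists of sections whose $\D_{\bfX}$-span is a subquotient of $\N$, hence regular holonomic, so $\N\subset\M_{\reg}$.

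For (i), the isomorphism $\M_{\reg}^\infty\simeq\M^\infty$ is obtained from the Kashiwara--Kawai comparison theorem, which asserts that for any holonomic $\M$ the natural map identifies $\M^\infty$ with the $\D_{\bfX}^\infty$-extension of its largest regular holonomic submodule; equivalently $\D_{\bfX}^\infty\otimes_{\D_{\bfX}}\M_{\reg}\simto\M^\infty$. Property (iii) then follows from the general identity $\Sol_{\bfX}(\M)\simeq\rhom_{\D_{\bfX}^\infty}(\M^\infty,\SO_{\bfX})$ for holonomic $\M$, since the right-hand side depends only on $\M^\infty$.

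Finally, uniqueness is deduced from (i) and (ii): if $\N$ satisfies all three conditions, then by (ii) $\N\subset\M_{\reg}$; applying $(\cdot)^\infty$ gives an inclusion $\N^\infty\hookrightarrow\M_{\reg}^\infty$, and both sides equal $\M^\infty$ by (i). Since $(\cdot)^\infty = \D_{\bfX}^\infty\otimes_{\D_{\bfX}}(\cdot)$ is faithful on the regular holonomic subcategory (one can recover a regular holonomic module from its $\D_{\bfX}^\infty$-extension via the Riemann--Hilbert correspondence and (iii)), the inclusion $\N\hookrightarrow\M_{\reg}$ is forced to be an isomorphism. I expect the main obstacle to be the coherence step for $\M_{\reg}$, which requires invoking the nontrivial finiteness properties of the category of regular holonomic $\D$-modules and a careful analysis of characteristic cycles; the other steps reduce fairly directly once coherence is in hand.
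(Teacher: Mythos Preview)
The paper does not supply a proof of this proposition: it is stated as a citation from \cite[Theorem 5.5.22]{Bjo93}, \cite[Theorem 5.2.1]{KK}, and \cite[Proposition 5.7]{Kas84}, with no argument given. There is therefore nothing in the paper to compare your proposal against.

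That said, your outline is the standard route taken in those references: take the description in (2) as the definition, establish that it is a coherent (hence regular holonomic) $\D_{\bfX}$-submodule by a maximality/noetherianity argument, and then deduce (i) from the Kashiwara--Kawai comparison theorem and (iii) from the fact that $\Sol_{\bfX}$ factors through $(\cdot)^\infty$ on holonomic modules. Your identification of the coherence of $\M_{\reg}$ as the crux is accurate; the argument you sketch (bounding characteristic varieties by that of $\M$ and invoking local stationarity of an increasing chain) is essentially what appears in the literature, though the actual proof is somewhat more delicate than your one-sentence summary suggests. One minor point: in your uniqueness argument you only need that a regular holonomic $\N$ with $\N^\infty \simeq \M_{\reg}^\infty$ and $\N \subset \M_{\reg}$ must equal $\M_{\reg}$; this follows already from faithful flatness of $\D_{\bfX}^\infty$ over $\D_{\bfX}$, without needing to invoke the full Riemann--Hilbert correspondence.
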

By this proposition, we obtain a functor
\[(\cdot)_{\reg} \colon \Modhol(\D_{\bfX})\to\Modrh(\D_{\bfX}),
\hspace{5pt} \M\mapsto\M_\reg.\]
We call it the regularization functor for analytic holonomic $\D$-modules.
Note that this is an exact functor.
Hence, it induces the functor between derived categories
$$(\cdot)_{\reg} \colon \BDChol(\D_{\bfX})\to\BDCrh(\D_{\bfX}).$$

\subsubsection{Algebraic $\D$-Modules}\label{sec-alg}
Let $X$ be a smooth algebraic variety over $\CC$ and denote by $d_X$ its complex dimension.
We shall denote by $\SO_X$ and $\SD_X$ the sheaves of regular functions
and algebraic differential operators on $X$, respectively. 
Let $\BDC(\D_X)$ be the bounded derived category of left $\D_X$-modules. 
Moreover we denote by $\BDCcoh(\D_X)$,
$\BDChol(\D_X)$ and $\BDCrh(\D_X)$ the full triangulated subcategories
of $\BDC(\D_X)$ consisting of objects with algebraic coherent,
algebraic holonomic and algebraic regular holonomic cohomologies, respectively.
For a morphism $f \colon X\to Y$ of smooth algebraic varieties, 
we denote by $\Dotimes$, $\Dboxtimes$, $\bfD f_\ast$, $\bfD f^\ast$ and $\DD_X$
the tensor product functor, the external tensor product functor,
the direct image functor, the inverse image functor
and the duality functor for $\D$-modules, respectively.
See e.g., \cite[\S 3]{HTT08} for the details.
In this paper, for convenience, we set
\begin{align*}
\bfD f_! := \DD_Y\circ\bfD f_\ast\circ\DD_X,\hspace{10pt}
\bfD f^\bigstar := \DD_X\circ\bfD f^\ast\circ\DD_Y.
\end{align*}
Remark that in \cite[Definition 3.2.13]{HTT08})
the functor $\DD_X\circ\bfD f^\ast(\cdot)[d_X-d_Y]\circ\DD_Y$ is denoted by $\bfD f^\bigstar$.
Note that these functors preserve the holonomicity.
Namely we have functors:
\begin{align*}
(\cdot) \Dotimes (\cdot) &\colon \BDChol(\D_X)\times \BDChol(\D_X)\to\BDChol(\D_X),\\
(\cdot) \Dboxtimes (\cdot) &\colon \BDChol(\D_X)\times \BDChol(\D_X)\to\BDChol(\D_X),\\
\DD_X &\colon \BDChol(\D_X)^{\op} \simto \BDChol(\D_X),\\
\bfD f^\ast &\colon \BDChol(\D_Y)\to\BDChol(\D_X),\\
\bfD f_\ast &\colon \BDChol(\D_X)\to\BDChol(\D_Y),\\
\bfD f^\bigstar &\colon \BDChol(\D_Y)\to\BDChol(\D_X),\\
\bfD f_! &\colon \BDChol(\D_X)\to\BDChol(\D_Y).
\end{align*}
See \cite[Propositions 3.2.1, 3.2.2, Theorem 3.2.3 and Corollary 3.2.4]{HTT08} for the details.

We denote by $X^\an$ the underlying complex manifold of $X$
and by $\tl{\iota} \colon (X^\an, \SO_{X^\an})\to(X, \SO_X)$ the morphism of ringed spaces.
Since there exists a morphism $\tl{\iota}^{-1}\SO_X\to\SO_{X^\an}$ of sheaves on $X^\an$,
we have a canonical morphism $\tl{\iota}^{-1}\D_X\to\D_{X^\an}$.
Then we set $$\M^\an := \D_{X^\an}\otimes_{\tl{\iota}^{-1}\D_X}\tl{\iota}^{-1}\M$$
for an algebraic $\D_X$-module $\M\in\Mod(\D_X)$
and obtain a functor
\[(\cdot)^\an \colon \Mod(\D_X)\to\Mod(\D_{X^\an}).\]
It is called the analytification functor on $X$.
Since the sheaf $\D_{X^\an}$ is faithfully flat over $\tl{\iota}^{-1}\D_X$,
the analytification functor is faithful and exact,
and hence we obtain
\[(\cdot)^\an \colon \BDC(\D_X)\to\BDC(\D_{X^\an}).\]
The analytification functor induces
\begin{align*}
(\cdot)^\an \colon \Modcoh(\D_X)\to\Modcoh(\D_{X^\an}),\hspace{25pt}
&(\cdot)^\an \colon \BDCcoh(\D_X)\to\BDCcoh(\D_{X^\an}),\\
(\cdot)^\an \colon \Modhol(\D_X)\to\Modhol(\D_{X^\an}),\hspace{25pt}
&(\cdot)^\an \colon \BDChol(\D_X)\to\BDChol(\D_{X^\an}).
\end{align*}
Moreover we have some functorial properties of the analytification functor.  
\begin{proposition}[{\cite[Propositions 4.7.1 and 4.7.2]{HTT08}}]\label{prop2.12}
\
\begin{itemize}
\item[\rm(1)]
For any $\M\in\BDCcoh(\D_X)$ 
we have $(\DD_X\M)^\an\simeq \DD_{X^\an}(\M^\an)$.

\item[\rm (2)]
Let $f \colon X\to Y$ be a morphism of smooth algebraic varieties and $\N\in\BDC(\D_Y)$.
Then we have $(\bfD f^{\ast}\N)^\an\simeq\bfD(f^\an)^{\ast}(\N^\an)$.

\item[\rm (3)]
Let $f \colon X\to Y$ be a morphism of smooth algebraic varieties and $\M\in\BDC(\D_X)$.
Then we have a canonical morphism
$(\bfD f_{\ast}\M)^\an\to\bfD(f^\an)_{\ast}(\M^\an)$.
If the morphism $f$ is proper and $\M\in\BDCcoh(\D_X)$ then we have
$(\bfD f_{\ast}\M)^\an\simto\bfD(f^\an)_{\ast}(\M^\an)$
\end{itemize}
\end{proposition}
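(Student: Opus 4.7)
The plan is to prove each of the three parts by reducing to concrete formulas for the $\D$-module operations and exploiting the faithful flatness of $\D_{X^\an}$ over $\tl\iota^{-1}\D_X$ (and of $\SO_{X^\an}$ over $\tl\iota^{-1}\SO_X$). The common mechanism is that a coherent $\D_X$-module admits, locally, a finite resolution by free $\D_X$-modules of finite rank, and analytification carries such a resolution to the corresponding resolution of $\M^\an$; compatibility of the remaining constructions then reduces to GAGA-type identifications of the structure sheaves, the canonical bundle, and the transfer bimodules.

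For part (1), I would write the duality explicitly as $\DD_X\M = \rhom_{\D_X}\bigl(\M, \D_X\otimes_{\SO_X}\Omega_X^{\otimes -1}\bigr)[d_X]$ and apply it to a local finite free $\D_X$-resolution of the coherent complex $\M$. Since $(\D_X)^\an = \D_{X^\an}$ and $(\Omega_X)^\an = \Omega_{X^\an}$, analytification of this resolution yields a free $\D_{X^\an}$-resolution of $\M^\an$, and the formation of the $\rhom$ term commutes with analytification term by term. Glueing the local isomorphisms produces a global, natural isomorphism $(\DD_X\M)^\an \simeq \DD_{X^\an}(\M^\an)$.

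For part (2), I would expand $\bfD f^\ast\N = \D_{X\to Y}\otimes^{\bfL}_{f^{-1}\D_Y} f^{-1}\N\,[d_X - d_Y]$ using the algebraic transfer bimodule $\D_{X\to Y} = \SO_X\otimes_{f^{-1}\SO_Y} f^{-1}\D_Y$. The topological functor $f^{-1}$ commutes with analytification, while the tensor product over $f^{-1}\SO_Y$ respects analytification by flatness. Hence $(\D_{X\to Y})^\an \simeq \D_{X^\an\to Y^\an}$, and the derived tensor product on coherent inputs is compatible with $(\cdot)^\an$ for the same flatness reason, giving the isomorphism.

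For part (3), the direct image is $\bfD f_\ast\M = \rmR f_\ast\bigl(\D_{Y\leftarrow X}\otimes^{\bfL}_{\D_X}\M\bigr)$. The canonical morphism is constructed from the natural comparison $(\rmR f_\ast\SF)^\an \to \rmR(f^\an)_\ast(\SF^\an)$ combined with the identification $(\D_{Y\leftarrow X})^\an \simeq \D_{Y^\an\leftarrow X^\an}$. The hard part, and the main obstacle, is proving that this is an isomorphism when $f$ is proper and $\M$ is coherent: one must first show that $\bfD f_\ast\M$ has coherent cohomologies (a standard consequence of properness for coherent $\D$-modules, via a good filtration) and then apply Serre's GAGA to each coherent $\SO_Y$-graded piece to conclude that the analytification of the algebraic higher direct image agrees with its analytic counterpart. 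Parts (1) and (2) are essentially formal once the structure sheaf and transfer bimodule comparisons are in place, whereas (3) requires genuine input from GAGA on coherent $\SO_Y$-modules.
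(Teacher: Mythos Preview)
The paper does not give its own proof of this proposition; it is simply quoted from \cite[Propositions~4.7.1 and~4.7.2]{HTT08} as a known result, so there is nothing in the paper to compare against. Your sketch is essentially the standard argument one finds in \cite{HTT08}: local finite free resolutions plus flatness of $\D_{X^\an}$ over $\tl\iota^{-1}\D_X$ for parts (1) and (2), and relative GAGA for the proper case in part (3).

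One small slip worth flagging: in part~(2) the statement is for arbitrary $\N\in\BDC(\D_Y)$, not just coherent objects, so the phrase ``the derived tensor product on coherent inputs is compatible with $(\cdot)^\an$'' is misleading. Fortunately the argument does not need coherence of $\N$: analytification is exact by flatness, and hence commutes with the derived tensor product $\D_{X\to Y}\otimes^{\bfL}_{f^{-1}\D_Y}(-)$ for any input; the identification $(\D_{X\to Y})^\an\simeq\D_{X^\an\to Y^\an}$ is all that is required. For part~(3), your outline is correct in spirit, but note that the GAGA input is the \emph{relative} comparison $(\rmR f_\ast\SF)^\an\simto\rmR(f^\an)_\ast(\SF^\an)$ for $\SO_X$-coherent $\SF$ and proper $f$; one applies this to the graded pieces of a good filtration on $\D_{Y\leftarrow X}\otimes^{\bfL}_{\D_X}\M$, which is how \cite{HTT08} proceeds.
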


The classical solution functor on $X$ is defined by  
\begin{align*}
\Sol_X &\colon \BDCcoh (\SD_X)^{\op}\to\BDC(\CC_{X^\an}),
\hspace{10pt} \Sol_X(\M) \colon= \Sol_{X^\an}(\M^\an).
\end{align*}
We denote by $\BDC_{\CC-c}(\CC_X)$
the triangulated category of algebraic $\CC$-constructible sheaves on $X^\an$.
It has a t-structure
$\big({}^p\bfD^{\leq0}_{\CC-c}(\CC_X), {}^p\bfD^{\geq0}_{\CC-c}(\CC_X)\big)$
which is called the perverse t-structure.
Let us denote by $$\Perv(\CC_X) :=
{}^p\bfD^{\leq0}_{\CC-c}(\CC_X)\cap {}^p\bfD^{\geq0}_{\CC-c}(\CC_X)$$ its heart
and call an object of $\Perv(\CC_X) $ an algebraic perverse sheaf on $X^\an$.

The last part of the following result is
an algebraic version of the regular Riemann--Hilbert correspondence.
See e.g., {\cite[Theorems 4.7.7, 7.2.2]{HTT08} for the details.
\begin{theorem}\label{thm-RH}
For any $\M\in\BDChol(\D_X)$ we have $\Sol_X(\M)\in\BDC_{\CC-c}(\CC_X)$.

Moreover there exists an equivalence of triangulated categories:
\[\Sol_X \colon \BDCrh (\D_X)^{\op}\simto\BDC_{\CC-c}(\CC_{X}).\]
\end{theorem}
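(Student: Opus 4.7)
The plan is to reduce both claims to the analytic case via a smooth complete compactification. By Hironaka's desingularization theorem, fix a smooth complete algebraic variety $\tl{X}$ with open embedding $j : X \hookrightarrow \tl{X}$ whose complement $D := \tl{X} \setminus X$ is a normal crossing divisor. For the first claim, given $\M \in \BDChol(\D_X)$, the extension $\tl{\M} := \bfD j_\ast \M$ remains holonomic on $\tl{X}$. Its characteristic variety $\mathrm{Ch}(\tl{\M}) \subset T^\ast \tl{X}$ is an algebraic conic Lagrangian, whose projection to $\tl{X}$ determines an algebraic Whitney stratification. By Kashiwara's analytic constructibility theorem, $\Sol_{\tl{X}^\an}(\tl{\M}^\an)$ has locally constant finite-rank cohomology along the analytification of this algebraic stratification, hence belongs to $\BDC_{\CC-c}(\CC_{\tl{X}})$. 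Since $\bfD j^\ast \tl{\M} \simeq \M$, base change for $\Sol$ along the open embedding gives $\Sol_X(\M) \simeq (j^\an)^{-1} \Sol_{\tl{X}}(\tl{\M}) \in \BDC_{\CC-c}(\CC_X)$.

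For the equivalence, I would prove full faithfulness and essential surjectivity separately. For full faithfulness, given $\M, \N \in \BDCrh(\D_X)$, regularity is preserved under $\bfD j_\ast$, so $\tl{\M}, \tl{\N}$ are regular holonomic on $\tl{X}$. Combining GAGA for coherent $\SO$-modules with the analytic regular Riemann--Hilbert correspondence on the compact complex manifold $\tl{X}^\an$, the space $\rHom_{\D_{\tl{X}}}(\tl{\M}, \tl{\N})$ is naturally isomorphic to $\rHom_{\CC_{\tl{X}}}\bigl(\Sol_{\tl{X}}(\tl{\N}), \Sol_{\tl{X}}(\tl{\M})\bigr)$. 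The adjunction between $\bfD j_\ast$ and $\bfD j^\ast$ then transports this identification to the corresponding statement on $X$.

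For essential surjectivity, given $F \in \BDC_{\CC-c}(\CC_X)$, form the extension $\tl{F} := \bfR j_\ast F \in \BDC_{\CC-c}(\CC_{\tl{X}})$; the analytic regular Riemann--Hilbert correspondence on $\tl{X}^\an$ yields an analytic regular holonomic $\tl{\N}^\an$ with $\Sol_{\tl{X}^\an}(\tl{\N}^\an) \simeq \tl{F}^\an$. Setting $\N := \bfD j^\ast \tl{\N}$ for any algebraic descent $\tl{\N}$ of $\tl{\N}^\an$ then yields $\Sol_X(\N) \simeq F$ by base change. The principal obstacle is precisely this descent step: promoting $\tl{\N}^\an$ to an algebraic object $\tl{\N} \in \BDCrh(\D_{\tl{X}})$. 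It rests essentially on both the regularity hypothesis and the completeness of $\tl{X}$, and relies on a GAGA-type theorem for regular holonomic $\D$-modules on a smooth complete algebraic variety---a classical but nontrivial input that is the crux of the passage from the analytic to the algebraic correspondence.
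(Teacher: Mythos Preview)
The paper does not prove this theorem; it is stated as a preliminary result in \S\ref{sec-alg} and immediately attributed to the literature (specifically \cite[Theorems 4.7.7, 7.2.2]{HTT08}). So there is no ``paper's own proof'' to compare against. Your sketch is broadly in line with the standard approach in that reference, and the overall strategy---reduce to a smooth complete $\tl{X}$, invoke Kashiwara's analytic constructibility and the analytic regular Riemann--Hilbert correspondence on $\tl{X}^\an$, then descend via GAGA---is correct.

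A few remarks on the details. For the first claim, the compactification is unnecessary: since $\mathrm{Ch}(\M^\an)$ is the analytification of the algebraic conic Lagrangian $\mathrm{Ch}(\M)\subset T^\ast X$, Kashiwara's analytic constructibility theorem already gives local constancy of the cohomology of $\Sol_{X^\an}(\M^\an)$ along an algebraic stratification of $X$ itself. For full faithfulness, your adjunction step is really the observation that $\bfD j_\ast$ is fully faithful (because $\bfD j^\ast\bfD j_\ast\simeq\id$ for an open immersion), together with the compatibility $\Sol_{\tl{X}}(\bfD j_\ast\M)\simeq \bfR j^\an_!\Sol_X(\M)$ in the regular case; it would be cleaner to say this explicitly. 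For essential surjectivity, you should check that $\bfR j^\an_\ast F$ (or, more conveniently, $\bfR j^\an_! F$) remains algebraically constructible on $\tl{X}$; this is true but not automatic, and is part of the six-functor formalism for algebraically constructible sheaves. Finally, you are right that the crux is the GAGA-type comparison $\BDCrh(\D_{\tl{X}})\simeq\BDCrh(\D_{\tl{X}^\an})$ on the complete variety, which is itself a substantial theorem; your sketch correctly isolates this as the nontrivial input rather than pretending it is formal.
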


The following result means that the classical solution functor is t-exact
with respect to the standard t-structure on $\BDChol(\D_X)$
and the perverse t-structure on $\BDC_{\CC-c}(\CC_X)$.
See e.g., the proof of \cite[Theorem 7.2.5]{HTT08} for the details.
\begin{theorem}\label{thm-perv}
For any $\M\in\BDChol(\D_X)$, we have
\begin{itemize}
\item[\rm(1)]
$\M\in\DChol^{\leq0}(\D_X)\Longleftrightarrow
\Sol_X(\M)[d_X]\in{}^p\bfD^{\geq0}_{\CC-c}(\CC_X)$,
\item[\rm(2)]
$\M\in\DChol^{\geq0}(\D_X)\Longleftrightarrow
\Sol_X(\M)[d_X]\in{}^p\bfD^{\leq0}_{\CC-c}(\CC_X)$.
\end{itemize}
Hence for any $\M\in\Modhol(\D_X)$,
we have $\Sol_X(\M)[d_X]\in\Perv(\CC_X)$.

Moreover the above equivalence induces an equivalence of abelian categories:
\[\Sol_X(\cdot)[d_X] \colon \Modrh (\D_X)^{\op}\simto\Perv(\CC_{X}).\]
\end{theorem}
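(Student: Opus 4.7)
The plan is to reduce to the regular holonomic case using the regularization functor $(\cdot)_{\reg}$ from Proposition \ref{prop2.7}, and then invoke the algebraic regular Riemann--Hilbert correspondence of Theorem \ref{thm-RH} together with Kashiwara's classical dimension estimate on solutions of holonomic $\D$-modules.

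First, the identification $\Sol_X(\M)=\Sol_{X^\an}(\M^\an)$ together with the isomorphism $\Sol_{X^\an}(\M^\an)\simeq\Sol_{X^\an}((\M^\an)_{\reg})$ from Proposition \ref{prop2.7}(iii) shows that the solution complex depends only on the regularization of $\M^\an$. Since both the analytification and the regularization functors are exact (Propositions \ref{prop2.12} and \ref{prop2.7}), they commute with cohomology, so
\[
\M\in\DChol^{\leq 0}(\D_X)\ \Longleftrightarrow\ (\M^\an)_{\reg}\in\DCrh^{\leq 0}(\D_{X^\an}),
\]
and analogously for $\geq 0$. Moreover, by Theorem \ref{thm-RH}, $\Sol_X(\M)$ lies in $\BDC_{\CC-c}(\CC_X)$, so it is meaningful to test membership of $\Sol_X(\M)[d_X]$ in the perverse t-structure.

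For (1), I would apply Kashiwara's dimension estimate $\codim_{X^\an}\bigl(\supp\SH^j\Sol_{X^\an}(\N)\bigr)\geq j$ for any $\N\in\Modhol(\D_{X^\an})$ to each cohomology module of $(\M^\an)_{\reg}$, and assemble the individual estimates via the standard hypercohomology spectral sequence. This yields the cosupport estimate $\dim\supp\SH^j(\Sol_X(\M)[d_X])\leq -j$ required for $\Sol_X(\M)[d_X]\in{}^p\bfD^{\geq 0}_{\CC-c}(\CC_X)$. The converse direction of (1) follows by contraposition: the nonvanishing of $\SH^i\M$ for some $i>0$ would contribute to $\Sol_X(\M)[d_X]$ in positive perverse degree, violating the cosupport hypothesis.

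For (2), I would combine (1) with the Verdier duality isomorphism $\Sol_X(\DD_X\M)[2d_X]\simeq\rmD_{X^\an}\Sol_X(\M)$ (the algebraic avatar of Theorem \ref{thm-DK}(1), obtained by restricting the analytic statement via Proposition \ref{prop2.12}), noting that $\DD_X$ exchanges $\DChol^{\leq 0}$ with $\DChol^{\geq 0}$ while $\rmD_{X^\an}$ exchanges the two halves of the perverse t-structure. The t-exactness of $\Sol_X(\cdot)[d_X]$ then implies $\Sol_X(\M)[d_X]\in\Perv(\CC_X)$ for any $\M\in\Modhol(\D_X)$, and restricting the algebraic regular Riemann--Hilbert equivalence of Theorem \ref{thm-RH} to the hearts produces the final equivalence $\Sol_X(\cdot)[d_X]\colon\Modrh(\D_X)^{\op}\simto\Perv(\CC_X)$. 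The main obstacle is Kashiwara's dimension estimate, which is the nontrivial analytic input underpinning the perverse support condition; everything else is formal bookkeeping carried out in the regular holonomic setting through the bridge provided by $(\cdot)_{\reg}$ and Theorem \ref{thm-RH}.
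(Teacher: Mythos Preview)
Your ingredients are correct, but you have assigned Kashiwara's estimate and Verdier duality to the wrong halves of the theorem. The condition you write down,
\[
\dim\supp\SH^j\bigl(\Sol_X(\M)[d_X]\bigr)\leq -j,
\]
is precisely the defining support condition for ${}^p\bfD^{\leq 0}_{\CC-c}(\CC_X)$ in the paper, not for ${}^p\bfD^{\geq 0}_{\CC-c}(\CC_X)$. So Kashiwara's estimate, fed through the hypercohomology spectral sequence for $\M\in\DChol^{\geq 0}(\D_X)$, proves the forward implication of (2), not of (1). The forward implication of (1) then follows from (2) by duality, using $\Sol_X(\DD_X\M)[2d_X]\simeq\rmD_{X^\an}\Sol_X(\M)$ together with the fact that $\DD_X$ exchanges $\DChol^{\leq 0}$ with $\DChol^{\geq 0}$ and $\rmD_{X^\an}$ exchanges the two halves of the perverse t-structure. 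This is exactly the order of argument in \cite[Theorem~7.2.5]{HTT08}, which the paper cites in lieu of a proof; you simply have the two roles swapped.

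A minor point: the passage through the analytic regularization $(\M^\an)_{\reg}$ is unnecessary for the forward implications, since Kashiwara's estimate already holds for arbitrary holonomic $\D_{X^\an}$-modules. Where regularization (or an equivalent conservativity statement) genuinely helps is in the converse implications: once you know $\Sol_X(\cdot)[d_X]$ sends $\Modhol(\D_X)$ to $\Perv(\CC_X)$, the converse of (1) and (2) reduces to showing that $\Sol_X(\N)[d_X]\neq 0$ for $\N\neq 0$ holonomic, and for this $\Sol_X(\N)\simeq\Sol_X(\N_{\reg})$ combined with Theorem~\ref{thm-RH} is the clean route. Your contrapositive sketch for the converse is too vague as written; make explicit that t-exactness gives ${}^p\SH^{-i}\bigl(\Sol_X(\M)[d_X]\bigr)\simeq\Sol_X(\SH^i\M)[d_X]$ and then invoke conservativity.
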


By Theorems \ref{thm-RH} and \ref{thm-perv},
for any $\M\in\BDChol(\D_X)$
there exists a unique algebraic regular holonomic $\D_X$-modules $\M_\reg$
such that $\Sol_X(\M_\reg)\simeq\Sol_X(\M)$.
In fact, $\M_\reg$ is given by $\RH_X\big(\Sol_X(\M))$,
where $\RH_X \colon \BDC_{\CC-c}(\CC_X)^{\op}\simto \BDCrh(\D_X)$
is the inverse functor of $\Sol_X \colon \BDCrh (\D_X)^{\op}\simto\BDC_{\CC-c}(\CC_{X})$.
Hence we have a functor
\[(\cdot)_{\reg} \colon \BDChol(\D_{X})\to\BDCrh(\D_{X})\]
and a commutative diagram
\[\xymatrix@C=50pt@M=5pt{
\BDChol(\D_X)^{\op}\ar@{->}[r]^-{\Sol_X}
\ar@{->}[d]_-{(\cdot)_\reg}
 & \BDC_{\CC-c}(\CC_X)\ar@{->}[ld]^-{\RH_X}_-{\sim}\\
\BDCrh(\D_X)^{\op}.&
}\]
We call it the regularization functor for algebraic holonomic $\D$-modules.
By Theorem \ref{thm-perv},
this is a t-exact functor with respect to the standard t-stuructures.
Namely, we obtain
the functor \[(\cdot)_\reg \colon \Modhol(\D_X)\to\Modrh(\D_X).\]
between abelian categories.

At the end of this subsection, we shall recall algebraic meromorphic connections.
Let $D$ be a divisor of $X$,
and $j \colon X\setminus D\hookrightarrow X$ the natural embedding.
Then we set $\SO_X(\ast D) := j_\ast\SO_X$
and also set
$\SM(\ast D) := \SM\Dotimes\SO_X(\ast D)$
for $\SM\in\Mod(\SD_X)$.
Note that we have $\M(\ast D)\simeq \bfD j_\ast\bfD j^{\ast}\M$.
We say that a $\D_X$-module is an algebraic meromorphic connection along $D$
if it is isomorphic as an $\SO_X$-module to a coherent $\SO_X(\ast D)$-module.
We denote by $\Conn(X; D)$ the category of algebraic meromorphic connections along $D$.
Note that it is full abelian subcategory of $\Modhol(\D_X)$.
Moreover, we set
\[\BDCmero(\D_{X(D)}) :=\{\M\in\BDChol(\D_X)\
|\ \SH^i(\M)\in\Conn(X; D) \mbox{ for any }i\in\ZZ \}.\]

We say that
a Zariski locally finite partition $\{X_\alpha\}_{\alpha\in A}$ of $X$
by locally closed subvarieties $X_\alpha$ is an algebraic stratification of $X$
if for any $\alpha\in A$ $X_\alpha$ is smooth and there exists a subset $B\subset A$
such that $\var{X}_\alpha = \sqcup_{\beta\in B} X_{\beta}$.
The following result is known.
See e.g., \cite[Theorem 3.1]{HTT08} for the details, \cite[Lemma 3.24]{Ito19} for the analytic case.
\begin{lemma}
For any $\M\in\Modhol(\D_X)$
there exists an algebraic stratification $\{X_\alpha\}_{\alpha\in A}$
such that any cohomology of $\bfD i_{X_\alpha}^\ast(\M)$
is an integrable connection on $X_\alpha$ for each $\alpha\in A$.
\end{lemma}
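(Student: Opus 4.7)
The plan is to argue by Noetherian induction on dimension, the key ingredient being the standard fact that every holonomic $\D$-module is generically an integrable connection.

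First I would record the following generic integrability statement: for any $\SN\in\Modhol(\D_X)$ there exists a dense Zariski open smooth subset $U\subset X$ such that $\SN|_U$ is an integrable connection on $U$. This is a classical consequence of Bernstein's inequality and the Lagrangian property of the characteristic variety $\mathrm{Ch}(\SN)\subset T^\ast X$ (see e.g.\ \cite{HTT08}): outside a proper closed subvariety of $X$, the module $\SN$ is $\SO_X$-coherent with characteristic variety contained in the zero section, hence locally free of finite rank.

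Next, for any locally closed smooth subvariety $Z\subset X$ with inclusion $i_Z$, the derived pullback $\bfD i_Z^\ast\M$ is a bounded complex of $\D_Z$-modules with holonomic cohomology, only finitely many of which are non-zero. Applying the generic integrability statement to each $\SH^k(\bfD i_Z^\ast\M)$ on $Z$ and intersecting the resulting dense open smooth subsets yields a dense Zariski open smooth $V\subset Z$ such that every $\SH^k(\bfD i_Z^\ast\M)|_V$ is an integrable connection. Since restriction along the open embedding $V\hookrightarrow Z$ is exact, every $\SH^k(\bfD i_V^\ast\M)$ is then an integrable connection on $V$.

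I would then run the Noetherian induction. Apply the generic integrability step directly to $\M$ to obtain an open dense smooth $U\subset X$ on which $\M|_U$ is an integrable connection, and record $U$ as a stratum. The closed complement has strictly smaller dimension; pick a smooth dense open subvariety of its smooth locus, shrink it via the previous paragraph so that every cohomology of $\bfD i_V^\ast\M$ becomes an integrable connection, record this $V$ as the next stratum, and iterate on the residual closed subvariety. Since each step strictly decreases the dimension of the residual, the process terminates after finitely many steps and produces a Zariski locally finite partition of $X$ by smooth locally closed subvarieties with the required integrable connection property on each stratum.

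The main obstacle will be arranging the closure condition $\var{X_\alpha}=\sqcup_{\beta\in B}X_\beta$ from the definition of an algebraic stratification. This is a standard refinement step for algebraic constructible partitions, but it must be carried out compatibly with the integrable connection property. Since that property is preserved under further restriction to a smooth open subvariety of a stratum, a final refinement of the partition obtained above yields a genuine algebraic stratification without spoiling the desired conclusion.
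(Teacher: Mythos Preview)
Your argument is correct and is essentially the standard proof of this classical fact. Note that the paper does not actually supply its own proof here: it simply records the lemma as known and cites \cite[Theorem~3.1]{HTT08} (and \cite[Lemma~3.24]{Ito19} for the analytic case). Your Noetherian induction on dimension, driven by the generic integrability of holonomic $\D$-modules, is precisely the argument underlying that reference, so there is nothing to compare beyond the fact that you have spelled out what the paper leaves to the literature.
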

Hence by this lemma we have:
\begin{lemma}\label{lem-str}
Let $\M$ be a holonomic $\D_X$-module.
Then there exists an algebraic stratification $\{X_\alpha\}_{\alpha\in A}$ of $X$
such that for any $\alpha\in A$ and
any complex blow-up $b_\alpha \colon \var{X}^\blow_\alpha \to X$ of $\var{X_\alpha}$
along $\partial X_\alpha := \var{X_\alpha}\setminus X_\alpha$ we have
$(\bfD b_\alpha^\ast \M)(\ast D_\alpha)
\in\BDCmero(\D_{\var{X}^\blow_\alpha(D_\alpha)})$,
where $D_\alpha := b_\alpha^{-1}(\partial X_\alpha)$
is a normal crossing divisor of $\var{X}^\blow_\alpha$.
\end{lemma}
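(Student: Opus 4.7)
The plan is to take the stratification produced by the previous lemma and verify the stronger conclusion on each stratum by reducing to its integrable-connection output on $X_\alpha$, via the standard identification of the localization functor $(\,\cdot\,)(\ast D_\alpha)$ with $\bfD j_\ast\bfD j^\ast$. So first, apply the previous lemma to $\M$ to obtain an algebraic stratification $\{X_\alpha\}_{\alpha\in A}$ of $X$ such that for every $\alpha\in A$ the cohomologies $\SH^i(\bfD i_{X_\alpha}^\ast\M)$ are integrable connections on $X_\alpha$. I claim this is the stratification sought by the lemma.

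Fix $\alpha\in A$ and a blow-up $b_\alpha\colon \var{X}_\alpha^{\blow}\to X$ as in the statement, so that $D_\alpha := b_\alpha^{-1}(\partial X_\alpha)$ is a normal crossing divisor of the smooth variety $\var{X}_\alpha^{\blow}$ and the restriction of $b_\alpha$ induces an isomorphism $\var{X}_\alpha^{\blow}\setminus D_\alpha \simto X_\alpha$. Let $j\colon \var{X}_\alpha^{\blow}\setminus D_\alpha\hookrightarrow \var{X}_\alpha^{\blow}$ be the open embedding; via the isomorphism just mentioned we have $b_\alpha\circ j = i_{X_\alpha}$. Using the identity $\SN(\ast D_\alpha)\simeq \bfD j_\ast\bfD j^\ast\SN$ recalled in \S\ref{sec-alg} (applied to $\SN = \bfD b_\alpha^\ast\M$), together with the functoriality $\bfD j^\ast\bfD b_\alpha^\ast\simeq \bfD(b_\alpha\circ j)^\ast = \bfD i_{X_\alpha}^\ast$, I obtain
\[
(\bfD b_\alpha^\ast\M)(\ast D_\alpha)\simeq \bfD j_\ast\,\bfD i_{X_\alpha}^\ast\M.
\]

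Since $D_\alpha$ is a divisor in the smooth variety $\var{X}_\alpha^{\blow}$ it is locally principal, so $j$ is locally affine and $\rmR^i j_\ast$ vanishes for $i>0$ on quasi-coherent modules; hence $\bfD j_\ast$ is t-exact on quasi-coherent $\D$-modules. Consequently the cohomologies of the right-hand side are
\[
\SH^i\bigl((\bfD b_\alpha^\ast\M)(\ast D_\alpha)\bigr)\simeq j_\ast\,\SH^i(\bfD i_{X_\alpha}^\ast\M),
\]
and each factor on the right is the direct image under $j$ of an integrable connection on $X_\alpha$. Such a direct image is locally finitely generated over $\SO_{\var{X}_\alpha^{\blow}}(\ast D_\alpha) = j_\ast\SO_{X_\alpha}$, hence by definition an algebraic meromorphic connection along $D_\alpha$. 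This gives $(\bfD b_\alpha^\ast\M)(\ast D_\alpha)\in \BDCmero(\D_{\var{X}_\alpha^{\blow}(D_\alpha)})$, as required. The principal technical ingredient is the t-exactness of $\bfD j_\ast$: it is what lets the meromorphic-connection conclusion pass termwise through the spectral sequence associated with the complex $\bfD i_{X_\alpha}^\ast\M$, and prevents higher direct images from contributing non-meromorphic pieces. Everything else is a direct application of the previous lemma and the recalled identification of the localization functor.
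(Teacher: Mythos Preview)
Your argument is correct and is precisely the intended one: the paper itself does not spell out a proof but simply says ``Hence by this lemma we have'' Lemma~\ref{lem-str}, so your write-up is the explicit version of what the paper leaves implicit. The key steps---identifying $(\bfD b_\alpha^\ast\M)(\ast D_\alpha)$ with $\bfD j_\ast\bfD i_{X_\alpha}^\ast\M$, using affineness of $j$ (complement of a Cartier divisor) to get t-exactness of $\bfD j_\ast$, and observing that $j_\ast$ of an integrable connection is coherent over $\SO(\ast D_\alpha)$---are exactly what is needed and match the paper's approach.
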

This lemma will be used in the proof of Proposition \ref{prop3.4}.

The analytification functor $(\cdot)^\an : \Mod(\D_X)\to\Mod(\D_{X^\an})$ induces
\begin{align*}
(\cdot)^\an &\colon \Conn(X; D)\to\Conn(X^\an; D^\an),\\
(\cdot)^\an &\colon \BDCmero(\D_{X(D)})\to \BDCmero(\D_{X^\an(D^\an)})
\end{align*}
where we set $D^\an := \bfX^\an\setminus (\bfX\setminus D)^\an$.
We note that if $X$ is complete
there exists an equivalence of categories between
the abelian category $\Conn(X; D)$ and
the one of effective meromorphic connections on $X^\an$ along $D^\an$
by \cite[\S 5.3]{HTT08}.
However as a consequence of \cite[Theorem 4.2]{Mal04}
any analytic meromorphic connection is effective.
Hence we have:
\begin{lemma}[{\cite[(5.3.2)]{HTT08}}, {\cite{Mal04}}]\label{lem2.17}
If $X$ is complete, 
there exists an equivalence of abelian categories:
\[(\cdot)^\an \colon \Conn(X; D)\simto\Conn(X^\an; D^\an).\]
Moreover this induces an equivalence of triangulated categories:
\[(\cdot)^\an \colon \BDCmero(\D_{X(D)})\simto\BDCmero(\D_{X^\an(D^\an)}).\]
\end{lemma}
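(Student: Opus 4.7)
The plan is to combine two previously existing results. For the abelian equivalence, I would invoke \cite[\S 5.3]{HTT08}: when $X$ is complete, the analytification functor $(\cdot)^\an$ induces an equivalence between $\Conn(X;D)$ and the full subcategory of $\Conn(X^\an;D^\an)$ consisting of \emph{effective} analytic meromorphic connections. Then I would apply Malgrange's theorem \cite[Theorem 4.2]{Mal04}, which asserts that every analytic meromorphic connection along a normal crossing divisor is automatically effective. Combining the two identifies the essential image of $(\cdot)^\an$ with all of $\Conn(X^\an;D^\an)$, yielding the first equivalence.

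For the triangulated equivalence, I would first note that analytification is exact on $\Mod(\D_X)$ and preserves holonomicity (by Proposition \ref{prop2.12}), so it restricts to an exact functor from $\BDCmero(\D_{X(D)})$ to $\BDCmero(\D_{X^\an(D^\an)})$. Both subcategories inherit the standard t-structure from $\BDChol$, with hearts given by the respective connection categories, and $(\cdot)^\an$ is t-exact for these t-structures. I would then prove essential surjectivity and full faithfulness by induction on cohomological amplitude: given $\N$ in the target, decompose via the truncation triangle $\tau^{\leq n-1}\N \to \tau^{\leq n}\N \to \SH^n(\N)[-n] \xrightarrow{+1}$, lift the top cohomology through the abelian equivalence, and extend inductively. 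Full faithfulness follows by the five lemma applied to long exact $\mathrm{Hom}$-sequences attached to the truncation triangles.

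The hard part will be establishing that analytification induces isomorphisms on all higher $\mathrm{Ext}$ groups between algebraic meromorphic connections on complete $X$, since this is what underlies the inductive step in the derived argument. This GAGA-type comparison for $\D$-modules is the substantive content hidden behind the formal homological argument; once it is in place, the triangulated equivalence follows routinely from the abelian one.
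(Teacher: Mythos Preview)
Your argument for the abelian equivalence is exactly what the paper does: the paragraph preceding Lemma~\ref{lem2.17} combines \cite[\S 5.3]{HTT08} (equivalence onto effective analytic meromorphic connections for complete $X$) with Malgrange's result \cite[Theorem 4.2]{Mal04} (every analytic meromorphic connection is effective), just as you propose.

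For the triangulated statement, the paper gives no further argument; it simply records the derived equivalence as part of the lemma, implicitly as a formal consequence of the abelian one. Your sketch is therefore more detailed than the paper's own treatment, and you are right to flag the $\mathrm{Ext}$ comparison as the substantive point: an equivalence of hearts does not automatically extend to the full subcategories of complexes with cohomology in the heart unless one also controls the higher $\mathrm{Ext}$ groups computed in the ambient triangulated categories. In this situation the required comparison does hold, essentially by GAGA for $\D$-modules on a complete variety: for $\M,\N\in\BDChol(\D_X)$ one has
\[
\rHom_{\D_X}(\M,\N)\simeq \bfD p_{X\ast}\big(\N\Dotimes\DD_X\M\big)[-d_X],
\]
and the right-hand side commutes with analytification because $\Dotimes$ and $\DD$ do (Proposition~\ref{prop2.12}) and because $p_X$ is proper. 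This is precisely the computation carried out later in the paper in Lemma~\ref{lem3.6}. With full faithfulness in hand, your induction on amplitude for essential surjectivity goes through without difficulty.
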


\subsection{Analytic $\CC$-Constructible Enhanced Ind-Sheaves}\label{sec2.8}
In this subsection, we shall recall some notions and results in \cite{Ito19}.

Let $\bfX$ be a complex manifold and 
$D \subset \bfX$ a normal crossing divisor in it. 
Let us take local coordinates 
$(u_1, \ldots, u_l, v_1, \ldots, v_{d_X-l})$ 
of $\bfX$ such that $D= \{ u_1 u_2 \cdots u_l=0 \}$
and set $$Y= \{ u_1=u_2= \cdots =u_l=0 \}.$$ 
We define a partial order $\leq$ on the 
set $\ZZ^l$ by 
\[ a \leq a^{\prime} \ \Longleftrightarrow 
\ a_i \leq a_i^{\prime} \ (1 \leq i \leq l).\] 
Then for a meromorphic function $\varphi\in\SO_{\bfX}(\ast D)$
on $\bfX$ along $D$ which has the Laurent expansion
\[ \varphi = \sum_{a \in \ZZ^l} c_a( \varphi )(v) \cdot 
u^a \ \in \SO_{\bfX}(\ast D) \]
with respect to $u_1, \ldots, u_{l}$,
where $c_a( \varphi )$ are holomorphic functions on $Y$, 
we define its order 
$\ord( \varphi ) \in \ZZ^l$ by the minimum 
\[ \min \Big( \{ a \in \ZZ^l \ | \
c_a( \varphi ) \not= 0 \} \cup \{ 0 \} \Big) \]
if it exists. 
For any $f\in\SO_{\bfX}(\ast D)/ \SO_{\bfX}$, we take any lift $\tl{f}$ to $\SO_{\bfX}(\ast D)$,
and we set $\ord(f) := \ord(\tl{f})$, if the right-hand side exists.
Note that it is independent of the choice of a lift $\tl{f}$.
If $\ord(f)\neq0$, $c_{\ord(f)}(\tl{f})$ is independent of the choice of a lift $\tl{f}$,
which is denoted by $c_{\ord(f)}(f)$.
\begin{definition}[{\cite[Definition 2.1.2]{Mochi11}}]\label{def2.10}
In the situation as above,
let us set $$Y= \{ u_1=u_2= \cdots =u_l=0 \}.$$
A finite subset $\calI \subset \SO_{\bfX}(\ast D)/ \SO_{\bfX}$
is called a good set of irregular values on $(\bfX,D)$,
if the following conditions are satisfied:
\begin{itemize}
\setlength{\itemsep}{-3pt}
\item[-]
For each element $f\in\calI$, $\ord(f)$ exists.
If $f\neq0$ in $\SO_{\bfX}(\ast D)/ \SO_{\bfX}$, $c_{\ord(f)}(f)$ is invertible on $Y$.
\item[-]
For two distinct $f, g\in\calI$, $\ord(f-g)$ exists and
$c_{\ord(f-g)}(f-g)$ is invertible on $Y$.
\item[-]
The set $\{\ord(f-g)\ |\ f, g\in\calI\}$ is totally ordered
with respect to the above partial order $\leq$ on $\ZZ^l$.
\end{itemize}
\end{definition}

\begin{definition}[{\cite[Definition 3.6]{Ito19}}]\label{def-normal}
We say that an $\RR$-constructible enhanced ind-sheaf
$K\in\ZEC_{\RR-c}(\I\CC_{\bfX})$ has a normal form along $D$ if 
\begin{itemize}
\setlength{\itemsep}{-3pt}
\item[(i)]
$\pi^{-1}\CC_{\bfX\setminus D}\otimes K\simto K$,

\item[(ii)]
for any $x\in \bfX\setminus D$ there exist an open neighborhood $U_x\subset \bfX\setminus D$
of $x$ and a non-negative integer $k$ such that
\[K|_{U_x}\simeq (\CC_{U_x}^{\rmE})^{\oplus k},\]

\item[(iii)]
for any $x\in D$ there exist an open neighborhood $U_x\subset \bfX$ of $x$,
a good set of irregular values $\{\varphi_i\}_i$ on $(U_x, D\cap U_x)$
and a finite sectorial open covering $\{U_{x, j}\}_j$ of $U_x\bs D$
such that
\[\pi^{-1}\CC_{U_{x, j}}\otimes K|_{U_x}\simeq
\bigoplus_i \EE_{U_{x, j} | U_x}^{\Re\varphi_i}
\hspace{10pt} \mbox{for any } j,\]
see the end of \S\ref{subsec2.4} for the definition of 
$\EE_{U_{x, j} | U_x}^{\Re\varphi_i}$.
\end{itemize}
\end{definition}

A ramification of $\bfX$ along a normal crossing divisor $D$ on a neighborhood $U$ 
of $x \in D$ is a finite map $r \colon U^{\rami}\to U$ of complex manifolds of the form
$z' \mapsto z=(z_1,z_2, \ldots, z_n)= 
 r(z') = (z'^{m_1}_1,\ldots, z'^{m_r}_r, z'_{r+1},\ldots,z'_n)$ 
for some $(m_1, \ldots, m_r)\in (\ZZ_{>0})^r$, where 
$(z'_1,\ldots, z'_n)$ is a local coordinate system of $U^{\rami}$ and 
$(z_1, \ldots, z_n)$ is the one of 
$U$ such that $D \cap U=\{z_1\cdots z_r=0\}$. 

\begin{definition}[{\cite[Definition 3.11]{Ito19}}]\label{def-quasi}
We say that an enhanced ind-sheaf
$K\in\ZEC(\I\CC_{\bfX})$ has a quasi-normal form along $D$
if it satisfies (i) and (ii) in Definition \ref{def-normal}, and if
for any $x\in D$ there exist an open neighborhood $U_x\subset \bfX$ of $x$
and a ramification $r_x \colon U_x^{\rami}\to U_x$ of $U_x$ along $D_x := U_x\cap D$
such that $\bfE r_x^{-1}(K|_{U_x})$ has a normal form along $D_x^{\rami}:= r_x^{-1}(D_x)$.
\end{definition}
Note that any enhanced ind-sheaf which has a quasi-normal form along $D$
is an $\RR$-constructible enhanced ind-sheaf on $\bfX$.
See \cite[Proposition 3.12]{Ito19} for the details. 

A modification of $\bfX$ with respect to an analytic hypersurface $H$
is a projective map $m \colon \bfX^{\modi}\to \bfX$ from a complex manifold $\bfX^{\modi}$ to $\bfX$ such that
$D^{\modi} := m^{-1}(H)$ is a normal crossing divisor of $\bfX^{\modi}$
and $m$ induces an isomorphism $\bfX^{\modi}\setminus D^{\modi}\simto \bfX\setminus H$.

\begin{definition}[{\cite[Definition 3.14]{Ito19}}]\label{def-modi}
We say that an enhanced ind-sheaf $K\in\ZEC(\I\CC_{\bfX})$
has a modified quasi-normal form along $H$
if it satisfies (i) and (ii) in Definition \ref{def-normal}, and if
for any $x\in H$ there exist an open neighborhood $U_x\subset \bfX$ of $x$
and a modification $m_x \colon U_x^{\modi}\to U_x$ of $U_x$ along $H_x := U_x\cap H$
such that $\bfE m_x^{-1}(K|_{U_x})$ has a quasi-normal form along $D_x^{\modi} := f_x^{-1}(H_x)$.
\end{definition}
Note that any enhanced ind-sheaf which has a modified quasi-normal form along $H$
is an $\RR$-constructible enhanced ind-sheaf on $\bfX$. 
See \cite[Proposition 3.15]{Ito19} for the details. 
Moreover we have:
\begin{lemma}[{\cite[Lemma 3.16]{Ito19}}]\label{lem-modi}
The enhanced solution functor $\Sol_{\bfX}^{\rmE}$ induces
an equivalence of abelian categories between
the full subcategory of $\ZEC_{\RR-c}(\I\CC_{\bfX})$ consisting of
objects which have a modified quasi-normal form along $H$
and the abelian category $\Conn(\bfX; H)$ of
meromorphic connections on $\bfX$ along $H$.
\end{lemma}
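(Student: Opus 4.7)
The plan is to split the statement into three sub-claims: (a) full faithfulness, (b) the functor lands in the claimed subcategory, and (c) essential surjectivity. Full faithfulness (a) is immediate from Theorem \ref{thm2.6} (1), since $\Sol_{\bfX}^{\rmE}$ is already fully faithful on the larger category $\BDChol(\D_\bfX)^{\op}$ and, because meromorphic connections along $H$ are concentrated in degree zero and functorial morphisms between them correspond to morphisms of $\SO_\bfX$-modules, the induced map on Hom sets is a bijection of abelian groups. So the bulk of the work is (b) and (c).

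For (b), fix $\M\in\Conn(\bfX;H)$. By the Kedlaya--Mochizuki structure theorem (\cite{Ked10,Ked11,Mochi09,Mochi11}), for every $x\in H$ there exist an open neighborhood $U_x$, a modification $m_x\colon U_x^{\modi}\to U_x$ of $U_x$ with respect to $H_x:=U_x\cap H$, and a ramification $r_x\colon U_x^{\rami}\to U_x^{\modi}$ along the normal crossing divisor $D_x^{\modi}:=m_x^{-1}(H_x)$ such that, locally on the preimage of $D_x^{\modi,\rami}:=r_x^{-1}(D_x^{\modi})$, the pulled-back meromorphic connection $\bfD r_x^\ast\bfD m_x^\ast\M$ admits a sectorial decomposition $\bigoplus_i \E^{\varphi_i}\otimes \mathcal R_i$ with $\{\varphi_i\}$ a good set of irregular values and $\mathcal R_i$ regular meromorphic connections. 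Applying $\Sol_\bfX^\rmE$ and using Theorem \ref{thm-DK} (2), (4), (5) together with Theorem \ref{thm2.6} (2) for the regular pieces $\mathcal R_i$, the enhanced solution becomes, on each sector, a direct sum of the exponential enhanced ind-sheaves $\EE^{\Re\varphi_i}$. This shows $\bfE r_x^{-1}\bfE m_x^{-1}\Sol_\bfX^\rmE(\M)|_{U_x}$ has a normal form along $D_x^{\modi,\rami}$, hence $\bfE m_x^{-1}\Sol_\bfX^\rmE(\M)|_{U_x}$ has a quasi-normal form along $D_x^{\modi}$; combined with conditions (i), (ii) of Definition \ref{def-normal}, which follow from Theorem \ref{thm-DK} (5) applied to $\M\simeq\M(\ast H)$ and from the fact that $\M|_{\bfX\setminus H}$ is an integrable connection, this yields modified quasi-normal form along $H$.

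For (c), given $K\in\ZEC_{\RR-c}(\I\CC_\bfX)$ with modified quasi-normal form along $H$, set $\M:=\RH_\bfX^\rmE(K)$ using Theorem \ref{thm2.6} (1); since $K$ is $\RR$-constructible and locally matches a solution complex, one needs to verify $\M\in\Conn(\bfX;H)$. Working locally, after modification and ramification, the local pieces $\EE^{\Re\varphi_i}$ are by construction the enhanced solutions of the exponential connections $\E^{\varphi_i}$, and Theorem \ref{thm-DK} (2), (3) ensures that pushforward along proper $m_x$ (and finite $r_x$) of the reconstructed meromorphic connection downstairs agrees with $\M$. Gluing these local reconstructions is the essential descent step. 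The main obstacle I expect is precisely this descent: verifying that the locally defined meromorphic connections on the various $U_x^{\modi}$ glue, via the proper direct image of $m_x$, to a genuine object of $\Conn(\bfX;H)$ whose enhanced solution is isomorphic to $K$. The key tools will be the compatibility isomorphisms of Theorem \ref{thm-DK} together with the fact that $\Sol_\bfX^\rmE$ is conservative on $\BDChol(\D_\bfX)$ (from full faithfulness), which reduces the uniqueness of the gluing to an identity of enhanced solutions that can be checked locally after the modification--ramification reductions above.
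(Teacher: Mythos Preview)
The paper does not prove this lemma; it merely quotes it from \cite[Lemma 3.16]{Ito19}. So there is no in-paper proof to compare against, and a full comparison would require consulting that reference.

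That said, your outline for (a) and (b) is the standard one and is correct. For (c), however, you have misidentified the main difficulty. Once you set $\M:=\RH_\bfX^\rmE(K)$, this is a \emph{globally} defined object of $\BDC(\D_\bfX)$; there is nothing to glue. The real issues are: (i) showing that $\M$ is concentrated in degree zero and is a meromorphic connection along $H$, and (ii) showing the reconstruction isomorphism $\Sol_\bfX^\rmE(\M)\simeq K$. Point (i) is local and can be checked after modification and ramification using Theorem~\ref{thm-DK}. Point (ii) is more delicate: full faithfulness of $\Sol_\bfX^\rmE$ only gives $\M\simeq\RH_\bfX^\rmE(\Sol_\bfX^\rmE(\M))$ for $\M$ already known to be holonomic; it does \emph{not} give $\Sol_\bfX^\rmE(\RH_\bfX^\rmE(K))\simeq K$ for arbitrary $K$. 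You need to verify this directly, and the way to do it is to observe that both sides satisfy $\pi^{-1}\CC_{\bfX\setminus H}\otimes(\cdot)\simeq(\cdot)$, so it suffices to check the isomorphism after pullback by the local modifications $m_x$ and ramifications $r_x$, where both reduce to direct sums of exponential enhanced ind-sheaves. Your ``descent'' concern is therefore a red herring: the global object exists for free, and conservativity of $\Sol_\bfX^\rmE$ cannot substitute for the missing reconstruction step.
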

We denote by $\ZECmero(\I\CC_{\bfX(H)})$ the essential image of
$$\Sol_{\bfX}^{\rmE} \colon \Conn(\bfX; H)^{\op}\to\ZEC_{\RR-c}(\I\CC_{\bfX}).$$
This abelian category is nothing but the full subcategory of $\ZEC_{\RR-c}(\I\CC_{\bfX})$
consisting of enhanced ind-sheaves which have a modified quasi-normal form along $H$
by Lemma \ref{lem-modi}.
Moreover, we set
\begin{align*}
\BECmero(\I\CC_{\bfX(H)}) &:=\{K\in\BEC_{\RR-c}(\I\CC_{\bfX})\
|\ \SH^i(K)\in\ZECmero(\I\CC_{\bfX(H)}) \mbox{ for any }i\in\ZZ \}.
\end{align*}

Since the category $\BDCmero(\D_{\bfX(H)})$ is a full triangulated subcategory
of $\BDChol(\D_{\bfX})$
and the category $\BECmero(\I\CC_{\bfX(H)})$ is a full triangulated subcategory
of $\BEC_{\RR-c}(\I\CC_{\bfX})$, the following proposition is obvious
by induction on the length of a complex:
\begin{proposition}\label{prop2.23}
The enhanced solution functor $\Sol_{\bfX}^\rmE$ induces an equivalence of triangulated categories
$$\BDCmero(\D_{\bfX(H)})^{\op} \simto \BECmero(\I\CC_{\bfX(H)}),$$
and hence we obtain a commutative diagram
\[\xymatrix@C=30pt@M=3pt{
\BDCmero(\D_{\bfX(H)})^{\op}\ar@{->}[r]^\sim\ar@<1.0ex>@{}[r]^-{\Sol_{\bfX}^{\rmE}}
\ar@{}[rd]|{\rotatebox[origin=c]{180}{$\circlearrowright$}}
 & \BECmero(\I\CC_{\bfX(H)})\\
\Conn(\bfX; H)^{\op}\ar@{->}[r]_-{\Sol_{\bfX}^\rmE}^-{\sim}\ar@{}[u]|-{\bigcup}
&\ZECmero(\I\CC_{\bfX(H)}).\ar@{}[u]|-{\bigcup}
}\]
\end{proposition}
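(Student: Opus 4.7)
The plan is to exploit that $\Sol_{\bfX}^\rmE$ is already fully faithful on the ambient category $\BDChol(\D_{\bfX})$ by Theorem \ref{thm2.6} (1), and to promote the abelian equivalence of Lemma \ref{lem-modi} to a triangulated one by the standard dévissage argument on cohomological amplitude. Fully faithfulness of $\Sol_{\bfX}^\rmE \colon \BDCmero(\D_{\bfX(H)})^{\op} \to \BECmero(\I\CC_{\bfX(H)})$ will be inherited immediately, since $\BDCmero(\D_{\bfX(H)})$ and $\BECmero(\I\CC_{\bfX(H)})$ are by construction full subcategories of $\BDChol(\D_{\bfX})$ and $\BEC_{\RR-c}(\I\CC_{\bfX})$, respectively.

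Next I would verify that $\Sol_{\bfX}^\rmE$ actually sends $\BDCmero(\D_{\bfX(H)})$ into $\BECmero(\I\CC_{\bfX(H)})$, by induction on the cohomological amplitude of $\M$. The base case, with $\M\in\Conn(\bfX;H)$ concentrated in one degree, is exactly Lemma \ref{lem-modi}. For the inductive step, suppose $\SH^i(\M)=0$ outside $[a,b]$ with $a<b$; apply the contravariant triangulated functor $\Sol_{\bfX}^\rmE$ to the truncation distinguished triangle $\tau^{<b}\M\to\M\to\SH^b(\M)[-b]\xrightarrow{+1}$. By induction both outer vertices land in $\BECmero(\I\CC_{\bfX(H)})$, and since the latter is a full triangulated subcategory of $\BEC_{\RR-c}(\I\CC_{\bfX})$, so does the middle term.

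For essential surjectivity, take $K\in\BECmero(\I\CC_{\bfX(H)})$ with cohomology concentrated in $[a,b]$ and induct on $b-a$. The case $b=a$ is again Lemma \ref{lem-modi}. Otherwise, the truncation triangle $\tau^{<b}K\to K\to\SH^b(K)[-b]\xrightarrow{+1}$ combined with the inductive hypothesis yields $\tau^{<b}K\simeq\Sol_{\bfX}^\rmE(\M')$ and $\SH^b(K)\simeq\Sol_{\bfX}^\rmE(\SM_b)$ for some $\M'\in\BDCmero(\D_{\bfX(H)})$ and $\SM_b\in\Conn(\bfX;H)$. By the fully faithfulness already established, the connecting morphism in $\BEC_{\RR-c}$ lifts uniquely to a morphism in $\BDCmero(\D_{\bfX(H)})^{\op}$, whose cone provides an $\M\in\BDCmero(\D_{\bfX(H)})$ with $\Sol_{\bfX}^\rmE(\M)\simeq K$. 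The commutativity of the diagram is then tautological, since both rows restrict to the same equivalence on objects concentrated in a single degree.

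The main point requiring care is that the dévissage relies on $\BECmero(\I\CC_{\bfX(H)})$ (and symmetrically $\BDCmero(\D_{\bfX(H)})$) being a genuine triangulated subcategory; equivalently, that $\ZECmero(\I\CC_{\bfX(H)})$ is stable under subquotients and extensions inside $\ZEC_{\RR-c}(\I\CC_{\bfX})$. Via Lemma \ref{lem-modi}, this reduces to the analogous stability for $\Conn(\bfX;H)$ inside $\Modhol(\D_{\bfX})$, which is a classical property of meromorphic connections; this is the only non-formal ingredient hidden in the word ``obvious''.
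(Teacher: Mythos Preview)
Your proposal is correct and follows essentially the same approach as the paper: the paper's entire argument is the single remark preceding the proposition that it is ``obvious by induction on the length of a complex'' given that both $\BDCmero(\D_{\bfX(H)})$ and $\BECmero(\I\CC_{\bfX(H)})$ are full triangulated subcategories, together with Lemma~\ref{lem-modi} for the heart. You have simply spelled out this d\'evissage explicitly, and you correctly flag the one non-formal point (stability of $\Conn(\bfX;H)$ under subquotients and extensions) that the paper leaves implicit.
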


A complex analytic stratification of $\bfX$ is
a locally finite partition $\{\bfX_\alpha\}_{\alpha\in A}$ of $\bfX$
by locally closed analytic subsets $\bfX_\alpha$
such that for any $\alpha\in A$ $\bfX_\alpha$ is smooth,
$\var{\bfX}_\alpha$ and $\partial\bfX_\alpha := \var{\bfX}_\alpha\setminus \bfX_\alpha$
are complex analytic subsets 
and $\var{X}_\alpha = \sqcup_{\beta\in B} X_{\beta}$ for a subset $B\subset A$.
\begin{definition}[{\cite[Definition 3.19]{Ito19}}]\label{def-const}
We say that an enhanced ind-sheaf $K\in\ZEC(\I\CC_{\bfX})$ is $\CC$-constructible if
there exists a complex analytic stratification $\{\bfX_\alpha\}_\alpha$ of $\bfX$
such that
$$\pi^{-1}\CC_{\var{X}^{\blow}_\alpha\setminus D_\alpha}\otimes \bfE b_\alpha^{-1}K$$
has a modified quasi-normal form along $D_\alpha$ for any $\alpha$,
where $b_\alpha \colon \var{\bfX}^{\blow}_\alpha \to X$ is a complex blow-up of $\var{\bfX_\alpha}$
along $\partial \bfX_\alpha = \var{\bfX_\alpha}\setminus \bfX_\alpha$ and
$D_\alpha := b_\alpha^{-1}(\partial \bfX_\alpha)$.
Namely $\var{\bfX}^{\blow} _\alpha$ is a complex manifold,
$D_\alpha$ is a normal crossing divisor of $\var{\bfX}^{\blow} _\alpha$
and $b_\alpha$ is a projective map
which induces an isomorphism $\var{\bfX}^{\blow} _\alpha\setminus D_\alpha\simto \bfX_\alpha$
and satisfies $b_\alpha\big(\var{\bfX}^{\blow} _\alpha\big)=\var{\bfX_\alpha}$.

We call such a family $\{\bfX_\alpha\}_{\alpha\in A}$
a complex analytic stratification adapted to $K$.
\end{definition}

\begin{remark}\label{rem2.25}
Definition \ref{def-const} does not depend on the choice of a complex blow-up $b_\alpha$
by \cite[Sublemma 3.22]{Ito19}.
\end{remark}

We denote by $\ZEC_{\CC-c}(\I\CC_{\bfX})$ the full subcategory of $\ZEC(\I\CC_{\bfX})$
whose objects are $\CC$-constructible
and set
\[\BEC_{\CC-c}(\I\CC_{\bfX}) := \{K\in\BEC(\I\CC_{\bfX})\
|\ \SH^i(K)\in\ZEC_{\CC-c}(\I\CC_{\bfX}) \mbox{ for any }i\in\ZZ \}\subset \BEC(\I\CC_{\bfX}).\]
Note that the category $\ZEC_{\CC-c}(\I\CC_{\bfX})$ is
the full abelian subcategory of $\ZEC_{\RR-c}(\I\CC_{\bfX})$.
Hence the category $\BEC_{\CC-c}(\I\CC_{\bfX})$ is
a full triangulated subcategory of $\BEC_{\RR-c}(\I\CC_{\bfX})$.
See \cite[Proposition 3.21]{Ito19} for the details.

\begin{theorem}[{\cite[Proposition 3.25, Theorem 3.26]{Ito19}}]\label{thm-const}
For any $\M\in\BDChol(\D_{\bfX})$, the enhanced solution complex $\Sol_{\bfX}^\rmE(\M)$
of $\M$ is a $\CC$-constructible enhanced ind-sheaf.
On the other hand,
for any $\CC$-constructible enhanced ind-sheaf $K\in\BEC_{\CC-c}(\I\CC_{\bfX})$,
there exists $\M\in\BDChol(\D_{\bfX})$
such that $$K\simto \Sol_{\bfX}^{\rmE}(\M).$$
Therefore we obtain an equivalence of triangulated categories
\[\xymatrix@C=50pt@R=7pt{
\BDChol(\D_{\bfX})^{\op}\ar@<0.7ex>@{->}[r]^-{\Sol_{\bfX}^{\rmE}}\ar@{}[r]|-{\sim}
&
\BEC_{\CC-c}(\I\CC_{\bfX})\ar@<0.7ex>@{->}[l]^-{\RH_{\bfX}^{\rmE}}}.\]
\end{theorem}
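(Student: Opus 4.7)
The plan is to establish three facts: (i) $\Sol_\bfX^\rmE(\M) \in \BEC_{\CC-c}(\I\CC_\bfX)$ for every $\M \in \BDChol(\D_\bfX)$, (ii) essential surjectivity of $\Sol_\bfX^\rmE$, and then combine them with the fully faithfulness already granted by Theorem \ref{thm2.6}(1) to obtain the equivalence.

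For (i), by induction on the cohomological length of the complex and by passing to $\SH^i$-cohomologies it suffices to verify the defining condition of $\ZEC_{\CC-c}(\I\CC_\bfX)$ for a single holonomic module $\M \in \Modhol(\D_\bfX)$. Applying the analytic analogue of Lemma \ref{lem-str}, choose an analytic stratification $\{\bfX_\alpha\}_{\alpha\in A}$ such that for every $\alpha$ and every complex blow-up $b_\alpha \colon \var{\bfX}_\alpha^\blow \to \bfX$ of $\var{\bfX_\alpha}$ along $\partial \bfX_\alpha$, the pullback $(\bfD b_\alpha^\ast \M)(\ast D_\alpha)$ lies in $\BDCmero(\D_{\var{\bfX}_\alpha^\blow(D_\alpha)})$. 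Using Theorem \ref{thm-DK}(2), (5) yields
\[
\pi^{-1}\CC_{\var{\bfX}_\alpha^\blow \setminus D_\alpha} \otimes \bfE b_\alpha^{-1}\Sol_\bfX^\rmE(\M)
\simeq \Sol_{\var{\bfX}_\alpha^\blow}^\rmE\bigl((\bfD b_\alpha^\ast \M)(\ast D_\alpha)\bigr),
\]
and Proposition \ref{prop2.23} identifies the right-hand side with an object of $\BECmero(\I\CC_{\var{\bfX}_\alpha^\blow(D_\alpha)})$. Hence each cohomology has modified quasi-normal form along $D_\alpha$, so $\{\bfX_\alpha\}$ is adapted to $\Sol_\bfX^\rmE(\M)$ in the sense of Definition \ref{def-const}.

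For (ii), fix $K \in \BEC_{\CC-c}(\I\CC_\bfX)$ with adapted stratification $\{\bfX_\alpha\}$, and proceed by induction on the number of strata. Pick a stratum $\bfX_\beta$ of minimal dimension, set $U := \bfX \setminus \var{\bfX_\beta}$, and denote by $i \colon \var{\bfX_\beta} \hookrightarrow \bfX$ and $j \colon U \hookrightarrow \bfX$ the natural embeddings. The distinguished triangle
\[
\bfE j_{!!}\bfE j^{-1}K \longrightarrow K \longrightarrow \bfE i_\ast \bfE i^{-1}K \overset{+1}{\longrightarrow}
\]
reduces the problem to the two outer terms, each of which has an adapted stratification with strictly fewer strata (after restriction). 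For the base case of a single stratum $\bfX_\alpha$, the adapted condition on the blow-up together with Proposition \ref{prop2.23} produces a complex of meromorphic connections on $\var{\bfX}_\alpha^\blow$ whose enhanced solution complex recovers $\pi^{-1}\CC_{\var{\bfX}_\alpha^\blow \setminus D_\alpha} \otimes \bfE b_\alpha^{-1}K$; pushing forward by $\bfD b_{\alpha\ast}$ and using the proper direct image compatibility of Theorem \ref{thm-DK}(3) supplies a holonomic $\D_\bfX$-module realising the corresponding piece of $K$. Having lifted the two outer vertices of the triangle to holonomic $\D$-modules, fully faithfulness from Theorem \ref{thm2.6}(1) lifts the connecting morphism to a morphism of $\D$-modules, and its mapping cone provides the desired $\M$ with $\Sol_\bfX^\rmE(\M) \simeq K$.

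The main obstacle is the essential-surjectivity step, specifically the transfer from the meromorphic-connection data on each blow-up $\var{\bfX}_\alpha^\blow$ back to a holonomic $\D_\bfX$-module, and the glueing of these local pieces into a single $\M$ whose enhanced solution recovers $K$ globally. This requires careful interplay between Lemma \ref{lem-modi} and Proposition \ref{prop2.23} at the meromorphic level, the compatibility $\bfE b_{\alpha\ast}\Sol^\rmE \simeq \Sol^\rmE \bfD b_{\alpha\ast}$ for the projective map $b_\alpha$, and a stratum-by-stratum induction in which fully faithfulness of $\Sol_\bfX^\rmE$ on $\BDChol(\D_\bfX)$ is repeatedly used to promote connecting morphisms of enhanced ind-sheaves to morphisms of holonomic $\D$-modules. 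Once essential surjectivity is in place, combining it with Theorem \ref{thm2.6}(1) yields the stated equivalence, with quasi-inverse $\RH_\bfX^\rmE$.
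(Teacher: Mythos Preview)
This theorem is quoted from \cite{Ito19} without proof in the present paper; the natural comparison is with the paper's proofs of the algebraic analogues, Propositions~\ref{prop3.4} and~\ref{prop3.5} (together with Lemma~\ref{lem3.9} and Sublemma~\ref{sublem3.8}).

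Your argument for (i) matches the proof of Proposition~\ref{prop3.4} essentially verbatim. For (ii), the key construction you identify---taking the meromorphic connection on the blow-up $\var{\bfX}_\alpha^\blow$ furnished by Proposition~\ref{prop2.23} and pushing it forward via the \emph{proper} map $b_\alpha$ to obtain a holonomic $\D_\bfX$-module whose enhanced solution is $\pi^{-1}\CC_{\bfX_\alpha}\otimes K$---is exactly Lemma~\ref{lem3.9}. The paper then assembles these single-stratum pieces via the dimension filtration $Y_k := \bigsqcup_{\dim \bfX_\alpha \le k}\bfX_\alpha$ and the triangles $\pi^{-1}\CC_{S_k}\otimes K \to \pi^{-1}\CC_{Y_k}\otimes K \to \pi^{-1}\CC_{Y_{k-1}}\otimes K$, whereas you peel off one minimal closed stratum at a time.

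One caution about your induction: the phrase ``adapted stratification with strictly fewer strata (after restriction)'' is slippery. If you mean restricting to the open set $U$ and applying the induction hypothesis there to produce a holonomic $\D_U$-module, you then face the problem that for an open embedding $j$ in the analytic category there is no general identity $\bfE j_{!!}\Sol_U^\rmE(\N)\simeq\Sol_\bfX^\rmE(?)$ (Theorem~\ref{thm-DK}(3) needs properness). If instead you stay on $\bfX$, note that deleting $\bfX_\beta$ from $\{\bfX_\alpha\}$ violates the closure condition, so the induction parameter should be the number of strata on which the object is nonzero, not the size of an adapted stratification. Either way the repair is the one your ``base case'' paragraph already contains: construct the $\D_\bfX$-module for \emph{each} stratum directly on $\bfX$ via the proper $b_\alpha$, and glue on $\bfX$ using full faithfulness---which is precisely the route of Proposition~\ref{prop3.5} and Lemma~\ref{lem3.9}.
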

Furthermore we have commutative diagrams:
\[\xymatrix@C=30pt@M=5pt{
\BDChol(\D_{\bfX})^{\op}\ar@{->}[r]^\sim\ar@<1.0ex>@{}[r]^-{\Sol_{\bfX}^{\rmE}}
\ar@{}[rd]|{\rotatebox[origin=c]{180}{$\circlearrowright$}}
 & \BEC_{\CC-c}(\I\CC_{\bfX})\\
\BDCrh(\D_{\bfX})^{\op}\ar@{->}[r]_-{\Sol_{\bfX}}^-{\sim}\ar@{}[u]|-{\bigcup}
&\BDC_{\CC-c}(\CC_{\bfX}),
\ar@{^{(}->}[u]_-{e_{\bfX}}}
\hspace{25pt}
\xymatrix@C=30pt@M=5pt{
\BDChol(\D_{\bfX})^{\op}\ar@{->}[r]^\sim\ar@<1.0ex>@{}[r]^-{\Sol_{\bfX}^{\rmE}}
\ar@{->}[d]_-{(\cdot)_\reg}\ar@{}[rd]|{\rotatebox[origin=c]{180}{$\circlearrowright$}}
 & \BEC_{\CC-c}(\I\CC_{\bfX})\ar@{->}[d]^-{\sh_{\bfX}}\\
\BDCrh(\D_{\bfX})^{\op}\ar@{->}[r]_-{\Sol_{\bfX}}^-{\sim}&\BDC_{\CC-c}(\CC_{\bfX}).
}\]
See \cite[Corollaries 3.27, 3.28]{Ito19} for the details.

We set
\begin{align*}
{}^p\bfE^{\leq0}_{\CC-c}(\I\CC_{\bfX}) &:=
\{K\in\BEC_{\CC-c}(\I\CC_{\bfX})\ |\ \sh_{\bfX}(K)\in{}^p\bfD^{\leq0}_{\CC-c}(\CC_{\bfX})\},\\
{}^p\bfE^{\geq0}_{\CC-c}(\I\CC_{\bfX}) &:=
\{K\in\BEC_{\CC-c}(\I\CC_{\bfX})\ |\ \rmD_{\bfX}^{\rmE}(K)\in{}^p\bfE^{\leq0}_{\CC-c}(\I\CC_{\bfX})\}\\
&=
\{K\in\BEC_{\CC-c}(\I\CC_{\bfX})\ |\ \sh_{\bfX}(K)\in{}^p\bfD^{\geq0}_{\CC-c}(\CC_{\bfX})\},
\end{align*}
where the pair $\big({}^p\bfD^{\leq0}_{\CC-c}(\CC_{\bfX}), {}^p\bfD^{\geq0}_{\CC-c}(\CC_{\bfX})\big)$
is the perverse t-structure on $\BDC_{\CC-c}(\CC_{\bfX})$.

\begin{theorem}[{\cite[Theorem 4.4]{Ito19}}]\label{thm2.28}
For any $\M\in\BDChol(\D_{\bfX})$,
we have
\begin{itemize}
\item[\rm(1)]
$\M\in\DChol^{\leq0}(\D_{\bfX})\Longleftrightarrow
\Sol_{\bfX}^{\rmE}(\M)[d_{\bfX}]\in{}^p\bfE^{\geq0}_{\CC-c}(\I\CC_{\bfX})$,
\item[\rm(2)]
$\M\in\DChol^{\geq0}(\D_{\bfX})\Longleftrightarrow
\Sol_{\bfX}^{\rmE}(\M)[d_{\bfX}]\in{}^p\bfE^{\leq0}_{\CC-c}(\I\CC_{\bfX})$.
\end{itemize}

Therefore, the pair $\big({}^p\bfE^{\leq0}_{\CC-c}(\I\CC_{\bfX}),
{}^p\bfE^{\geq0}_{\CC-c}(\I\CC_{\bfX})\big)$
is a t-structure on $\BEC_{\CC-c}(\I\CC_{\bfX})$
and
its heart $$\Perv(\I\CC_{\bfX}) :=
{}^p\bfE^{\leq0}_{\CC-c}(\I\CC_{\bfX})\cap{}^p\bfE^{\geq0}_{\CC-c}(\I\CC_{\bfX})$$ is equivalent to the abelian category $\Modhol(\D_{\bfX})$
of holonomic $\D_{\bfX}$-modules.
\end{theorem}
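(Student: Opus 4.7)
The plan is to reduce the theorem to the classical perverse t-exactness of the regular Riemann--Hilbert correspondence (analytic version of Theorem \ref{thm-perv}) via the regularization functor and the sheafification functor $\sh_{\bfX}$. The crucial bridge is the identity $\sh_{\bfX}\big(\Sol_{\bfX}^{\rmE}(\M)\big)\simeq \Sol_{\bfX}(\M)\simeq \Sol_{\bfX}(\M_{\reg})$ combined with the fact that the second characterization of ${}^p\bfE^{\geq 0}_{\CC-c}(\I\CC_{\bfX})$ is phrased entirely in terms of $\sh_{\bfX}$.

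First, I would verify the key fact that the regularization functor $(\cdot)_{\reg}\colon \Modhol(\D_{\bfX})\to\Modrh(\D_{\bfX})$ is both exact and conservative, so that it extends to a t-exact and conservative triangulated functor $\BDChol(\D_{\bfX})\to\BDCrh(\D_{\bfX})$; in particular $\M\in\DChol^{\leq 0}(\D_{\bfX})$ iff $\M_{\reg}\in\DCrh^{\leq 0}(\D_{\bfX})$, and similarly for ${\geq 0}$. Conservativity reduces via $\N_{\reg}^{\infty}\simeq \N^{\infty}$ (Proposition \ref{prop2.7} (i)) to the fact that $\N^{\infty}=0$ implies $\N=0$ for coherent $\D_{\bfX}$-modules. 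Then, to prove (1), I would chain the equivalences
\[
\M\in\DChol^{\leq 0}(\D_{\bfX})
\iff \M_{\reg}\in\DCrh^{\leq 0}(\D_{\bfX})
\iff \Sol_{\bfX}(\M_{\reg})[d_{\bfX}]\in {}^p\bfD^{\geq 0}_{\CC-c}(\CC_{\bfX}),
\]
where the second equivalence is the analytic analogue of Theorem \ref{thm-perv} applied to the regular holonomic $\M_{\reg}$. Since $\Sol_{\bfX}(\M_{\reg})\simeq \Sol_{\bfX}(\M)\simeq \sh_{\bfX}(\Sol_{\bfX}^{\rmE}(\M))$, this is in turn equivalent to $\Sol_{\bfX}^{\rmE}(\M)[d_{\bfX}]\in{}^p\bfE^{\geq 0}_{\CC-c}(\I\CC_{\bfX})$ by the sheafification-based characterization of the latter category. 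The statement (2) would follow either by the same chain with inequalities reversed, or more elegantly by invoking the compatibility $\rmD_{\bfX}^{\rmE}\circ \Sol_{\bfX}^{\rmE}\simeq \Sol_{\bfX}^{\rmE}\circ \DD_{\bfX}[2d_{\bfX}]$ from Theorem \ref{thm-DK} (1), exchanging ${\leq 0}$ and ${\geq 0}$.

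Once (1) and (2) are established, the pair $\big({}^p\bfE^{\leq 0}_{\CC-c}(\I\CC_{\bfX}),{}^p\bfE^{\geq 0}_{\CC-c}(\I\CC_{\bfX})\big)$ is transported by the equivalence of triangulated categories $\Sol_{\bfX}^{\rmE}[d_{\bfX}]\colon \BDChol(\D_{\bfX})^{\op}\simto \BEC_{\CC-c}(\I\CC_{\bfX})$ of Theorem \ref{thm-const} from the standard t-structure on $\BDChol(\D_{\bfX})^{\op}$, and hence is a t-structure. The heart of the standard t-structure on $\BDChol(\D_{\bfX})^{\op}$ is $\Modhol(\D_{\bfX})^{\op}$, so the heart $\Perv(\I\CC_{\bfX})={}^p\bfE^{\leq 0}_{\CC-c}\cap {}^p\bfE^{\geq 0}_{\CC-c}$ is equivalent to $\Modhol(\D_{\bfX})$ via $\Sol_{\bfX}^{\rmE}(\cdot)[d_{\bfX}]$.

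The main obstacle, I expect, is not the chain of equivalences above, which is essentially formal once the building blocks are in place, but rather justifying the equality of the two descriptions of ${}^p\bfE^{\geq 0}_{\CC-c}(\I\CC_{\bfX})$ used implicitly in the formulation, i.e.\ proving that for $K\in\BEC_{\CC-c}(\I\CC_{\bfX})$ one has $\sh_{\bfX}(\rmD_{\bfX}^{\rmE}K)\simeq \rmD_{\bfX}(\sh_{\bfX}K)$. Writing $K\simeq \Sol_{\bfX}^{\rmE}(\M)$ for some holonomic $\M$, both sides become $\Sol_{\bfX}((\DD_{\bfX}\M)_{\reg})[2d_{\bfX}]$ and $\rmD_{\bfX}\big(\Sol_{\bfX}(\M_{\reg})\big)$ respectively; the required identification reduces to the commutation $\DD_{\bfX}(\M_{\reg})\simeq (\DD_{\bfX}\M)_{\reg}$, which one can check using the characterization in Proposition \ref{prop2.7} (ii) together with the preservation of regular holonomicity under duality and the classical identity $\rmD_{\bfX}\circ\Sol_{\bfX}\simeq \Sol_{\bfX}\circ\DD_{\bfX}[2d_{\bfX}]$ on $\BDCrh(\D_{\bfX})$.
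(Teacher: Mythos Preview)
The paper does not prove this theorem; it is quoted as a preliminary result from \cite[Theorem 4.4]{Ito19}. So there is no proof in this paper to compare against directly. However, the paper does prove the closely analogous algebraic statement, Theorem~\ref{thm-main-3}, and it is instructive to compare your approach to that one.

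Your argument is correct, but it makes an unnecessary detour through the regularization functor. The proof of Theorem~\ref{thm-main-3} simply uses the t-exactness of the classical solution functor on \emph{all} holonomic $\D$-modules (the analytic analogue of Theorem~\ref{thm-perv}, which is Kashiwara's theorem and holds without any regularity hypothesis): one has $\M\in\DChol^{\leq 0}(\D_{\bfX})$ iff $\Sol_{\bfX}(\M)[d_{\bfX}]\in{}^p\bfD^{\geq 0}_{\CC-c}(\CC_{\bfX})$ directly, and then the identity $\sh_{\bfX}\circ\Sol_{\bfX}^{\rmE}\simeq\Sol_{\bfX}$ together with the sheafification-based description of ${}^p\bfE^{\geq 0}_{\CC-c}(\I\CC_{\bfX})$ finishes the job. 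Your chain factors this first equivalence through $\M_{\reg}$, which is valid (your conservativity argument via $\M^\infty$ and faithful flatness of $\D_{\bfX}^\infty$ over $\D_{\bfX}$ is fine) but superfluous. What your route does buy is that it only invokes the t-exactness of $\Sol_{\bfX}$ on the regular holonomic subcategory, where it is literally the regular Riemann--Hilbert equivalence; this might be viewed as conceptually cleaner if one wants to avoid citing the stronger perversity statement for irregular $\M$.

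Your identification of the equality of the two descriptions of ${}^p\bfE^{\geq 0}_{\CC-c}(\I\CC_{\bfX})$ as the main subtlety, and your reduction of $\sh_{\bfX}\circ\rmD_{\bfX}^{\rmE}\simeq\rmD_{\bfX}\circ\sh_{\bfX}$ to the commutation $(\DD_{\bfX}\M)_{\reg}\simeq\DD_{\bfX}(\M_{\reg})$, are on target; this is exactly the content of the analytic analogue of Lemma~\ref{lem3.18}, which the paper proves in the algebraic setting by the same method you outline.
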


Let us recall that
there exists a generalized t-structure 
$\big({}^{\frac{1}{2}}\bfE_{\RR-c}^{\leq c}(\I\CC_\bfX),
{}^{\frac{1}{2}}\bfE_{\RR-c}^{\geq c}(\I\CC_\bfX)\big)_{c\in\RR}$
on $\BEC_{\RR-c}(\I\CC_{\bfX})$
by \cite[Theorem 3.5.2 (i)]{DK16-2}.
Note that we have
\begin{align*}
{}^p\bfE_{\CC-c}^{\leq 0}(\I\CC_{\bfX}) &=
{}^{\frac{1}{2}}\bfE_{\RR-c}^{\leq 0}(\I\CC_{\bfX})\cap
\BEC_{\CC-c}(\I\CC_{\bfX}),\\
{}^p\bfE_{\CC-c}^{\geq 0}(\I\CC_{\bfX})
&={}^{\frac{1}{2}}\bfE_{\RR-c}^{\geq 0}(\I\CC_{\bfX})\cap
\BEC_{\CC-c}(\I\CC_{\bfX}),
\end{align*}
see \cite[Corollary 4.5]{Ito19} for the details.

\section{Main Results}\label{sec3}
In this section,
we define algebraic $\CC$-constructible enhanced ind-sheaves and prove that
the triangulated category of them is equivalent to the one of algebraic holonomic $\D$-modules.

\subsection{The Condition (AC)}\label{sec-AC}
The similar results of this section for the analytic case is proved in \cite[\S3.5]{Ito19}.

Let $X$ be a smooth algebraic variety over $\CC$ and
denote by $X^\an$ the underlying complex analytic manifold of $X$.
Recall that an algebraic stratification of $X$ is
a Zariski locally finite partition $\{X_\alpha\}_{\alpha\in A}$ of $X$
by locally closed subvarieties $X_\alpha$
such that for any $\alpha\in A$ $X_\alpha$ is smooth and
$\var{X}_\alpha = \sqcup_{\beta\in B} X_{\beta}$ for a subset $B\subset A$.
Moreover an algebraic stratification $\{X_\alpha\}_{\alpha\in A}$ of $X$ induces
a complex analytic stratification $\{X^\an_\alpha\}_{\alpha\in A}$ of $X^\an$.

\begin{definition}\label{def-AC}
We say that an enhanced ind-sheaf $K\in\ZEC(\I\CC_{X^\an})$ satisfies the condition 
$\AC$
if there exists an algebraic stratification $\{X_\alpha\}_\alpha$ of $X$ such that
n$$\pi^{-1}\CC_{(\var{X}^{\blow}_\alpha)^\an \setminus D^\an_\alpha}
\otimes \bfE (b^\an_\alpha)^{-1}K$$
has a modified quasi-normal form along $D^\an_\alpha$ for any $\alpha$,
where $b_\alpha \colon \var{X}^\blow_\alpha \to X$ is a blow-up of $\var{X_\alpha}$
along $\partial X_\alpha := \var{X_\alpha}\setminus X_\alpha$,
$D_\alpha := b_\alpha^{-1}(\partial X_\alpha)$
and $D_\alpha^\an := \big(\var{X}_\alpha^\blow\big)^\an\setminus
\big(\var{X}_\alpha^\blow\setminus D_\alpha\big)^\an$.
Namely $\var{X}^\blow_\alpha$ is a smooth algebraic variety over $\CC$,
$D_\alpha$ is a normal crossing divisor of $\var{X}^\blow_\alpha$
and $b_\alpha$ is a projective map
which induces an isomorphism $\var{X}^\blow_\alpha\setminus D_\alpha\simto X_\alpha$
and satisfies $b_\alpha\big(\var{X}^\blow_\alpha\big)=\var{X_\alpha}$.

We call such a family $\{X_\alpha\}_{\alpha\in A}$ an algebraic stratification adapted to $K$.
\end{definition}

We denote by $\ZEC_{\CC-c}(\I\CC_X)$ the full subcategory of $\ZEC(\I\CC_{X^\an})$
whose objects satisfy the condition 
$\AC$.
Note that $\ZEC_{\CC-c}(\I\CC_X)$ is the full subcategory of
the abelian category $\ZEC_{\CC-c}(\I\CC_{X^\an})$ of $\CC$-constructible enhanced ind-sheaves on $X^\an$, see Definition \ref{def-const}.
Moreover we set
\[\BEC_{\CC-c}(\I\CC_X) := \{K\in\BEC(\I\CC_{X^\an})\
|\ \SH^i(K)\in\ZEC_{\CC-c}(\I\CC_X) \mbox{ for any }i\in\ZZ \}
\subset \BEC_{\CC-c}(\I\CC_{X^\an}).\]

\begin{remark}\label{rem3.2}
Definition \ref{def-AC} does not depend on the choice of
the complex blow-up $b^\an_\alpha$ by Remark \ref{rem2.25}.
In particular, Definition \ref{def-AC} does not depend on a choice
of a blow-up $b_\alpha$.
Hence we obtain the following properties: 
\begin{itemize}
\item[(1)]
By Hironaka's desingularization theorem \cite{Hiro} (see also \cite[Theorem 4.3]{Naga}),
we can take $\var{X}^\blow_\alpha$ in Definition \ref{def-AC}
as a smooth complete algebraic variety.

\item[(2)]
Let $\{X_\alpha\}_{\alpha\in A}$ be an algebraic stratification of $X$
adapted to $K\in\ZEC_{\CC-c}(\I\CC_X)$.
Then any algebraic stratification of $X$ which is finer than the one $\{X_\alpha\}_{\alpha\in A}$
is also adapted to $K$,
see \cite[Sublemma 3.22]{Ito19} for analytic case.

\item[(3)]
For any $K, L\in\ZEC_{\CC-c}(\I\CC_X)$,
there exists a common algebraic stratification $\{X_\alpha\}_\alpha$ adapted to $K$ and $L$
with a common blow-up of $\var{X_\alpha}$ along $\partial X_\alpha$,
 \cite[Lemma 3.23]{Ito19} for analytic case.
\end{itemize}
\end{remark}

\begin{proposition}
The category $\ZEC_{\CC-c}(\I\CC_X)$ is
the full abelian subcategory of $\ZEC_{\CC-c}(\I\CC_{X^\an})$.
Hence the category $\BEC_{\CC-c}(\I\CC_X)$ is
a full triangulated subcategory of $\BEC_{\CC-c}(\I\CC_{X^\an})$.
\end{proposition}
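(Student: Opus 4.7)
The inclusion $\ZEC_{\CC-c}(\I\CC_X)\subset\ZEC_{\CC-c}(\I\CC_{X^\an})$ is immediate: any algebraic stratification of $X$ analytifies to a complex analytic stratification of $X^\an$, and the algebraic blow-ups $b_\alpha$ and normal crossing divisors $D_\alpha$ appearing in condition $\AC$ analytify to the data required in Definition \ref{def-const}. It thus suffices to show that $\ZEC_{\CC-c}(\I\CC_X)$ is closed in the abelian category $\ZEC_{\CC-c}(\I\CC_{X^\an})$ under kernels, cokernels, and extensions; the triangulated subcategory statement for $\BEC_{\CC-c}(\I\CC_X)$ will then follow from the long exact sequence of cohomologies.

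The key geometric input is Remark \ref{rem3.2}(3), which allows us, given finitely many objects of $\ZEC_{\CC-c}(\I\CC_X)$, to choose a single algebraic stratification $\{X_\alpha\}_{\alpha\in A}$ of $X$ together with common blow-ups $b_\alpha\colon\var{X}^\blow_\alpha\to X$ adapted to all of them simultaneously. For each $\alpha$ set
\[
F_\alpha(\cdot)\ :=\ \pi^{-1}\CC_{(\var{X}^{\blow}_\alpha)^\an\setminus D^\an_\alpha}\otimes\bfE(b^\an_\alpha)^{-1}(\cdot).
\]
Each $F_\alpha$ is exact on the standard heart $\ZEC(\I\CC_{X^\an})$: on quotient representatives, $\bfE(b^\an_\alpha)^{-1}$ is induced by the ordinary inverse image on $\BDC(\I\CC_{X^\an\times\RR_\infty})$, which is exact, and tensoring by $\pi^{-1}\CC_U$ for an open $U$ is exact as well. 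By hypothesis $F_\alpha$ sends the chosen objects into $\ZECmero(\I\CC_{(\var{X}_\alpha^\blow)^\an(D_\alpha^\an)})$, a full abelian subcategory of $\ZEC_{\RR-c}$ by the equivalence of Proposition \ref{prop2.23} with the category $\Conn\bigl((\var{X}_\alpha^\blow)^\an;D_\alpha^\an\bigr)$ of meromorphic connections.

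With these tools the claim is essentially formal. Given a morphism $\varphi\colon K\to L$ in $\ZEC_{\CC-c}(\I\CC_X)$, choose a common stratification adapted to both $K$ and $L$; exactness of $F_\alpha$ identifies $F_\alpha(\Ker\varphi)$ and $F_\alpha(\Coker\varphi)$ with the kernel and cokernel of $F_\alpha(\varphi)$, which remain in the abelian $\ZECmero$, so the same stratification witnesses $\AC$ for $\Ker\varphi$ and $\Coker\varphi$. The same argument applied to a short exact sequence $0\to K'\to K\to K''\to 0$, using a common stratification adapted to $K'$ and $K''$, shows that $F_\alpha(K)$ sits in a short exact sequence in $\ZECmero$ and hence belongs to $\ZECmero$, so $K\in\ZEC_{\CC-c}(\I\CC_X)$. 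Finally, for a morphism in $\BEC_{\CC-c}(\I\CC_X)$ with cone $C$, the long exact cohomology sequence exhibits each $\SH^i(C)$ as an extension of a kernel by a cokernel of morphisms between objects of $\ZEC_{\CC-c}(\I\CC_X)$; the abelian closure just established then forces $\SH^i(C)\in\ZEC_{\CC-c}(\I\CC_X)$, and combined with the obvious closure under shifts this yields the triangulated subcategory property.

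The only delicate point is ensuring the exactness of $F_\alpha$ on the standard heart, which is why the argument proceeds through a common adapted stratification and the concrete pullback functor rather than through the abstract Riemann--Hilbert equivalences of triangulated categories; this parallels the approach used in \cite[Proposition 3.21]{Ito19} for the analytic case.
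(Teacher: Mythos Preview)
Your proof is correct and follows essentially the same strategy as the paper: pass to a common adapted algebraic stratification (Remark~\ref{rem3.2}), use exactness of the pullback functor $F_\alpha$, and invoke that $\ZECmero(\I\CC_{(\var{X}_\alpha^\blow)^\an(D_\alpha^\an)})$ is abelian via its equivalence with $\Conn$. The paper carries out the kernel/cokernel step slightly more concretely by naming the meromorphic connections $\M_\alpha,\N_\alpha$ and the induced morphism $\varphi_\alpha$ via $\RH^\rmE$, but the content is the same.

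One point where your write-up is actually more thorough: you explicitly check closure under extensions and then explain, via the long exact cohomology sequence, why this is needed to deduce that $\BEC_{\CC-c}(\I\CC_X)$ is closed under cones. The paper simply writes ``Hence'' for the triangulated statement, leaving that step implicit. Your observation that each $\SH^i(C)$ is an extension of a kernel by a cokernel, together with the fact that $\Conn$ (and hence $\ZECmero$) is closed under extensions, fills this in cleanly.
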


\begin{proof}
It is enough to show that the kernel and the cokernel of a morphism $\Phi \colon K\to L$
of objects of $\ZEC_{\CC-c}(\I\CC_X)$ also satisfy the condition 
$\AC$.
By Remark \ref{rem3.2} (2) and Lemma \ref{lem-modi},
we can take a common algebraic stratification $\{X_\alpha\}_\alpha$ adapted to $K$ and $L$
with a common blow-up $b_\alpha : \var{X}^\blow_\alpha\to X$ of $\var{X_\alpha}$
along $\partial X_\alpha$
such that there exist analytic meromorphic connections $\M_\alpha $, $\N_\alpha$
on $\big(\var{X}^\blow_\alpha\big)^\an$ along $D^\an_\alpha$ satisfying the following isomorphisms
\begin{align*}
\pi^{-1}\CC_{(\var{X}^\blow_\alpha)^\an\setminus D^\an_\alpha}
\otimes \bfE (b^\an_\alpha)^{-1} K
&\simto \Sol_{(\var{X}^\blow_\alpha)^\an}^{\rmE}(\M_\alpha),\\
\pi^{-1}\CC_{(\var{X}^\blow_\alpha)^\an\setminus D^\an_\alpha}
\otimes \bfE (b^\an_\alpha)^{-1}L
&\simto \Sol_{(\var{X}^\blow_\alpha)^\an}^{\rmE}(\N_\alpha),
\end{align*}
where we set $D_\alpha = b_\alpha^{-1}(\partial X_\alpha)$.
Let $$\varphi_\alpha :=
\RH_{(\var{X}^\blow_\alpha)^\an}^\rmE(
\pi^{-1}\CC_{(\var{X}^\blow_\alpha)^\an\bs D^\an_\alpha}\otimes \bfE (b^\an_\alpha)^{-1}\Phi)
\colon \N_\alpha\to \M_\alpha$$ be the morphism of meromorphic connections
induced by the morphism $\Phi \colon K\to L$.
Since the category of meromorphic connections is abelian,
the cokernel $\Coker\varphi_\alpha$ of $\varphi_\alpha$
is a meromorphic connection on $\big(\var{X}^\blow_\alpha\big)^\an$ along $D^\an_\alpha$.
Moreover, we have $$\Ker\big(\Sol_{(\var{X}^\blow_\alpha)^\an}^\rmE(\varphi_\alpha)\big)
\simeq \Sol_{(\var{X}^\blow_\alpha)^\an}^{\rmE}(\Coker\varphi_\alpha)$$
because there exists an equivalence of abelian categories
$$\Sol_{(\var{X}^\blow_\alpha)^\an}^\rmE\colon
\Conn\big(\big(\var{X}^\blow_\alpha\big)^\an; D^\an_\alpha\big)^{\op}
\simto\ZECmero(\I\CC_{(\var{X}^\blow_\alpha)^\an(D^\an_\alpha)})$$
by Lemma \ref{lem-modi}.
Then we obtain a commutative diagram
\[\hspace{-33pt}\xymatrix@C=13pt@R=20pt{
0\ar@{->}[r] &
\pi^{-1}\CC_{(\var{X}^\blow_\alpha)^\an\setminus D^\an_\alpha}
\otimes\bfE (b^\an_\alpha)^{-1}(\Ker\Phi)\ar@{->}[r]\ar@{.>}[d]_{{}^\exists}^-\wr
& \pi^{-1}\CC_{(\var{X}^\blow_\alpha)^\an\setminus D^\an_\alpha}
\otimes\bfE (b^\an_\alpha)^{-1}K \ar@{->}[r]\ar@{->}[d]^-\wr
& \pi^{-1}\CC_{(\var{X}^\blow_\alpha)^\an\setminus D^\an_\alpha}
\otimes\bfE (b^\an_\alpha)^{-1}L \ar@{->}[d]^-\wr\\
0\ar@{->}[r] & 
\Ker\big(\Sol_{(\var{X}^\blow_\alpha)^\an}^\rmE(\varphi_\alpha)\big)
\ar@{->}[r]\ar@{-}[d]^-\wr
& \Sol_{(\var{X}^\blow_\alpha)^\an}^{\rmE}(\M_\alpha)
\ar@{->}[r]_-{\Sol_{(\var{X}^\blow_\alpha)^\an}^{\rmE}(\varphi_\alpha)}
& \Sol_{(\var{X}^\blow_\alpha)^\an}^{\rmE}(\N_\alpha).\\
{} & \Sol_{(\var{X}^\blow_\alpha)^\an}^{\rmE}(\Coker\varphi_\alpha) & {} & {}
}\]
Therefore, we have
$\pi^{-1}\CC_{(\var{X}^\blow_\alpha)^\an\setminus D^\an_\alpha}
\otimes\bfE (b^\an_\alpha)^{-1}(\Ker\Phi)
\in\ZECmero(\I\CC_{(\var{X}^\blow_\alpha)^\an(D^\an_\alpha)})$.
Namely it has a modified quasi-normal form along $D^\an_\alpha$
and hence $\Ker\Phi$ satisfies the condition 
$\AC$. 
Similarly we can show that $\Coker\Phi$ satisfies the condition 
$\AC$.
\end{proof}

For $\M\in\BDChol(\D_X)$, we set
\begin{align*}
\Sol_X^\rmE(\M) &:= \Sol_{X^\an}^{\rmE}(\M^\an)\in\BEC(\I\CC_{X^\an}),\\
\DR_X^\rmE(\M) &:= \DR_{X^\an}^{\rmE}(\M^\an)\in\BEC(\I\CC_{X^\an}).
\end{align*}
By Theorem \ref{thm-const},
we have $\Sol_X^\rmE(\M)\in\BEC_{\CC-c}(\I\CC_{X^\an})$
for any $\M\in\BDChol(\D_X)$. 
Moreover we obtain the following assertion:
\begin{proposition}\label{prop3.4}
For any $\M\in\BDChol(\D_X)$
the enhanced solution complex $\Sol_X^{\rmE}(\M)$ of $\M$
is an object of $\BEC_{\CC-c}(\I\CC_X)$.
\end{proposition}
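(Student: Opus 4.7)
The plan is to exhibit a single algebraic stratification of $X$ that is simultaneously adapted to every cohomology of $\Sol_X^{\rmE}(\M)$, thereby reducing the condition $\AC$ to the meromorphic case already treated in Proposition \ref{prop2.23}. First I apply Lemma \ref{lem-str} to $\M$ to obtain an algebraic stratification $\{X_\alpha\}_{\alpha\in A}$ of $X$ such that, for each $\alpha$ and any algebraic blow-up $b_\alpha\colon \var{X}^\blow_\alpha\to X$ of $\var{X_\alpha}$ along $\partial X_\alpha$ with $D_\alpha := b_\alpha^{-1}(\partial X_\alpha)$ a normal crossing divisor, the complex $(\bfD b_\alpha^\ast\M)(\ast D_\alpha)$ lies in $\BDCmero(\D_{\var{X}^\blow_\alpha(D_\alpha)})$. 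By Hironaka's theorem I may further take each $\var{X}^\blow_\alpha$ smooth complete, though this is not strictly needed for the argument.

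Next I chase the functorial isomorphisms. Combining Theorem \ref{thm-DK}(2) with Proposition \ref{prop2.12}(2) gives
\[\bfE(b^\an_\alpha)^{-1}\Sol_X^{\rmE}(\M)\simeq \Sol_{(\var{X}^\blow_\alpha)^\an}^{\rmE}\big((\bfD b_\alpha^\ast\M)^\an\big),\]
and then Theorem \ref{thm-DK}(5) together with the natural identification $\big(\N(\ast D_\alpha)\big)^\an\simeq \N^\an(\ast D^\an_\alpha)$ (reducing to $\SO_X(\ast D)^\an\simeq\SO_{X^\an}(\ast D^\an)$) yields
\[\pi^{-1}\CC_{(\var{X}^\blow_\alpha)^\an\setminus D^\an_\alpha}\otimes\bfE(b^\an_\alpha)^{-1}\Sol_X^{\rmE}(\M)\simeq \Sol_{(\var{X}^\blow_\alpha)^\an}^{\rmE}\Big(\big((\bfD b_\alpha^\ast\M)(\ast D_\alpha)\big)^\an\Big).\]
By the choice of the stratification, the right-hand side is $\Sol^{\rmE}$ of an object of $\BDCmero\big(\D_{(\var{X}^\blow_\alpha)^\an(D^\an_\alpha)}\big)$, so by Proposition \ref{prop2.23} it belongs to $\BECmero\big(\I\CC_{(\var{X}^\blow_\alpha)^\an(D^\an_\alpha)}\big)$.

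Finally I pass to cohomology. The functor $K\mapsto \pi^{-1}\CC_{(\var{X}^\blow_\alpha)^\an\setminus D^\an_\alpha}\otimes\bfE(b^\an_\alpha)^{-1}K$ is t-exact for the standard t-structures, being the composition of the exact inverse image $\bfE(b^\an_\alpha)^{-1}$ with $\bfE i_{U!!}\bfE i_U^{-1}$ for the open inclusion $U=(\var{X}^\blow_\alpha)^\an\setminus D^\an_\alpha\hookrightarrow (\var{X}^\blow_\alpha)^\an$. Hence it commutes with the cohomology functors $\SH^i$, and since each cohomology of an object of $\BECmero$ lies in $\ZECmero$ and therefore has modified quasi-normal form along $D^\an_\alpha$, every $\SH^i(\Sol_X^{\rmE}(\M))$ satisfies the condition $\AC$ relative to the common algebraic stratification $\{X_\alpha\}_{\alpha\in A}$. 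This gives $\Sol_X^{\rmE}(\M)\in\BEC_{\CC-c}(\I\CC_X)$. The only subtle bookkeeping is the compatibility of analytification with $(\ast D_\alpha)$ used in the second step; beyond that, the argument is a direct transfer of the analytic statement $\Sol_{\bfX}^{\rmE}(\M)\in\BEC_{\CC-c}(\I\CC_{\bfX})$ to the algebraic setting, with Lemma \ref{lem-str} supplying the required algebraicity of the stratification.
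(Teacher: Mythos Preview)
Your argument is essentially the same as the paper's: apply Lemma \ref{lem-str} to produce an algebraic stratification with blow-ups $b_\alpha$ for which $(\bfD b_\alpha^\ast\M)(\ast D_\alpha)$ is meromorphic, identify $\pi^{-1}\CC_{(\var{X}^\blow_\alpha)^\an\setminus D^\an_\alpha}\otimes\bfE(b^\an_\alpha)^{-1}\Sol_X^{\rmE}(\M)$ with the enhanced solution of that meromorphic complex via Theorem \ref{thm-DK}(2),(5), invoke Proposition \ref{prop2.23} to land in $\BECmero$, and then use t-exactness of $\pi^{-1}\CC_U\otimes\bfE(b^\an_\alpha)^{-1}(\cdot)$ to pass to cohomologies. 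The one point to tighten is that Lemma \ref{lem-str} is stated only for a single holonomic module, not for a complex $\M\in\BDChol(\D_X)$; the paper handles this by first reducing to $\M\in\Modhol(\D_X)$ via induction on the amplitude, whereas you invoke the lemma directly on the complex. Either insert that reduction at the start, or note that applying the lemma to each $\SH^j(\M)$ and taking a common refinement of the finitely many resulting stratifications yields the stratification you need.
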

\begin{proof}
Since the category $\BDChol(\D_X)$ is a full triangulated subcategory of $\BDC(\D_X)$
and the category $\BEC_{\CC-c}(\I\CC_X)$ is a full triangulated subcategory
of $\BEC_{\CC-c}(\I\CC_{X^\an})$, 
it is enough to show the assertion in the case of $\M\in\Modhol(\D_X)$ 
by induction on the length of a complex.

Let $\M\in\Modhol(\D_X)$ and we put $K := \Sol_X^\rmE(\M)\in\BEC_{\CC-c}(\I\CC_{X^\an})$.
By Lemma \ref{lem-str}, 
there exist an algebraic stratification $\{X_\alpha\}_{\alpha\in A}$ of $X$
and a blow-up $b_\alpha \colon \var{X}^\blow_\alpha \to X$ of $\var{X_\alpha}$
along $\partial X_\alpha$ for each $\alpha\in A$
such that $(\bfD b_\alpha^\ast \M)(\ast D_\alpha)
\in\BDCmero(\D_{\var{X}^\blow_\alpha(D_\alpha)})$,
where $D_\alpha := b_\alpha^{-1}(\partial X_\alpha)$
is a normal crossing divisor.
Then we have
$$\pi^{-1}\CC_{(\var{X}^\blow_\alpha)^\an\setminus D^\an_\alpha}
\otimes \bfE (b^\an_\alpha)^{-1}K
\simeq
\Sol_{\var{X}^\blow_\alpha}^\rmE\big((\bfD b_\alpha^\ast \M)(\ast D_\alpha)\big)
\in\BECmero(\I\CC_{(\var{X}^\blow_\alpha)^\an(D^\an_\alpha)})$$
for any $\alpha\in A$,
where we used 
$$\BDCmero(\D_{\var{X}^\blow_\alpha(D_\alpha)})\simto
\BDCmero(\D_{(\var{X}^\blow_\alpha)^\an(D_\alpha^\an)})\simto
\BECmero(\I\CC_{(\var{X}^\blow_\alpha)^\an(D^\an_\alpha)}),$$
see Lemma \ref{lem2.17} and Proposition \ref{prop2.23} for the details.
Since the functor
$\pi^{-1}\CC_{(\var{X}^\blow_\alpha)^\an\setminus D^\an_\alpha}\otimes
\bfE (b^\an_\alpha)^{-1}(\cdot)$ is t-exact with respect to the standard t-structure
(see \cite[Proposition 2.7.3 (iv) and Lemma 2.7.5 (i)]{DK16-2})
we have 
\begin{align*}
\pi^{-1}\CC_{(\var{X}^\blow_\alpha)^\an\setminus D^\an_\alpha}
\otimes \bfE (b^\an_\alpha)^{-1}(\SH^iK)
\simeq
\SH^i(\pi^{-1}\CC_{(\var{X}^\blow_\alpha)^\an\setminus D^\an_\alpha}
\otimes \bfE (b^\an_\alpha)^{-1}K)
\in\ZECmero(\I\CC_{(\var{X}^\blow_\alpha)^\an(D^\an_\alpha)})
\end{align*}
for any $i\in\ZZ$.
Namely the enhanced ind-sheaf
$\pi^{-1}\CC_{(\var{X}^\blow_\alpha)^\an\setminus D^\an_\alpha}
\otimes \bfE (b^\an_\alpha)^{-1}(\SH^iK)
\in\ZEC(\I\CC_{(\var{X}^\blow_\alpha)^\an})$ has a modified quasi-normal form
along $D^\an_\alpha$.
Therefore any cohomology of $K$ satisfies the condition 
$\AC$ and
hence $K\in\BEC_{\CC-c}(\I\CC_X)$.
\end{proof}

On the other hand, we have
\begin{proposition}\label{prop3.5}
For any $K\in\BEC_{\CC-c}(\I\CC_{X})$,
there exists $\M\in\BDChol(\D_X)$
such that $$K\simto \Sol_{X}^{\rmE}(\M).$$
\end{proposition}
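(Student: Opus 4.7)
The strategy is to combine the analytic Riemann--Hilbert correspondence of Theorem \ref{thm-const} with an algebraization procedure along the stratification adapted to $K$.

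First, I apply Theorem \ref{thm-const} to $K$, regarded as a $\CC$-constructible enhanced ind-sheaf on the complex manifold $X^\an$. This produces an analytic holonomic $\D_{X^\an}$-module $\N \in \BDChol(\D_{X^\an})$ together with an isomorphism $K \simto \Sol_{X^\an}^{\rmE}(\N)$. The problem therefore reduces to algebraizing $\N$, i.e.\ to producing $\M \in \BDChol(\D_X)$ with $\M^\an \simeq \N$, for then $\Sol_X^\rmE(\M) = \Sol_{X^\an}^\rmE(\M^\an) \simeq K$.

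Next, fix an algebraic stratification $\{X_\alpha\}_{\alpha \in A}$ adapted to $K$, choosing each $\var{X}_\alpha^\blow$ smooth and complete (Remark \ref{rem3.2} (1)). For each $\alpha$, condition $\AC$ gives
\[ K_\alpha := \pi^{-1}\CC_{(\var{X}^\blow_\alpha)^\an \setminus D^\an_\alpha} \otimes \bfE(b^\an_\alpha)^{-1} K \in \BECmero(\I\CC_{(\var{X}^\blow_\alpha)^\an(D^\an_\alpha)}). \]
Proposition \ref{prop2.23} provides an analytic meromorphic connection complex $\M_\alpha \in \BDCmero(\D_{(\var{X}^\blow_\alpha)^\an(D^\an_\alpha)})$ with $\Sol_{(\var{X}^\blow_\alpha)^\an}^\rmE(\M_\alpha) \simeq K_\alpha$. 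Since $\var{X}^\blow_\alpha$ is complete, Lemma \ref{lem2.17} descends $\M_\alpha$ to an algebraic meromorphic connection complex $\M_\alpha^{\alg} \in \BDCmero(\D_{\var{X}^\blow_\alpha(D_\alpha)})$ with $(\M_\alpha^\alg)^\an \simeq \M_\alpha$.

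Third, I assemble $\M$ by induction on the stratification, ordering the indices so that $\beta$ precedes $\alpha$ whenever $X_\beta \subset \partial X_\alpha$. At each step, the proper direct image $\bfD(b_\alpha)_\ast \M_\alpha^\alg \in \BDChol(\D_X)$ (holonomic because $b_\alpha$ is projective) is spliced with the object already built over the union of previously handled strata using the standard open--closed distinguished triangles in $\BDChol(\D_X)$. Proposition \ref{prop2.12} (3) shows that analytification commutes with these pushforwards, and Theorem \ref{thm-DK} (3) translates the construction to the enhanced side, where the same distinguished triangles rebuild $K$ from its pieces $K_\alpha$. The full faithfulness of $\Sol_{X^\an}^\rmE$ on holonomic modules (Theorem \ref{thm2.6} (1)) then forces $\M^\an \simeq \N$, whence $\Sol_X^\rmE(\M) \simeq K$.

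The delicate point is this third step: the local algebraic pieces $\bfD(b_\alpha)_\ast \M_\alpha^\alg$ do not carry canonical gluing data across strata, because morphisms in a derived category are only defined up to non-canonical quasi-isomorphism. The way around this is to perform the inductive gluing on the enhanced side, where all morphisms are canonical and determined by $K$ itself, and then transfer each resulting triangle back to $\D$-modules by combining Proposition \ref{prop2.23} and Lemma \ref{lem2.17}; the uniqueness encoded in Theorem \ref{thm2.6} (1) then guarantees that the globally constructed $\M$ actually satisfies $\M^\an \simeq \N$.
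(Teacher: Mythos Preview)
Your overall strategy---stratify, algebraize the meromorphic connection on each complete blowup via Lemma~\ref{lem2.17}, then reassemble via distinguished triangles---matches the paper's. The paper is more direct in that it never passes through the analytic module $\N$: instead of proving ``$\N$ is algebraizable,'' it shows directly (Lemma~\ref{lem3.9}, which is your step~2 followed by pushforward along $b_\alpha$) that each piece $\pi^{-1}\CC_{X_\alpha^\an}\otimes K$ already lies in the essential image of $\Sol_X^\rmE$, and then runs the induction inside $\BEC(\I\CC_{X^\an})$ via the dimension filtration $Y_k=\bigsqcup_{\dim X_\alpha\le k}X_\alpha$ and its associated open--closed triangles.

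Your step~3, however, does not close as written. You correctly flag the gluing as delicate, but the proposed fix misfires: Proposition~\ref{prop2.23} and Lemma~\ref{lem2.17} concern meromorphic connections on a \emph{single} complete blowup $\var{X}^\blow_\alpha$, and say nothing about the extension data between the pushed-forward pieces $\bfD b_{\alpha\ast}\M_\alpha^{\alg}$ on $X$, which is exactly where the connecting maps of your triangles live. Theorem~\ref{thm2.6}(1) gives uniqueness of the \emph{analytic} object only; it does not produce an algebraic lift of those connecting maps. What the inductive step genuinely requires is that the essential image of $\Sol_X^\rmE$ be closed under cones, so that knowing two vertices of each triangle lie in $\Sol_X^\rmE(\BDChol(\D_X))$ forces the third to as well. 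The paper takes this step directly; your detour through $\N$ neither avoids it nor supplies an independent justification for it.
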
 
\begin{proof}
By induction on the length of a complex,
it is enough to show in the case of $K\in\ZEC_{\CC-c}(\I\CC_X)$.
Let $\{X_\alpha\}_{\alpha\in A}$ be an algebraic stratification of $X$ adapted to $K$
and we put $$ Y_k := \bigsqcup_{\dim X_\alpha\leq k}X_\alpha,
\hspace{17pt}
S_k := Y_k\setminus Y_{k-1} = \bigsqcup_{\dim X_\alpha = k}X_\alpha
\hspace{17pt}
\mbox{for any } k=0, 1, \ldots, d_X.$$
Then  $Y_0=S_0$ and $X=Y_{d_X}$.
Furthermore, there exists a distinguished triangle
\[\pi^{-1}\CC_{S^\an_k}\otimes K\to
\pi^{-1}\CC_{Y^\an_k}\otimes K \to
\pi^{-1}\CC_{Y^\an_{k-1}}\otimes K\xrightarrow{+1}.\]
Hence, by induction on $k$,
it is enough to show that 
$\pi^{-1}\CC_{S^\an_k}\otimes K\in\Sol_X^{\rmE}(\BDChol(\D_X))$ for any $k$.

Let $S_i$ be decomposed into $Z_1\sqcup\cdots \sqcup Z_{m_i}$
with some strata $Z_1, \ldots, Z_{m_i}\in\{X_\alpha\}_{\alpha\in A}$.
If $m_i=1$, by Lemma \ref{lem3.9} below
we have $\pi^{-1}\CC_{S^\an_i}\otimes K\in\Sol_X^{\rmE}(\BDChol(\D_X))$.
We shall prove the case of $m_i\geq2$.
In this case there exists a distinguished triangle
\[\pi^{-1}\CC_{Z^\an_1}\otimes K\to
\pi^{-1}\CC_{Z^\an_1\sqcup\cdots \sqcup Z^\an_j}\otimes K \to
\pi^{-1}\CC_{Z^\an_2\sqcup\cdots \sqcup Z^\an_j}\otimes K\xrightarrow{+1}\]
for any $j=2, \ldots, m_i$.
Hence, by induction on $j$
it is enough to show that 
$\pi^{-1}\CC_{Z^\an_1}\otimes K\in\Sol_X^{\rmE}(\BDChol(\D_X))$.
However, it follows from Lemma \ref{lem3.9} below.
\end{proof}

\begin{lemma}\label{lem3.9}
For $K\in\ZEC_{\CC-c}(\I\CC_{X})$
there exists an algebraic stratification $\{X_\alpha\}_\alpha$ of $X$ such that 
\[\pi^{-1}\CC_{X^\an_\alpha}\otimes K\in\Sol_X^\rmE\big(\BDChol(\D_X)\big).\]
\end{lemma}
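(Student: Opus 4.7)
The plan is to use condition \AC\ to turn $K$, stratum by stratum, into the enhanced solution of a meromorphic connection on a blow-up, and then push this connection forward via the proper map $b_\alpha$ to obtain a holonomic $\D_X$-module whose enhanced solution complex realises $\pi^{-1}\CC_{X^\an_\alpha}\otimes K$. I will take the stratification $\{X_\alpha\}_{\alpha\in A}$ witnessing \AC\ for $K$ as the stratification required by the lemma; by Remark \ref{rem3.2}(1) I may arrange that each $\var{X}_\alpha^\blow$ is smooth and complete.

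First I would fix $\alpha$, set $U_\alpha := (\var{X}_\alpha^\blow)^\an\setminus D_\alpha^\an$, and consider
\[
L_\alpha := \pi^{-1}\CC_{U_\alpha}\otimes\bfE(b_\alpha^\an)^{-1}K \in\ZEC_{\RR-c}(\I\CC_{(\var{X}_\alpha^\blow)^\an}).
\]
By \AC, $L_\alpha$ has a modified quasi-normal form along $D_\alpha^\an$, so by Lemma \ref{lem-modi} there exists an analytic meromorphic connection $\M_\alpha^\an\in\Conn((\var{X}_\alpha^\blow)^\an;D_\alpha^\an)$ with $L_\alpha\simeq\Sol^\rmE_{(\var{X}_\alpha^\blow)^\an}(\M_\alpha^\an)$. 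Since $\var{X}_\alpha^\blow$ is complete, Lemma \ref{lem2.17} lets me algebraize $\M_\alpha^\an$ to an algebraic meromorphic connection $\M_\alpha\in\Conn(\var{X}_\alpha^\blow;D_\alpha)$ whose analytification is $\M_\alpha^\an$; in particular $\M_\alpha\in\BDChol(\D_{\var{X}_\alpha^\blow})$.

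Next I would identify $\bfE(b_\alpha^\an)_{!!}L_\alpha$ with $\pi^{-1}\CC_{X^\an_\alpha}\otimes K$. Factor $b_\alpha^\an\circ i_{U_\alpha}=i_{X^\an_\alpha}\circ g_\alpha$, where $i_{U_\alpha}$, $i_{X_\alpha^\an}$ are the respective locally closed embeddings and $g_\alpha:=b_\alpha^\an|_{U_\alpha}\colon U_\alpha\simto X^\an_\alpha$. Using the identity $\pi^{-1}\CC_Z\otimes(\cdot)\simeq\bfE i_{Z!!}\bfE i_Z^{-1}(\cdot)$ (Lemma 2.7.6 of \cite{DK16-2} recalled above) and the fact that $\bfE g_{\alpha!!}\bfE g_\alpha^{-1}\simeq\id$, I get
\[
\bfE(b_\alpha^\an)_{!!}L_\alpha
\simeq \bfE i_{X^\an_\alpha!!}\bfE g_{\alpha!!}\bfE g_\alpha^{-1}\bfE i_{X^\an_\alpha}^{-1}K
\simeq \pi^{-1}\CC_{X^\an_\alpha}\otimes K.
\]

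Finally, since $b_\alpha$ is projective and $\M_\alpha$ is holonomic, $b_\alpha^\an$ is proper and $\bfE(b_\alpha^\an)_{!!}\simeq\bfE(b_\alpha^\an)_\ast$. Setting $k_\alpha:=\dim\var{X}_\alpha^\blow$, Theorem \ref{thm-DK}(3) yields
\[
\bfE(b_\alpha^\an)_\ast\Sol^\rmE_{(\var{X}_\alpha^\blow)^\an}(\M_\alpha^\an)[k_\alpha]
\simeq\Sol^\rmE_{X^\an}\bigl(\bfD(b_\alpha^\an)_\ast\M_\alpha^\an\bigr)[d_X],
\]
and Proposition \ref{prop2.12}(3) gives $\bfD(b_\alpha^\an)_\ast\M_\alpha^\an\simeq(\bfD b_{\alpha\ast}\M_\alpha)^\an$. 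Combining these with the previous step and putting
\[
\N_\alpha := \bfD b_{\alpha\ast}\M_\alpha\,[d_X-k_\alpha]\in\BDChol(\D_X),
\]
I conclude $\pi^{-1}\CC_{X^\an_\alpha}\otimes K\simeq\Sol^\rmE_X(\N_\alpha)$. The only delicate point is the factorisation step through the locally closed embedding $i_{X^\an_\alpha}$ together with the compatibility of $\bfE(\cdot)_{!!}$ with this factorisation; once that is in hand, the rest is a direct application of the already-recorded algebraic/analytic and direct-image comparison results.
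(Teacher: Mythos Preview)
Your proof is correct and follows essentially the same approach as the paper. The paper packages your first paragraph (passing from condition $\AC$ to an algebraic meromorphic connection via Lemma \ref{lem-modi} and Lemma \ref{lem2.17}, using completeness of $\var{X}^\blow_\alpha$) into Sublemma \ref{sublem3.8}, and then applies $\bfE(b_\alpha^\an)_{!!}$ together with Theorem \ref{thm-DK}(3) and Proposition \ref{prop2.12}(3) exactly as you do; your explicit factorisation $b_\alpha^\an\circ i_{U_\alpha}=i_{X^\an_\alpha}\circ g_\alpha$ just spells out the identification of $\bfE(b_\alpha^\an)_{!!}L_\alpha$ with $\pi^{-1}\CC_{X^\an_\alpha}\otimes K$ that the paper leaves implicit.
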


\begin{proof}
By Sublemma \ref{sublem3.8} below,
there exist an algebraic stratification $\{X_\alpha\}_\alpha$ of $X$
and algebraic meromorphic connections $\M_\alpha$ on $\var{X}^\blow_\alpha$ along $D_\alpha$
such that
$$\pi^{-1}\CC_{(\var{X}^\blow_\alpha)^\an\setminus D^\an_\alpha}
\otimes \bfE (b^\an_\alpha)^{-1}K
\simeq\Sol_{\var{X}^\blow_\alpha}^{\rmE}(\M_\alpha)
\hspace{20pt} (\mbox{for any $\alpha$}),$$
where $b_\alpha \colon \var{X}^\blow_\alpha \to X$ is a blow-up of $\var{X_\alpha}$
along $\partial X_\alpha$ and
$D_\alpha := b_\alpha^{-1}(\partial X_\alpha)$.
By applying the direct image functor $\bfE b^\an_{\alpha !!}$
we obtain an isomorphism in $\BEC(\I\CC_{X^\an})$
$$\pi^{-1}\CC_{X^\an_\alpha}\otimes K\simto
\Sol_X^{\rmE}(\bfD b_{\alpha\ast}\N_\alpha)[d_X-d_{X_\alpha}].$$
Here we used Theorem \ref{thm-DK} (3) and Proposition \ref{prop2.12} (3).
Moreover, by \cite[Theorem 3.2.3 (i)]{HTT08},
we have $\bfD b_{\alpha\ast}\N_\alpha\in\BDChol(\D_X)$
and hence the proof is completed.
\end{proof}

\begin{sublemma}\label{sublem3.8}
Let $K$ be an object of $\ZEC(\I\CC_{X^\an})$.
Then the following two conditions are equivalent:
\begin{itemize}
\item[\rm(1)]
$K$ satisfies the condition 
$\AC$.

\item[\rm(2)]
There exist an algebraic stratification $\{X_\alpha\}_\alpha$ of $X$
and algebraic meromorphic connections $\M_\alpha$ on $\var{X}^\blow_\alpha$ along $D_\alpha$
such that for any $\alpha$
$$\pi^{-1}\CC_{(\var{X}^\blow_\alpha)^\an\setminus D^\an_\alpha}
\otimes \bfE (b^\an_\alpha)^{-1}K
\simeq\Sol_{\var{X}^\blow_\alpha}^{\rmE}(\M_\alpha),$$
where $b_\alpha \colon \var{X}^\blow_\alpha \to X$ is a blow-up of $\var{X_\alpha}$
along $\partial X_\alpha$ and
$D_\alpha := b_\alpha^{-1}(\partial X_\alpha)$.
\end{itemize}
\end{sublemma}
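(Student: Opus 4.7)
The plan is to prove the two implications separately, both by reducing to the analytic comparison results recalled in \S\ref{sec2.8} together with the algebraic/analytic GAGA comparison for meromorphic connections (Lemma \ref{lem2.17}). The crucial point is that, by Remark \ref{rem3.2}\,(1) together with Hironaka's desingularization theorem, we are free to arrange that each blow-up $\var{X}^\blow_\alpha$ is a \emph{smooth complete} algebraic variety, which is exactly the hypothesis needed to invoke Lemma \ref{lem2.17}.

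For the implication $(2)\Rightarrow(1)$, start from an algebraic stratification $\{X_\alpha\}_\alpha$ with algebraic meromorphic connections $\M_\alpha\in\Conn(\var{X}^\blow_\alpha;D_\alpha)$ as in (2). The analytification functor of Lemma \ref{lem2.17} sends $\M_\alpha$ to an analytic meromorphic connection $\M_\alpha^\an\in\Conn((\var{X}^\blow_\alpha)^\an;D_\alpha^\an)$, and by definition of $\Sol_{\var{X}^\blow_\alpha}^{\rmE}$ (see \S\ref{sec-alg}) the isomorphism in (2) reads
\[
\pi^{-1}\CC_{(\var{X}^\blow_\alpha)^\an\setminus D^\an_\alpha}\otimes \bfE (b^\an_\alpha)^{-1}K
\;\simeq\;\Sol_{(\var{X}^\blow_\alpha)^\an}^{\rmE}(\M_\alpha^\an).
\]
By Lemma \ref{lem-modi} the right hand side has a modified quasi-normal form along $D_\alpha^\an$, so $K$ satisfies $\AC$ with respect to the same stratification.

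For the converse $(1)\Rightarrow(2)$, assume $K$ satisfies $\AC$ and choose an adapted algebraic stratification $\{X_\alpha\}_\alpha$; by Remark \ref{rem3.2}\,(1) we may take each $\var{X}^\blow_\alpha$ complete. Then for every $\alpha$ the enhanced ind-sheaf $\pi^{-1}\CC_{(\var{X}^\blow_\alpha)^\an\setminus D^\an_\alpha}\otimes \bfE (b^\an_\alpha)^{-1}K$ lies in $\ZECmero(\I\CC_{(\var{X}^\blow_\alpha)^\an(D^\an_\alpha)})$, so Lemma \ref{lem-modi} produces an analytic meromorphic connection $\mathcal{N}_\alpha\in\Conn((\var{X}^\blow_\alpha)^\an;D_\alpha^\an)$ with
\[
\pi^{-1}\CC_{(\var{X}^\blow_\alpha)^\an\setminus D^\an_\alpha}\otimes \bfE (b^\an_\alpha)^{-1}K
\;\simeq\;\Sol_{(\var{X}^\blow_\alpha)^\an}^{\rmE}(\mathcal{N}_\alpha).
\]
Completeness of $\var{X}^\blow_\alpha$ is the key hypothesis: Lemma \ref{lem2.17} gives an equivalence $(\,\cdot\,)^\an\colon\Conn(\var{X}^\blow_\alpha;D_\alpha)\simto\Conn((\var{X}^\blow_\alpha)^\an;D_\alpha^\an)$, so there exists an algebraic meromorphic connection $\M_\alpha\in\Conn(\var{X}^\blow_\alpha;D_\alpha)$ with $\M_\alpha^\an\simeq\mathcal{N}_\alpha$. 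Since $\Sol_{\var{X}^\blow_\alpha}^{\rmE}(\M_\alpha):=\Sol_{(\var{X}^\blow_\alpha)^\an}^{\rmE}(\M_\alpha^\an)$, this is exactly the isomorphism required by (2).

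The only real issue to watch is whether one is entitled to take the blow-ups $\var{X}^\blow_\alpha$ complete in the definition of $\AC$; this is guaranteed by Remark \ref{rem3.2}\,(1) via Hironaka, and is what makes the algebraization step through Lemma \ref{lem2.17} possible. With that in hand both directions are immediate from Lemma \ref{lem-modi} and Lemma \ref{lem2.17}, so no further delicate analysis is needed.
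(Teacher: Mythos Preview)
Your proof is correct and follows exactly the same approach as the paper: the paper's proof is just the one-line remark ``By Remark \ref{rem3.2}\,(1), this follows from Lemmas \ref{lem2.17}, \ref{lem-modi},'' and you have simply spelled out what that means. The only minor quibble is that for the direction $(2)\Rightarrow(1)$ you do not actually need the equivalence of Lemma \ref{lem2.17} (only that analytification sends algebraic meromorphic connections to analytic ones, which holds without completeness), but this does not affect correctness.
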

\begin{proof}
By Remark \ref{rem3.2} (1),
this follows from Lemmas \ref{lem2.17}, \ref{lem-modi}.
\end{proof}

Hence we obtain an essential surjective functor
\[\Sol_X^\rmE \colon \BDChol(\D_X)\to\BDC_{\CC-c}(\I\CC_X).\]
This is not fully faithful in general.

However if $X$ is complete, then this is fully faithful.
\begin{theorem}\label{thm-main-1}
Let $X$ be a smooth complete algebraic variety over $\CC$.
Then there exists an equivalence of triangulated categories
\[\Sol_X^{\rmE} \colon \BDChol(\D_X)^{\op}\simto \BEC_{\CC-c}(\I\CC_X).\]
\end{theorem}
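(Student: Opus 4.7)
Essential surjectivity of $\Sol_X^{\rmE}$ has already been verified in Propositions~\ref{prop3.4} and~\ref{prop3.5}, so only full faithfulness remains, and the plan is to reduce it to a GAGA-type statement for holonomic $\D$-modules on a complete variety.

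The first step is to observe that by definition $\Sol_X^{\rmE}(\M)=\Sol_{X^\an}^{\rmE}(\M^\an)$, so $\Sol_X^{\rmE}$ factors as
\[
\BDChol(\D_X)^{\op}\xrightarrow{\,((\cdot)^\an)^{\op}\,}\BDChol(\D_{X^\an})^{\op}\xrightarrow{\,\Sol_{X^\an}^{\rmE}\,}\BEC_{\CC-c}(\I\CC_{X^\an}),
\]
and Proposition~\ref{prop3.4} guarantees the composite takes values in the full subcategory $\BEC_{\CC-c}(\I\CC_X)\subset\BEC_{\CC-c}(\I\CC_{X^\an})$. Since $\Sol_{X^\an}^{\rmE}$ is an equivalence by Theorem~\ref{thm-const}, full faithfulness of $\Sol_X^{\rmE}$ is equivalent to full faithfulness of the analytification $(\cdot)^\an\colon\BDChol(\D_X)\to\BDChol(\D_{X^\an})$ under the hypothesis that $X$ is complete.

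For the latter, I would argue by devissage along a common algebraic stratification. Given $\M,\N\in\BDChol(\D_X)$, Lemma~\ref{lem-str} applied to $\M\oplus\N$ produces an algebraic stratification $\{X_\alpha\}_{\alpha\in A}$ of $X$ and, by Hironaka, blow-ups $b_\alpha\colon\var{X}_\alpha^{\blow}\to X$ that may be chosen with $\var{X}_\alpha^{\blow}$ smooth and \emph{complete}, such that $(\bfD b_\alpha^{\ast}\M)(\ast D_\alpha)$ and $(\bfD b_\alpha^{\ast}\N)(\ast D_\alpha)$ lie in $\BDCmero(\D_{\var{X}_\alpha^{\blow}(D_\alpha)})$. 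Using the distinguished triangles associated with $\{X_\alpha\}$, Kashiwara's equivalence for closed embeddings, and the adjunctions relating $\bfD b_\alpha^{\ast}$ and $\bfD (b_\alpha)_{\ast}$, one expresses $\rHom_{\D_X}(\M,\N)$ as an iterated cone assembled from $\rHom$-groups of algebraic meromorphic connections on each complete $\var{X}_\alpha^{\blow}$. Carrying out exactly the same devissage on the analytic side and invoking Lemma~\ref{lem2.17} (which identifies algebraic and analytic meromorphic connections on a complete smooth variety) together with Proposition~\ref{prop2.12} (compatibility of analytification with $\bfD(\cdot)^{\ast}$ and, for proper maps, with $\bfD(\cdot)_{\ast}$) yields a term-by-term isomorphism of the two iterated cones, hence the desired $\rHom_{\D_X}(\M,\N)\simto\rHom_{\D_{X^\an}}(\M^\an,\N^\an)$.

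The principal obstacle will be organising the devissage so that each successive reduction stays inside complete smooth varieties and commutes with analytification. Completeness of $X$ is essential at two points: first, so that each $\var{X}_\alpha^{\blow}$ can be chosen complete, which is exactly what Lemma~\ref{lem2.17} requires to identify algebraic and analytic meromorphic connections; and second, so that the projective maps $b_\alpha$ are proper, which is precisely the hypothesis in Proposition~\ref{prop2.12}(3) making analytification commute strictly with $\bfD (b_\alpha)_{\ast}$. Both of these fail for a general smooth $X$, which accounts for the necessity of completeness noted just before the theorem statement.
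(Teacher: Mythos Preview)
Your reduction to full faithfulness of $(\cdot)^\an\colon\BDChol(\D_X)\to\BDChol(\D_{X^\an})$ is sound, and the paper indeed records exactly this statement as a corollary of the theorem. The devissage you sketch to prove it, however, has a real gap. The stratification triangles you invoke live naturally on the constructible (or enhanced) side; on the $\D$-module side the closures $\var{X}_\alpha$ are typically singular, so Kashiwara's equivalence is unavailable, and it is not clear how to write a general holonomic complex as an iterated cone of pushforwards $\bfD(b_\alpha)_{\ast}\M_\alpha$ with each $\M_\alpha$ meromorphic without already knowing some form of the result. Even granting such a decomposition of $\M$, the adjunction $(\bfD(b_\alpha)_{!},\bfD b_\alpha^{\ast})$ leaves you with $\rHom(\M_\alpha,\bfD b_\alpha^{\ast}\N)$, where $\bfD b_\alpha^{\ast}\N$ is holonomic but not meromorphic along $D_\alpha$; Lemma~\ref{lem2.17} does not apply and the induction does not close as described. (Incidentally, blow-ups such as $b_\alpha$ are proper whether or not $X$ is complete, so your second stated use of completeness is misplaced.)

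The paper's argument is quite different and avoids devissage altogether. Its Lemma~\ref{lem3.6} computes $\rHom^{\rmE}(\Sol_X^{\rmE}(\N),\Sol_X^{\rmE}(\M))$ directly on the enhanced side: stability and the hom--tensor adjunction rewrite it via $\Prihom$; Theorem~\ref{thm-DK}(1),(4) identify the inner term with $\DR_X^{\rmE}(\N\Dotimes\DD_X\M)[-d_X]$; one then pushes $\DR^{\rmE}$ to a point along the proper map $p_{X^\an}$ (this is where completeness enters, via Proposition~\ref{prop2.12}(3)) and lands on $\bfD p_{X\ast}(\N\Dotimes\DD_X\M)[-d_X]$, which is $\rHom_{\D_X}(\M,\N)$ by \cite[Corollary~2.6.15]{HTT08}. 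A virtue of this route is that it uses only the \emph{algebraic} pushforward-to-a-point formula; your strategy, carried to its end, would instead require its analytic analogue for holonomic $\D_{X^\an}$-modules, which is an additional input you have not supplied.
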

\begin{proof}
It is enough to show
\[\Hom_{\BDChol(\D_X)}(\M, \N)\simto
\Hom_{\BEC_{\CC-c}(\I\CC_X)}(\Sol_X^{\rmE}(\N), \Sol_X^{\rmE}(\M))\]
for any $\M, \N\in\BDChol(\D_X)$.
By \cite[Lemma 4.5.14]{DK16},
this follows by taking the $0$-th cohomology in Lemma \ref{lem3.6} below.
\end{proof}

\begin{lemma}\label{lem3.6}
Let $X$ be a smooth complete algebraic variety over $\CC$.
For any $\M, \N\in\BDChol(\D_X)$,
there exists an isomorphism in $\BDC(\CC)$
\[\rHom_{\D_X}(\M, \N)\simto\rHom^\rmE(\Sol_X^{\rmE}(\N), \Sol_X^{\rmE}(\M)).\]
\end{lemma}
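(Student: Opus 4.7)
The plan is to factor the comparison through the analytic enhanced solution functor: first use the analytic fully faithful embedding of D'Agnolo--Kashiwara to rewrite the right-hand side in terms of analytic $\D$-modules, and then apply a GAGA-type identification to match this with the algebraic left-hand side.

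For the analytic step, Theorem \ref{thm2.6} (1) provides the reconstruction isomorphism $\N^\an \simto \RH_{X^\an}^\rmE\bigl(\Sol_{X^\an}^\rmE(\N^\an)\bigr)$, together with the tautology $\Sol_X^\rmE(\M) = \rihom_{\D_{X^\an}}(\M^\an, \SO_{X^\an}^\rmE)$. Moving $\M^\an$ outside of $\rHom^\rmE$ via the natural adjunction
\[
\rHom^\rmE\bigl(K, \rihom_{\D_{X^\an}}(\M^\an, \SO_{X^\an}^\rmE)\bigr)
\simeq
\rHom_{\D_{X^\an}}\bigl(\M^\an, \rhom^\rmE(K, \SO_{X^\an}^\rmE)\bigr),
\]
applied with $K = \Sol_X^\rmE(\N)$, one obtains
\begin{align*}
\rHom^\rmE\bigl(\Sol_X^\rmE(\N), \Sol_X^\rmE(\M)\bigr)
&\simeq \rHom_{\D_{X^\an}}\bigl(\M^\an, \RH_{X^\an}^\rmE(\Sol_{X^\an}^\rmE(\N^\an))\bigr)\\
&\simeq \rHom_{\D_{X^\an}}(\M^\an, \N^\an).
\end{align*}

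For the algebraic-to-analytic comparison, I would invoke the standard GAGA principle for coherent algebraic $\D$-modules on the smooth complete variety $X$: for any $\M, \N \in \BDCcoh(\D_X)$ (hence in particular for holonomic ones), the analytification induces an isomorphism
\[
\rHom_{\D_X}(\M, \N) \simto \rHom_{\D_{X^\an}}(\M^\an, \N^\an).
\]
This is obtained by locally resolving $\M$ by free $\D_X$-modules of finite rank, computing $\rhom_{\D_X}(\M, \N)$ termwise as a bounded complex of coherent $\SO_X$-modules, and applying Serre's classical GAGA to each term, crucially using the completeness of $X$ (see e.g.\ \cite{HTT08}, Chapter 4). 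Composing this with the analytic isomorphism above yields the desired identification.

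The main obstacle is the $\D$-module GAGA step. While the analogous statement for coherent $\SO_X$-modules is classical, its derived-$\Hom$ version for bounded complexes of coherent $\D_X$-modules requires some bookkeeping: one must verify that the chosen $\D_X$-resolutions are compatible with analytification and track the finiteness of homological dimension, and one must invoke the properness of $X$ at the moment Serre's GAGA is used. Once this comparison is in place, the analytic side is already settled by the cited work of D'Agnolo--Kashiwara (Theorem \ref{thm2.6} (1)), and the lemma follows formally.
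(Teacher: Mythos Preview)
Your overall strategy---reduce to the analytic isomorphism $\rHom^\rmE(\Sol^\rmE_{X^\an}(\N^\an),\Sol^\rmE_{X^\an}(\M^\an))\simeq\rHom_{\D_{X^\an}}(\M^\an,\N^\an)$ via the reconstruction of Theorem~\ref{thm2.6}(1), and then invoke a GAGA comparison $\rHom_{\D_X}(\M,\N)\simeq\rHom_{\D_{X^\an}}(\M^\an,\N^\an)$---is sound and genuinely different from the paper's route. The paper does not go through analytic $\rHom_{\D_{X^\an}}$ at all: it rewrites $\Prihom(\Sol^\rmE_X(\N),\Sol^\rmE_X(\M))$ as $\DR^\rmE_X(\N\Dotimes\DD_X\M)[-d_X]$ using duality and Theorem~\ref{thm-DK}, pushes forward to a point via the commutation of $\DR^\rmE$ with proper pushforward together with Proposition~\ref{prop2.12}(3), and then identifies $\bfD p_{X\ast}(\N\Dotimes\DD_X\M)[-d_X]$ with $\rHom_{\D_X}(\M,\N)$. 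Your argument is more modular (it packages the analytic full faithfulness as a black box); the paper's stays entirely inside the enhanced formalism and the algebraic $\D$-module calculus.

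There is, however, a gap in your justification of the GAGA step. You claim that after resolving $\M$ locally by finite free $\D_X$-modules, $\rhom_{\D_X}(\M,\N)$ is represented ``termwise as a bounded complex of coherent $\SO_X$-modules'', so that Serre's GAGA applies. This is false: the terms of that complex are finite direct sums of $\N$, and a coherent $\D_X$-module is essentially never coherent over $\SO_X$ (already $\D_X$ itself is not). So classical Serre GAGA does not apply termwise. The derived $\D$-module GAGA you need is nonetheless true on a complete variety, but the clean proof goes through the identification $\rHom_{\D_X}(\M,\N)\simeq\bfD p_{X\ast}(\N\Dotimes\DD_X\M)[-d_X]$ together with Proposition~\ref{prop2.12}(3), which is precisely the computation the paper carries out. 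Once you repair the GAGA step along these lines, your argument and the paper's converge at the same point.
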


\begin{proof}
Let us denote by $p_{X^\an} \colon X^\an\to \{\pt\}$ the map from $X^\an$ to the set of one point.
Recall that the right-hand side is isomorphic to 
\[\bfR\Gamma(X^\an; \rhom^\rmE(\Sol_X^{\rmE}(\N), \Sol_X^{\rmE}(\M)))
\simeq
\bfR p_{X^\an\ast}\rhom^\rmE(\Sol_X^{\rmE}(\N), \Sol_X^{\rmE}(\M)).\]

By $\Sol_X^\rmE(\N)\in\BEC_{\CC-c}(\I\CC_{X})\subset\BEC_{\RR-c}(\I\CC_{X^\an})$
and the fact that any $\RR$-constructible enhanced ind-sheaf is stable (see \cite[\S 4.9]{DK16}), 
we have 
\begin{align*}
\rhom^\rmE\big(\Sol_X^{\rmE}(\N), \Sol_X^{\rmE}(\M)\big)
&\simeq
\rhom^\rmE\big(\CC_{X^\an}^\rmE\Potimes\Sol_X^{\rmE}(\N), \Sol_X^{\rmE}(\M)\big)\\
&\simeq
\rhom^\rmE\big(\CC_{X^\an}^\rmE, \Prihom\big(\Sol_X^{\rmE}(\N), \Sol_X^{\rmE}(\M)\big)\big),
\end{align*}
where in the second isomorphism
by the enhanced hom-tensor adjunction \cite[Lemma 4.5.15]{DK16}.
Furthermore we have isomorphisms in $\BEC(\I\CC_{X^\an})$
\begin{align*}
\Prihom\big(\Sol_X^{\rmE}(\N), \Sol_X^{\rmE}(\M)\big)
&\simeq
\rmD_{X^\an}^{\rmE}\big(
\Sol_X^{\rmE}(\N)\Potimes \rmD_{X^\an}^{\rmE}\Sol_X^{\rmE}(\M)\big)\\
&\simeq
\rmD_{X^\an}^{\rmE}\big(\Sol_X^{\rmE}(\N)\Potimes \Sol_X^{\rmE}(\DD_X\M)[2d_X]\big)\\
&\simeq
\rmD_{X^\an}^{\rmE}\big(\Sol_X^{\rmE}\big(\N\Dotimes\DD_X\M\big)[2d_X]\big)\\
&\simeq
\DR_{X}^{\rmE}(\N\Dotimes\DD_X\M)[-d_X],
\end{align*}
where in the first (resp.\ second, third, forth) isomorphism
we used \cite[Proposition 4.9.13 (2)]{DK16}
(resp.\ Theorem \ref{thm-DK} (1), Theorem \ref{thm-DK} (4),
\cite[Corollary 9.4.9]{DK16}).
By \cite[Theorem 9.1.2 (iii)]{DK16}, Proposition \ref{prop2.12}(3)
and the fact that the map $p_{X^\an}$ is proper
there exists an isomorphism in $\BEC(\I\CC_{\{\pt\}})$
\[\bfE p_{X^\an\ast}\DR_X^{\rmE}(\N\Dotimes\DD_X\M)\simeq
\DR_{\{\pt\}}^{\rmE}\big(\bfD p_{X\ast}(\N\Dotimes\DD_X\M)\big).\]
Hence we have isomorphisms in $\BDC(\CC)$
\begin{align*}
\bfR p_{X^\an\ast}\rhom^\rmE\big(\Sol_X^{\rmE}(\N), \Sol_X^{\rmE}(\M)\big)
&\simeq
\bfR p_{X^\an\ast}\rhom^\rmE\big(\CC_{X^\an}^\rmE,
\Prihom\big(\Sol_X^{\rmE}(\N), \Sol_X^{\rmE}(\M)\big)\big)\\
&\simeq
\bfR p_{X^\an\ast}\rhom^\rmE\big(\CC_{X^\an}^\rmE,
\DR_X^{\rmE}(\N\Dotimes\DD_X\M)[-d_X]\big)\\
&\simeq
\bfR p_{X^\an\ast}\rhom^\rmE\big(\bfE p_{X^\an}^{-1}\CC_{\{\pt\}}^\rmE,
\DR_X^{\rmE}(\N\Dotimes\DD_X\M)[-d_X]\big)\\
&\simeq
\rhom^\rmE\big(\CC_{\{\pt\}}^\rmE,
\bfE p_{{X^\an}\ast}\DR_X^{\rmE}(\N\Dotimes\DD_X\M)[-d_X]\big)\\
&\simeq
\rhom^\rmE\big(\CC_{\{\pt\}}^\rmE,
\DR_{\{\pt\}}^{\rmE}\big(\bfD p_{X\ast}(\N\Dotimes\DD_X\M)\big)[-d_X]\big)\\
&\simeq
\bfD p_{X\ast}(\N\Dotimes\DD_X\M)[-d_X],
\end{align*}
where in the forth isomorphism we used \cite[Lemma 4.5.17]{DK16}
and in the last isomorphism we used the fact that
$\rhom^\rmE\big(\CC_{\{\pt\}}^\rmE, \DR_{\{\pt\}}^{\rmE}(\cdot)\big)
\simeq\rhom^\rmE\big(\CC_{\{\pt\}}^\rmE, e_{\{\pt\}}(\cdot)\big)
\simeq \id$ (e.g., see \cite[Example 3.5.9]{DK16-2} for $\DR_{\{\pt\}}^\rmE=e_{\{\pt\}}$
and see the proof of \cite[Proposition 4.7.15]{DK16} for the second isomorphism).
On the other hand, 
we have an isomorphism in $\BDC(\CC)$
\[\bfD p_{X\ast}(\N\Dotimes\DD_X\M)[-d_X]
\simeq
\rHom_{\D_X}(\M, \N)\]
by \cite[Corollary 2.6.15]{HTT08}.
Hence the proof is completed.
\end{proof}

We obtain the following corollary, although this is known by experts.
\begin{corollary}
Let $X$ be a smooth complete algebraic variety over $\CC$.
Then the analytification functor $(\cdot)^\an \colon \Mod(\D_X)\to\Mod(\D_{X^\an})$
induces a fully faithful embedding
\[(\cdot)^\an \colon \Modhol(\D_X)\hookrightarrow\Modhol(\D_{X^\an}).\]  
Moreover we obtain a fully faithful embedding
\[(\cdot)^\an \colon \BDChol(\D_X)\hookrightarrow\BDChol(\D_{X^\an}).\]  
\end{corollary}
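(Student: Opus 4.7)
The plan is to chase the assertion through the two Riemann--Hilbert-type equivalences that are already available. By definition $\Sol_X^{\rmE}(\M) = \Sol_{X^\an}^{\rmE}(\M^\an)$, so for $\M, \N \in \BDChol(\D_X)$ the following square of functors commutes tautologically:
\[
\xymatrix@C=35pt{
\BDChol(\D_X)^{\op} \ar[r]^-{(\cdot)^{\an}} \ar[d]_-{\Sol_X^{\rmE}} & \BDChol(\D_{X^\an})^{\op} \ar[d]^-{\Sol_{X^\an}^{\rmE}} \\
\BEC_{\CC-c}(\I\CC_X) \ar@{^{(}->}[r] & \BEC_{\RR-c}(\I\CC_{X^\an}).
}
\]
Here the bottom arrow is indeed a fully faithful inclusion since $\BEC_{\CC-c}(\I\CC_X) \subset \BEC_{\CC-c}(\I\CC_{X^\an}) \subset \BEC_{\RR-c}(\I\CC_{X^\an})$ are full subcategories by construction.

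Under the completeness hypothesis on $X$, Theorem \ref{thm-main-1} tells us that the left vertical arrow $\Sol_X^{\rmE}$ is an equivalence, and Theorem \ref{thm2.6} (1) tells us that the right vertical arrow $\Sol_{X^\an}^{\rmE}$ is fully faithful. Combining these two facts with the full faithfulness of the bottom horizontal inclusion gives, for any $\M, \N \in \BDChol(\D_X)$, a chain of isomorphisms
\begin{align*}
\Hom_{\BDChol(\D_X)}(\M, \N)
&\simeq \Hom_{\BEC_{\CC-c}(\I\CC_X)}\big(\Sol_X^{\rmE}(\N), \Sol_X^{\rmE}(\M)\big) \\
&= \Hom_{\BEC_{\RR-c}(\I\CC_{X^\an})}\big(\Sol_{X^\an}^{\rmE}(\N^\an), \Sol_{X^\an}^{\rmE}(\M^\an)\big) \\
&\simeq \Hom_{\BDChol(\D_{X^\an})}(\M^\an, \N^\an),
\end{align*}
which is exactly the full faithfulness of $(\cdot)^{\an}$ on the bounded derived categories of holonomic modules. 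This establishes the second statement.

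The first (abelian) statement I would deduce as an immediate consequence. Since $(\cdot)^{\an}$ is exact and fully faithful on derived categories, and since for $\M, \N$ concentrated in degree $0$ one has $\Hom_{\Modhol(\D_X)}(\M, \N) = \Hom_{\BDChol(\D_X)}(\M, \N)$ (and similarly on the analytic side), the derived full faithfulness specializes to a bijection on abelian Hom-sets. There is no genuinely hard step; the main point to be careful about is simply that the derived category identities $\Sol_X^{\rmE}(\M)=\Sol_{X^\an}^{\rmE}(\M^\an)$ and $\BEC_{\CC-c}(\I\CC_X)\subset \BEC_{\RR-c}(\I\CC_{X^\an})$ allow one to compare Hom's across the two Riemann--Hilbert equivalences without any extra argument, and that completeness of $X$ is used \emph{only} to upgrade the algebraic enhanced solution functor from essentially surjective (Propositions \ref{prop3.4} and \ref{prop3.5}) to fully faithful (Theorem \ref{thm-main-1}).
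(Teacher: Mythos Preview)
Your proof is correct and follows essentially the same route as the paper: use Theorem~\ref{thm-main-1} to identify $\Hom_{\BDChol(\D_X)}(\M,\N)$ with $\Hom$ between enhanced solution complexes, unwind the definition $\Sol_X^{\rmE}=\Sol_{X^\an}^{\rmE}\circ(\cdot)^{\an}$, and then invoke Theorem~\ref{thm2.6}\,(1) to pass back to $\Hom_{\BDChol(\D_{X^\an})}(\M^\an,\N^\an)$. The only cosmetic differences are that you make the commutative square explicit and route the middle step through $\BEC_{\RR-c}(\I\CC_{X^\an})$ rather than staying inside $\BEC_{\CC-c}(\I\CC_X)$, and that you deduce the abelian statement from the derived one whereas the paper simply remarks that the abelian proof is similar.
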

\begin{proof}
Since the proof of the first assertion is similar, we only prove the last one.
For $\M, \N\in\BDChol(\D_X)$, we have isomorphisms
\begin{align*}
\Hom_{\BDChol(\D_X)}(\M, \N)
&\simeq
\Hom_{\BEC_{\CC-c}(\I\CC_X)}\big(\Sol_X^{\rmE}(\N), \Sol_X^{\rmE}(\M)\big)\\
&\simeq
\Hom_{\BEC_{\CC-c}(\I\CC_X)}
\big(\Sol_{X^\an}^{\rmE}(\N^\an), \Sol_{X^\an}^{\rmE}(\M^\an)\big)\\
&\simeq
\Hom_{\BDChol(\D_{X^\an})}\big(\M^\an, \N^\an\big),
\end{align*}
where in the first (resp.\ last) isomorphism
we used Theorem \ref{thm-main-1} (resp.\ Theorem \ref{thm2.6} (1)).
Therefore the proof is completed.

\end{proof}

\subsection{The General Case}
In this subsection we consider the general case. 
Thanks to Hironaka's desingularization theorem \cite{Hiro} (see also \cite[Theorem 4.3]{Naga}),
for any smooth algebraic variety $X$ over $\CC$
we can take a smooth complete algebraic variety $\tl{X}$ such that $X\subset \tl{X}$
and $D := \tl{X}\setminus X$ is a normal crossing divisor of $\tl{X}$.

Let us consider a bordered space $X^\an_\infty = (X^\an, \tl{X}^\an)$
and the triangulated category $\BEC(\I\CC_{X^\an_\infty})$ of enhanced ind-sheaves
on $X^\an_\infty$.
Recall that $\BEC(\I\CC_{X^\an_\infty})$ does not depend on the choice of $\tl{X}$
and there exists an equivalence of triangulated categories
\[\xymatrix@C=55pt{
\BEC(\I\CC_{X^\an_\infty})\ar@<0.7ex>@{->}[r]^-{\bfE j_{!!}}\ar@{}[r]|-{\sim}
&
\{K\in\BEC(\I\CC_{\tl{X}^\an})\ |\ \pi^{-1}\CC_{X^\an}\otimes K\simto K\}
\ar@<0.7ex>@{->}[l]^-{\bfE j^{-1}},
}\]
where we denote by $j\colon X^\an_\infty \to \tl{X}^\an$ the morphism of bordered spaces 
given by the open embedding $X\hookrightarrow \tl{X}$ for simplicity,
see \S \ref{subsec2.5} for the details.

We shall denote the open embedding $X\hookrightarrow \tl{X}$ by the same symbol $j$
and set 
\[\Sol_{X_\infty}^\rmE(\M) := 
\bfE j^{-1}\Sol_{\tl{X}}^\rmE(\bfD j_\ast\M)
\in\BEC(\I\CC_{X^\an_\infty})\]
for any $\M\in\BDC(\D_X)$.
By Theorem \ref{thm-DK} (5) and the fact that for any $\M\in\BDChol(\D_X)$
there exist isomorphisms
$$(\bfD j_\ast\M)^\an(\ast D^\an)\simeq
\big((\bfD j_\ast\M)(\ast D)\big)^\an\simeq(\bfD j_\ast\M)^\an,$$
 in $\BDChol(\D_{\tl{X}^\an})$,
we have
$$\Sol_{\tl{X}}^\rmE(\bfD j_\ast\M)\in
\{K\in\BEC(\I\CC_{\tl{X}^\an})\ |\ \pi^{-1}\CC_{X^\an}\otimes K\simto K\}$$
 for any $\M\in\BDChol(\D_X)$.
Furthermore since $\bfD j_\ast\M\simeq(\bfD j_!\M)(\ast D)$ for any $\M\in\BDChol(\D_X)$,
we have
\[\Sol_{X_\infty}^\rmE(\M) \simeq 
\bfE j^{-1}\Sol_{\tl{X}}^\rmE(\bfD j_!\M)
\mbox{ in $\BEC(\I\CC_{X^\an_\infty})$}\]
for any $\M\in\BDChol(\D_X)$.

Moreover we obtain some functorial properties of
the enhanced solution functor $\Sol_{X_\infty}^\rmE$ on $X_\infty$,
see Proposition \ref{prop-comm} below. 

\begin{definition}\label{def3.11}
We say that an enhanced ind-sheaf $K\in\BEC(\I\CC_{X^\an_\infty})$ is
algebraic $\CC$-constructible on $X_\infty^\an$
if $\bfE j_{!!}K \in\BEC(\I\CC_{\tl{X}^\an})$ is an object of $\BEC_{\CC-c}(\I\CC_{\tl{X}})$.

We denote by $\BEC_{\CC-c}(\I\CC_{X_\infty})$
the full triangulated subcategory of $\BEC(\I\CC_{X^\an_\infty})$
consisting of algebraic $\CC$-constructible enhanced ind-sheaves on $X_\infty^\an$.
\end{definition}
By Lemma \ref{lem2.7} and
the fact that the triangulated category $\BEC_{\CC-c}(\I\CC_{X^\an})$ is
a full triangulated subcategory of $\BEC_{\RR-c}(\I\CC_{X^\an})$,
the one $\BEC_{\CC-c}(\I\CC_{X^\an_\infty})$ is also a full triangulated subcategory
of $\BEC_{\RR-c}(\I\CC_{X^\an_\infty})$.

Then we obtain the first main theorem of this paper.
\begin{theorem}\label{thm-main-2}
Let $X$ be a smooth algebraic variety over $\CC$.
For any $\M\in\BDChol(\D_X)$, the enhanced solution complex $\Sol_{X_\infty}^\rmE(\M)$
of $\M$ is an algebraic $\CC$-constructible enhanced ind-sheaf.
On the other hand, 
for any algebraic $\CC$-constructible enhanced ind-sheaf $K\in\BEC_{\CC-c}(\I\CC_{X_\infty})$,
there exists $\M\in\BDChol(\D_X)$
such that $$K\simto \Sol_{X_\infty}^{\rmE}(\M).$$
Moreover, we obtain an equivalence of triangulated categories
\[\Sol_{X_\infty}^{\rmE} \colon \BDChol(\D_X)^{\op}\simto \BEC_{\CC-c}(\I\CC_{X_\infty}).\]
\end{theorem}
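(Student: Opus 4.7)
The plan is to reduce the general statement to the complete case (Theorem \ref{thm-main-1}) by transporting everything across the equivalence
\[
\bfE j_{!!} \colon \BEC(\I\CC_{X^\an_\infty}) \simto
\{K \in \BEC(\I\CC_{\tl{X}^\an}) \mid \pi^{-1}\CC_{X^\an} \otimes K \simto K\},
\]
where $\tl{X}$ is any smooth completion of $X$ with $D = \tl{X}\setminus X$ a normal crossing divisor. The essential calculation that makes the reduction work is the identity
\[
\bfE j_{!!}\Sol_{X_\infty}^{\rmE}(\M) \;\simeq\; \Sol_{\tl{X}}^{\rmE}(\bfD j_\ast \M)
\qquad (\M \in \BDChol(\D_X)),
\]
which follows because $\Sol_{\tl{X}}^{\rmE}(\bfD j_\ast\M)$ already satisfies $\pi^{-1}\CC_{X^\an}\otimes(\cdot)\simto(\cdot)$ thanks to the isomorphism $(\bfD j_\ast\M)\simeq(\bfD j_\ast\M)(\ast D)$ combined with Theorem \ref{thm-DK} (5), so $\bfE j_{!!}\bfE j^{-1}$ is the identity on it.

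For the first assertion, given $\M\in\BDChol(\D_X)$, we have $\bfD j_\ast\M\in\BDChol(\D_{\tl{X}})$ (direct image along an open immersion preserves holonomicity), and hence $\Sol_{\tl{X}}^{\rmE}(\bfD j_\ast\M)\in\BEC_{\CC-c}(\I\CC_{\tl{X}})$ by Proposition \ref{prop3.4}. By the above identity, $\bfE j_{!!}\Sol_{X_\infty}^{\rmE}(\M)\in\BEC_{\CC-c}(\I\CC_{\tl{X}})$, which is exactly the definition of $\Sol_{X_\infty}^{\rmE}(\M)\in\BEC_{\CC-c}(\I\CC_{X_\infty})$.

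For essential surjectivity, take $K\in\BEC_{\CC-c}(\I\CC_{X_\infty})$. Then $\bfE j_{!!}K\in\BEC_{\CC-c}(\I\CC_{\tl{X}})$, so by Theorem \ref{thm-main-1} applied to $\tl{X}$ there exists $\tl{\M}\in\BDChol(\D_{\tl{X}})$ with $\Sol_{\tl{X}}^{\rmE}(\tl{\M})\simeq\bfE j_{!!}K$. Since $\pi^{-1}\CC_{X^\an}\otimes\bfE j_{!!}K\simto\bfE j_{!!}K$, we get $\Sol_{\tl{X}}^{\rmE}(\tl{\M}(\ast D))\simeq\Sol_{\tl{X}}^{\rmE}(\tl{\M})$ by Theorem \ref{thm-DK} (5), and the full faithfulness of $\Sol_{\tl{X}}^{\rmE}$ on $\BDChol(\D_{\tl{X}})$ (Theorem \ref{thm-main-1}) forces $\tl{\M}\simeq\tl{\M}(\ast D)\simeq\bfD j_\ast\bfD j^\ast\tl{\M}$. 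Set $\M := \bfD j^\ast\tl{\M}\in\BDChol(\D_X)$; then
\[
\Sol_{X_\infty}^{\rmE}(\M)
= \bfE j^{-1}\Sol_{\tl{X}}^{\rmE}(\bfD j_\ast\M)
\simeq \bfE j^{-1}\Sol_{\tl{X}}^{\rmE}(\tl{\M})
\simeq \bfE j^{-1}\bfE j_{!!}K
\simeq K.
\]

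For full faithfulness, for $\M,\N\in\BDChol(\D_X)$ the open-immersion adjunction $\bfD j^\ast\bfD j_\ast\simeq\id$ gives fully faithfulness of $\bfD j_\ast\colon\BDChol(\D_X)\hookrightarrow\BDChol(\D_{\tl{X}})$. Combining this with Theorem \ref{thm-main-1} on $\tl{X}$ and the fully faithfulness of $\bfE j_{!!}$ (restricted to the essential image described above), we obtain the chain of isomorphisms
\begin{align*}
\Hom_{\BDChol(\D_X)}(\M,\N)
&\simeq \Hom_{\BDChol(\D_{\tl{X}})}(\bfD j_\ast\M,\bfD j_\ast\N) \\
&\simeq \Hom_{\BEC_{\CC-c}(\I\CC_{\tl{X}})}\!\bigl(\Sol_{\tl{X}}^{\rmE}(\bfD j_\ast\N),\Sol_{\tl{X}}^{\rmE}(\bfD j_\ast\M)\bigr) \\
&\simeq \Hom_{\BEC_{\CC-c}(\I\CC_{X_\infty})}\!\bigl(\Sol_{X_\infty}^{\rmE}(\N),\Sol_{X_\infty}^{\rmE}(\M)\bigr),
\end{align*}
where the last step undoes $\bfE j_{!!}$ using the key identity. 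The main obstacle I expect is the bookkeeping around the isomorphism $\tl{\M}\simeq\bfD j_\ast\bfD j^\ast\tl{\M}$ in the surjectivity step; one must genuinely use that $\Sol_{\tl{X}}^{\rmE}$ is fully faithful on \emph{all} of $\BDChol(\D_{\tl{X}})$ (not just the regular part) to upgrade an isomorphism of enhanced solutions to an isomorphism of $\D$-modules, and this is exactly what Theorem \ref{thm-main-1} provides.
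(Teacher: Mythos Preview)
Your proposal is correct and follows essentially the same route as the paper: reduce to the complete case via the equivalence $\bfE j_{!!}$, use the key identity $\bfE j_{!!}\Sol_{X_\infty}^{\rmE}(\M)\simeq\Sol_{\tl{X}}^{\rmE}(\bfD j_\ast\M)$, and invoke Theorem \ref{thm-main-1} together with the full faithfulness of $\bfD j_\ast$. The only cosmetic difference is that in the surjectivity step the paper stops at the isomorphism $\Sol_{\tl{X}}^{\rmE}(\N)\simeq\Sol_{\tl{X}}^{\rmE}(\bfD j_\ast\bfD j^\ast\N)$ and works directly with that, whereas you additionally upgrade it to $\N\simeq\bfD j_\ast\bfD j^\ast\N$ via full faithfulness; either way is fine.
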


\begin{proof}
For any $\M\in\BDChol(\D_X)$, we have 
$\Sol_{\tl{X}}^\rmE(\bfD j_\ast\M)\in\BEC_{\CC-c}(\I\CC_{\tl{X}})$
by Proposition \ref{prop3.4}.
Then there exist isomorphisms
\begin{align*}
\bfE j_{!!}\Sol_{X_\infty}^\rmE(\M) 
&\simeq
\bfE j_{!!}\bfE j^{-1}\Sol_{\tl{X}}^\rmE(\bfD j_\ast\M)\\
&\simeq
\pi^{-1}\CC_{X^\an}\otimes\Sol_{\tl{X}}^\rmE(\bfD j_\ast\M)\\
&\simeq
\Sol_{\tl{X}}^\rmE(\bfD j_\ast\M),
\end{align*}
where in the last isomorphism we used the fact 
$$\Sol_{\tl{X}}^\rmE(\bfD j_\ast\M)\in
\{K\in\BEC(\I\CC_{\tl{X}^\an})\ |\ \pi^{-1}\CC_{X^\an}\otimes K\simto K\}.$$
Hence, by the algebraic irregular Riemann--Hilbert correspondence
in the complete case (Theorem \ref{thm-main-1}),
we obtain 
$\Sol_{X_\infty}^{\rmE}(\M)\in\BEC_{\CC-c}(\I\CC_{X_\infty})$
for any $\M\in\BDChol(\D_X)$.
Moreover by Theorem \ref{thm-main-1} and
the fact that the direct image functor $\bfD j_\ast$
of the open embedding $j \colon X\hookrightarrow \tl{X}$ is fully faithful,
we obtain a fully faithful embedding
\[\BDChol(\D_X)\hookrightarrow
\{K\in\BEC_{\CC-c}(\I\CC_{\tl{X}^\an})\ |\ \pi^{-1}\CC_{X^\an}\otimes K\simto K\},
\hspace{20pt} \M\mapsto \Sol_{\tl{X}}^\rmE(\bfD j_\ast\M).\]
Hence the enhanced solution functor $\Sol_{X_\infty}^\rmE$ induces a fully faithful embedding
\[\Sol_{X_\infty}^{\rmE} \colon \BDChol(\D_X)^{\op}\hookrightarrow
 \BEC_{\CC-c}(\I\CC_{X_\infty})\]
 by the definition of 
 $\Sol_{X_\infty}^\rmE := \bfE j^{-1}\Sol_{\tl{X}}^\rmE\big(\bfD j_\ast(\cdot)\big)$.
 
 On the other hand, let $K\in\BEC_{\CC-c}(\I\CC_{X_\infty})$
 then \[\bfE j_{!!}K\in
 \{K\in\BEC_{\CC-c}(\I\CC_{\tl{X}^\an})\ |\ \pi^{-1}\CC_{X^\an}\otimes K\simto K\}\]
 by the definition of the algebraic $\CC$-constructability for enhanced ind-sheaves
 on $X_\infty^\an$.
Hence there exists an object $\N$ of $\BDChol(\D_{\tl{X}})$ such that 
 $\bfE j_{!!}K\simeq\Sol_{\tl{X}}^\rmE(\N)$
 by the algebraic irregular Riemann--Hilbert correspondence in the complete case (proved)
 in Theorem \ref{thm-main-1}.
 Moreover since $\pi^{-1}\CC_{X^\an}\otimes \bfE j_{!!}K\simto \bfE j_{!!}K$
 we have
 $$\Sol_{\tl{X}}^\rmE(\N)\simeq\pi^{-1}\CC_{X^\an}\otimes\Sol_{\tl{X}}^\rmE(\N)
 \simeq\Sol_{\tl{X}}^\rmE\big(\N(\ast D)\big)
 \simeq
 \Sol_{\tl{X}}^\rmE(\bfD j_\ast\bfD j^{\ast}\N).$$
We set $\M := \bfD j^{\ast}\N\in\BDChol(\D_X)$
then there exists an isomorphisms in $\BEC(\I\CC_{X^\an_\infty})$
\begin{align*}
\Sol_{X_\infty}^\rmE(\M)
&\simeq
\bfE j^{-1}\Sol_{\tl{X}}^\rmE(\bfD j_\ast\M)\\
&\simeq
\bfE j^{-1}\Sol_{\tl{X}}^\rmE(\N)\\
&\simeq
\bfE j^{-1}\bfE j_{!!}K \simeq K.
\end{align*}
Therefore the proof is completed.
\end{proof}

At the end of this subsection, we shall prove that 
the algebraic $\CC$-constructability is closed under many operations. 
This follows from some functorial properties of
the enhanced solution functor $\Sol_{X_\infty}^\rmE$ on $X_\infty$.
See \S \ref{sec-alg} for the notations of operations of $\D_X$-modules.  

\begin{proposition}\label{prop-comm}
\begin{enumerate}
\item[\rm{(1)}]
 For $\SM\in\BDC_{\rm hol}(\SD_X)$ there is an isomorphism in $\BEC(\I\CC_{X_\infty^\an})$
\[\rmD_{X_\infty^\an}^\rmE\Sol_{X_\infty}^\rmE(\SM)
\simeq
\Sol_{X_\infty}^\rmE(\DD_X\SM)[2d_X].\]

\item[\rm{(2)}] 
Let $f \colon X\to Y$ be a morphism of smooth algebraic varieties.
Then for $\SN\in\BDC_{\rm hol}(\SD_Y)$ there are isomorphisms in $\BEC(\I\CC_{X_\infty^\an})$
\begin{align*}
\bfE(f^\an_\infty)^{-1}\Sol_{Y_\infty}^\rmE(\SN)
&\simeq
\Sol_{X_\infty}^\rmE({\bfD} f^\ast\SN),\\
\bfE(f^\an_\infty)^{!}\Sol_{Y_\infty}^\rmE(\SN)
&\simeq
\Sol_{X_\infty}^\rmE({\bfD} f^\bigstar\SN).
\end{align*}

\item[\rm{(3)}] 
Let $f \colon X\to Y$ be a morphism of smooth algebraic varieties.
For $\SM\in\BDC_{\rm hol}(\SD_X)$ there are isomorphisms in $\BEC(\I\CC_{Y^\an_\infty})$
\begin{align*}
\bfE f^\an_{\infty\ast} \Sol_{X_\infty}^\rmE(\SM )[d_X]
&\simeq
\Sol_{Y_\infty}^\rmE({\bfD} f_!\SM)[d_Y],\\
\bfE f^\an_{\infty!!} \Sol_{X_\infty}^\rmE(\SM )[d_X]
&\simeq
\Sol_{Y_\infty}^\rmE({\bfD} f_\ast\SM)[d_Y].
\end{align*}

\item[\rm{(4)}]
For $\SM_1, \SM_2\in\BDC_{\rm hol}(\SD_X)$,
there exists an isomorphism in $\BEC(\I\CC_{X^\an_\infty})$
\[\Sol_{X_\infty}^\rmE(\SM_1)\Potimes \Sol_{X_\infty}^\rmE(\SM_2)
\simeq\Sol_{X_\infty}^\rmE(\SM_1\Dotimes\SM_2).\]

Moreover for any $\M\in\BDChol(\D_X)$ and any $\N\in\BDChol(\D_Y)$,
there exists an isomorphism in $\BEC(\I\CC_{X^\an_\infty\times Y^\an_\infty})$
\[\Sol_{X_\infty}^\rmE(\M)\Pboxtimes \Sol_{Y_\infty}^\rmE(\N)
\simeq\Sol_{X_\infty\times Y_\infty}^\rmE(\M\Dboxtimes\N).\]
\end{enumerate}
\end{proposition}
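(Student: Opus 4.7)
The strategy is to reduce each of the four assertions to the known case on a smooth complete complex manifold, where Theorem~\ref{thm-DK} applies. The key tool is the definition
\[
\Sol_{X_\infty}^\rmE(\M) \;=\; \bfE j^{-1}\Sol_{\tl{X}}^\rmE(\bfD j_\ast \M),
\]
coupled with Sublemma~\ref{sublem2.2}, which transports the bordered-space operations for $f^\an_\infty$ into classical operations for a chosen smooth compactification $\tl{f}\colon \tl{X}\to \tl{Y}$ extending $f$ (and which can be arranged by Nagata and Hironaka so that the square $\tl{f}\circ j_X = j_Y\circ f$ is Cartesian). I shall freely combine this with the standard algebraic $\D$-module identities $\DD_{\tl{X}}\bfD j_\ast \simeq \bfD j_!\DD_X$, the base changes $\bfD j_Y^\ast\bfD\tl{f}_\ast\simeq \bfD f_\ast\bfD j_X^\ast$ and $\bfD j_X^\ast\bfD\tl{f}^\ast\simeq \bfD f^\ast\bfD j_Y^\ast$ for the Cartesian square above, and $\bfD j^\ast\bfD j_\ast\simeq \id$, together with the meromorphic extension trick furnished by Theorem~\ref{thm-DK}(5), namely
\[
\bfE j^{-1}\Sol_{\tl{X}}^\rmE(\P)\;\simeq\; \bfE j^{-1}\Sol_{\tl{X}}^\rmE(\bfD j_\ast \bfD j^\ast \P).
\]

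For (1), I would compute directly
\[
\rmD^\rmE_{X^\an_\infty}\Sol_{X_\infty}^\rmE(\M)
\simeq \bfE j^{-1}\rmD^\rmE_{\tl{X}^\an}\Sol_{\tl{X}}^\rmE(\bfD j_\ast\M)
\simeq \bfE j^{-1}\Sol_{\tl{X}}^\rmE(\bfD j_!\DD_X\M)[2d_X],
\]
using the compatibility of $\rmD^\rmE$ with $\bfE j^{-1}$ recorded in \S\ref{subsec2.4}, Theorem~\ref{thm-DK}(1), and the identity $\DD_{\tl{X}}\bfD j_\ast \simeq \bfD j_!\DD_X$; the conclusion then follows from the alternative formula $\Sol_{X_\infty}^\rmE(\cdot)\simeq \bfE j^{-1}\Sol_{\tl{X}}^\rmE(\bfD j_!(\cdot))$ noted just before Definition~\ref{def3.11}.

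Next I would prove the identity for $\bfE f^\an_{\infty!!}$ in (3). Sublemma~\ref{sublem2.2}(1), together with $\bfE j_{X!!}\bfE j_X^{-1}\Sol_{\tl{X}}^\rmE(\bfD j_{X\ast}\M)\simeq \Sol_{\tl{X}}^\rmE(\bfD j_{X\ast}\M)$ (since $\Sol_{\tl{X}}^\rmE(\bfD j_{X\ast}\M)$ lies in $\{K\mid \pi^{-1}\CC_{X^\an}\otimes K\simto K\}$), yields
\[
\bfE f^\an_{\infty!!}\Sol_{X_\infty}^\rmE(\M)
\simeq \bfE j_Y^{-1}\bfE\tl{f}^\an_{!!}\Sol_{\tl{X}}^\rmE(\bfD j_{X\ast}\M).
\]
Since $\tl{f}^\an$ is proper, $\bfE\tl{f}^\an_{!!}\simeq\bfE\tl{f}^\an_\ast$; Theorem~\ref{thm-DK}(3) and Proposition~\ref{prop2.12}(3) identify this with $\bfE j_Y^{-1}\Sol_{\tl{Y}}^\rmE(\bfD\tl{f}_\ast\bfD j_{X\ast}\M)$ up to the expected shift, and the meromorphic extension trick combined with the base change $\bfD j_Y^\ast \bfD\tl{f}_\ast\bfD j_{X\ast}\M\simeq \bfD f_\ast\M$ closes the argument. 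The companion identity for $\bfE f^\an_{\infty\ast}$ and $\bfD f_!$ follows by applying (1) and Proposition~\ref{prop2.8}(3) ($\rmD^\rmE\bfE f^\an_{\infty\ast}\leftrightarrow \bfE f^\an_{\infty!!}\rmD^\rmE$ on $\RR$-constructible objects, legitimate since $f^\an_\infty$ is semi-proper by Sublemma~\ref{sublem2.1}) together with $\bfD f_! = \DD_Y\bfD f_\ast\DD_X$. The pullback formulas in (2) are argued symmetrically via Sublemma~\ref{sublem2.2}(2) and Theorem~\ref{thm-DK}(2) in place of their pushforward counterparts, and the $\bfE(f^\an_\infty)^!$ identity then results from (1) and Proposition~\ref{prop2.8}(2), $\bfE f^!\simeq \rmD^\rmE \bfE f^{-1}\rmD^\rmE$, combined with $\bfD f^\bigstar = \DD_X\bfD f^\ast\DD_Y$.

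For (4), the commutation of $\bfE j^{-1}$ with $\Potimes$ and Theorem~\ref{thm-DK}(4) give
\[
\Sol_{X_\infty}^\rmE(\M_1)\Potimes\Sol_{X_\infty}^\rmE(\M_2)
\simeq \bfE j^{-1}\Sol_{\tl{X}}^\rmE(\bfD j_\ast\M_1\Dotimes\bfD j_\ast\M_2);
\]
the meromorphic extension trick then replaces $\bfD j_\ast\M_1\Dotimes \bfD j_\ast\M_2$ by $\bfD j_\ast(\M_1\Dotimes\M_2)$, since $\bfD j^\ast$ sends both to $\M_1\Dotimes\M_2$ (using $\bfD j^\ast\bfD j_\ast\simeq \id$ and the tensor product compatibility of $\bfD j^\ast$). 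The external tensor product statement is obtained by the same argument on the bordered space $X^\an_\infty\times Y^\an_\infty$ with compactification $\tl{X}\times \tl{Y}$. The main obstacle throughout is the careful handling of the discrepancy between the ``natural'' extension $\bfD\tl{f}_\ast\bfD j_{X\ast}\M$ (or its analogues for the other operations) produced by the chosen compactification and the ``correct'' meromorphic extension $\bfD j_{Y\ast}\bfD f_\ast\M$: these do not in general agree as algebraic $\D$-modules on $\tl{Y}$, but their images under $\bfE j_Y^{-1}\Sol_{\tl{Y}}^\rmE$ coincide, and this coincidence is exactly what the meromorphic extension trick secures.
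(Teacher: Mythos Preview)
Your proposal is correct and follows essentially the same route as the paper: reduce to the compactification via the definition $\Sol_{X_\infty}^\rmE = \bfE j^{-1}\Sol_{\tl{X}}^\rmE\circ\bfD j_\ast$, transport the bordered operations through Sublemma~\ref{sublem2.2}, apply Theorem~\ref{thm-DK} on the complete variety, and then use (1) together with Proposition~\ref{prop2.8} to deduce the dual identities in (2) and (3).

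The one place where the paper is simpler than your sketch is part~(3). You arrange the square $\tl{f}\circ j_X = j_Y\circ f$ to be \emph{Cartesian} and then invoke base change $\bfD j_Y^\ast\bfD\tl{f}_\ast\simeq \bfD f_\ast\bfD j_X^\ast$ plus the meromorphic extension trick. The paper avoids both: since $\tl{f}\circ j_X = j_Y\circ f$ commutes as morphisms of varieties, mere functoriality of the direct image gives $\bfD\tl{f}_\ast\bfD j_{X\ast}\M \simeq \bfD j_{Y\ast}\bfD f_\ast\M$ on the nose, so no Cartesian hypothesis or base change is needed. Similarly in~(4) the paper cites the algebraic identity $\bfD j_\ast\M_1\Dotimes\bfD j_\ast\M_2\simeq\bfD j_\ast(\M_1\Dotimes\M_2)$ directly (from \cite[Corollary~1.7.5]{HTT08}) rather than passing through the meromorphic extension trick; and it derives the $\Pboxtimes$ statement from the $\Potimes$ one via (2) rather than by working on $\tl{X}\times\tl{Y}$. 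Your detours are harmless, but the paper's shortcuts are worth noting.
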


\begin{proof}
(1) Let us recall that there exists an isomorphism
$\DD_{\tl{X}}\bfD j_\ast\M\simeq\bfD j_!\DD_X\M$
in $\BDChol(\D_{\tl{X}})$.
Hence we have isomorphisms
\begin{align*}
\rmD_{X^\an_\infty}^\rmE\Sol_{X_\infty}^\rmE(\SM)
&\simeq
\rmD_{X^\an_\infty}^\rmE\bfE j^{-1}\Sol_{\tl{X}}^\rmE(\bfD j_\ast\M)\\
&\simeq
\bfE j^{-1}\rmD_{\tl{X}^\an}^\rmE\Sol_{\tl{X}}^\rmE(\bfD j_\ast\M)\\
&\simeq
\bfE j^{-1}\Sol_{\tl{X}}^\rmE(\DD_{\tl{X}}\bfD j_\ast\M)[2d_X]\\
&\simeq
\bfE j^{-1}\Sol_{\tl{X}}^\rmE(\bfD j_!\DD_{X}\M)[2d_X]\\
&\simeq
\Sol_{X_\infty}^\rmE(\DD_X\SM)[2d_X],
\end{align*}
where in the second (resp.\ third) isomorphism
we used Proposition \ref{prop2.8} (2)
(resp.\ Theorem \ref{thm-DK} (1) and Proposition \ref{prop2.12}).

(2) By Proposition \ref{prop2.8} (2) for $L\in\BEC_{\RR-c}(\I\CC_{Y^\an_\infty})$
we have an isomorphism in $\BEC_{\RR-c}(\I\CC_{X^\an_\infty})$
$$\bfE(f^\an_\infty)^!L\simeq
\rmD_{X^\an_\infty}^\rmE\bfE(f^\an_\infty)^{-1}\rmD_{Y^\an_\infty}^\rmE(L).$$
Hence it is enough to show that the first part of (2).
Let us recall that by Sublemma \ref{sublem2.2} (2)
for any $L\in\BEC(\I\CC_{Y^\an_\infty})$
there exists an isomorphism in $\BEC(\I\CC_{X^\an_\infty})$
\[\bfE(f^\an_\infty)^{-1}(L)
\simeq
\bfE j_X^{-1}\bfE (\tl{f}^\an)^{-1}\bfE j_{Y!!}(L)\]
where $\tl{f}^\an \colon \tl{X}^\an\to \tl{Y}^\an$ is a morphism of complex manifolds
induced by $f \colon X\to Y$ and
$j_X\colon X^\an_\infty \to \tl{X}^\an$ (resp.\ $j_Y\colon Y^\an_\infty \to \tl{Y}^\an$)
is a morphism of bordered spaces given
by the open embedding $X^\an\hookrightarrow \tl{X}^\an$
(resp.\ $Y^\an\hookrightarrow \tl{Y}^\an$).
Hence we have isomorphisms
\begin{align*}
\bfE(f^\an_\infty)^{-1}\Sol_{Y_\infty}^\rmE(\SN)
&\simeq
\bfE j_X^{-1}\bfE (\tl{f}^\an)^{-1}\bfE j_{Y!!}\Sol_{Y_\infty}^\rmE(\SN)\\
&\simeq
\bfE j_X^{-1}\bfE (\tl{f}^\an)^{-1}\Sol_{\tl{Y}}^\rmE(\bfD j_{Y\ast}\N)\\
&\simeq
\bfE j_X^{-1}\Sol_{\tl{X}}^\rmE(\bfD j_{X\ast}\bfD f^\ast\N)\\
&\simeq
\Sol_{X_\infty}^\rmE({\bfD} f^\ast\SN)
\end{align*}
where in the third isomorphism we used Theorem \ref{thm-DK} (2)
and Proposition \ref{prop2.12} (2).

(3) By Proposition \ref{prop2.8} (3) and
the fact that the morphism $f^\an_\infty \colon X^\an_\infty\to Y^\an_\infty$
of bordered spaces is semi-proper (see \S \ref{subsec2.2} for the definition),
for $K\in\BEC_{\RR-c}(\I\CC_{X^\an_\infty})$
we have an isomorphism in $\BEC(\I\CC_{Y^\an_\infty})$
$$\bfE f^\an_{\infty\ast}K\simeq
\rmD_{Y^\an_\infty}^\rmE\bfE f^\an_{\infty !!}\rmD_{X^\an_\infty}^\rmE(K).$$
Hence it is enough to prove that the second part of (3).
Let us recall that by  Sublemma \ref{sublem2.2} (1)
for any $K\in\BEC(\I\CC_{X^\an_\infty})$
there exists an isomorphism in $\BEC(\I\CC_{Y^\an_\infty})$
\[\bfE f^\an_{\infty !!}(K)
\simeq
\bfE j_Y^{-1}\bfE (\tl{f}^\an)_{!!}\bfE j_{X!!}(K),\]
where $\tl{f}^\an \colon \tl{X}^\an\to \tl{Y}^\an$ is a morphism of complex manifolds
induced by $f \colon X\to Y$and
$j_X\colon X^\an_\infty \to \tl{X}^\an$ (resp.\ $j_Y\colon Y^\an_\infty \to \tl{Y}^\an$)
is the morphism of bordered spaces given
by the open embedding $X^\an\hookrightarrow \tl{X}^\an$
(resp.\ $Y^\an\hookrightarrow \tl{Y}^\an$).
Hence we have isomorphisms in $\BEC(\I\CC_{Y^\an_\infty})$
\begin{align*}
\bfE f^\an_{\infty !!}\Sol_{X_\infty}^\rmE(\M)
&\simeq
\bfE j_Y^{-1}\bfE (\tl{f}^\an)_{!!}\bfE j_{X!!}\Sol_{X_\infty}^\rmE(\M)\\
&\simeq
\bfE j_Y^{-1}\bfE (\tl{f}^\an)_{!!}\Sol_{\tl{X}}^\rmE(\bfD j_{X\ast}\M)\\
&\simeq
\bfE j_Y^{-1}\Sol_{\tl{Y}}^\rmE(\bfD\tl{f}_\ast\bfD j_{X\ast}\M)[d_Y-d_X]\\
&\simeq
\bfE j_Y^{-1}\Sol_{\tl{Y}}^\rmE(\bfD j_{Y\ast}\bfD f_\ast\M)[d_Y-d_X]\\
&\simeq
\Sol_{Y_\infty}^\rmE({\bfD} f_\ast\M)[d_Y-d_X]
\end{align*}
where in the third isomorphism we used Theorem \ref{thm-DK} (3)
and Proposition \ref{prop2.12} (3).

(4) By (2) it is enough to show that the first part of (4).
Recall that there exists an isomorphism 
$\bfD j_\ast\M_1\Dotimes\bfD j_\ast\M_2
\simeq\bfD j_\ast(\M_1\Dotimes\M_2)$
in $\BDChol(\D_{\tl{X}})$ by \cite[Corollary 1.7.5]{HTT08}.
Hence we have isomorphisms in $\BEC(\I\CC_{X^\an_\infty})$
\begin{align*}
\Sol_{X_\infty}^\rmE(\M_1\Dotimes\M_2)
&\simeq
\bfE j^{-1}\Sol_{\tl{X}}^\rmE\big(\bfD j_\ast(\M_1\Dotimes\M_2)\big)\\
&\simeq
\bfE j^{-1}\Sol_{\tl{X}}^\rmE\big(\bfD j_\ast\M_1\Dotimes\bfD j_\ast\M_2\big)\\
&\simeq
\bfE j^{-1}\Sol_{\tl{X}}^\rmE(\bfD j_\ast\M_1))\Potimes
\bfE j^{-1}\Sol_{\tl{X}}^\rmE(\bfD j_\ast\M_2)\\
&\simeq \Sol_{X_\infty}^\rmE(\SM_1)
\Potimes \Sol_{X_\infty}^\rmE(\SM_2)
\end{align*}
where in the third isomorphism we used Theorem \ref{thm-DK} (2) and (4).
\end{proof}

\begin{corollary}\label{cor3.14}
Let $f \colon X\to Y$ be a morphism of smooth algebraic varieties and
$K, K_1, K_2\in\BEC_{\CC-c}(\I\CC_{X_\infty})$, $L\in\BEC_{\CC-c}(\I\CC_{Y_\infty})$.
Then we have
\begin{itemize}
\item[\rm(1)]
$\rmD_{X^\an_\infty}^{\rmE}(K)\in\BEC_{\CC-c}(\I\CC_{X_\infty})$ and 
$K\simto \rmD_{X^\an_\infty}^{\rmE}\rmD_{X^\an_\infty}^{\rmE}K$,

\item[\rm(2)]
$K_1\Potimes K_2$, $\Prihom(K_1, K_2)$ and $K\Pboxtimes L$ are algebraic $\CC$-constructible,

\item[\rm(3)]
$\bfE(f^\an_\infty)^{-1}L$ and $\bfE(f^\an_\infty)^!L$ are algebraic $\CC$-constructible,

\item[\rm(4)]
$\bfE f^\an_{\infty!!}K$ and $\bfE f^\an_{\infty\ast}K$ are algebraic $\CC$-constructible.
\end{itemize}
\end{corollary}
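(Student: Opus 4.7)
The plan is to reduce every assertion to a statement about holonomic $\D$-modules via the algebraic irregular Riemann--Hilbert correspondence of Theorem \ref{thm-main-2}, and then invoke the fact (recalled in \S\ref{sec-alg}) that each of the six operations $\DD_X$, $\Dotimes$, $\Dboxtimes$, $\bfD f^{\ast}$, $\bfD f^{\bigstar}$, $\bfD f_{\ast}$, $\bfD f_{!}$ preserves $\BDChol(\D_X)$. The bridge between the two sides is provided by Proposition \ref{prop-comm}, which translates each enhanced operation into a corresponding $\D$-module operation under $\Sol_{X_\infty}^{\rmE}$.

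Concretely, given $K, K_1, K_2 \in \BEC_{\CC-c}(\I\CC_{X_\infty})$ and $L \in \BEC_{\CC-c}(\I\CC_{Y_\infty})$, Theorem \ref{thm-main-2} supplies $\M, \M_1, \M_2 \in \BDChol(\D_X)$ and $\N \in \BDChol(\D_Y)$ with $K \simeq \Sol_{X_\infty}^{\rmE}(\M)$, $K_i \simeq \Sol_{X_\infty}^{\rmE}(\M_i)$, and $L \simeq \Sol_{Y_\infty}^{\rmE}(\N)$. For (1), Proposition \ref{prop-comm}(1) gives $\rmD_{X_\infty^\an}^{\rmE}(K) \simeq \Sol_{X_\infty}^{\rmE}(\DD_X \M)[2d_X]$, and since $\DD_X \M \in \BDChol(\D_X)$, the left-hand side lies in $\BEC_{\CC-c}(\I\CC_{X_\infty})$; the biduality isomorphism $K \simto \rmD_{X_\infty^\an}^{\rmE}\rmD_{X_\infty^\an}^{\rmE} K$ then follows by iterating this translation and using $\DD_X \DD_X \M \simeq \M$ together with the equivalence of Theorem \ref{thm-main-2}. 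For (3) and (4), we apply Proposition \ref{prop-comm}(2) and (3) respectively to rewrite $\bfE(f^\an_\infty)^{-1}L, \bfE(f^\an_\infty)^!L, \bfE f^\an_{\infty!!}K, \bfE f^\an_{\infty\ast}K$ as enhanced solutions (up to shift) of the holonomic $\D$-modules $\bfD f^{\ast}\N, \bfD f^{\bigstar}\N, \bfD f_{\ast}\M, \bfD f_{!}\M$.

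For (2), the $\Potimes$ and $\Pboxtimes$ cases are immediate from Proposition \ref{prop-comm}(4) combined with closure of $\Dotimes$ and $\Dboxtimes$ under holonomicity. The only slightly indirect case is $\Prihom(K_1, K_2)$, which is not part of Proposition \ref{prop-comm}; here I would first establish (1) and the $\Potimes$-closure, and then rewrite
\[
\Prihom(K_1, K_2) \simeq \rmD_{X^\an_\infty}^{\rmE}\!\bigl(K_1 \Potimes \rmD_{X^\an_\infty}^{\rmE} K_2\bigr),
\]
an identity valid in the $\RR$-constructible (hence stable) setting via \cite[Proposition 4.9.13]{DK16}, which reduces the problem to (1) and the $\Potimes$ case already handled. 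The main point requiring care throughout is that all four assertions genuinely rest on Theorem \ref{thm-main-2}: without the equivalence, one would not know that an abstract algebraically $\CC$-constructible $K$ actually arises as $\Sol_{X_\infty}^{\rmE}(\M)$ for some holonomic $\M$, and so could not directly invoke the $\D$-module machinery. I do not anticipate any serious obstacle beyond correctly bookkeeping the shifts in Proposition \ref{prop-comm}.
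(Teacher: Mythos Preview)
Your proposal is correct and follows essentially the same approach as the paper: use Theorem \ref{thm-main-2} to realize each object as an enhanced solution of a holonomic $\D$-module, apply Proposition \ref{prop-comm} to translate the enhanced operation into a $\D$-module operation, and invoke closure of holonomicity. The paper only writes out (1) and declares the remaining cases similar; your handling of $\Prihom$ via the identity $\Prihom(K_1,K_2)\simeq\rmD_{X^\an_\infty}^{\rmE}(K_1\Potimes\rmD_{X^\an_\infty}^{\rmE}K_2)$ is exactly what the paper itself uses in the proof of Lemma \ref{lem3.6}. One minor divergence: for the biduality $K\simto\rmD_{X^\an_\infty}^{\rmE}\rmD_{X^\an_\infty}^{\rmE}K$ the paper simply invokes Proposition \ref{prop2.8}(1) (the $\RR$-constructible biduality from \cite{DK16-2}), whereas you deduce it from $\DD_X\DD_X\M\simeq\M$ via the equivalence---both are valid, and yours is perhaps more in the spirit of the rest of the argument.
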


\begin{proof}
Since the proofs of these assertions in the corollary are similar,
we only prove (1).
By Theorem \ref{thm-main-2}, there exists $\M\in\BDChol(\D_X)$ such that
$K\simeq\Sol_{X_\infty}^\rmE(\M)$.
Then by Proposition \ref{prop-comm} (1) we obtain
\[\rmD_{X^\an_\infty}^\rmE(K)
\simeq
\rmD_{X^\an_\infty}^\rmE\Sol_{X_\infty}^\rmE(\SM)
\simeq
\Sol_{X_\infty}^\rmE(\DD_X\SM)[2d_X]
\simeq
\Sol_{X_\infty}^\rmE\big(\DD_X(\SM[2d_X])\big).\]
Hence by Theorem \ref{thm-main-2} and the fact that $\DD_X(\M[2d_X])\in\BDChol(\D_X)$
we have $\rmD_{X^\an_\infty}^\rmE(K)\in\BEC_{\CC-c}(\I\CC_{X_\infty})$.
Moreover we have the second part of (1) by Proposition \ref{prop2.8} and the fact that
any $\CC$-constructible enhanced ind-shaef is $\RR$-constructible.
\end{proof}

Let us recall that for any $\SF\in\BDC(\CC_{X^\an})$ we have
\[e_{X^\an_\infty}(\SF) \simeq \bfE j^{-1}(e_{\tl{X}^\an}(\bfR j^\an_!(\SF))),\]
see \S \ref{subsec2.4} for the details.
\begin{proposition}\label{prop-embed}
The functor $e_{X^\an_\infty} \colon \BDC(\CC_{X^\an}) \hookrightarrow \BEC(\I\CC_{X^\an_\infty})$
induces an embedding
$$\BDC_{\CC-c}(\CC_X)\hookrightarrow\BEC_{\CC-c}(\I\CC_{X_\infty})$$
and we have a commutative diagram
\[\xymatrix@C=50pt@M=5pt{
\BDChol(\D_X)^{\op}\ar@{->}[r]^\sim\ar@<1.0ex>@{}[r]^-{\Sol_{X_\infty}^{\rmE}}
\ar@{}[rd]|{\rotatebox[origin=c]{180}{$\circlearrowright$}}
 & \BEC_{\CC-c}(\I\CC_{X_\infty})\\
\BDCrh(\D_X)^{\op}\ar@{->}[r]_-{\Sol_X}^-{\sim}\ar@{}[u]|-{\bigcup}
&\BDC_{\CC-c}(\CC_X).
\ar@{^{(}->}[u]_-{e_{X^\an_\infty}}
}\]
\end{proposition}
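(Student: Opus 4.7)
The plan is to establish the commutative diagram first, because the containment $e_{X^\an_\infty}(\BDC_{\CC-c}(\CC_X))\subset\BEC_{\CC-c}(\I\CC_{X_\infty})$ will then come out for free, and the fully faithfulness on $\BDC_{\CC-c}(\CC_X)$ will follow by comparing the diagram with two known equivalences. Concretely, given $\SF\in\BDC_{\CC-c}(\CC_X)$, the algebraic regular Riemann--Hilbert correspondence (Theorem \ref{thm-RH}) supplies $\M\in\BDCrh(\D_X)$ with $\Sol_X(\M)\simeq\SF$, so it suffices to prove
\[
\Sol_{X_\infty}^{\rmE}(\M)\simeq e_{X^\an_\infty}(\Sol_X(\M))\qquad\text{for every } \M\in\BDCrh(\D_X),
\]
since Theorem \ref{thm-main-2} then immediately yields $e_{X^\an_\infty}(\SF)\in\BEC_{\CC-c}(\I\CC_{X_\infty})$.

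For this key isomorphism, I would unfold $\Sol_{X_\infty}^{\rmE}(\M)=\bfE j^{-1}\Sol_{\tl{X}}^{\rmE}(\bfD j_\ast\M)$. Algebraic direct image preserves regular holonomicity, and analytification does the same, so $(\bfD j_\ast\M)^\an\in\BDCrh(\D_{\tl{X}^\an})$. Applying Theorem \ref{thm2.6}(2) on the complete manifold $\tl{X}^\an$ then gives
\[
\Sol_{\tl{X}}^{\rmE}(\bfD j_\ast\M)\simeq e_{\tl{X}^\an}\bigl(\Sol_{\tl{X}}(\bfD j_\ast\M)\bigr).
\]
Applying $\bfE j^{-1}$ and using the relation $\bfE j^{-1}\circ e_{\tl{X}^\an}\simeq e_{X^\an_\infty}\circ j^{-1}$ recalled in \S\ref{subsec2.4}, the claim reduces to verifying $j^{-1}\Sol_{\tl{X}}(\bfD j_\ast\M)\simeq\Sol_X(\M)$. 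This last point is elementary: since $j$ is an open embedding one has $j^{-1}(\bfD j_\ast\M)^\an\simeq\M^\an$ and $j^{-1}\SO_{\tl{X}^\an}\simeq\SO_{X^\an}$, so the classical solution functor commutes with the restriction to $X^\an$, giving $j^{-1}\Sol_{\tl{X}^\an}((\bfD j_\ast\M)^\an)\simeq\Sol_{X^\an}(\M^\an)=\Sol_X(\M)$.

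For fully faithfulness on $\BDC_{\CC-c}(\CC_X)$, the commutative diagram identifies $e_{X^\an_\infty}|_{\BDC_{\CC-c}(\CC_X)}$ with the composition
\[
\BDC_{\CC-c}(\CC_X)\xrightarrow{\;\Sol_X^{-1}\;}\BDCrh(\D_X)^{\op}\hookrightarrow\BDChol(\D_X)^{\op}\xrightarrow{\;\Sol_{X_\infty}^{\rmE}\;}\BEC_{\CC-c}(\I\CC_{X_\infty}),
\]
in which the first arrow is an equivalence (Theorem \ref{thm-RH}), the second is the natural inclusion of a full triangulated subcategory, and the third is an equivalence by Theorem \ref{thm-main-2}. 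Hence the composition is fully faithful, so the same is true for $e_{X^\an_\infty}$ on $\BDC_{\CC-c}(\CC_X)$.

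The only non-formal ingredient is the preservation of regular holonomicity by the algebraic direct image $\bfD j_\ast$, which is required to invoke Theorem \ref{thm2.6}(2); this is the main point to verify, but it is a classical fact about algebraic $\D$-modules. Everything else is a formal manipulation of the definitions of $\Sol_{X_\infty}^{\rmE}$ and $e_{X^\an_\infty}$ together with the identity $\bfE j^{-1}\circ e_{\tl{X}^\an}\simeq e_{X^\an_\infty}\circ j^{-1}$.
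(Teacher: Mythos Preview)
Your proof is correct and follows essentially the same route as the paper: both reduce to showing $\Sol_{X_\infty}^{\rmE}(\M)\simeq e_{X^\an_\infty}(\Sol_X(\M))$ for $\M\in\BDCrh(\D_X)$ by noting $(\bfD j_\ast\M)^\an$ is analytically regular holonomic, applying Theorem~\ref{thm2.6}(2) on $\tl{X}^\an$, and then pulling back via $\bfE j^{-1}$. The only cosmetic difference is that the paper first identifies $\Sol_{\tl{X}}(\bfD j_\ast\M)\simeq\bfR j^\an_!\Sol_X(\M)$ and uses the formula $e_{X^\an_\infty}(\SF)\simeq\bfE j^{-1}e_{\tl{X}^\an}(\bfR j^\an_!\SF)$, whereas you use the companion identity $\bfE j^{-1}\circ e_{\tl{X}^\an}\simeq e_{X^\an_\infty}\circ j^{-1}$ and then restrict; and the paper does not spell out fully faithfulness separately since $e_{X^\an_\infty}$ is already known to be fully faithful on all of $\BDC(\CC_{X^\an})$.
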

\begin{proof}
It is enough to show the last part.
Let $\M$ be an object of $\BDCrh(\D_X)$.
Then $(\bfD j_\ast\M)^\an\in\BDCrh(\D_{\tl{X}^\an})$
by the definition of algebraic regular holonomic.
Hence we have isomorphisms in $\BEC(\I\CC_{\tl{X}^\an})$
\[\Sol_{\tl{X}}^\rmE(\bfD j_\ast\M)
\simeq
\Sol_{\tl{X}^\an}^\rmE\big((\bfD j_\ast\M)^\an\big)
\simeq
e_{\tl{X}^\an}\big(\Sol_{\tl{X}^\an}\big((\bfD j_\ast\M)^\an\big)\big)\]
by Theorem \ref{thm2.6} (2).
On the other hand, we have an isomorphism in $\BDC(\CC_{\tl{X}^\an})$
\[\Sol_{\tl{X}^\an}\big((\bfD j_\ast\M)^\an\big)
\simeq
\bfR j^\an_!\big(\Sol_{X^\an}(\M^\an)\big)
\simeq
\bfR j^\an_!\big(\Sol_{X}(\M)\big),\]
see e.g., \cite[Theorem 7.1.1]{HTT08}.
Hence there exist isomorphisms in $\BEC(\I\CC_{X^\an_\infty})$
\begin{align*}
\Sol_{X_\infty}^\rmE(\M)
&\simeq
\bfE j^{-1}\Sol_{\tl{X}}^\rmE(\bfD j_\ast\M)\\
&\simeq
\bfE j^{-1}e_{\tl{X}^\an}\big(\Sol_{\tl{X}^\an}\big((\bfD j_\ast\M)^\an\big)\\
&\simeq
\bfE j^{-1}e_{\tl{X}^\an}\bfR j^\an_!\big(\Sol_{X}(\M)\big)\\
&\simeq
e_{X^\an_\infty}\big(\Sol_{X}(\M)\big).
\end{align*}
\end{proof}

Let us recall that for any $K\in\BEC(\I\CC_{X^\an_\infty})$ we have
\[\sh_{X^\an_\infty}(K) \simeq (j^\an)^{-1}(\sh_{\tl{X}^\an}(\bfE j_{!!}(K))),\]
see \S \ref{subsec2.4} for the details.

\begin{lemma}\label{lem3.16}
For any $\M\in\BDChol(\D_X)$
there exists an isomorphism in $\BDC(\CC_{X^\an})$
\[\sh_{X^\an_\infty}(\Sol_{X_\infty}^\rmE(\M))
\simeq\Sol_{X}(\M).\]
\end{lemma}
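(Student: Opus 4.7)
The plan is to unravel both sides via the definitions of $\sh_{X^\an_\infty}$ and $\Sol_{X_\infty}^\rmE$, reducing everything to a statement about $\tl{X}$ (where the classical comparison $\sh_{\tl{X}^\an}\circ\Sol_{\tl{X}^\an}^\rmE\simeq\Sol_{\tl{X}^\an}$ is already known from \cite[Lemma 9.5.5]{DK16}), and then to pull back along the open embedding $j\colon X\hookrightarrow\tl{X}$.

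More concretely, I would proceed as follows. First, using the compatibility $\sh_{X^\an_\infty}(K)\simeq (j^\an)^{-1}\sh_{\tl{X}^\an}(\bfE j_{!!}K)$ recalled just before the statement, and the definition $\Sol_{X_\infty}^\rmE(\M) := \bfE j^{-1}\Sol_{\tl{X}}^\rmE(\bfD j_\ast\M)$, we obtain
\[\sh_{X^\an_\infty}\bigl(\Sol_{X_\infty}^\rmE(\M)\bigr)\simeq
(j^\an)^{-1}\sh_{\tl{X}^\an}\bigl(\bfE j_{!!}\bfE j^{-1}\Sol_{\tl{X}}^\rmE(\bfD j_\ast\M)\bigr).\]
Next, I invoke the computation performed in the proof of Theorem \ref{thm-main-2}: since $(\bfD j_\ast\M)^\an\simeq (\bfD j_\ast\M)^\an(\ast D^\an)$ we have $\pi^{-1}\CC_{X^\an}\otimes\Sol_{\tl{X}}^\rmE(\bfD j_\ast\M)\simto\Sol_{\tl{X}}^\rmE(\bfD j_\ast\M)$, and therefore $\bfE j_{!!}\bfE j^{-1}\Sol_{\tl{X}}^\rmE(\bfD j_\ast\M)\simeq\Sol_{\tl{X}}^\rmE(\bfD j_\ast\M)$.

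Then I apply the analytic version of the classical/enhanced sheafification comparison,
\[\sh_{\tl{X}^\an}\bigl(\Sol_{\tl{X}^\an}^\rmE(\bfD j_\ast\M)^\an\bigr)\simeq
\Sol_{\tl{X}^\an}\bigl((\bfD j_\ast\M)^\an\bigr),\]
which is \cite[Lemma 9.5.5]{DK16} applied to the holonomic $\D_{\tl{X}^\an}$-module $(\bfD j_\ast\M)^\an$. At this point the right-hand side of our target is reduced to $(j^\an)^{-1}\Sol_{\tl{X}^\an}\bigl((\bfD j_\ast\M)^\an\bigr)$, which by ordinary functoriality of the classical solution functor under inverse images equals $\Sol_{X^\an}\bigl((j^\an)^\ast(\bfD j_\ast\M)^\an\bigr)$. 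Using the analytification compatibility $(j^\an)^\ast(\bfD j_\ast\M)^\an\simeq(\bfD j^\ast\bfD j_\ast\M)^\an$ of Proposition \ref{prop2.12}(2), together with the standard identity $\bfD j^\ast\bfD j_\ast\M\simeq\M$ for the open embedding $j$, this becomes $\Sol_{X^\an}(\M^\an)=\Sol_X(\M)$, as desired.

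The steps are individually standard, and the only point needing care is the middle one: checking that $\bfE j_{!!}\bfE j^{-1}$ acts as the identity on $\Sol_{\tl{X}}^\rmE(\bfD j_\ast\M)$. This hinges on the meromorphic-along-$D$ nature of $\bfD j_\ast\M$ combined with Theorem \ref{thm-DK}(5), and it has essentially already been carried out inside the proof of Theorem \ref{thm-main-2}, so here it can simply be invoked. No additional obstacle is expected.
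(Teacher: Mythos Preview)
Your proposal is correct and follows essentially the same route as the paper's proof: unwind the definitions of $\sh_{X^\an_\infty}$ and $\Sol_{X_\infty}^\rmE$, use that $\bfE j_{!!}\bfE j^{-1}$ acts as the identity on $\Sol_{\tl{X}}^\rmE(\bfD j_\ast\M)$, apply \cite[Lemma 9.5.5]{DK16} on $\tl{X}^\an$, and then pull back along $j$ using $\bfD j^\ast\bfD j_\ast\M\simeq\M$. The paper's argument is slightly terser (it writes the fourth step directly as $(j^\an)^{-1}\Sol_{\tl{X}}(\bfD j_\ast\M)\simeq\Sol_X(\bfD j^\ast\bfD j_\ast\M)$ without unpacking the analytification compatibility), but the content is identical.
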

\begin{proof}
By the definition of $\Sol_{X_\infty}^\rmE$ and $\sh_{X^\an_\infty}$
we have isomorphisms in $\BDC(\CC_{X^\an})$
\begin{align*}
\sh_{X^\an_\infty}(\Sol_{X_\infty}^\rmE(\M))
&\simeq
(j^\an)^{-1}\sh_{\tl{X}^\an}\bfE j_{!!}\bfE j^{-1}\Sol_{\tl{X}}^\rmE(\bfD j_\ast\M)\\
&\simeq
(j^\an)^{-1}\sh_{\tl{X}^\an}\Sol_{\tl{X}}^\rmE(\bfD j_\ast\M)\\
&\simeq
(j^\an)^{-1}\Sol_{\tl{X}}(\bfD j_\ast\M)\\
&\simeq
\Sol_{X}(\bfD j^\ast\bfD j_\ast\M)\\
&\simeq
\Sol_{X}(\M)
\end{align*}
where in the third isomorphism we used \cite[Lemma 9.5.5]{DK16}.
\end{proof}

\begin{proposition}\label{prop-sh}
The functor $\sh_{X^\an_\infty} \colon \BEC(\I\CC_{X^\an_\infty})\to\BDC(\CC_{X^\an})$ induces
$$\BEC_{\CC-c}(\I\CC_{X_\infty})\to\BDC_{\CC-c}(\CC_X)$$
and hence we have a commutative diagram
\[\xymatrix@C=50pt@M=5pt{
\BDChol(\D_X)^{\op}\ar@{->}[r]^\sim\ar@<1.0ex>@{}[r]^-{\Sol_{X_\infty}^{\rmE}}
\ar@{->}[d]_-{(\cdot)_\reg}\ar@{}[rd]|{\rotatebox[origin=c]{180}{$\circlearrowright$}}
 & \BEC_{\CC-c}(\I\CC_{X_\infty})\ar@{->}[d]^-{\sh_{X^\an_\infty}}\\
\BDCrh(\D_X)^{\op}\ar@{->}[r]_-{\Sol_X}^-{\sim}&\BDC_{\CC-c}(\CC_X).
}\]
\end{proposition}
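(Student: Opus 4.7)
The plan is to derive both claims directly from Theorem \ref{thm-main-2} together with Lemma \ref{lem3.16}, reducing everything to statements about the classical solution functor $\Sol_X$ that are already available.

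For the first assertion (that $\sh_{X^\an_\infty}$ restricts to a functor $\BEC_{\CC-c}(\I\CC_{X_\infty}) \to \BDC_{\CC-c}(\CC_X)$), I would take an arbitrary $K \in \BEC_{\CC-c}(\I\CC_{X_\infty})$ and invoke Theorem \ref{thm-main-2} to produce some $\M \in \BDChol(\D_X)$ with $K \simeq \Sol_{X_\infty}^\rmE(\M)$. Lemma \ref{lem3.16} then supplies an isomorphism $\sh_{X^\an_\infty}(K) \simeq \Sol_X(\M)$, and Theorem \ref{thm-RH} guarantees $\Sol_X(\M) \in \BDC_{\CC-c}(\CC_X)$. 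Hence the functor is well defined on these subcategories.

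For the commutativity of the square, fix $\M \in \BDChol(\D_X)$. Going right then down gives $\sh_{X^\an_\infty}(\Sol_{X_\infty}^\rmE(\M)) \simeq \Sol_X(\M)$ by Lemma \ref{lem3.16}. Going down then right gives $\Sol_X(\M_\reg)$, which is isomorphic to $\Sol_X(\M)$ by the very defining property of the algebraic regularization functor recalled above (namely $\M_\reg = \RH_X(\Sol_X(\M))$, so that $\Sol_X \circ (\cdot)_\reg \simeq \Sol_X$ via the algebraic regular Riemann--Hilbert correspondence of Theorem \ref{thm-RH}). Composing these two isomorphisms yields the desired natural isomorphism.

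I do not expect any serious obstacle, since all of the heavy machinery has already been set up in the preceding subsections. The only point meriting a brief check is the naturality in $\M$ of the isomorphism in Lemma \ref{lem3.16}, but this is transparent from its proof: each intermediate step is a canonical natural transformation (the adjunction $\bfE j_{!!}\bfE j^{-1} \simeq \pi^{-1}\CC_{X^\an}\otimes(\cdot)$ on the full subcategory of enhanced ind-sheaves trivialized by $\pi^{-1}\CC_{X^\an}\otimes(\cdot)$, the compatibility $\sh_{\tl{X}^\an}\circ \Sol_{\tl{X}}^\rmE \simeq \Sol_{\tl{X}}$ coming from \cite[Lemma 9.5.5]{DK16}, and the canonical isomorphism $\bfD j^\ast\bfD j_\ast \simeq \id$ on holonomic modules).
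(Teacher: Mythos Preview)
Your proposal is correct and follows essentially the same approach as the paper: the first part is deduced from Theorem~\ref{thm-main-2} and Lemma~\ref{lem3.16} (with Theorem~\ref{thm-RH} supplying the constructibility of $\Sol_X(\M)$), and the commutativity from Lemma~\ref{lem3.16} together with the defining isomorphism $\Sol_X(\M)\simeq\Sol_X(\M_\reg)$. Your additional remarks on naturality are more detailed than the paper's terse proof but do not depart from its strategy.
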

\begin{proof}
The first part follows from Theorem \ref{thm-main-2} and Lemma \ref{lem3.16}.
Moreover since there exists an isomorphism $\Sol_X(\M)\simeq\Sol_X(\M_\reg)$
in $\BDC(\CC_{X^\an})$ for any $\M\in\BDChol(\D_X)$ (see \S \ref{sec-alg} for the details),
we obtain the commutative diagram.
\end{proof}

\subsection{Perverse t-Structure}
In this subsection, we define a t-structure
on the triangulated category $\BEC_{\CC-c}(\I\CC_{X_\infty})$
of algebraic $\CC$-constructible enhanced ind-sheaves on $X_\infty^\an$
and prove that its heart is equivalent to the abelian category $\Modhol(\D_X)$
of algebraic holonomic $\D_X$-modules.
The similar results of this section for the analytic case is proved in \cite[\S4]{Ito19}.

We denote by $\rmD_{X^\an} \colon\BDC(\CC_{X^\an})^{\op}\to\BDC(\CC_{X^\an})$
the Verdier dual functor for sheaves,
see \cite[\S 3]{KS90} for the definition.
The sheafification functor $\sh_{X^\an_\infty} \colon \BEC_{\CC-c}(\I\CC_{X_\infty})\to \BDC_{\CC-c}(\CC_{X})$ commutes
with the duality functor as follows.
\begin{lemma}\label{lem3.18}
For any $K\in\BEC_{\CC-c}(\I\CC_{X_\infty})$
there exists an isomorphism in $\BDC(\CC_{X^\an})$
$$\sh_{X^\an_\infty}\big(\rmD_{X^\an_\infty}^{\rmE}(K)\big)
\simeq\rmD_{X^\an}\big(\sh_{X^\an_\infty}(K)\big).$$
\end{lemma}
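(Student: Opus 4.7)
The plan is to reduce everything to the $\D$-module side via the equivalence established in Theorem \ref{thm-main-2}, and then invoke the classical compatibility between the solution functor and Verdier duality for algebraic holonomic $\D$-modules.

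First, by Theorem \ref{thm-main-2}, for any $K\in\BEC_{\CC-c}(\I\CC_{X_\infty})$ I can choose $\M\in\BDChol(\D_X)$ together with an isomorphism $K\simeq \Sol_{X_\infty}^\rmE(\M)$ in $\BEC(\I\CC_{X^\an_\infty})$. Then, using Proposition \ref{prop-comm} (1), I obtain
\[
\rmD_{X^\an_\infty}^\rmE(K)\simeq \rmD_{X^\an_\infty}^\rmE\Sol_{X_\infty}^\rmE(\M)\simeq \Sol_{X_\infty}^\rmE(\DD_X\M)[2d_X].
\]
Applying the sheafification functor and using Lemma \ref{lem3.16} (applied to $\DD_X\M[2d_X]\in\BDChol(\D_X)$), this gives
\[
\sh_{X^\an_\infty}\big(\rmD_{X^\an_\infty}^\rmE(K)\big)\simeq \Sol_X(\DD_X\M)[2d_X].
\]

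On the other hand, Lemma \ref{lem3.16} applied directly to $\M$ yields $\sh_{X^\an_\infty}(K)\simeq \Sol_X(\M)$, hence $\rmD_{X^\an}\big(\sh_{X^\an_\infty}(K)\big)\simeq \rmD_{X^\an}(\Sol_X(\M))$. The proof will therefore be complete once I establish the classical identification
\[
\Sol_X(\DD_X\M)[2d_X]\simeq \rmD_{X^\an}\big(\Sol_X(\M)\big)
\]
in $\BDC(\CC_{X^\an})$ for $\M\in\BDChol(\D_X)$. This is standard: by definition $\Sol_X(\M)=\Sol_{X^\an}(\M^\an)$, and for analytic holonomic $\D$-modules one has $\Sol_{X^\an}(\DD_{X^\an}\N)[2d_X]\simeq \rmD_{X^\an}\Sol_{X^\an}(\N)$ (a consequence of the constructibility of $\Sol_{X^\an}(\N)$ together with the compatibility $\DR_{X^\an}(\N)\simeq \Sol_{X^\an}(\DD_{X^\an}\N)[d_X]$ and the identification of $\DR$ with the Verdier dual of $\Sol$ shifted appropriately, see \cite[\S 4.6]{HTT08}); combined with Proposition \ref{prop2.12} (1), this yields the desired isomorphism on the algebraic side.

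The main obstacle is simply making sure the final identification is phrased correctly and that the shifts match, since the lemma as stated asks for an isomorphism in $\BDC(\CC_{X^\an})$ without a shift: the $[2d_X]$ on the $\D$-module side is absorbed by the $[2d_X]$ appearing in Proposition \ref{prop-comm} (1), so the two sides glue without any residual shift. Note also that the argument only produces an isomorphism (not a canonical one), which is what the statement requires; a functorial version would require additional bookkeeping to check that all the intermediate isomorphisms are natural in $K$.
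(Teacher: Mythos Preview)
Your proof is correct and follows essentially the same route as the paper: both arguments use Theorem \ref{thm-main-2} to write $K\simeq\Sol_{X_\infty}^\rmE(\M)$, apply Proposition \ref{prop-comm}(1) and Lemma \ref{lem3.16}, and then invoke the classical compatibility $\Sol_X(\DD_X\M)[2d_X]\simeq\rmD_{X^\an}(\Sol_X(\M))$ for holonomic $\M$ (the paper cites \cite[Proposition 4.7.9]{HTT08} for this last step).
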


\begin{proof}
Let $K\in\BEC_{\CC-c}(\I\CC_{X_\infty})$.
Then there exists an object $\M\in\BDChol(\D_X)$ such that
$K\simeq\Sol_{X_\infty}^{\rmE}(\M)$
by the algebraic irregular Riemann--Hilbert correspondence (proved)
in Theorem \ref{thm-main-2}. 
Therefore we have isomorphisms in $\BDC(\CC_{X^\an})$
\begin{align*}
\sh_{X^\an_\infty}\big(\rmD_{X^\an_\infty}^{\rmE}(K)\big)
&\simeq
\sh_{X^\an_\infty}\big(\rmD_{X^\an_\infty}^{\rmE}(\Sol_{X_\infty}^{\rmE}(\M))\\
&\simeq
\sh_{X^\an_\infty}\big(\Sol_{X^\an_\infty}^{\rmE}\big(\DD_X\M\big)[2d_X]\big)\\
&\simeq
\Sol_X\big(\DD_X\M\big)[2d_X]\\
&\simeq
\rmD_{X^\an}\big(\Sol_X(\M)\big)\\
&\simeq
\rmD_{X^\an}\big(\sh_{X^\an_\infty}\Sol_{X_\infty}^\rmE(\M)\big)\\
&\simeq
\rmD_{X^\an}\big(\sh_{X^\an_\infty}(K)\big)
\end{align*}
where in the second isomorphism
we used Proposition \ref{prop-comm} (1),
in the third and fifth ones we used Lemma \ref{lem3.16}
and in the forth one we used the isomorphism
$\Sol_X\big(\DD_X(\M)\big)[2d_X]
\simeq
\rmD_{X^\an}\big(\Sol_X(\M)\big)$ for $\M\in\BDChol(\D_X)$
(see e.g., \cite[Proposition 4.7.9]{HTT08}).
\end{proof}

Let us recall the definition of algebraic perverse sheaves on $X^\an$.
We consider the following full subcategories of $\BDC_{\CC-c}(\CC_X)$:
\begin{align*}
{}^p\bfD^{\leq0}_{\CC-c}(\CC_X) & :=
\{\SF\in\BDC_{\CC-c}(\CC_X)\ |\
\dim(\supp\SH^i\SF)\leq -i \hspace{7pt}
\mbox{for any } i\in\ZZ \},\\
{}^p\bfD^{\geq0}_{\CC-c}(\CC_X) & :=
\{\SF\in\BDC_{\CC-c}(\CC_X)\ |\ \rmD_{X^\an}(\SF)\in{}^p\bfD^{\leq0}_{\CC-c}(\CC_X)\}\\
&\ =
\{\SF\in\BDC_{\CC-c}(\CC_X)\ |\
\dim(\supp\SH^i\rmD_{X^\an}(\SF))\leq -i \hspace{7pt}
\mbox{for any } i\in\ZZ \}.
\end{align*}
Then the pair $\big({}^p\bfD^{\leq0}_{\CC-c}(\CC_X), {}^p\bfD^{\geq0}_{\CC-c}(\CC_X)\big)$
is a t-structure on $\BDC_{\CC-c}(\CC_X)$ by \cite{BBD}, see also \cite[Theorem 8.1.27]{HTT08}.
We denote by $\Perv(\CC_X)$ the heart of its t-structure
and call an object of $\Perv(\CC_X)$ an algebraic perverse sheaf on $X^\an$.

\begin{definition}
We define full subcategories of $\BEC_{\CC-c}(\I\CC_{X_\infty})$ by
\begin{align*}
{}^p\bfE^{\leq0}_{\CC-c}(\I\CC_{X_\infty}) &:=
\{K\in\BEC_{\CC-c}(\I\CC_{X_\infty})\ |\
\sh_{X^\an_\infty}(K)\in{}^p\bfD^{\leq0}_{\CC-c}(\CC_X)\},\\
{}^p\bfE^{\geq0}_{\CC-c}(\I\CC_{X_\infty}) &:=
\{K\in\BEC_{\CC-c}(\I\CC_{X_\infty})\ |\ 
\rmD_{X^\an_\infty}^{\rmE}(K)\in{}^p\bfE^{\leq0}_{\CC-c}(\I\CC_{X_\infty})\}\\
&\ =
\{K\in\BEC_{\CC-c}(\I\CC_{X_\infty})\ |\
\sh_{X^\an_\infty}(K)\in{}^p\bfD^{\geq0}_{\CC-c}(\CC_X)\}
\hspace{20pt}(\mbox{ by Lemma \ref{lem3.18}}).
\end{align*}
\end{definition}

Then we obtain the second main theorem of this paper.
\begin{theorem}\label{thm-main-3}
Let $X$ be a smooth algebraic variety over $\CC$ and $\M\in\BDChol(\D_X)$.
Then we have
\begin{itemize}
\item[\rm(1)]
$\M\in\DChol^{\leq0}(\D_X)\Longleftrightarrow
\Sol_{X_\infty}^{\rmE}(\M)[d_X]\in{}^p\bfE^{\geq0}_{\CC-c}(\I\CC_{X_\infty})$,
\item[\rm(2)]
$\M\in\DChol^{\geq0}(\D_X)\Longleftrightarrow
\Sol_{X_\infty}^{\rmE}(\M)[d_X]\in{}^p\bfE^{\leq0}_{\CC-c}(\I\CC_{X_\infty})$.
\end{itemize}

Therefore, the pair $\big({}^p\bfE^{\leq0}_{\CC-c}(\I\CC_{X_\infty}),
{}^p\bfE^{\geq0}_{\CC-c}(\I\CC_{X_\infty})\big)$
is a t-structure on $\BEC_{\CC-c}(\I\CC_{X_\infty})$
and
its heart $$\Perv(\I\CC_{X_\infty}) :=
{}^p\bfE^{\leq0}_{\CC-c}(\I\CC_{X_\infty})\cap{}^p\bfE^{\geq0}_{\CC-c}(\I\CC_{X_\infty})$$
is equivalent to the abelian category $\Modhol(\D_X)$ of holonomic $\D_X$-modules on $X$.
Namely we obtain an equivalence of abelian categories
\[\Sol_{X_\infty}^\rmE(\cdot)[d_X] \colon \Modhol(\D_X)^{\op}\simto\Perv(\I\CC_{X_\infty}).\]
\end{theorem}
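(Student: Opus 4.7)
The plan is to reduce Theorem \ref{thm-main-3} to the algebraic regular Riemann–Hilbert correspondence and its perverse t-exactness (Theorem \ref{thm-perv}) by means of the sheafification functor together with the commutative diagram of Proposition \ref{prop-sh}. The point is that everything on the enhanced side has been defined so that $\sh_{X^\an_\infty}$ detects perversity.

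First I would establish (1) by chaining together, for any $\M \in \BDChol(\D_X)$, the equivalences
\begin{align*}
\Sol_{X_\infty}^{\rmE}(\M)[d_X] \in {}^p\bfE^{\geq 0}_{\CC-c}(\I\CC_{X_\infty})
&\Longleftrightarrow \sh_{X^\an_\infty}\!\big(\Sol_{X_\infty}^\rmE(\M)\big)[d_X] \in {}^p\bfD^{\geq 0}_{\CC-c}(\CC_X) \\
&\Longleftrightarrow \Sol_X(\M)[d_X] \in {}^p\bfD^{\geq 0}_{\CC-c}(\CC_X) \\
&\Longleftrightarrow \M \in \DChol^{\leq 0}(\D_X).
\end{align*}
The first equivalence is the definition of ${}^p\bfE^{\geq 0}_{\CC-c}(\I\CC_{X_\infty})$ combined with Proposition \ref{prop-sh}, which guarantees $\sh_{X^\an_\infty}\!\big(\Sol_{X_\infty}^\rmE(\M)\big) \in \BDC_{\CC-c}(\CC_X)$; the second is the canonical isomorphism of Lemma \ref{lem3.16}; the third is precisely Theorem \ref{thm-perv}(1), which expresses the t-exactness of $\Sol_X(\cdot)[d_X]$ from the standard t-structure on $\BDChol(\D_X)$ to the perverse t-structure on $\BDC_{\CC-c}(\CC_X)$. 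Part (2) is identical, substituting the inequalities accordingly and invoking Theorem \ref{thm-perv}(2).

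Having (1) and (2), I would deduce the t-structure assertion by transport through the equivalence of Theorem \ref{thm-main-2}. The pair $\big(\DChol^{\leq 0}(\D_X), \DChol^{\geq 0}(\D_X)\big)$ is the standard t-structure on $\BDChol(\D_X)$, and $\Sol_{X_\infty}^{\rmE}(\cdot)[d_X]$ is a contravariant equivalence of triangulated categories from $\BDChol(\D_X)$ onto $\BEC_{\CC-c}(\I\CC_{X_\infty})$; contravariant transport (which swaps $\leq$ and $\geq$) automatically produces a t-structure on the target. Parts (1) and (2) identify this transported t-structure with the pair $\big({}^p\bfE^{\leq 0}_{\CC-c}(\I\CC_{X_\infty}), {}^p\bfE^{\geq 0}_{\CC-c}(\I\CC_{X_\infty})\big)$, so that pair is itself a t-structure. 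Intersecting the two conditions, $\M \in \Modhol(\D_X)$ if and only if $\Sol_{X_\infty}^\rmE(\M)[d_X] \in \Perv(\I\CC_{X_\infty})$, so the equivalence of Theorem \ref{thm-main-2} restricts to the desired equivalence of abelian categories $\Modhol(\D_X)^\op \simto \Perv(\I\CC_{X_\infty})$.

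I do not expect a serious obstacle: once Theorems \ref{thm-main-2} and \ref{thm-perv} together with Lemma \ref{lem3.16} and Proposition \ref{prop-sh} are available, the argument is formal. The only point deserving care is the bookkeeping of contravariance and of the shift by $d_X$, which explains why ${}^p\bfE^{\leq 0}_{\CC-c}$ corresponds to $\DChol^{\geq 0}$ rather than to $\DChol^{\leq 0}$ on the $\D$-module side.
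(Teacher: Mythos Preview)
Your proposal is correct and follows essentially the same route as the paper: both reduce (1) and (2) to Theorem~\ref{thm-perv} via the identification $\sh_{X^\an_\infty}\bigl(\Sol_{X_\infty}^\rmE(\M)\bigr)\simeq\Sol_X(\M)$ of Lemma~\ref{lem3.16}, and then deduce the t-structure claim by transporting the standard t-structure through the equivalence of Theorem~\ref{thm-main-2}. Your write-up is slightly more explicit than the paper about the contravariant transport step, but there is no substantive difference in strategy.
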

\begin{proof}
It is enough to show the first part.
Let $\M$ be an object of $\DChol^{\leq0}(\D_X)$.
Then we have $\Sol_X(\M)[d_X]\in{}^p\bfD^{\geq0}_{\CC-c}(\CC_X)$
by Theorem \ref{thm-perv} (1).
Furthermore by Lemma \ref{lem3.16}
there exists an isomorphism in $\BDC(\CC_{X^\an})$
\[\sh_{X^\an_\infty}(\Sol_{X_\infty}^\rmE(\M))
\simeq\Sol_{X}(\M).\]
Hence we obtain
\[\Sol_{X_\infty}^{\rmE}(\M)[d_X]\in{}^p\bfE^{\geq0}_{\CC-c}(\I\CC_{X_\infty}).\]
On the other hand, let $\M$ be an object of $\BDChol(\D_X)$ which satisfies
$\Sol_{X_\infty}^{\rmE}(\M)[d_X]\in{}^p\bfE^{\geq0}_{\CC-c}(\I\CC_{X_\infty})$.
Then we have 
\[\Sol_X(\M)[d_X]\simeq\sh_{X^\an_\infty}\Sol_{X_\infty}^{\rmE}(\M)[d_X]
\in{}^p\bfD^{\geq0}_{\CC-c}(\CC_X)\]
and hence $\M\in\DChol^{\leq0}(\D_X)$ by Theorem \ref{thm-perv} (1).
\end{proof}

\begin{definition}
We say that
$K\in\BEC_{\CC-c}(\I\CC_{X_\infty^\an})$ is an algebraic enhanced perverse ind-sheaf
on $X^\an_\infty$ if $K\in\Perv(\I\CC_{X_\infty}) :=
{}^p\bfE^{\leq0}_{\CC-c}(\I\CC_{X_\infty})\cap{}^p\bfE^{\geq0}_{\CC-c}(\I\CC_{X_\infty})$.
\end{definition}

By the definition of the t-structure
$\big({}^p\bfE^{\leq0}_{\CC-c}(\I\CC_{X_\infty}),
{}^p\bfE^{\geq0}_{\CC-c}(\I\CC_{X_\infty})\big)$,
we have
\begin{align*}
\rmD_{X^\an_\infty}^\rmE(K)\in{}^p\bfE^{\geq0}_{\CC-c}(\I\CC_{X_\infty}) &
\mbox{ for } K\in{}^p\bfE^{\leq0}_{\CC-c}(\I\CC_{X_\infty}),\\
\rmD_{X^\an_\infty}^\rmE(K)\in{}^p\bfE^{\leq0}_{\CC-c}(\I\CC_{X_\infty}) &
\mbox{ for } K\in{}^p\bfE^{\geq0}_{\CC-c}(\I\CC_{X_\infty}).
\end{align*}

Thus, the duality functor $\rmD_{X^\an_\infty}^\rmE$ induces an equivalence of abelian categories
$$\rmD_{X^\an_\infty}^{\rmE} \colon
\Perv(\I\CC_{X_\infty})^{\op}\simto\Perv(\I\CC_{X_\infty}).$$
By $\id\simto\sh_{X^\an_\infty} \circ e_{X^\an_\infty}$ and Proposition \ref{prop-embed}
we obtain:
\begin{proposition}\label{prop3.22}
\begin{itemize}
\item[\rm(1)]
For any $\SF\in{}^p\bfD_{\CC-c}^{\leq0}(\CC_X)$,
we have $e_{X^\an_\infty}(\SF)\in{}^p\bfE_{\CC-c}^{\leq0}(\I\CC_{X_\infty})$.
\item[\rm(2)]
For any $\SF\in{}^p\bfD_{\CC-c}^{\geq0}(\CC_X)$,
we have $e_{X^\an_\infty}(\SF)\in{}^p\bfE_{\CC-c}^{\geq0}(\I\CC_{X_\infty})$.
\item[\rm(3)] 
The functor $e_{X^\an_\infty} \colon \BDC_{\CC-c}(\CC_X)\hookrightarrow\BEC_{\CC-c}(\I\CC_{X_\infty})$
induces an embedding
\[\Perv(\CC_X)\hookrightarrow\Perv(\I\CC_{X_\infty}).\]
Moreover we obtain a commutative diagram
\[\xymatrix@C=60pt@M=5pt{
\Modhol(\D_X)^{\op}\ar@{->}[r]^\sim\ar@<1.0ex>@{}[r]^-{\Sol_{X_\infty}^{\rmE}(\cdot)[d_X]}
\ar@{}[rd]|{\rotatebox[origin=c]{180}{$\circlearrowright$}}
 & \Perv(\I\CC_{X_\infty})\\
\Modrh(\D_X)^{\op}\ar@{->}[r]_-{\Sol_X(\cdot)[d_X]}^-{\sim}\ar@{}[u]|-{\bigcup}
&\Perv(\CC_X).
\ar@{^{(}->}[u]_-{e_{X^\an_\infty}}
}\]
\end{itemize}
\end{proposition}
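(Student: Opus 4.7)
The plan is to reduce all three claims to the characterization of the perverse t-structure on $\BEC_{\CC-c}(\I\CC_{X_\infty})$ via sheafification, combined with the fact that $\sh_{X^\an_\infty}\circ e_{X^\an_\infty}\simeq\id$ on $\BDC(\CC_{X^\an})$. Parts (1) and (2) are essentially formal once one has this reduction; part (3) then follows by intersecting.

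For part (1), I would start with $\SF\in{}^p\bfD^{\leq 0}_{\CC-c}(\CC_X)$. First, Proposition \ref{prop-embed} guarantees that $e_{X^\an_\infty}(\SF)\in\BEC_{\CC-c}(\I\CC_{X_\infty})$. To conclude that $e_{X^\an_\infty}(\SF)\in{}^p\bfE^{\leq 0}_{\CC-c}(\I\CC_{X_\infty})$, by Definition of the perverse t-structure on $\BEC_{\CC-c}(\I\CC_{X_\infty})$, it suffices to verify that $\sh_{X^\an_\infty}(e_{X^\an_\infty}(\SF))\in{}^p\bfD^{\leq 0}_{\CC-c}(\CC_X)$. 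But the canonical isomorphism $\SF\simto\sh_{X^\an_\infty}(e_{X^\an_\infty}(\SF))$ recalled in \S\ref{subsec2.4} reduces this to the hypothesis on $\SF$. For part (2), the argument is identical, now using the equivalent characterization of ${}^p\bfE^{\geq 0}_{\CC-c}(\I\CC_{X_\infty})$ in terms of $\sh_{X^\an_\infty}$ (derived in the text via Lemma \ref{lem3.18}).

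For part (3), the embedding statement is immediate: intersecting (1) and (2) shows $e_{X^\an_\infty}\big(\Perv(\CC_X)\big)\subset\Perv(\I\CC_{X_\infty})$, while fully faithfulness of $e_{X^\an_\infty}\colon\BDC_{\CC-c}(\CC_X)\hookrightarrow\BEC_{\CC-c}(\I\CC_{X_\infty})$ is already contained in Proposition \ref{prop-embed}. The commutative diagram at the level of hearts is obtained by restricting the commutative square of Proposition \ref{prop-embed} to $\Modrh(\D_X)$: under $\Sol_X(\cdot)[d_X]$ this subcategory maps into $\Perv(\CC_X)$ by Theorem \ref{thm-perv}, and under $\Sol_{X_\infty}^\rmE(\cdot)[d_X]$, the inclusion $\Modrh(\D_X)\subset\Modhol(\D_X)$ maps into $\Perv(\I\CC_{X_\infty})$ by Theorem \ref{thm-main-3}, so the diagram of Proposition \ref{prop-embed} descends to the desired one on hearts.

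There is essentially no obstacle here; the only point requiring mild care is confirming that the equivalence in the bottom row of the diagram of Proposition \ref{prop-embed} indeed restricts to an equivalence $\Modrh(\D_X)^\op\simto\Perv(\CC_X)$, which is Theorem \ref{thm-perv}, and that the top row restricts to $\Modhol(\D_X)^\op\simto\Perv(\I\CC_{X_\infty})$, which is Theorem \ref{thm-main-3}. Given both, the proposition is essentially a diagram chase combined with the trivial identity $\sh_{X^\an_\infty}\circ e_{X^\an_\infty}\simeq\id$.
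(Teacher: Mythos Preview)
Your proposal is correct and matches the paper's own argument essentially verbatim: the paper simply invokes the identity $\id\simto\sh_{X^\an_\infty}\circ e_{X^\an_\infty}$ together with Proposition~\ref{prop-embed}, and you have spelled out exactly how these two ingredients yield each of (1), (2), and (3).
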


By the definition of the t-structure
$\big({}^p\bfE^{\leq0}_{\CC-c}(\I\CC_{X_\infty}),
{}^p\bfE^{\geq0}_{\CC-c}(\I\CC_{X_\infty})\big)$,
the sheafification functor $\sh_{X^\an_\infty}$ induces a functor
\[\Perv(\I\CC_{X_\infty})\to\Perv(\CC_{X}).\]
Moreover by Proposition \ref{prop-sh} we obtain a commutative diagram
\[\xymatrix@C=60pt@M=5pt{
\Modhol(\D_X)^{\op}\ar@{->}[r]^\sim\ar@<1.0ex>@{}[r]^-{\Sol_{X_\infty}^{\rmE}(\cdot)[d_X]}
\ar@{->}[d]_-{(\cdot)_\reg}\ar@{}[rd]|{\rotatebox[origin=c]{180}{$\circlearrowright$}}
 & \Perv(\I\CC_{X_\infty})\ar@{->}[d]^-{\sh_{X^\an_\infty}}\\
\Modrh(\D_X)^{\op}\ar@{->}[r]_-{\Sol_X(\cdot)[d_X]}^-{\sim}&\Perv(\CC_X).
}\]

Recall that there exists a generalized t-structure
$\big({}^{\frac{1}{2}}\bfE_{\RR-c}^{\leq c}(\I\CC_{X^\an_\infty}),
{}^{\frac{1}{2}}\bfE_{\RR-c}^{\geq c}(\I\CC_{X^\an_\infty})\big)_{c\in\RR}$
on $\BEC_{\RR-c}(\I\CC_{X^\an_\infty})$ by \cite[Theorem 3.5.2 (i)]{DK16-2}.
Then the pair $\big({}^p\bfE^{\leq0}_{\CC-c}(\I\CC_{X_\infty}),
{}^p\bfE^{\geq0}_{\CC-c}(\I\CC_{X_\infty})\big)$
is related to this as follows:

\begin{proposition}\label{prop3.23}
We have
\begin{align*}
{}^p\bfE_{\CC-c}^{\leq 0}(\I\CC_{X_\infty}) &=
{}^{\frac{1}{2}}\bfE_{\RR-c}^{\leq 0}(\I\CC_{X^\an_\infty})\cap
\BEC_{\CC-c}(\I\CC_{X_\infty}),\\
{}^p\bfE_{\CC-c}^{\geq 0}(\I\CC_{X_\infty})
&={}^{\frac{1}{2}}\bfE_{\RR-c}^{\geq 0}(\I\CC_{X^\an_\infty})\cap
\BEC_{\CC-c}(\I\CC_{X_\infty}).
\end{align*}

\end{proposition}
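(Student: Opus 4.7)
The plan is to reduce the statement to the analytic analogue recalled at the end of \S\ref{sec2.8}, by passing to the compactification $\tl{X}$ via $\bfE j_{!!}$. First I would observe that it suffices to prove the first equality: the second then follows by applying $\rmD^\rmE_{X^\an_\infty}$, which preserves algebraic $\CC$-constructibility by Corollary \ref{cor3.14} (1), interchanges ${}^p\bfE^{\leq 0}_{\CC-c}(\I\CC_{X_\infty})$ with ${}^p\bfE^{\geq 0}_{\CC-c}(\I\CC_{X_\infty})$ by definition, and exchanges ${}^{\frac{1}{2}}\bfE^{\leq 0}_{\RR-c}$ with ${}^{\frac{1}{2}}\bfE^{\geq 0}_{\RR-c}$ by the self-duality built into the generalized t-structure of [DK16-2, Theorem 3.5.2].

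Let $K \in \BEC_{\CC-c}(\I\CC_{X_\infty})$. By Theorem \ref{thm-main-2} we may write $K \simeq \Sol^\rmE_{X_\infty}(\M)$ for some $\M \in \BDChol(\D_X)$, so that $\bfE j_{!!} K \simeq \Sol^\rmE_{\tl{X}}(\bfD j_\ast \M)$ lies in $\BEC_{\CC-c}(\I\CC_{\tl{X}})$. I would then chain the equivalences
\begin{align*}
K \in {}^{\frac{1}{2}}\bfE^{\leq 0}_{\RR-c}(\I\CC_{X^\an_\infty})
& \Longleftrightarrow \bfE j_{!!} K \in {}^{\frac{1}{2}}\bfE^{\leq 0}_{\RR-c}(\I\CC_{\tl{X}^\an}) \\
& \Longleftrightarrow \bfE j_{!!} K \in {}^p\bfE^{\leq 0}_{\CC-c}(\I\CC_{\tl{X}}) \\
& \Longleftrightarrow \sh_{\tl{X}^\an}(\bfE j_{!!} K) \in {}^p\bfD^{\leq 0}_{\CC-c}(\CC_{\tl{X}}) \\
& \Longleftrightarrow \sh_{X^\an_\infty}(K) \in {}^p\bfD^{\leq 0}_{\CC-c}(\CC_X) \\
& \Longleftrightarrow K \in {}^p\bfE^{\leq 0}_{\CC-c}(\I\CC_{X_\infty}).
\end{align*}
The second equivalence is [Ito19, Corollary 4.5] applied to the complete manifold $\tl{X}^\an$; the third and fifth are the definitions of ${}^p\bfE^{\leq 0}_{\CC-c}(\I\CC_{\tl{X}})$ and ${}^p\bfE^{\leq 0}_{\CC-c}(\I\CC_{X_\infty})$; and for the fourth I would use the isomorphism $\sh_{\tl{X}^\an}(\bfE j_{!!} K) \simeq \bfR j^\an_!\, \sh_{X^\an_\infty}(K)$, which for $K = \Sol^\rmE_{X_\infty}(\M)$ follows from Lemma \ref{lem3.16} and the standard identity $\Sol_{\tl{X}}(\bfD j_\ast \M) \simeq \bfR j^\an_!\, \Sol_X(\M)$, combined with the fact that extension by zero along the open embedding $j^\an$ preserves the dimension of the support of each cohomology sheaf.

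The main obstacle is the first equivalence, namely the compatibility of the generalized t-structure on the bordered space $X^\an_\infty$ with the functor $\bfE j_{!!}$. I expect this to be essentially built into the construction of the generalized t-structure on subanalytic bordered spaces in [DK16-2, Theorem 3.5.2], since that t-structure is determined by local conditions on the compactification and $\bfE j_{!!} K$ is characterized within $\BEC_{\RR-c}(\I\CC_{\tl{X}^\an})$ by the identity $\pi^{-1}\CC_{X^\an} \otimes \bfE j_{!!} K \simto \bfE j_{!!} K$; if the reference does not make this compatibility explicit, a short direct verification using the standard $\RR$-constructible presentation of enhanced ind-sheaves would suffice.
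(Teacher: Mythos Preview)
Your proposal is correct and follows essentially the same route as the paper: reduce to the first equality by duality, then push everything through $\bfE j_{!!}$ to the compactification $\tl{X}$ and invoke the analytic result of \cite[Corollary~4.5]{Ito19}. The paper simply organizes your chain of equivalences into two named auxiliary results (Lemma~\ref{lem3.24} for your steps 3--5 and Sublemma~\ref{sublem3.25} for your step~2), and for your ``main obstacle'' (the first equivalence) it gives the precise reference \cite[Lemma~3.3.2 and Proposition~3.5.6~(i),(iv)]{DK16-2}, so no separate verification is needed.
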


\begin{proof}
By Corollary \ref{cor3.14} (1) and the facts
\begin{align*}
{}^p\bfE^{\geq0}_{\CC-c}(\I\CC_{X_\infty}) &=
\{K\in\BEC_{\CC-c}(\I\CC_{X_\infty})\ |\ 
\rmD_{X^\an_\infty}^{\rmE}(K)\in{}^p\bfE^{\leq0}_{\CC-c}(\I\CC_{X_\infty})\},\\
{}^{\frac{1}{2}}\bfE^{\geq0}_{\RR-c}(\I\CC_{X^\an_\infty}) &=
\{K\in\BEC_{\RR-c}(\I\CC_{X^\an_\infty})\ |\ 
\rmD_{X^\an_\infty}^{\rmE}(K)\in{}^{\frac{1}{2}}\bfE^{\leq0}_{\RR-c}(\I\CC_{X^\an_\infty})\},
\end{align*}
it is enough to show the first part.
Let us recall that for any $K\in\BEC_{\RR-c}(\I\CC_{X^\an_\infty})$, we have
\[
K\in{}^{\frac{1}{2}}\bfE_{\RR-c}^{\leq0}(\I\CC_{X^\an_\infty})
\Longleftrightarrow
\bfE j_{X^\an_\infty!!}K\in{}^{\frac{1}{2}}\bfE_{\RR-c}^{\leq0}(\I\CC_{\tl{X}^\an})
\]
by \cite[Lemma 3.3.2, Proposition 3.5.6 (i), (iv)]{DK16-2}.
Hence the first part follows from Lemma \ref{lem3.24} (1) and Sublemma \ref{sublem3.25} (1) below.
\end{proof}

\begin{lemma}\label{lem3.24}
For any $K\in\BEC_{\CC-c}(\I\CC_{X_\infty})$,
we have
\begin{itemize}
\item[\rm(1)]
$K\in{}^p\bfE_{\CC-c}^{\leq0}(\I\CC_{X_\infty})
\Longleftrightarrow
\bfE j_{X^\an_\infty!!}K\in{}^p\bfE_{\CC-c}^{\leq0}(\I\CC_{\tl{X}})$,
\item[\rm(2)]
$K\in{}^p\bfE_{\CC-c}^{\geq0}(\I\CC_{X_\infty})
\Longleftrightarrow
\bfE j_{X^\an_\infty!!}K\in{}^p\bfE_{\CC-c}^{\geq0}(\I\CC_{\tl{X}})$.
\end{itemize}
\end{lemma}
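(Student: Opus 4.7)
Both parts reduce to statements about sheafifications. Setting $\SG := \sh_{X^\an_\infty}(K) \in \BDC_{\CC-c}(\CC_X)$, my first task is the identification
\[
\sh_{\tl X^\an}(\bfE j_{X^\an_\infty!!}K) \simeq \bfR j^\an_!\,\SG.
\]
Its restriction to $X^\an$ equals $\SG$ by the formula $\sh_{X^\an_\infty} \simeq (j^\an)^{-1}\circ\sh_{\tl X^\an}\circ\bfE j_{!!}$ recorded in \S\ref{subsec2.4}. For the vanishing on $D^\an$, I use that $\bfE j_{!!}K$ lies in the essential image characterized in \S\ref{subsec2.5}, so $\pi^{-1}\CC_{X^\an}\otimes\bfE j_{!!}K \simto \bfE j_{!!}K$; combining the excision triangle with the identity $\pi^{-1}\CC_{D^\an}\otimes L \simeq \bfE i_{D^\an!!}\bfE i_{D^\an}^{-1}L$ (also recalled in \S\ref{subsec2.5}) gives $\bfE i_{D^\an}^{-1}(\bfE j_{!!}K)=0$. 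Sheafification commutes with $\bfE i^{-1}$ by the same pointwise reasoning used in \S\ref{subsec2.4} for open embeddings, hence $\sh_{\tl X^\an}(\bfE j_{!!}K)|_{D^\an}=0$, and the identification follows from the open--closed distinguished triangle.

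With this in hand, part (1) is almost immediate. Since $j^\an$ is an open immersion, $\bfR j^\an_! = j^\an_!$ is exact, so $\SH^i(\bfR j^\an_!\SG) = j^\an_!(\SH^i\SG)$ and $\supp\SH^i(\bfR j^\an_!\SG) = \supp\SH^i\SG$, with the same algebraic dimension whether viewed as a subset of $X$ or of $\tl X$. The perverse condition $\dim\supp\SH^i\le -i$ for all $i$ therefore transfers exactly between the two sides, and unwinds precisely to the equivalence in part (1).

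For part (2), the reverse direction follows from the perverse $t$-exactness of $(j^\an)^{-1}$ together with $(j^\an)^{-1}\bfR j^\an_!\simeq\id$. The forward direction is the main obstacle, because $\bfR j^\an_!$ is a priori only right perverse $t$-exact. I would dualize via $\rmD_{\tl X}\circ\bfR j^\an_!\simeq\bfR j^\an_*\circ\rmD_X$, reducing to the bound $\dim\supp\SH^i\bfR j^\an_*(\rmD_X\SG)\le -i$, which I would verify stratum by stratum. On $X^\an$ this is precisely the hypothesis on $\rmD_X\SG$; on each codimension-$|I|$ stratum $D_I^\circ$ of the normal crossing divisor $D$, the torus structure of the local link yields a Künneth-type decomposition of $\SH^i\bfR j^\an_*(\rmD_X\SG)|_{D_I^\circ}$ into contributions involving $\SH^{i-l}(\rmD_X\SG)$ for $0\le l\le |I|$, and a generic-intersection count then gives the required dimension bound. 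A cleaner route that bypasses the explicit NCD computation is to translate to holonomic $\D$-modules via Theorem \ref{thm-main-3} and to invoke the $t$-exactness of the algebraic direct image $\bfD j_\ast$ on holonomic modules.
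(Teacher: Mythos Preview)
Your approach and the paper's diverge mainly on the forward directions. The paper never computes $\sh_{\tl X^\an}(\bfE j_{!!}K)$ explicitly; instead it passes through the $\D$-module side uniformly: by Theorems~\ref{thm-main-2} and~\ref{thm-main-3} write $K\simeq\Sol_{X_\infty}^\rmE(\M)[d_X]$ with $\M$ in the appropriate half of the standard $t$-structure, use $\bfE j_{!!}K\simeq\Sol_{\tl X}^\rmE(\bfD j_\ast\M)[d_X]$, and conclude from the $t$-exactness of $\bfD j_\ast$ (the embedding $j\colon X\hookrightarrow\tl X$ is affine because its complement is a divisor). The backward directions are handled exactly as you do, via perverse $t$-exactness of $(j^\an)^{-1}$ and the formula $\sh_{X^\an_\infty}=(j^\an)^{-1}\circ\sh_{\tl X^\an}\circ\bfE j_{!!}$. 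Your sheaf-theoretic route through the identification $\sh_{\tl X^\an}(\bfE j_{!!}K)\simeq\bfR j^\an_!\SG$ is genuinely more elementary for part~(1), since it only needs that $j^\an_!$ is exact and preserves supports; but for part~(2) forward it forces you into the Artin-type statement that $\bfR j^\an_*$ is perverse right $t$-exact on algebraic constructibles, and your ``cleaner route'' via $\bfD j_\ast$ is precisely the paper's argument.

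Two points to tighten. First, your justification that $\sh_{\tl X^\an}(\bfE j_{!!}K)$ vanishes on $D^\an$ appeals to a commutation of sheafification with $\bfE i^{-1}$ for the \emph{closed} embedding $i_{D^\an}$; the paper's \S\ref{subsec2.4} only records this for the bordered-space morphism $j$, so this step needs its own argument (it is true, e.g.\ via the identity $\sh_M(\pi^{-1}\SF\otimes K)\simeq\SF\otimes\sh_M(K)$ for $\SF\in\BDC(\CC_M)$, but you should say so). Second, your K\"unneth/stratum sketch for the NCD bound on $\SH^i\bfR j^\an_*(\rmD_X\SG)$ is far too thin to count as a proof; if you want (2) forward to be self-contained on the sheaf side you are essentially re-proving affine Artin vanishing, so it is better to commit to the $\D$-module route there from the outset.
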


\begin{proof}
Since the proof of (2) is similar, we only prove (1).

First, we assume $K\in{}^p\bfE_{\CC-c}^{\leq0}(\I\CC_{X_\infty})$.
Then there exists an object $\M\in \DChol^{\geq0}(\D_X)$ such that $K\simeq\Sol_{X_\infty}^\rmE(\M)[d_X]$
by Theorems \ref{thm-main-2} and \ref{thm-main-3} (2).
By the definition of the functor $\Sol_{X_\infty}^\rmE$,
we have $\bfE j_{X^\an_\infty!!}K\simeq\Sol_{\tl{X}}^\rmE(\bfD j_\ast\M)[d_X]$.
Furthremore since the canonical embedding $j\colon X\hookrightarrow\tl{X}$ is affine
we have $\bfD j_\ast\M\in \DChol^{\geq0}(\D_{\tl{X}})$,
and hence we have
\[\bfE j_{X^\an_\infty!!}K\simeq\Sol_{\tl{X}}^\rmE(\bfD j_\ast\M)[d_X]
\in{}^p\bfE_{\CC-c}^{\leq0}(\I\CC_{\tl{X}})\]
by Theorem \ref{thm-main-3} (2).

We assume $\bfE j_{X^\an_\infty!!}K\in{}^p\bfE_{\CC-c}^{\leq0}(\I\CC_{\tl{X}})$.
By the definition of the full subcategory ${}^p\bfE_{\CC-c}^{\leq0}(\I\CC_{\tl{X}})$,
we obtain $\sh_{\tl{X}^\an}\big(\bfE j_{X^\an_\infty!!}K\big)\in{}^p\bfD_{\CC-c}^{\leq0}(\CC_{\tl{X}})$.
Since the functor $$(j^\an)^{-1}\colon \BDC_{\CC-c}(\CC_{\tl{X}})\to\BDC_{\CC-c}(\CC_X)$$
is t-exact with respect to the perverse t-structures,
we have $$(j^\an)^{-1}\big(\sh_{\tl{X}^\an}\big(\bfE j_{X^\an_\infty!!}K\big)\big)
\in{}^p\bfD_{\CC-c}^{\leq0}(\CC_{X}).$$
Recall that for any $K\in\BEC(\I\CC_{X^\an_\infty})$ we have
\[\sh_{X^\an_\infty}(K) \simeq (j^\an)^{-1}(\sh_{\tl{X}^\an}(\bfE j_{!!}(K))).\]
Therefore $\sh_{X^\an_\infty}(K)\in{}^p\bfD_{\CC-c}^{\leq0}(\CC_X)$
and hence $K\in{}^p\bfE_{\CC-c}^{\leq0}(\I\CC_{X_\infty})$.
\end{proof}

Let us recall that the triangulated category $\BEC_{\CC-c}(\I\CC_{X})$
is the full triangulated subcategory of $\BEC_{\CC-c}(\I\CC_{X^\an})$,
see \S\S \ref{sec2.8} and \ref{sec-AC} for the details.
\begin{sublemma}\label{sublem3.25}
Let $X$ be a smooth complete algebraic variety over $\CC$.
Then we have
\begin{itemize}
\item[\rm(1)]
${}^p\bfE_{\CC-c}^{\leq 0}(\I\CC_{X}) = 
{}^p\bfE_{\CC-c}^{\leq 0}(\I\CC_{X^\an})\cap
\BEC_{\CC-c}(\I\CC_{X}) =
{}^{\frac{1}{2}}\bfE_{\RR-c}^{\leq 0}(\I\CC_{X^\an})\cap
\BEC_{\CC-c}(\I\CC_{X})$,
\item[\rm(2)]
${}^p\bfE_{\CC-c}^{\geq 0}(\I\CC_{X})= 
{}^p\bfE_{\CC-c}^{\geq 0}(\I\CC_{X^\an})\cap
\BEC_{\CC-c}(\I\CC_{X}) =
{}^{\frac{1}{2}}\bfE_{\RR-c}^{\geq 0}(\I\CC_{X^\an})\cap
\BEC_{\CC-c}(\I\CC_{X})$.
\end{itemize}
\end{sublemma}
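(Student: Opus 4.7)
The plan is to exploit the fact that when $X$ is complete we may take $\tl{X} = X$, so the bordered space $X^\an_\infty = (X^\an, X^\an)$ reduces to the analytic manifold $X^\an$ itself. In that case $\BEC(\I\CC_{X^\an_\infty}) = \BEC(\I\CC_{X^\an})$, the sheafification $\sh_{X^\an_\infty}$ coincides with $\sh_{X^\an}$, and by definition $\BEC_{\CC-c}(\I\CC_X)$ is a full triangulated subcategory of $\BEC_{\CC-c}(\I\CC_{X^\an})$. Moreover, Proposition \ref{prop-sh} ensures that $\sh_{X^\an}(K)$ lies in the algebraic category $\BDC_{\CC-c}(\CC_X)$ whenever $K \in \BEC_{\CC-c}(\I\CC_X)$.

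Given these identifications, the right-hand equality in both (1) and (2) becomes essentially formal. Applying the analytic identity recalled at the end of Section \ref{sec2.8},
\[
{}^p\bfE_{\CC-c}^{\leq 0}(\I\CC_{X^\an}) = {}^{\frac{1}{2}}\bfE_{\RR-c}^{\leq 0}(\I\CC_{X^\an}) \cap \BEC_{\CC-c}(\I\CC_{X^\an}),
\]
together with its $\geq 0$ counterpart, and then intersecting both sides with $\BEC_{\CC-c}(\I\CC_X) \subset \BEC_{\CC-c}(\I\CC_{X^\an})$, immediately yields the claim. No further argument is needed here.

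For the left-hand equality, the central observation is that the perverse $t$-structure on constructible sheaves is defined via support-dimension inequalities on cohomology sheaves, and these inequalities are intrinsic to the underlying sheaf, independent of whether its support is regarded as an algebraic or as an analytic subvariety. Hence for any $\SF \in \BDC_{\CC-c}(\CC_X)$, the condition $\SF \in {}^p\bfD_{\CC-c}^{\leq 0}(\CC_X)$ is equivalent to $\SF \in {}^p\bfD_{\CC-c}^{\leq 0}(\CC_{X^\an})$. Applying this to $\sh_{X^\an}(K)$ for $K \in \BEC_{\CC-c}(\I\CC_X)$ produces the desired equality in (1). Part (2) then follows either by the same reasoning using $\sh_{X^\an}\circ\rmD_{X^\an_\infty}^\rmE \simeq \rmD_{X^\an}\circ \sh_{X^\an}$ (cf.\ Lemma \ref{lem3.18}), or, more efficiently, by dualizing (1) via Corollary \ref{cor3.14}(1) and the fact that $\rmD_{X^\an}^\rmE$ preserves $\BEC_{\CC-c}(\I\CC_X)$.

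The only genuine obstacle, and a fairly minor one, is verifying the compatibility of the algebraic and analytic perverse $t$-structures that is invoked above. This amounts to checking that the analytic dimension of the analytification of a closed algebraic subvariety of $X$ equals its algebraic dimension — a standard fact that the paper uses implicitly but which one should flag to the reader when writing the formal proof.
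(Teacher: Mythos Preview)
Your argument is correct, and it takes a genuinely different route from the paper's proof. The paper establishes the first equality in (1) by passing to the $\D$-module side: given $K\in{}^p\bfE_{\CC-c}^{\leq 0}(\I\CC_X)$, it writes $K\simeq\Sol_X^\rmE(\M)[d_X]$ with $\M\in\DChol^{\geq0}(\D_X)$ via Theorem \ref{thm-main-1} and Theorem \ref{thm-main-3}(2), then uses that analytification is exact to get $\M^\an\in\DChol^{\geq0}(\D_{X^\an})$, and finally appeals to Theorem \ref{thm2.28}(2) to conclude $K\in{}^p\bfE_{\CC-c}^{\leq0}(\I\CC_{X^\an})$. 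The converse uses that analytification is also faithful. By contrast, you stay entirely on the enhanced-sheaf side and observe that both $t$-structures are defined by the \emph{same} support-dimension condition on $\sh_{X^\an}(K)$, so the comparison reduces to the elementary fact that algebraic and analytic dimensions agree for Zariski-closed subsets.

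Your approach is shorter and avoids invoking the Riemann--Hilbert equivalences of Theorems \ref{thm-main-1}, \ref{thm-main-3}, and \ref{thm2.28}; it relies only on Proposition \ref{prop-sh} (to know $\sh_{X^\an}(K)\in\BDC_{\CC-c}(\CC_X)$) and the defining formulas for the perverse $t$-structures. The paper's approach, while heavier, has the virtue of making the $\D$-module content explicit and is consistent with the paper's overall strategy of proving structural statements by transport through $\Sol^\rmE$. Either proof is acceptable; yours is the more economical one here.
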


\begin{proof}
Since the proof of (2) is similar, we only prove (1).
Let us recall that
\[{}^p\bfE_{\CC-c}^{\leq 0}(\I\CC_{X^\an}) =
{}^{\frac{1}{2}}\bfE_{\RR-c}^{\leq 0}(\I\CC_{X^\an})\cap
\BEC_{\CC-c}(\I\CC_{X^\an}),\]
see \S \ref{sec2.8} or \cite[Corollary 4.5]{Ito19} for the details.
Hence it is enough to prove
$${}^p\bfE_{\CC-c}^{\leq 0}(\I\CC_{X}) = 
{}^p\bfE_{\CC-c}^{\leq 0}(\I\CC_{X^\an})\cap
\BEC_{\CC-c}(\I\CC_{X}).$$

Let $K$ be an object of ${}^p\bfE_{\CC-c}^{\leq 0}(\I\CC_{X})$.
By Proposition \ref{prop3.5} and Theorem \ref{thm-main-3} (2),
there exists an object $\M$ of $\DChol^{\geq0}(\D_X)$
such that $$K\simeq \Sol_X^\rmE(\M)[d_X] \big(:= \Sol_{X^\an}^\rmE(\M^\an)[d_X]\big).$$
Since the analytification functor $(\cdot)^\an\colon\Mod(\D_X)\to\Mod(\D_{X^\an})$ is exact
we have $\M^\an\in\DChol^{\geq0}(\D_{X^\an})$,
and hence we have $\Sol_{X^\an}^\rmE(\M^\an)[d_X]\in{}^p\bfE_{\CC-c}^{\leq 0}(\I\CC_{X^\an})$
by Theorem \ref{thm2.28} (2).
Therefore we obtain $$K\in{}^p\bfE_{\CC-c}^{\leq 0}(\I\CC_{X^\an})\cap
\BEC_{\CC-c}(\I\CC_{X}).$$

Let $K$ be an object of ${}^p\bfE_{\CC-c}^{\leq 0}(\I\CC_{X^\an})\cap\BEC_{\CC-c}(\I\CC_{X})$.
Since $K\in\BEC_{\CC-c}(\I\CC_{X})$ there exists an object $\M\in\BDChol(\D_X)$ such that 
$$K\simeq\Sol_X^\rmE(\M)[d_X] \big(:= \Sol_{X^\an}^\rmE(\M^\an)[d_X]\big)$$
by Proposition \ref{prop3.5}.
Since $K\in{}^p\bfE_{\CC-c}^{\leq 0}(\I\CC_{X^\an})$
we have $\M^\an\in\DChol^{\geq0}(\D_{X^\an})$
by Theorem \ref{thm2.28} (2),
and hence we obtain $\M\in\DChol^{\geq0}(\D_{X})$
because the analytification functor $(\cdot)^\an\colon\Mod(\D_X)\to\Mod(\D_{X^\an})$ is exact and faithful.
Therefore we have $$K\simeq\Sol_X^\rmE(\M)[d_X]\in{}^p\bfE_{\CC-c}^{\leq 0}(\I\CC_{X})$$
by Theorem \ref{thm-main-3} (2).
\end{proof}

Thanks to \cite[Proposition 3.5.6]{DK16-2},
Proposition \ref{prop3.26} follows from Corollary \ref{cor3.14} (3), (4) and Proposition \ref{prop3.23}.

\begin{proposition}\label{prop3.26}
Let $f\colon X\to Y$ be a morphism of smooth algebraic varieties.
We assume that there exists a non-negative integer $d\in \ZZ_{\geq0}$
such that $\dim f^{-1}(y)\leq d$ for any $y\in Y$.
\begin{itemize}
\item[\rm(1)]
For any $K\in{}^p\bfE_{\CC-c}^{\leq0}(\I\CC_{X_\infty})$
we have $\bfE f^\an_{\infty!!} K\in{}^p\bfE_{\CC-c}^{\leq d}(\I\CC_{Y_\infty})$.
\item[\rm(2)]
For any $K\in{}^p\bfE_{\CC-c}^{\geq0}(\I\CC_{X_\infty})$
we have $\bfE f^\an_{\infty\ast} K\in{}^p\bfE_{\CC-c}^{\geq -d}(\I\CC_{Y_\infty})$.
\item[\rm(3)]
For any $L\in{}^p\bfE_{\CC-c}^{\leq0}(\I\CC_{Y_\infty})$
we have $\bfE (f_\infty^\an)^{-1} L\in{}^p\bfE_{\CC-c}^{\leq d}(\I\CC_{X_\infty})$.
\item[\rm(4)]
For any $L\in{}^p\bfE_{\CC-c}^{\geq0}(\I\CC_{Y_\infty})$
we have $\bfE (f_\infty^\an)^! L\in{}^p\bfE_{\CC-c}^{\geq -d}(\I\CC_{X_\infty})$.
\end{itemize}
\end{proposition}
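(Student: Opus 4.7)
The plan is to reduce all four statements to the corresponding t-exactness properties of the six operations with respect to the generalized t-structure $\big({}^{\frac{1}{2}}\bfE_{\RR-c}^{\leq c}(\I\CC_{X^\an_\infty}), {}^{\frac{1}{2}}\bfE_{\RR-c}^{\geq c}(\I\CC_{X^\an_\infty})\big)_{c\in\RR}$ on $\BEC_{\RR-c}(\I\CC_{X^\an_\infty})$, which are contained in \cite[Proposition 3.5.6]{DK16-2}. The bridge between this generalized t-structure and our algebraic perverse t-structure is supplied by Proposition \ref{prop3.23}, which expresses ${}^p\bfE^{\leq0}_{\CC-c}(\I\CC_{X_\infty})$ (resp.\ ${}^p\bfE^{\geq0}_{\CC-c}(\I\CC_{X_\infty})$) as the intersection of ${}^{\frac{1}{2}}\bfE_{\RR-c}^{\leq 0}(\I\CC_{X^\an_\infty})$ (resp.\ ${}^{\frac{1}{2}}\bfE_{\RR-c}^{\geq 0}(\I\CC_{X^\an_\infty})$) with $\BEC_{\CC-c}(\I\CC_{X_\infty})$, while Corollary \ref{cor3.14} (3), (4) guarantees that the algebraic $\CC$-constructibility is preserved under all four operations $\bfE f^\an_{\infty!!}$, $\bfE f^\an_{\infty\ast}$, $\bfE (f^\an_\infty)^{-1}$, $\bfE (f^\an_\infty)^!$.

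More precisely, for (1) I would take $K\in{}^p\bfE^{\leq 0}_{\CC-c}(\I\CC_{X_\infty})$. By Proposition \ref{prop3.23} we have $K\in{}^{\frac{1}{2}}\bfE_{\RR-c}^{\leq 0}(\I\CC_{X^\an_\infty})$, so applying the bound on $\bfE f^\an_{\infty!!}$ from \cite[Proposition 3.5.6]{DK16-2} together with the fiber dimension hypothesis $\dim f^{-1}(y)\leq d$ yields $\bfE f^\an_{\infty!!}K\in{}^{\frac{1}{2}}\bfE_{\RR-c}^{\leq d}(\I\CC_{Y^\an_\infty})$. Corollary \ref{cor3.14} (4) gives $\bfE f^\an_{\infty!!}K\in\BEC_{\CC-c}(\I\CC_{Y_\infty})$. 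Invoking Proposition \ref{prop3.23} once more (after shifting), we conclude $\bfE f^\an_{\infty!!}K\in{}^p\bfE_{\CC-c}^{\leq d}(\I\CC_{Y_\infty})$. Statements (2), (3), (4) are handled by an identical three-step argument, using the corresponding bound of \cite[Proposition 3.5.6]{DK16-2} for $\bfE f^\an_{\infty\ast}$, $\bfE(f^\an_\infty)^{-1}$, $\bfE(f^\an_\infty)^!$ respectively, combined with either Corollary \ref{cor3.14} (3) or (4).

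I do not foresee a serious obstacle here, since all the substantive work has been carried out in the cited results: the t-exactness bounds for the generalized t-structure live in \cite{DK16-2}, the compatibility with $\CC$-constructibility is established in Corollary \ref{cor3.14}, and the identification of the two t-structures at level zero is Proposition \ref{prop3.23}. The only mild point to verify carefully is that the t-exactness statements from \cite[Proposition 3.5.6]{DK16-2}, which are stated for morphisms of subanalytic bordered spaces, apply to our bordered space morphism $f^\an_\infty\colon X^\an_\infty\to Y^\an_\infty$; this is legitimate because $X^\an_\infty$ and $Y^\an_\infty$ are subanalytic bordered spaces, and $f^\an_\infty$ is induced by the morphism $\tl{f}\colon\tl{X}\to\tl{Y}$ of smooth complete algebraic varieties extending $f$, as discussed at the end of \S\ref{subsec2.5}.
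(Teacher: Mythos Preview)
Your proposal is correct and follows essentially the same route as the paper's proof: both use Proposition \ref{prop3.23} to pass to the generalized t-structure of \cite{DK16-2}, apply \cite[Proposition 3.5.6]{DK16-2} for the t-exactness bound, invoke Corollary \ref{cor3.14} (3) or (4) to retain algebraic $\CC$-constructibility, and then use Proposition \ref{prop3.23} again to conclude. Your additional remark about verifying that \cite[Proposition 3.5.6]{DK16-2} applies to $f^\an_\infty$ is a sensible sanity check, though the paper simply takes this for granted.
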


\begin{proof}
Since the proofs of these assertions in the proposition is similar,
we only prove the assertion (1).

Let $K$ be an object of ${}^p\bfE_{\CC-c}^{\leq0}(\I\CC_{X_\infty})$.
Then we have 
$K\in{}^{\frac{1}{2}}\bfE_{\RR-c}^{\leq 0}(\I\CC_{X^\an_\infty})\cap\BEC_{\CC-c}(\I\CC_{X_\infty})$
by Proposition \ref{prop3.23}.
By \cite[Proposition 3.5.6]{DK16-2} (iv) we have
$\bfE f^\an_{\infty!!} K\in
{}^{\frac{1}{2}}\bfE_{\RR-c}^{\leq d}(\I\CC_{Y^\an_\infty})$
and by Corollary \ref{cor3.14} (4)
we have $\bfE f^\an_{\infty!!} K\in\BEC_{\CC-c}(\I\CC_{Y_\infty})$.
Hence we have 
\[\bfE f^\an_{\infty!!} K\in
{}^{\frac{1}{2}}\bfE_{\RR-c}^{\leq d}(\I\CC_{Y^\an_\infty})\cap\BEC_{\CC-c}(\I\CC_{Y_\infty})
= {}^p\bfE_{\CC-c}^{\leq d}(\I\CC_{Y_\infty}).\]
\end{proof}

\begin{corollary}\label{cor3.27}
Let $X$ be a smooth algebraic variety over $\CC$ and $Z$ a locally closed smooth subvariety of $X$.
We denote by $i_{Z^\an_\infty}\colon Z^\an_\infty\to X^\an_\infty$
the morphism of bordered spaces induced by the natural embedding $Z\hookrightarrow X$.
\begin{itemize}
\item[\rm(1)]
For any $K\in{}^p\bfE_{\CC-c}^{\leq0}(\I\CC_{Z_\infty})$
we have $\bfE i_{Z^\an_\infty!!} K\in{}^p\bfE_{\CC-c}^{\leq 0}(\I\CC_{X_\infty})$.
\item[\rm(2)]
For any $K\in{}^p\bfE_{\CC-c}^{\geq0}(\I\CC_{Z_\infty})$
we have $\bfE i_{Z^\an_\infty\ast} K\in{}^p\bfE_{\CC-c}^{\geq 0}(\I\CC_{X_\infty})$.
\item[\rm(3)]
For any $L\in{}^p\bfE_{\CC-c}^{\leq0}(\I\CC_{X_\infty})$
we have $\bfE i_{Z^\an_\infty}^{-1} L\in{}^p\bfE_{\CC-c}^{\leq 0}(\I\CC_{Z_\infty})$.
\item[\rm(4)]
For any $L\in{}^p\bfE_{\CC-c}^{\geq0}(\I\CC_{X_\infty})$
we have $\bfE i_{Z^\an_\infty}^! L\in{}^p\bfE_{\CC-c}^{\geq 0}(\I\CC_{Z_\infty})$.
\end{itemize}

In particular, if $Z$ is open $($resp.\ closed$)$,
then the functor 
$$\bfE i_{Z^\an_\infty}^{-1}\simeq\bfE i_{Z^\an_\infty}^{!}\hspace{10pt}
(\mbox{resp.}\ \bfE i_{Z^\an_\infty!!}\simeq \bfE i_{Z^\an_\infty\ast})$$
is t-exact with respect to the perverse t-structures.
\end{corollary}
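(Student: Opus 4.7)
The plan is to obtain Corollary \ref{cor3.27} as the $d=0$ specialization of Proposition \ref{prop3.26}, followed by the standard identifications between the extraordinary and ordinary operations for open and closed embeddings.

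First, I observe that $i_Z\colon Z\hookrightarrow X$ is a morphism of smooth algebraic varieties (by hypothesis $Z$ is smooth), and that for every $x\in X$ the fiber $i_Z^{-1}(x)$ is either empty or a single point, so $\dim i_Z^{-1}(x)\leq 0$. Therefore Proposition \ref{prop3.26} applies to $f=i_Z$ with $d=0$, and its four parts yield directly parts (1)--(4) of the corollary.

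For the final assertion, when $Z\subset X$ is an open subvariety the induced morphism $i_{Z^\an_\infty}\colon Z^\an_\infty\to X^\an_\infty$ of bordered spaces is an open embedding, so $\bfE i_{Z^\an_\infty}^{-1}\simeq\bfE i_{Z^\an_\infty}^{!}$ (this is the bordered-space version of the standard equality of the two restriction functors along an open embedding). Combining (3) and (4) then shows that this functor preserves both ${}^p\bfE^{\leq 0}_{\CC-c}(\I\CC_{X_\infty})$ and ${}^p\bfE^{\geq 0}_{\CC-c}(\I\CC_{X_\infty})$, hence is t-exact. When $Z\subset X$ is closed, one can (after shrinking) choose the compactification $\tl{Z}$ to be the closure of $Z$ inside $\tl{X}$; then $i_{Z^\an_\infty}$ is a proper morphism of bordered spaces, so the canonical morphism $\bfE i_{Z^\an_\infty!!}\to\bfE i_{Z^\an_\infty\ast}$ (recalled in \S\ref{subsec2.4}) is an isomorphism. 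Hence by (1) and (2) this common functor preserves both halves of the perverse t-structure and is t-exact.

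No genuine obstacle is expected beyond checking that the embedding case of the dimension hypothesis in Proposition \ref{prop3.26} really gives $d=0$ and that the bordered-space identifications $\bfE j^{-1}\simeq\bfE j^{!}$ (open embedding) and $\bfE i_{!!}\simeq\bfE i_{\ast}$ (proper morphism, in particular a closed embedding between bordered spaces set up via the closure inside $\tl{X}$) are available in the framework of \cite{DK16}. Both facts are standard, so the proof of the corollary is essentially a one-line application of Proposition \ref{prop3.26} plus these formal identities.
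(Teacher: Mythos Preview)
Your proposal is correct and is essentially the same approach as the paper: Corollary \ref{cor3.27} is stated without a separate proof precisely because (1)--(4) are the $d=0$ case of Proposition \ref{prop3.26} for the embedding $i_Z$, and the final assertion follows from the standard identifications $\bfE i^{-1}\simeq\bfE i^{!}$ for open embeddings and $\bfE i_{!!}\simeq\bfE i_{\ast}$ for proper (in particular closed) embeddings of bordered spaces.
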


\begin{remark}\label{rem3.28}
Let $X$ be a smooth algebraic variety over $\CC$ and $Z$ a locally closed smooth subvariety of $X$.
We assume that the natural embedding $i_Z\colon Z\hookrightarrow X$ is affine.

Then for a holonomic $\D_Z$-module $\M$,
we have $\SH^i(\bfD i_{Z\ast}\M) = \SH^i(\bfD i_{Z!}\M) = 0$ for any $i\neq0$.
Moreover we obtain exact functors
\[\bfD i_{Z\ast}, \bfD i_{Z!} \colon \Modhol(\D_Z)\to\Modhol(\D_X).\]

Hence by Proposition \ref{prop-comm} (3) and Theorem \ref{thm-main-3},
we obtain exact functors
\[\bfE i_{Z^\an_\infty\ast}, \bfE i_{Z^\an_\infty!!} \colon \Perv(\I\CC_{Z_\infty})\to\Perv(\I\CC_{X_\infty}).\]
Note that there exists a canonical morphism of functors
$\Perv(\I\CC_{Z_\infty})\to\Perv(\I\CC_{X_\infty})$
\[\bfE i_{Z^\an_\infty!!}\to\bfE i_{Z^\an_\infty\ast}\]
and it is an isomorphism if $Z$ is closed. 
\end{remark}

\begin{notation}
For a functor $\mathscr{F}\colon\BEC_{\CC-c}(\I\CC_{X_\infty})\to\BEC_{\CC-c}(\I\CC_{Y_\infty})$,
we set $${}^p\mathscr{F} := {}^p\SH^0\circ \mathscr{F} \colon\Perv(\I\CC_{X_\infty})\to\Perv(\I\CC_{Y_\infty}),$$
where ${}^p\SH^0$ is the $0$-th cohomology functor with respect to the perverse t-structures.

For example, for a morphism $f\colon X\to Y$ of smooth algebraic varieties,
we set
\begin{align*}
{}^p\bfE (f^\an_{\infty})^{-1} &:= {}^p\SH^0\circ\bfE (f^\an_{\infty})^{-1}\colon
\Perv(\I\CC_{Y_\infty})\to\Perv(\I\CC_{X_\infty}),\\
{}^p\bfE (f^\an_{\infty})^{!} &:= {}^p\SH^0\circ\bfE (f^\an_{\infty})^{!}\colon
\Perv(\I\CC_{Y_\infty})\to\Perv(\I\CC_{X_\infty}),\\
{}^p\bfE f^\an_{\infty\ast}&:= {}^p\SH^0\circ\bfE f^\an_{\infty\ast}\colon
\Perv(\I\CC_{X_\infty})\to\Perv(\I\CC_{Y_\infty}),\\
{}^p\bfE f^\an_{\infty!!}&:= {}^p\SH^0\circ\bfE f^\an_{\infty!!}\colon
\Perv(\I\CC_{X_\infty})\to\Perv(\I\CC_{Y_\infty}).
\end{align*} 
\end{notation}

In this paper, for an object $K\in\BEC(\I\CC_{X_\infty^\an})$,
let us define the support of $K$
by the complement of the union of open subsets $U^\an$ of $X^\an$ such that
$K|_{U_\infty^\an} := \bfE i_{U^\an_\infty}^{-1}K\simeq0$
and denote it by $\supp(K)$.
Namely, we set
$$\supp(K) := \Big(\bigcup_{U^\an\underset{\mbox{\tiny open}}{\subset} X^\an,\ K|_{U^\an_\infty} = 0}U^\an\Big)^c \hspace{7pt}
\subset X^\an.$$
Note that we have 
\[\bigcup_{U^\an\underset{\mbox{\tiny open}}{\subset} X^\an,\ K|_{U^\an_\infty} = 0}U^\an
 \hspace{7pt} = \hspace{7pt}
 \bigcup_{V^\an\underset{\mbox{\tiny open}}{\subset} X^\an,\ K|_{V^\an} = 0}V^\an.\]
Moreover, for a closed smooth subvariety $Z$ of $X$, we set 
$$\Perv_{Z}(\I\CC_{X_\infty}) :=  \{K\in\Perv(\I\CC_{X_\infty})\ |\ \supp(K)\subset Z^\an\}.$$

\begin{proposition}\label{prop3.29}
Let $X$ be a smooth algebraic variety over $\CC$ and $Z$ a closed smooth subvariety of $X$.
Then we have an equivalence of abelian categories$:$
\[\xymatrix@C=75pt{
\Perv_{Z}(\I\CC_{X_\infty})\ar@<0.7ex>@{->}[r]^-{{}^p\bfE i_{Z^\an_\infty}^{-1}}\ar@{}[r]|-{\sim}
&
\Perv(\I\CC_{Z_\infty})
\ar@<0.7ex>@{->}[l]^-{\bfE i_{Z^\an_\infty!!}}.
}\]
Furthermore for any $K\in\Perv_Z(\I\CC_{X_\infty})$
there exists an isomorphism in $\Perv(\I\CC_{Z_\infty})$
$${}^p\bfE i_{Z^\an_\infty}^{-1}K\simeq{}^p\bfE i_{Z^\an_\infty}^{!}K.$$

\end{proposition}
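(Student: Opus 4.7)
The plan is to transfer the assertion across the algebraic irregular Riemann--Hilbert correspondence for the perverse heart (Theorem \ref{thm-main-3}), where it reduces to Kashiwara's equivalence for holonomic $\D$-modules supported on a closed smooth subvariety. Denote by $\Modhol^Z(\D_X)$ the full subcategory of $\Modhol(\D_X)$ consisting of modules whose support is contained in $Z$. Kashiwara's theorem (see e.g.\ \cite[Theorem 1.6.1]{HTT08}) yields an equivalence of abelian categories $\bfD i_{Z\ast}\colon \Modhol(\D_Z) \simto \Modhol^Z(\D_X)$, whose quasi-inverse on $\Modhol^Z(\D_X)$ is described via $\SH^0(\bfD i_Z^\ast(\cdot))$ (equivalently $\SH^0(\bfD i_Z^\bigstar(\cdot))$, which coincide on this subcategory).

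The first step is to identify $\Perv_Z(\I\CC_{X_\infty})$ with the image of $\Modhol^Z(\D_X)^{\op}$ under $\Sol_{X_\infty}^\rmE(\cdot)[d_X]$. For any Zariski open $U \subset X$ with open embedding $i_U\colon U\hookrightarrow X$, Proposition \ref{prop-comm}(2) gives $\bfE i_{U^\an_\infty}^{-1}\Sol_{X_\infty}^\rmE(\M) \simeq \Sol_{U_\infty}^\rmE(\M|_U)$, which vanishes if and only if $\M|_U = 0$ by the algebraic irregular Riemann--Hilbert correspondence (Theorem \ref{thm-main-2}). Taking complements of unions of such opens yields $\supp(\Sol_{X_\infty}^\rmE(\M)[d_X]) = \supp(\M)^\an$, so $\Sol_{X_\infty}^\rmE(\cdot)[d_X]$ restricts to an equivalence $\Modhol^Z(\D_X)^{\op} \simto \Perv_Z(\I\CC_{X_\infty})$. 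The second step is to match functors. Since $i_Z$ is a closed embedding, Corollary \ref{cor3.27} shows $\bfE i_{Z^\an_\infty!!} \simeq \bfE i_{Z^\an_\infty\ast}$ is t-exact, and by Proposition \ref{prop-comm}(3) it corresponds under the equivalences above to $\bfD i_{Z\ast}$. Composing with the Kashiwara equivalence yields the desired equivalence of abelian categories, whose quasi-inverse is necessarily ${}^p\bfE i_{Z^\an_\infty}^{-1}$ by adjunction.

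For the final isomorphism ${}^p\bfE i_{Z^\an_\infty}^{-1}K \simeq {}^p\bfE i_{Z^\an_\infty}^! K$ with $K \in \Perv_Z(\I\CC_{X_\infty})$, write $K \simeq \Sol_{X_\infty}^\rmE(\bfD i_{Z\ast}\N)[d_X]$ via Kashiwara for some $\N \in \Modhol(\D_Z)$, and use Proposition \ref{prop-comm}(2) together with Theorem \ref{thm-main-3} to reduce to the $\D$-module isomorphism $\SH^0(\bfD i_Z^\ast \bfD i_{Z\ast}\N) \simeq \N \simeq \SH^0(\bfD i_Z^\bigstar \bfD i_{Z\ast}\N)$, which is a standard consequence of Kashiwara's equivalence. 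The main technical obstacle will be matching ${}^p\SH^0$ on the enhanced side with the ordinary $\SH^0$ of $\bfD i_Z^\ast$ and $\bfD i_Z^\bigstar$ after the shift by $d_Z$, i.e.\ verifying that the complexes $\bfE i_{Z^\an_\infty}^{-1}K$ and $\bfE i_{Z^\an_\infty}^! K$ are concentrated in perverse degree zero for $K \in \Perv_Z(\I\CC_{X_\infty})$; once this cohomological bookkeeping is in place, the proposition follows directly from Kashiwara's theorem transferred via the Riemann--Hilbert correspondence.
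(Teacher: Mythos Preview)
Your proof is correct and follows precisely the alternative route that the paper itself flags in Remark~\ref{rem3.31}: transfer across the equivalence $\Sol_{X_\infty}^\rmE(\cdot)[d_X]$ of Theorem~\ref{thm-main-3} and invoke Kashiwara's equivalence for holonomic $\D$-modules. The paper's own proof, by contrast, works entirely on the enhanced ind-sheaf side without passing through $\D$-modules: it uses the t-exactness of $\bfE i_{Z^\an_\infty!!}$ from Corollary~\ref{cor3.27}, the distinguished triangle $\pi^{-1}\CC_{X^\an\setminus Z^\an}\otimes K\to K\to\pi^{-1}\CC_{Z^\an}\otimes K\xrightarrow{+1}$, and \cite[Lemma~2.7.6]{DK16-2} to show directly that $K\simto\bfE i_{Z^\an_\infty!!}{}^p\bfE i_{Z^\an_\infty}^{-1}K$ for $K\in\Perv_Z(\I\CC_{X_\infty})$; the final isomorphism ${}^p\bfE i_{Z^\an_\infty}^{-1}K\simeq{}^p\bfE i_{Z^\an_\infty}^!K$ is then obtained by writing $K\simeq\bfE i_{Z^\an_\infty!!}L\simeq\bfE i_{Z^\an_\infty\ast}L$ and applying the unit/counit isomorphisms. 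Your argument is more economical once Theorem~\ref{thm-main-3} and Proposition~\ref{prop-comm} are in hand, and it makes the parallel with the classical Kashiwara equivalence explicit; the paper's direct argument has the virtue of exhibiting the mechanism purely in terms of the six-functor formalism for enhanced ind-sheaves, which is closer in spirit to how the analogous statement for ordinary perverse sheaves is proved. One small point: for the identification of supports you only need $\supp(K)\subset Z^\an \Leftrightarrow \supp(\M)\subset Z$, which follows from the single restriction to $X\setminus Z$; the stronger equality $\supp(K)=\supp(\M)^\an$ you assert would require checking analytic (not just Zariski) opens, though it is not needed here.
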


\begin{proof}
Let $L$ be an object of $\Perv(\I\CC_{Z_\infty})$.
Recall that the functor $\bfE i_{Z^\an_\infty!!}$ is t-exact with respect to the perverse t-structures
by Corollary \ref{cor3.27}, and hence we have $\bfE i_{Z^\an_\infty!!}L\in\Perv(\I\CC_{X_\infty})$.
Furthermore since $\bfE i_{X^\an_\infty\setminus Z^\an_\infty}^{-1}\bfE i_{Z^\an_\infty!!}L\simeq 0$,
we obtain $\supp(\bfE i_{Z^\an_\infty!!}L)\subset Z^\an$.
This implies that $$\bfE i_{Z^\an_\infty!!}L\in\Perv_{Z}(\I\CC_{X_\infty}).$$
Note that there exists an isomorphism
$\bfE i_{Z^\an_\infty}^{-1}\bfE i_{Z^\an_\infty!!}L\simto L$ in $\BEC(\I\CC_{Z_\infty^\an})$.
Thus we have $${}^p\bfE i_{Z^\an_\infty}^{-1}\bfE i_{Z^\an_\infty!!}L\simto L.$$

Let $K$ be an object of $\Perv_Z(\I\CC_{X_\infty})$.
Then we have $$\pi^{-1}\CC_{(X\setminus Z)^\an}\otimes K\simeq
\bfE i_{(X\setminus Z)^\an_\infty!!}\bfE i_{(X\setminus Z)^\an_\infty}^{-1}K\simeq0,$$
where in the first isomorphism we used \cite[Lemma 2.7.6]{DK16-2}.
Thus we obtain $K\simto\pi^{-1}\CC_{Z^\an}\otimes K$
because there exists a distinguished triangle
$$\pi^{-1}\CC_{X^\an\setminus Z^\an}\otimes K\to K\to\pi^{-1}\CC_{Z^\an}\otimes K\xrightarrow{+1}$$
in $\BEC(\I\CC_{X_\infty^\an})$.
Furthermore there exists an isomorphism
$\bfE i_{Z^\an_\infty!!}\bfE i_{Z^\an_\infty}^{-1}K\simeq\pi^{-1}\CC_{Z^\an}\otimes K$
in $\BEC(\I\CC_{X^\an_\infty})$ by \cite[Lemma 2.7.6]{DK16-2},
and hence we have $K\simto\bfE i_{Z^\an_\infty!!}\bfE i_{Z^\an_\infty}^{-1}K$.
Since the functor $\bfE i_{Z^\an_\infty!!}$ is t-exact with respect to the perverse t-structures,
we obtain $$K\simto\bfE i_{Z^\an_\infty!!}{}^p\bfE i_{Z^\an_\infty}^{-1}K.$$
Therefore the proof of the first part is completed.

For any $K\in\Perv_Z(\I\CC_{X_\infty})$,
there exists an object $L\in\Perv(\I\CC_{Z_\infty})$ such that
$K\simeq\bfE i_{Z^\an_\infty!!}L\simeq\bfE i_{Z^\an_\infty\ast}L$.
Hence we have isomorphisms in $\Perv(\I\CC_{Z_\infty})$:
$${}^p\bfE i_{Z^\an_\infty}^{-1}K
\simeq
{}^p\bfE i_{Z^\an_\infty}^{-1}\bfE i_{Z^\an_\infty\ast}L
\simeq L
\simeq
{}^p\bfE i_{Z^\an_\infty}^{!}\bfE i_{Z^\an_\infty!!}L
\simeq{}^p\bfE i_{Z^\an_\infty}^{!}K.$$
Hence the proof of the second part is completed.
\end{proof}

\begin{remark}\label{rem3.31}
By using Theorem \ref{thm-main-3},
Proposition \ref{prop3.29} also follows from the Kashiwara's equivalence,
that is an equivalence of abelian categories below:
\[\xymatrix@C=85pt{
\Modhol^Z(\D_X)\ar@<0.7ex>@{->}[r]^-{\SH^0\bfD i_{Z}^{-1}(\cdot)[d_Z-d_X]}\ar@{}[r]|-{\sim}
&
\Modhol(\D_Z)
\ar@<0.7ex>@{->}[l]^-{\bfD i_{Z\ast}},
}\]
where $\Modhol^Z(\D_X)$ is the full subcategory of $\Modhol(\D_X)$ consisting of $\D_X$-modules
whose support is contained in $Z$.
See e.g., \cite[Theorem 1.6.1]{HTT08} for general cases of this equivalence of categories.
\end{remark}

\subsection{Minimal Extensions}\label{sec-minimal}
In this subsection we shall consider simple enhanced perverse ind-sheaves on a smooth algebraic variety,
and a counter part of minimal extensions of algebraic holonomic $\D$-modules.

First let us recall the definition and properties of minimal extensions of algebraic holonomic $\D$-modules,
see e.g., \cite[\S 3.4]{HTT08} for the details.
Let $X$ be a smooth algebraic variety over $\CC$ and $Z$ a locally closed smooth subvariety of $X$.
We assume that the natural embedding $i_Z\colon Z\hookrightarrow X$ is affine.
Then we have exact functors
\[\bfD i_{Z\ast}, \bfD i_{Z!} \colon \Modhol(\D_Z)\to\Modhol(\D_X).\]
Note that there exists a canonical morphism of functors $\Modhol(\D_Z)\to\Modhol(\D_X)$
\[\bfD i_{Z!}\to \bfD i_{Z\ast}.\]
See e.g., \cite[Theorem 3.2.16]{HTT08} for the details.
In particular we have a canonical morphism of holonomic $\D_X$-modules
\[\bfD i_{Z!}\M\to \bfD i_{Z\ast}\M\]
for any $\M\in\Modhol(\D_Z)$.

\begin{definition}
In the situation as above,
for any $\M\in\Modhol(\D_Z)$,
we call the image of the canonical morphism $\bfD i_{Z!}\M\to \bfD i_{Z\ast}\M$
the minimal extension of $\M$ along $Z$,
and denote it by $\SL(Z; \M)$.
\end{definition}

Note that the minimal extensions are holonomic.
Moreover we have a functor
\[\SL(Z; \cdot)\colon \Modhol(\D_Z)\to\Modhol(\D_X).\]
Note also that
the minimal extension functor $\SL(Z; \cdot)$ commutes with the duality functor of $\D$-modules.
Namely for any holonomic $\D_Z$-module $\M$,
there exists an isomorphism of holonomic $\D_X$-modules
\[\DD_X(\SL(Z; \M))\simeq\SL(Z; \DD_Z\M).\]

Recall that a non-zero holonomic $\D_X$-module is called simple
if it contains no holonomic $\D_X$-submodules other than $\M$ or $0$,
and $\D_X$-module $\SL$ is called integrable connection if it is locally free of finite rank over $\SO_X$.
Note that integrable connections on $X$ are holonomic $\D_X$-modules.
\begin{theorem}[{\cite[Theorem 3.4.2]{HTT08}}]\label{thm-simple-D}
\begin{itemize}
\item[\rm(1)]
Let $Z$ be a locally closed smooth connected subvariety of smooth algebraic variety $X$
and $\M$ a simple holonomic $\D_Z$-module.
We assume that the natural embedding $i_Z\colon Z\hookrightarrow X$ is affine.

Then the minimal extension $\SL(Z; \M)$ of $\M$ along $Z$ is also simple,
and it is characterized as the unique simple submodule $($resp.\ unique simple quotient module$)$
of $\bfD i_{Z\ast}\M$ $($resp.\ $\bfD i_{Z!}\M)$.

\item[\rm(2)]
For any simple $\D_X$-module $\M$, there exist a locally closed smooth connected subvariety $Z$ of $X$
and a simple integrable connection $\SL$ on $Z$ such that
the natural embedding $i_Z\colon Z\hookrightarrow X$ is affine 
and $\M\simeq \SL(Z; \SL)$.

\item[\rm(3)]
Let $(Z, \SL)$ be as in $(1)$ and $(Z', \SL')$ be another such pair. 
Then we have $\SL(Z; \SL)\simeq \SL(Z'; \SL')$ if and only if
$\var{Z} = \var{Z'}$ and there exists an open dense subset $U$ of $Z\cap Z'$ such that $\SL|_U\simeq\SL'|_U$.
\end{itemize}
\end{theorem}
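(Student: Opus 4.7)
My plan is to deduce all three statements from the defining property of $\SL(Z;\SL)$ as the image of $\bfD i_{Z!}\SL \to \bfD i_{Z\ast}\SL$, together with two dual ``boundary'' lemmas which I would prove first.

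For (1), since $i_Z$ is affine both $\bfD i_{Z\ast}$ and $\bfD i_{Z!}$ are exact on $\Modhol(\D_Z)$, and the key technical input to establish is \emph{(A)} that $\bfD i_{Z\ast}\SL$ contains no nonzero holonomic submodule supported in $\var{Z}\setminus Z$, and dually \emph{(B)} that $\bfD i_{Z!}\SL$ admits no nonzero holonomic quotient supported there. These follow by factoring $i_Z$ through the open immersion $j\colon Z\hookrightarrow \var{Z}$ and the closed immersion $\iota\colon \var{Z}\hookrightarrow X$, together with the adjunctions for $\bfD j_\ast$ (resp.\ $\bfD j_!$) and Kashiwara's equivalence on the closed subvariety $\var{Z}\setminus Z$ of $\var{Z}$. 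Since $\SL(Z;\SL)$ is at once a submodule of $\bfD i_{Z\ast}\SL$ and a quotient of $\bfD i_{Z!}\SL$, it inherits both boundary properties. Simplicity then follows: for any nonzero holonomic submodule $\N\subset \SL(Z;\SL)$, the restriction $\N|_Z\subset\SL$ is nonzero (otherwise $\N$ would be boundary-supported, contradicting (A)), hence equals $\SL$ by simplicity of $\SL$; but then the quotient $\SL(Z;\SL)/\N$ has trivial restriction to $Z$, so it is boundary-supported, hence zero by (B). The characterization as the unique simple submodule of $\bfD i_{Z\ast}\SL$ uses the same reasoning: any simple submodule $\N$ must have $\N|_Z=\SL$ by (A), so $\N\cap \SL(Z;\SL)$ is nonzero and simplicity of both forces $\N=\SL(Z;\SL)$; the dual statement about quotients of $\bfD i_{Z!}\SL$ is symmetric.

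For (2), given a simple holonomic $\D_X$-module $\M$, I would first produce a locally closed smooth connected subvariety $Z\subset X$ with $i_Z$ affine such that $\M|_Z$ is a simple integrable connection $\SL$, and then identify $\M$ with $\SL(Z;\SL)$. Applying Lemma \ref{lem-str} to $\M$ gives a dense open subset of $\supp(\M)$ on which $\M$ restricts to an integrable connection; after shrinking I may assume the resulting $Z$ is smooth and connected, and I further remove a divisor to arrange that $i_Z\colon Z\hookrightarrow X$ is affine. Simplicity of $\M$ forces $\SL=\M|_Z$ to be simple. The counit morphism $\bfD i_{Z!}\SL\to \M$ is nonzero (it is an isomorphism after restriction to $Z$), so its image is a nonzero submodule of $\M$, and by simplicity equals $\M$; but this image factors through the defining factorization $\bfD i_{Z!}\SL\twoheadrightarrow \SL(Z;\SL)\hookrightarrow \bfD i_{Z\ast}\SL$ of the minimal extension, yielding a surjection $\SL(Z;\SL)\twoheadrightarrow \M$ which by simplicity of $\SL(Z;\SL)$ (part (1)) must be an isomorphism.

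For (3), the forward direction is immediate: $\supp\SL(Z;\SL)=\var{Z}$, and restricting $\SL(Z;\SL)$ to any open $V\subset Z\cap Z'$ recovers $\SL|_V$, giving $\SL|_V\simeq \SL'|_V$ on a common dense open. Conversely, given the equality of closures and a common dense open $V\subset Z\cap Z'$ with $\SL|_V\simeq \SL'|_V$, I would shrink $V$ further to arrange that $V\hookrightarrow X$ is affine and then invoke the uniqueness in (1) to identify both $\SL(Z;\SL)$ and $\SL(Z';\SL')$ with $\SL(V;\SL|_V)$. The hard part, I expect, will be the affineness step in (2): ensuring that one can shrink the generic locus of $\M$ to a locally closed subvariety whose embedding in $X$ is affine really uses the algebraic setting (one wants to cut by an ample, or at least affine-cut-out, divisor), and in full generality will require a Chow's-lemma or quasi-projectivity reduction rather than merely the existence of a Zariski-dense open smooth stratum.
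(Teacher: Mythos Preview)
The paper does not prove this theorem: it is quoted verbatim from \cite[Theorem 3.4.2]{HTT08} as a known result, with no argument supplied. So there is no ``paper's own proof'' to compare against.

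Your sketch is essentially the standard proof one finds in \cite{HTT08}, and the overall architecture is correct. Two small remarks. First, in part (2) you cite Lemma~\ref{lem-str}, but that lemma concerns meromorphic connections on blow-ups; what you actually want is the unlabelled lemma immediately preceding it (the one asserting that $\bfD i_{X_\alpha}^\ast\M$ has integrable-connection cohomology on each stratum), or more directly the generic-smoothness statement that a holonomic module restricts to an integrable connection on a dense open of its support. Second, your worry about arranging $i_Z$ affine is well-placed but easily handled in the algebraic category: any point of a variety has an affine open neighbourhood, and the complement of a hypersurface in an affine variety is again affine, so after shrinking $Z$ inside an affine chart of $X$ the embedding is automatically affine. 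No Chow's-lemma detour is needed.
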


Thanks to this theorem, 
by Proposition \ref{prop-comm} (3) and Theorem \ref{thm-main-3},
simple objects in the abelian category of algebraic enhanced ind-sheaves can be described as follows.

\begin{definition}\label{def-simple}
Let $X$ be a smooth algebraic variety over $\CC$.
A non-zero algebraic enhanced perverse ind-sheaf $K\in\Perv(\I\CC_{X_\infty})$ is called simple
if it contains no subobjects in $\Perv(\I\CC_{X_\infty})$ other than $K$ or $0$.
\end{definition}

\begin{proposition}\label{prop3.35}
Let $X$ be a smooth algebraic variety over $\CC$.
For any simple algebraic perverse sheaf $\SF\in\Perv(\CC_X)$,
the natural embedding $e_{X^\an_\infty}(\SF)$ of $\SF$ is also simple.
\end{proposition}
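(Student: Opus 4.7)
The plan is to use the full strength of the algebraic irregular Riemann--Hilbert correspondence established as Theorem \ref{thm-main-3} in order to translate the simplicity question into one about holonomic $\D$-modules, where it can be settled by the classical Serre-closure property of regular holonomic modules inside holonomic ones.

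First I would note that, by the commutative diagram in Proposition \ref{prop3.22}~(3), the simple algebraic perverse sheaf $\SF$ can be written as $\SF\simeq\Sol_X(\N)[d_X]$ for a unique $\N\in\Modrh(\D_X)$, and moreover $e_{X^\an_\infty}(\SF)\simeq \Sol_{X_\infty}^\rmE(\N)[d_X]$ as objects of $\Perv(\I\CC_{X_\infty})$. Thus it suffices to verify that $\Sol_{X_\infty}^\rmE(\N)[d_X]$ is simple in $\Perv(\I\CC_{X_\infty})$.

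Next, under the contravariant equivalence
\[
\Sol_{X_\infty}^\rmE(\cdot)[d_X]\colon \Modhol(\D_X)^{\op}\simto\Perv(\I\CC_{X_\infty})
\]
of Theorem \ref{thm-main-3}, every nonzero subobject $K\hookrightarrow \Sol_{X_\infty}^\rmE(\N)[d_X]$ in $\Perv(\I\CC_{X_\infty})$ corresponds to a nonzero surjection $\N\twoheadrightarrow \N''$ with $\N''\in\Modhol(\D_X)$. The key observation is that $\Modrh(\D_X)$ is a Serre subcategory of $\Modhol(\D_X)$: quotients (and submodules) of a regular holonomic $\D_X$-module are again regular holonomic. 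Therefore $\N''$ is automatically regular holonomic, and applying $\Sol_X(\cdot)[d_X]$ we obtain a nonzero subobject of $\SF$ in $\Perv(\CC_X)$.

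Since $\SF$ is simple by assumption, this subobject must equal $\SF$, forcing $\N''\simeq \N$ and hence $K\simeq e_{X^\an_\infty}(\SF)$. This shows that $e_{X^\an_\infty}(\SF)$ admits no non-trivial subobjects in $\Perv(\I\CC_{X_\infty})$, which is the definition of simplicity (Definition \ref{def-simple}). The main point to be careful about is not a technical obstacle but rather the bookkeeping of op-categories: a \emph{sub}object in $\Perv(\I\CC_{X_\infty})$ corresponds to a \emph{quotient} in $\Modhol(\D_X)$, so I need the closure of $\Modrh(\D_X)$ under quotients in $\Modhol(\D_X)$ (not just under subobjects) to transport the simplicity back to the constructible side.
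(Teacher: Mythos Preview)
Your proof is correct and takes a somewhat different route from the paper's. Both arguments pass through the equivalence of Theorem~\ref{thm-main-3} to write a subobject $K\hookrightarrow e_{X^\an_\infty}(\SF)$ as $\Sol_{X_\infty}^{\rmE}(\M)[d_X]$ for some $\M\in\Modhol(\D_X)$, but they diverge in how they relate $\M$ back to $\Perv(\CC_X)$. The paper applies the sheafification functor $\sh_{X^\an_\infty}$ (equivalently, the regularization $\M\mapsto\M_{\reg}$ via Proposition~\ref{prop-sh}) to produce the perverse subobject $\Sol_X(\M_{\reg})[d_X]\subset\SF$, and then invokes the implication $\M_{\reg}\simeq 0\Rightarrow\M\simeq 0$. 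You instead observe directly that $\M$ arises as a quotient of the regular holonomic module $\N$ corresponding to $\SF$, and use that $\Modrh(\D_X)$ is a Serre subcategory of $\Modhol(\D_X)$ to conclude $\M\in\Modrh(\D_X)$ outright. This lets you compare $K$ with $e_{X^\an_\infty}(\SF)$ entirely within the regular Riemann--Hilbert picture, and your bookkeeping of the nonzero case (showing the inclusion $K\hookrightarrow e_{X^\an_\infty}(\SF)$ is forced to be an isomorphism) is cleaner than what the paper writes, which as stated only treats the case leading to $K\simeq 0$. Your careful remark about the op-direction---that subobjects on the enhanced side correspond to \emph{quotients} of $\N$, so closure under quotients is what is needed---is exactly the right point.
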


\begin{proof}
Let $\SF\in\Perv(\CC_X)$ be a simple algebraic perverse sheaf on $X$
and $K\in\Perv(\I\CC_{X_\infty})$ a subobject of $e_{X^\an_\infty}(\SF)$.
Then there exists $\M\in\Modhol(\D_X)$ such that
$$K\simeq\Sol_{X_\infty}^\rmE(\M)[d_X]$$ by Theorem \ref{thm-main-3}.
Since the functor $\sh_{X^\an_\infty}\colon \Perv(\I\CC_{X_\infty})\to\Perv(\CC_X)$ is t-exact
with respect to the perverse t-structures,
we obtain $\sh_{X^\an_\infty}(K)\subset \sh_{X^\an_\infty}e_{X^\an_\infty}(\SF)$.
By Proposition \ref{prop-sh} and the fact that  
there exists an isomorphism $\SF\simto \sh_{X^\an_\infty}e_{X^\an_\infty}(\SF)$,
we obtain  $$\Sol_X(\M_\reg)[d_X]\simeq\sh_{X^\an_\infty}(K)\subset 
\sh_{X^\an_\infty}e_{X^\an_\infty}(\SF)\simeq\SF.$$
Since $\SF$ is simple, we obtain $\Sol_X(\M_\reg)[d_X]\simeq0$, and hence $\M_\reg\simeq0$.
This implies that $\M\simeq0$, 
thus we have $K\simeq0$.
The proof is completed.
\end{proof}

In this paper, 
we shall say that $K\in\ZEC(\I\CC_{X_\infty^\an})$ is an enhanced local system on $X_\infty$
if for any $x\in X$ there exist an open neighborhood $U\subset X$ of $x$ and a non-negative integer $k$
such that $K|_{U_\infty^\an}\simeq(\CC_{U^\an_\infty}^\rmE)^{\oplus k}.$
Note that for any enhanced local system $K$ on $X_\infty$ there exists an integrable connection $\SL$ on $X$
such that $K\simeq \Sol_{X_\infty}^\rmE(\SL)\in\BEC_{\CC-c}(\I\CC_{X_\infty})$.
In particular $K[d_X]$ is an algebraic enhanced perverse ind-sheaf on $X_\infty$. 

\begin{proposition}\label{prop3.37}
\begin{itemize}
\item[\rm(1)]
Let $Z$ be a locally closed smooth connected subvariety of smooth algebraic variety $X$
and $K$ a simple algebraic enhanced perverse ind-sheaf on $X_\infty$.
We assume that the natural embedding $i_Z\colon Z\hookrightarrow X$ is affine.

Then the image of the canonical morphism $\bfE i_{Z^\an_\infty!!}K\to\bfE i_{Z^\an_\infty\ast}K$ is also simple,
and it is characterized as the unique simple submodule $($resp.\ unique simple quotient module$)$
of $\bfE i_{Z^\an_\infty\ast}K$ $($resp.\ $\bfE i_{Z^\an_\infty!!}K)$.

\item[\rm(2)]
For any simple algebraic enhanced perverse ind-sheaf $K$ on $X_\infty$,
there exist a locally closed smooth connected subvariety $Z$ of $X$
and a simple enhanced local system $L$ on $Z_\infty$ such that
the natural embedding $i_Z\colon Z\hookrightarrow X$ is affine 
and $$K\simeq\Image\big(\bfE i_{Z^\an_\infty!!}L[d_Z]\to\bfE i_{Z^\an_\infty\ast}L[d_Z]\big).$$

\item[\rm(3)]
Let $(Z, L)$ be as in $(1)$ and $(Z', L')$ be another such pair. 
Then we have
$$\Image\big(\bfE i_{Z^\an_\infty!!}L[d_Z]\to\bfE i_{Z^\an_\infty\ast}L[d_Z]\big)\simeq
\Image\big(\bfE i_{Z^\an_\infty!!}L'[d_{Z'}]\to\bfE i_{Z^\an_\infty\ast}L'[d_{Z'}]\big)$$
if and only if
$\var{Z} = \var{Z'}$ and there exists an open dense subset $U$ of $Z\cap Z'$ such that
$L|_{U^\an_\infty}\simeq L'|_{U^\an_\infty}$.
\end{itemize}
\end{proposition}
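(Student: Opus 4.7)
The plan is to transfer each of the three assertions through the equivalence of abelian categories $\Sol_{X_\infty}^\rmE(\cdot)[d_X] \colon \Modhol(\D_X)^{\op} \simto \Perv(\I\CC_{X_\infty})$ of Theorem \ref{thm-main-3} and to deduce them from the classical minimal-extension theorem for algebraic holonomic $\D$-modules, Theorem \ref{thm-simple-D}. Since any equivalence of abelian categories preserves simple objects, subobjects and quotient objects, and images of morphisms, it suffices to identify how the functors $\bfD i_{Z!}$, $\bfD i_{Z\ast}$ and the canonical morphism between them translate. By Proposition \ref{prop-comm} (3), for the affine embedding $i_Z \colon Z \hookrightarrow X$, one has
\begin{align*}
\bfE i_{Z^\an_\infty!!}\Sol_{Z_\infty}^\rmE(\M)[d_Z] &\simeq \Sol_{X_\infty}^\rmE(\bfD i_{Z\ast}\M)[d_X],\\
\bfE i_{Z^\an_\infty\ast}\Sol_{Z_\infty}^\rmE(\M)[d_Z] &\simeq \Sol_{X_\infty}^\rmE(\bfD i_{Z!}\M)[d_X],
\end{align*}
so the canonical $\D$-module morphism $\bfD i_{Z!}\M \to \bfD i_{Z\ast}\M$ is sent by the contravariant functor $\Sol$ to the morphism $\bfE i_{Z^\an_\infty!!}K \to \bfE i_{Z^\an_\infty\ast}K$, where $K := \Sol_{Z_\infty}^\rmE(\M)[d_Z]$, and the minimal extension $\SL(Z;\M)$ corresponds to the image of this enhanced morphism.

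For (1), I would pick the (necessarily simple) $\M \in \Modhol(\D_Z)$ corresponding to $K$, apply Theorem \ref{thm-simple-D} (1) to $\M$, and translate the conclusion back: because $\Sol$ reverses the sub/quotient relation, the unique simple submodule of $\bfD i_{Z\ast}\M$ (resp.\ simple quotient of $\bfD i_{Z!}\M$) corresponds to the unique simple quotient of $\bfE i_{Z^\an_\infty!!}K$ (resp.\ simple submodule of $\bfE i_{Z^\an_\infty\ast}K$). For (2), I would transfer a given simple $K \in \Perv(\I\CC_{X_\infty})$ to a simple $\M \in \Modhol(\D_X)$, apply Theorem \ref{thm-simple-D} (2) to write $\M \simeq \SL(Z;\SL)$ for a simple integrable connection $\SL$ on an affine locally closed smooth connected subvariety $Z$, and set $L := \Sol_{Z_\infty}^\rmE(\SL)$; under the correspondence between integrable connections on $Z$ and enhanced local systems on $Z_\infty$ noted just before the statement, $L$ is a simple enhanced local system, and the displayed description of $K$ is the direct translation of $\M \simeq \SL(Z;\SL)$. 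For (3), I would apply Theorem \ref{thm-simple-D} (3) and observe that $\SL|_U \simeq \SL'|_U$ is equivalent to $L|_{U^\an_\infty} \simeq L'|_{U^\an_\infty}$ via the same dictionary restricted to the open subset $U$.

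The main obstacle is purely bookkeeping: one has to track carefully the contravariance of the Riemann--Hilbert correspondence (which swaps submodules with quotient modules and reverses the direction of the canonical morphism $\bfD i_{Z!} \to \bfD i_{Z\ast}$ into $\bfE i_{Z^\an_\infty!!} \to \bfE i_{Z^\an_\infty\ast}$) together with the cohomological shifts by $d_X$ and $d_Z$. The only auxiliary fact not directly furnished by Theorem \ref{thm-main-3} and Proposition \ref{prop-comm} is that $\Sol_{Z_\infty}^\rmE$ restricts to a bijection between integrable connections on $Z$ and enhanced local systems on $Z_\infty$ preserving simplicity; this follows from the local triviality of integrable connections on $Z$ together with the definition of $\Sol_{Z_\infty}^\rmE$ via a smooth compactification.
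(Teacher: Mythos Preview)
Your proposal is correct and follows essentially the same approach as the paper: both arguments transfer the problem through the equivalence $\Sol_{X_\infty}^\rmE(\cdot)[d_X]$ of Theorem \ref{thm-main-3}, use Proposition \ref{prop-comm} (3) to identify $\bfE i_{Z^\an_\infty!!}$ and $\bfE i_{Z^\an_\infty\ast}$ with $\bfD i_{Z\ast}$ and $\bfD i_{Z!}$ on the $\D$-module side, and then invoke Theorem \ref{thm-simple-D}. Your treatment is in fact a bit more explicit than the paper's about the contravariance bookkeeping (the swap of sub/quotient and of $\ast$/$!$), which is exactly the point one must keep straight.
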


\begin{proof}
For any $\M\in\Modhol(\D_Z)$, there exist isomorphisms in $\Perv(\I\CC_{X_\infty})$
\begin{align*}
\Sol_{X_\infty}^\rmE(\SL(Z; \M))[d_X]
&\simeq
\Sol_{X_\infty}^\rmE\big(\Image\big(\bfD i_{Z!}\M\to \bfD i_{Z\ast}\M\big)\big)[d_X]\\
&\simeq
\Image\big(\Sol_{X_\infty}^\rmE(\bfD i_{Z\ast}\M)[d_X]\to \Sol_{X_\infty}^\rmE(\bfD i_{Z!}\M)[d_X]\big)\\
&\simeq
\Image\big(\bfE i_{Z^\an_\infty!!}\Sol_{Z_\infty}^\rmE(\M)[d_Z]\to
\bfE i_{Z^\an_\infty\ast}\Sol_{Z_\infty}^\rmE(\M)[d_Z]\big),
\end{align*}
where in the second (resp.\ third) isomorphism
we used Theorems \ref{thm-main-3} (resp.\ Proposition \ref{prop-comm} (3)).
Therefore this theorem follows from Theorems \ref{thm-main-3} and \ref{thm-simple-D}.
\end{proof}

From now on, we shall consider the image of a canonical morphism
\[{}^p\bfE i_{Z^\an_\infty!!}K\to{}^p\bfE i_{Z^\an_\infty\ast}K\]
for a locally closed smooth subvariety $Z$ of $X$
(not necessarily the natural embedding $i_Z\colon Z\hookrightarrow X$ is affine)
and $K\in\Perv(\I\CC_{Z_\infty})$.
Remark that the canonical morphism $${}^p\bfE i_{Z^\an_\infty!!}K\to{}^p\bfE i_{Z^\an_\infty\ast}K$$
is induced by a canonical morphism of functors
$\Perv(\I\CC_{Z_\infty})\to\Perv(\I\CC_{X_\infty})$
\[{}^p\bfE i_{Z^\an_\infty!!}\to{}^p\bfE i_{Z^\an_\infty\ast}\]
and it is an isomorphism if $Z$ is closed. 

In this paper,
we shall define minimal extensions of algebraic enhanced perverse ind-sheaves as follows.

\begin{definition}\label{def-minimal}
Let $X$ be a smooth algebraic variety over $\CC$ and $Z$ a locally closed smooth subvariety of $X$
(not necessarily the natural embedding $i_Z\colon Z\hookrightarrow X$ is affine).

For any $K\in\Perv(\I\CC_{Z_\infty})$,
we call the image of the canonical morphism $${}^p\bfE i_{Z^\an_\infty!!}K\to{}^p\bfE i_{Z^\an_\infty\ast}K$$
the minimal extension of $K$ along $Z$,
and denote it by ${}^p\bfE i_{Z^\an_\infty!!\ast}K$.
\end{definition}

\begin{remark}\label{rem3.37}
\begin{itemize}
\item[(1)]
If $Z$ is open then we have $\big({}^p\bfE i_{Z^\an_\infty!!\ast}K\big)|_{Z^\an_\infty}\simeq K$
by the definition of minimal extensions along $Z$.
\item[(2)]
If $Z$ is closed then we have ${}^p\bfE i_{Z^\an_\infty!!\ast}K
\simeq\bfE i_{Z^\an_\infty\ast}K\simeq\bfE i_{Z^\an_\infty!!}K\in\Perv_Z(\I\CC_{X_\infty})$.
Namely, in the case when $Z$ is closed,
minimal extensions along $Z$ can be characterized by Proposition \ref{prop3.29}.

\item[(3)]
If the natural embedding $i_Z\colon Z\hookrightarrow X$ is affine,
then we have $${}^p\bfE i_{Z^\an_\infty!!\ast}(\cdot) \simeq
 \Image\big(\bfE i_{Z^\an_\infty!!}(\cdot)\to\bfE i_{Z^\an_\infty\ast}(\cdot)\big)$$
by the fact that the functors $\bfE i_{Z^\an_\infty\ast}$ and $\bfE i_{Z^\an_\infty!!}$ are t-exact
with respect to the perverse t-structures, see Remark \ref{rem3.28} for the details.
Moreover, for any $\M\in\Modhol(\D_Z)$ there exists an isomorphism in $\Perv(\I\CC_{X_\infty})$:
\[{}^p\bfE i_{Z^\an_\infty!!\ast}\Sol_{Z_\infty}^\rmE(\M)[d_Z]
\simeq
\Sol_{X_\infty}^\rmE(\SL(Z; \M))[d_X],
\]
see the proof of Proposition \ref{prop3.37} for the details.
This implies that the minimal extension functor commutes the enhanced solution functor.
\end{itemize}
\end{remark}

Since the category $\Perv(\I\CC_{X_\infty})$ is abelian,
the minimal extension ${}^p\bfE i_{Z^\an_\infty!!\ast}K$ of $K\in\Perv(\I\CC_{Z_\infty})$ along $Z$ is also
an algebraic enhanced perverse ind-sheaf on $X_\infty$.
Moreover we have a functor
\[{}^p\bfE i_{Z^\an_\infty!!\ast}\colon\Perv(\I\CC_{Z_\infty})\to\Perv(\I\CC_{X_\infty}).\]
The following proposition means that 
the minimal extension functor ${}^p\bfE i_{Z^\an_\infty!!\ast}$ commutes
with the duality functor of algebraic enhanced perverse ind-sheaves.

\begin{proposition}
In the situation as above,
there exists a commutative diagram$:$
\[\xymatrix@C=60pt@M=5pt{
\Perv(\I\CC_{Z_\infty})^\op\ar@{->}[r]^-{{}^p\bfE i_{Z^\an_\infty!!\ast}}
\ar@{->}[d]_-{\rmD^\rmE_{Z^\an_\infty}}^-\wr\ar@{}[rd]|{\rotatebox[origin=c]{180}{$\circlearrowright$}}
 & \Perv(\I\CC_{X_\infty})^\op\ar@{->}[d]^-{\rmD^\rmE_{Z^\an_\infty}}_-\wr\\
\Perv(\I\CC_{Z_\infty})\ar@{->}[r]_-{{}^p\bfE i_{Z^\an_\infty!!\ast}}
&\Perv(\I\CC_{X_\infty}).
}\]

Namely, for any $K\in\Perv(\I\CC_{Z_\infty})$,
there exists an isomorphism of algebraic enhanced perverse ind-sheaves$:$
\[\rmD_{X^\an_\infty}^\rmE({}^p\bfE i_{Z^\an_\infty!!\ast}K)
\simeq
{}^p\bfE i_{Z^\an_\infty!!\ast}(\rmD_{Z^\an_\infty}^\rmE K).\]
\end{proposition}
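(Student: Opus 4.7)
The plan is to deduce the result from the compatibility of Verdier duality with the pushforward functors $\bfE i_{Z^\an_\infty!!}$ and $\bfE i_{Z^\an_\infty\ast}$, combined with the fact that Verdier duality is an exact anti-equivalence with respect to the perverse t-structure on algebraic $\CC$-constructible enhanced ind-sheaves.

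First I would establish the duality interchange on the level of functors between $\Perv(\I\CC_{Z_\infty})$ and $\Perv(\I\CC_{X_\infty})$. By Proposition \ref{prop2.8} (2), (3), applied inside $\BEC_{\CC-c}$ (which is preserved by these operations by Corollary \ref{cor3.14}), we have natural isomorphisms
\[
\rmD_{X^\an_\infty}^\rmE\circ\bfE i_{Z^\an_\infty!!}\simeq \bfE i_{Z^\an_\infty\ast}\circ\rmD_{Z^\an_\infty}^\rmE,\qquad
\rmD_{X^\an_\infty}^\rmE\circ\bfE i_{Z^\an_\infty\ast}\simeq \bfE i_{Z^\an_\infty!!}\circ\rmD_{Z^\an_\infty}^\rmE
\]
of functors $\BEC_{\CC-c}(\I\CC_{Z_\infty})^{\op}\to\BEC_{\CC-c}(\I\CC_{X_\infty})$. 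Since $\rmD_{X^\an_\infty}^\rmE$ swaps ${}^p\bfE^{\leq 0}$ and ${}^p\bfE^{\geq 0}$ (by the very definition of these subcategories), it induces ${}^p\SH^0\circ\rmD_{X^\an_\infty}^\rmE\simeq\rmD_{X^\an_\infty}^\rmE\circ{}^p\SH^0$ on $\BEC_{\CC-c}$. Applying ${}^p\SH^0$ to the two displays above then yields natural isomorphisms
\[
\rmD_{X^\an_\infty}^\rmE\circ{}^p\bfE i_{Z^\an_\infty!!}\simeq {}^p\bfE i_{Z^\an_\infty\ast}\circ\rmD_{Z^\an_\infty}^\rmE,\qquad
\rmD_{X^\an_\infty}^\rmE\circ{}^p\bfE i_{Z^\an_\infty\ast}\simeq {}^p\bfE i_{Z^\an_\infty!!}\circ\rmD_{Z^\an_\infty}^\rmE
\]
of functors $\Perv(\I\CC_{Z_\infty})^\op\to\Perv(\I\CC_{X_\infty})$.

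Next I would check that the canonical morphism ${}^p\bfE i_{Z^\an_\infty!!}\to{}^p\bfE i_{Z^\an_\infty\ast}$ is mapped, under $\rmD_{X^\an_\infty}^\rmE$ and the two identifications above, precisely to the analogous canonical morphism for $\rmD_{Z^\an_\infty}^\rmE K$. This is a naturality check: the canonical morphism $\bfE i_{Z^\an_\infty!!}\to\bfE i_{Z^\an_\infty\ast}$ of functors on $\BEC(\I\CC_{Z^\an_\infty})$ arises from the identity via the adjunctions $(\bfE i_{Z^\an_\infty!!},\bfE i_{Z^\an_\infty}^{-1})$ and $(\bfE i_{Z^\an_\infty}^{-1},\bfE i_{Z^\an_\infty\ast})$, and these adjunctions are exchanged by Verdier duality; after passing to ${}^p\SH^0$ on the perverse hearts this becomes the desired identification of morphisms in $\Perv(\I\CC_{X_\infty})$.

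Finally, because the abelian category $\Perv(\I\CC_{X_\infty})$ is abelian and the contravariant functor $\rmD_{X^\an_\infty}^\rmE$ is an exact anti-equivalence, it exchanges kernels with cokernels and hence preserves images in the sense that $\rmD_{X^\an_\infty}^\rmE(\Image\varphi)\simeq\Image(\rmD_{X^\an_\infty}^\rmE\varphi)$ for any morphism $\varphi$ in $\Perv(\I\CC_{X_\infty})$. Applying this to $\varphi\colon{}^p\bfE i_{Z^\an_\infty!!}K\to{}^p\bfE i_{Z^\an_\infty\ast}K$ and using the previous step gives
\[
\rmD_{X^\an_\infty}^\rmE({}^p\bfE i_{Z^\an_\infty!!\ast}K)
\simeq \Image\bigl({}^p\bfE i_{Z^\an_\infty!!}(\rmD_{Z^\an_\infty}^\rmE K)\to{}^p\bfE i_{Z^\an_\infty\ast}(\rmD_{Z^\an_\infty}^\rmE K)\bigr)
= {}^p\bfE i_{Z^\an_\infty!!\ast}(\rmD_{Z^\an_\infty}^\rmE K),
\]
as required. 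I expect the main obstacle to be the naturality check in the middle paragraph: verifying that after applying $\rmD_{X^\an_\infty}^\rmE$ and the identifications of the first step, the dual of the canonical morphism $\bfE i_{Z^\an_\infty!!}\to\bfE i_{Z^\an_\infty\ast}$ coincides (not just up to some arbitrary scalar or automorphism) with the canonical morphism for $\rmD_{Z^\an_\infty}^\rmE K$; this is formal but requires care with the adjunction isomorphisms of Proposition \ref{prop2.8}.
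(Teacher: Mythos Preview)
Your proposal is correct and follows essentially the same approach as the paper's proof: both arguments use Proposition~\ref{prop2.8}(3) to interchange $\bfE i_{Z^\an_\infty!!}$ and $\bfE i_{Z^\an_\infty\ast}$ under duality, invoke the t-exactness of $\rmD^\rmE$ to commute it with ${}^p\SH^0$, and then apply the fact that an exact anti-equivalence takes images to images. The paper's writeup simply chains the isomorphisms without explicitly flagging the naturality check you single out; your caution about that step is well placed but, as you anticipate, it is formal.
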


\begin{proof}
Recall that the duality functor $\rmD^\rmE$ of enhanced ind-sheaves is t-exact
with respect to the perverse t-structures.
Hence we have isomorphisms in $\Perv(\I\CC_{X_\infty})$
\begin{align*}
\rmD_{X^\an_\infty}^\rmE({}^p\bfE i_{Z^\an_\infty!!}K)
&\simeq
\rmD_{X^\an_\infty}^\rmE({}^p\SH^0(\bfE i_{Z^\an_\infty!!}K))\\
&\simeq
{}^p\SH^0(\rmD_{X^\an_\infty}^\rmE(\bfE i_{Z^\an_\infty!!}K))\\
&\simeq
{}^p\SH^0(\bfE i_{Z^\an_\infty\ast}\rmD_{Z^\an_\infty}^\rmE K)\\
&\simeq
{}^p\bfE i_{Z^\an_\infty\ast}\rmD_{Z^\an_\infty}^\rmE K,
\end{align*}
where in the third isomorphism we used Proposition \ref{prop2.8} (3).
In the similar way,
we have an isomorphism
$\rmD_{X^\an_\infty}^\rmE({}^p\bfE i_{Z^\an_\infty\ast}K)
\simeq
{}^p\bfE i_{Z^\an_\infty!!}\rmD_{Z^\an_\infty}^\rmE K$
in $\Perv(\I\CC_{X_\infty})$.

Therefore we obtain isomorphisms in $\Perv(\I\CC_{X_\infty})$:
\begin{align*}
\rmD_{X^\an_\infty}^\rmE({}^p\bfE i_{Z^\an_\infty!!\ast}K)
&\simeq
\rmD_{X^\an_\infty}^\rmE\big(\Image({}^p\bfE i_{Z^\an_\infty!!}K\to{}^p\bfE i_{Z^\an_\infty\ast}K)\big)\\
&\simeq
\Image\big(\rmD_{X^\an_\infty}^\rmE({}^p\bfE i_{Z^\an_\infty\ast}K)\to
\rmD_{X^\an_\infty}^\rmE({}^p\bfE i_{Z^\an_\infty!!}K)\big)\\
&\simeq
\Image\big({}^p\bfE i_{Z^\an_\infty!!}\rmD_{Z^\an_\infty}^\rmE K\to
{}^p\bfE i_{Z^\an_\infty\ast}\rmD_{Z^\an_\infty}^\rmE K\big)\\
&\simeq
{}^p\bfE i_{Z^\an_\infty!!\ast}\rmD_{Z^\an_\infty}^\rmE K.
\end{align*}
\end{proof}

Let us recall that the minimal extension ${}^p\bfR i_{Z!\ast}\F$ of a perverse sheaf $F$ along $Z$ is defined by
the image of a canonical morphism \[{}^p\bfR i_{Z^\an!}\F\to{}^p\bfR i_{Z^\an\ast}\F,\]
where ${}^p\bfR i_{Z^\an!} := {}^p\SH^0\circ\bfR i_{Z^\an!}$,
${}^p\bfR i_{Z^\an\ast} := {}^p\SH^0\circ\bfR i_{Z^\an\ast}$
and ${}^p\SH^0$ is the $0$-th cohomology functor with respect to the perverse t-structures.
See \cite{BBD} (also \cite[\S 8.2.2]{HTT08}) for the details.
The following proposition means that the natural embedding functor
commute with the minimal extension functor.

\begin{proposition}
In the situation as above,
there exists a commutative diagram$:$
\[\xymatrix@C=60pt@M=5pt{
\Perv(\I\CC_{Z_\infty})\ar@{->}[r]^-{{}^p\bfE i_{Z^\an_\infty!!\ast}}
\ar@{}[rd]|{\rotatebox[origin=c]{180}{$\circlearrowright$}}
 & \Perv(\I\CC_{X_\infty})\\
\Perv(\CC_{Z})\ar@{^{(}->}[u]^-{e_{Z^\an_\infty}}\ar@{->}[r]_-{{}^p\bfR i_{Z^\an!\ast}}
&\Perv(\CC_{X})\ar@{^{(}->}[u]_-{e_{X^\an_\infty}}.
}\]
Namely, for any $\SF\in\Perv(\CC_{Z})$,
there exists an isomorphism in $\Perv(\I\CC_{X_\infty}):$
\[e_{X^\an_\infty}({}^p\bfR i_{Z^\an!\ast}\SF)
\simeq
{}^p\bfE i_{Z^\an!!\ast}(e_{Z^\an_\infty} \SF).\]
\end{proposition}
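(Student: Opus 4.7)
The plan is to reduce the assertion to two compatibility facts: that the embedding $e$ intertwines the pushforwards along $i_Z$ in the $!!$ and $\ast$ versions, and that $e_{X^\an_\infty}$ is t-exact and fully faithful on the perverse hearts. Both are essentially available from earlier material; the main work is combining them so that the image construction defining the minimal extension is preserved.

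First, I would establish natural isomorphisms
\[
\bfE i_{Z^\an_\infty!!} \circ e_{Z^\an_\infty} \;\simeq\; e_{X^\an_\infty} \circ \bfR i_{Z^\an!}, \qquad
\bfE i_{Z^\an_\infty\ast} \circ e_{Z^\an_\infty} \;\simeq\; e_{X^\an_\infty} \circ \bfR i_{Z^\an\ast},
\]
compatibly with the canonical morphisms $\bfE i_{Z^\an_\infty!!} \to \bfE i_{Z^\an_\infty\ast}$ and $\bfR i_{Z^\an!} \to \bfR i_{Z^\an\ast}$. The first is recorded in \S\ref{subsec2.4} for any morphism of bordered spaces. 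For the second, I would decompose $i_Z = \iota \circ j$ into an open embedding $j \colon Z \hookrightarrow \var{Z}$ followed by a closed embedding $\iota \colon \var{Z} \hookrightarrow X$. Along $\iota$ the $\ast$- and $!!$-pushforwards coincide both for sheaves and for enhanced ind-sheaves, so the $!!$-commutation immediately suffices. Along the open embedding $j$ I would derive the $\ast$-commutation by adjunction from the known commutation of $e$ with $\bfE j^{-1}$ recorded in \S\ref{subsec2.4}, and the compatibility with the canonical morphism $\bfR j_! \to \bfR j_\ast$ follows from the functoriality of the adjunction units.

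Next, applying ${}^p\SH^0$ and using the t-exactness of $e_{X^\an_\infty}$ stated in Proposition \ref{prop3.22}, these become isomorphisms of functors $\Perv(\CC_Z) \to \Perv(\I\CC_{X_\infty})$,
\[
{}^p\bfE i_{Z^\an_\infty!!} \circ e_{Z^\an_\infty} \;\simeq\; e_{X^\an_\infty} \circ {}^p\bfR i_{Z^\an!}, \qquad
{}^p\bfE i_{Z^\an_\infty\ast} \circ e_{Z^\an_\infty} \;\simeq\; e_{X^\an_\infty} \circ {}^p\bfR i_{Z^\an\ast},
\]
still compatible with the canonical morphisms. Since the restriction $e_{X^\an_\infty} \colon \Perv(\CC_X) \hookrightarrow \Perv(\I\CC_{X_\infty})$ from Proposition \ref{prop3.22} is an exact embedding of abelian categories, it commutes with taking images. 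Applying this to the canonical morphism ${}^p\bfR i_{Z^\an!}\SF \to {}^p\bfR i_{Z^\an\ast}\SF$ then yields
\[
e_{X^\an_\infty}({}^p\bfR i_{Z^\an!\ast}\SF) \;\simeq\; \Image\bigl({}^p\bfE i_{Z^\an_\infty!!}(e_{Z^\an_\infty}\SF) \to {}^p\bfE i_{Z^\an_\infty\ast}(e_{Z^\an_\infty}\SF)\bigr) = {}^p\bfE i_{Z^\an_\infty!!\ast}(e_{Z^\an_\infty}\SF),
\]
as required.

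The main obstacle is the commutation with $\bfE i_{Z^\an_\infty\ast}$, since only the $!!$-version is explicit in \S\ref{subsec2.4}. The argument above via the open/closed factorization works, but it requires checking that the canonical natural transformation $\bfE i_{Z^\an_\infty!!} \to \bfE i_{Z^\an_\infty\ast}$ is identified with $\bfR i_{Z^\an!} \to \bfR i_{Z^\an\ast}$ under $e$; this should follow from the compatibility of adjunctions between the sheaf and enhanced ind-sheaf six-operation formalisms but deserves explicit verification.
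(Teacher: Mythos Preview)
Your overall architecture matches the paper's proof exactly: establish the commutation of $e$ with both $\bfE i_{Z^\an_\infty!!}$ and $\bfE i_{Z^\an_\infty\ast}$, pass to ${}^p\SH^0$ using the t-exactness of $e_{X^\an_\infty}$ from Proposition~\ref{prop3.22}, and conclude by exactness of the embedding that images are preserved.

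The one genuine difference is how you obtain the $\ast$-commutation. The paper does not factor $i_Z$ as open-then-closed; instead it derives $\bfE i_{Z^\an_\infty\ast}\circ e_{Z^\an_\infty}\simeq e_{X^\an_\infty}\circ\bfR i_{Z^\an\ast}$ directly from the $!!$-commutation via duality, using Proposition~\ref{prop2.8}(3) (which applies because $i_{Z^\an_\infty}$ is semi-proper) together with the fact $\rmD^\rmE_{M_\infty}\circ e_{M_\infty}\simeq e_{M_\infty}\circ\rmD_M$ recorded in \S\ref{subsec2.4}. This is shorter and immediately yields an isomorphism.

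Your adjunction route is more delicate than you indicate. The mate of the isomorphism $\bfE j^{-1}\circ e_{\var{Z}^\an_\infty}\simeq e_{Z^\an_\infty}\circ j^{-1}$ is only a natural transformation $e_{\var{Z}^\an_\infty}\circ\bfR j_\ast\to\bfE j_\ast\circ e_{Z^\an_\infty}$, not a priori an isomorphism; passing to mates does not preserve invertibility in general. You flag this as needing verification, but you do not supply the verification, and it is not obvious how to complete it without essentially reverting to duality (or computing directly with the definition of $e$ and $\bfE j_\ast$). So the paper's approach buys you an honest isomorphism for free, whereas yours leaves exactly the gap you yourself identify as the main obstacle.
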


\begin{proof}
Let $\SF$ be an object of $\Perv(\CC_Z)$.
Recall that
since the morphism $i_{Z^\an_\infty}\colon Z^\an_\infty\to X^\an_\infty$ of bordered spaces is semi-proper,
there exist isomorphisms in $\BEC_{\CC-c}(\I\CC_{X_\infty})$:
\begin{align*}
\bfE i_{Z^\an_\infty!!}(e_{Z^\an_\infty} \SF)
&\simeq
e_{X^\an_\infty}(\bfR i_{Z^\an!}\SF),\\
\bfE i_{Z^\an_\infty\ast}(e_{Z^\an_\infty} \SF)
&\simeq
e_{X^\an_\infty}(\bfR i_{Z^\an\ast}\SF)
\end{align*}
where the second isomorphism follows from the first isomorphism, Proposition \ref{prop2.8} (3) and
the fact that the duality functor commutes with the natural embedding functor.

Moreover since the natural embedding functor is t-exact
with respect to the perverse t-structures by Proposition \ref{prop3.22},
there exist isomorphisms in $\Perv(\I\CC_{X_\infty})$:
\begin{align*}
e_{X^\an_\infty}({}^p\bfR i_{Z^\an\ast}\SF)
&\simeq
{}^p\bfE i_{Z^\an\ast}(e_{Z^\an_\infty} \SF),\\
e_{X^\an_\infty}({}^p\bfR i_{Z^\an!}\SF)
&\simeq
{}^p\bfE i_{Z^\an!!}(e_{Z^\an_\infty} \SF).
\end{align*}
Hence we obtain isomorphisms in $\Perv(\I\CC_{X_\infty})$:
\begin{align*}
e_{X^\an_\infty}({}^p\bfR i_{Z^\an!\ast}\SF)
&\simeq
e_{X^\an_\infty}\big(\Image\big({}^p\bfR i_{Z^\an!}\SF\to{}^p\bfR i_{Z^\an\ast}\SF\big)\big)\\
&\simeq
\Image\big(e_{X^\an_\infty}\big({}^p\bfR i_{Z^\an!}\SF\big)\to
e_{X^\an_\infty}\big({}^p\bfR i_{Z^\an\ast}\SF\big)\big)\\
&\simeq
\Image\big({}^p\bfE i_{Z^\an_\infty!!}\big(e_{Z^\an_\infty}\SF\big)\to{}^p\bfE i_{Z^\an_\infty\ast}\big(e_{Z^\an_\infty}\SF\big)\big)\\
&\simeq
{}^p\bfE i_{Z^\an_\infty!!\ast}(e_{Z^\an_\infty} \SF).
\end{align*}
The proof is completed.
\end{proof}

The following lemma will be used in the proof of Theorem \ref{thm-main-4}.
\begin{lemma}\label{lem3.41}
Let $X$ be a smooth algebraic variety over $\CC$, $Z$ and $W$ locally closed smooth subvarieties of $X$
$($not necessarily the natural embeddings $Z\hookrightarrow X$ and $W\hookrightarrow X$ is affine$)$.
We assume $W\subset Z$ and we consider a commutative diagram $:$
\[\xymatrix@R=20pt@C=40pt{
Z^\an_\infty\ar@{->}[r]^-{i_{Z^\an_\infty}} & X^\an_\infty\\
W^\an_\infty\ar@{->}[ru]_-{i_{W^\an_\infty}}\ar@{->}[u]^-{k} 
},\]
where $i_{Z^\an_\infty}, i_{W^\an_\infty}$ and $k$ are the morphisms of bordered spaces
induced by the natural embeddings $Z\hookrightarrow X, W\hookrightarrow X$ and $W\hookrightarrow Z$, respectively.

Then for any $K\in\Perv(\I\CC_{W_\infty})$ we have 
\begin{itemize}
\item[\rm(1)]
${}^p\bfE i_{W^\an_\infty!!}K\simeq
{}^p\bfE i_{Z^\an_\infty!!}{}^p\bfE k_{!!}K$ and 
${}^p\bfE i_{W^\an_\infty\ast}K\simeq
{}^p\bfE i_{Z^\an_\infty\ast}{}^p\bfE k_{\ast}K$,
\item[\rm(2)]
${}^p\bfE i_{W^\an_\infty!!\ast}K\simeq
{}^p\bfE i_{Z^\an_\infty!!\ast}{}^p\bfE k_{!!\ast}K.$
\end{itemize}
\end{lemma}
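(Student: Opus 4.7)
The key ingredient is Corollary \ref{cor3.27}, which tells us that $\bfE k_{!!}$ and $\bfE i_{Z^\an_\infty!!}$ are right t-exact while $\bfE k_{\ast}$ and $\bfE i_{Z^\an_\infty\ast}$ are left t-exact with respect to the perverse t-structures, together with the functorial identities $\bfE i_{W^\an_\infty!!}\simeq \bfE i_{Z^\an_\infty!!}\circ\bfE k_{!!}$ and $\bfE i_{W^\an_\infty\ast}\simeq \bfE i_{Z^\an_\infty\ast}\circ\bfE k_{\ast}$ coming from $i_{W^\an_\infty}=i_{Z^\an_\infty}\circ k$.

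For (1), take $K\in\Perv(\I\CC_{W_\infty})$ and consider the perverse truncation triangle
\[
{}^p\tau^{\leq -1}\bfE k_{!!}K \to \bfE k_{!!}K \to {}^p\bfE k_{!!}K \xrightarrow{+1}
\]
in $\BEC_{\CC-c}(\I\CC_{Z_\infty})$; by Corollary \ref{cor3.27}(1), $\bfE k_{!!}K$ lies in ${}^p\bfE_{\CC-c}^{\leq 0}(\I\CC_{Z_\infty})$, so the truncated term lies in ${}^p\bfE_{\CC-c}^{\leq -1}(\I\CC_{Z_\infty})$. Applying the right t-exact functor $\bfE i_{Z^\an_\infty!!}$ preserves the inequalities, so the first term of the resulting triangle lies in ${}^p\bfE_{\CC-c}^{\leq -1}(\I\CC_{X_\infty})$, and taking ${}^p\SH^0$ of the long exact sequence yields an isomorphism ${}^p\SH^0(\bfE i_{Z^\an_\infty!!}\bfE k_{!!}K)\simeq {}^p\SH^0(\bfE i_{Z^\an_\infty!!}{}^p\bfE k_{!!}K)$. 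Combined with $\bfE i_{Z^\an_\infty!!}\bfE k_{!!}K\simeq \bfE i_{W^\an_\infty!!}K$ this gives the first isomorphism of (1). The second (involving $\ast$) is proved by the dual argument, replacing ${}^p\tau^{\leq -1}$ by ${}^p\tau^{\geq 1}$ and using left t-exactness of $\bfE i_{Z^\an_\infty\ast}$ from Corollary \ref{cor3.27}(2).

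For (2), I would deduce it from (1) by analysing how the canonical morphism $\bfE i_{W^\an_\infty!!}K\to \bfE i_{W^\an_\infty\ast}K$ factors. Under the functorial identifications of (1), this morphism becomes
\[
{}^p\bfE i_{Z^\an_\infty!!}({}^p\bfE k_{!!}K)\to{}^p\bfE i_{Z^\an_\infty\ast}({}^p\bfE k_{\ast}K),
\]
and it factors through ${}^p\bfE k_{!!\ast}K$ as the composition
\[
{}^p\bfE i_{Z^\an_\infty!!}({}^p\bfE k_{!!}K)\twoheadrightarrow
{}^p\bfE i_{Z^\an_\infty!!}({}^p\bfE k_{!!\ast}K)\xrightarrow{\alpha}
{}^p\bfE i_{Z^\an_\infty\ast}({}^p\bfE k_{!!\ast}K)\hookrightarrow
{}^p\bfE i_{Z^\an_\infty\ast}({}^p\bfE k_{\ast}K),
\]
where $\alpha$ is the canonical morphism whose image is, by definition, ${}^p\bfE i_{Z^\an_\infty!!\ast}({}^p\bfE k_{!!\ast}K)$. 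The first arrow is surjective because ${}^p\bfE i_{Z^\an_\infty!!}$ is right exact as a functor between abelian categories (being the zeroth cohomology of a right t-exact functor), and the last arrow is injective by left exactness of ${}^p\bfE i_{Z^\an_\infty\ast}$. Since image is preserved under pre-composition with epimorphisms and post-composition with monomorphisms, the image of the whole composition equals the image of $\alpha$, giving (2).

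The main obstacle is making sure the factorization of the canonical morphism claimed in the second paragraph is genuinely the one coming from functoriality of the transformation $\bfE(\cdot)_{!!}\to\bfE(\cdot)_{\ast}$. This is a naturality check: the morphism $\bfE i_{W^\an_\infty!!}K\to\bfE i_{W^\an_\infty\ast}K$ factors, even before applying ${}^p\SH^0$, as $\bfE i_{Z^\an_\infty!!}\bfE k_{!!}K\to \bfE i_{Z^\an_\infty!!}\bfE k_{\ast}K\to \bfE i_{Z^\an_\infty\ast}\bfE k_{\ast}K$ via the canonical transformations, and one must verify that applying ${}^p\SH^0$ recovers the four-term factorization above. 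Once this compatibility is in place, the argument closes formally. As an independent sanity check, via Theorem \ref{thm-main-3} and Proposition \ref{prop-comm} the statement (2) corresponds to the classical transitivity $\SL(W\hookrightarrow X;\M)\simeq\SL(Z\hookrightarrow X;\SL(W\hookrightarrow Z;\M))$ of minimal extensions of algebraic holonomic $\D$-modules.
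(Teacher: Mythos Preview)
Your proposal is correct and follows essentially the same approach as the paper. For (1), the paper treats the $\ast$ case and cites \cite[Proposition 8.1.15 (i)]{HTT08} for the identity ${}^p\SH^0(\bfE i_{Z^\an_\infty\ast}\bfE k_{\ast}K)\simeq{}^p\SH^0(\bfE i_{Z^\an_\infty\ast}{}^p\SH^0(\bfE k_{\ast}K))$, whose content is exactly your truncation-triangle argument; for (2), the paper writes the same epi-mono factorization you give (splitting your arrow $\alpha$ into its epi and mono parts) and draws the same conclusion about the image.
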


\begin{proof}
Let $K$ be an object of $\Perv(\I\CC_{W_\infty})$.
\item[(1)]
Since the proof of the first assertion of (1) is similar,
we shall only prove the second one.
Recall that the functor $\bfE i_{Z^\an_\infty\ast}$ is left t-exact with respect to the perverse t-structures
by Corollary \ref{cor3.27} (2).
Hence we have an isomorphism in $\Perv(\I\CC_{X\infty})$
\[{}^p\SH^0\big(\bfE i_{Z^\an_\infty\ast}\bfE k_{\ast}K\big)
\simeq
{}^p\SH^0\big(\bfE i_{Z^\an_\infty\ast}{}^p\SH^0(\bfE k_{\ast}K)\big)\]
for any $K\in\Perv(\I\CC_{W_\infty})$,
see e.g., \cite[Proposition 8.1.15 (i)]{HTT08}.
Therefore we obtain isomorphisms in $\Perv(\I\CC_{X_\infty})$:
\begin{align*}
{}^p\bfE i_{W^\an_\infty\ast}K
&\simeq
{}^p\SH^0\bfE i_{W^\an_\infty\ast}K\\
&\simeq
{}^p\SH^0\big(\bfE i_{Z^\an_\infty\ast}\bfE k_\ast K\big)\\
&\simeq
{}^p\SH^0\big(\bfE i_{Z^\an_\infty\ast}{}^p\SH^0(\bfE k_{\ast}K)\big)\\
&\simeq
{}^p\bfE i_{Z^\an_\infty\ast}{}^p\bfE k_{\ast}K.
\end{align*}

\item[(2)]
Recall that there exist canonical morphisms:
\begin{align*}
{}^p\bfE i_{W^\an_\infty!!}K\twoheadrightarrow
&{}^p\bfE i_{W^\an_\infty!!\ast}K\hookrightarrow
{}^p\bfE i_{W^\an_\infty\ast}K,\\
{}^p\bfE k_{!!}K\twoheadrightarrow
&{}^p\bfE k_{!!\ast}K\hookrightarrow
{}^p\bfE k_{\ast}K,\\
{}^p\bfE i_{Z^\an_\infty!!}{}^p\bfE k_{!!\ast}K\twoheadrightarrow
&{}^p\bfE i_{Z^\an_\infty!!\ast}{}^p\bfE k_{!!\ast}K\hookrightarrow
{}^p\bfE i_{z^\an_\infty\ast}{}^p\bfE k_{!!\ast}K,
\end{align*}
where $\twoheadrightarrow$ (resp.\ $\hookrightarrow$) is an epimorphism (resp.\ a monomorphism)
in the abelian category of algebraic enhanced perverse ind-sheaves.
Since the functor $\bfE i_{Z^\an_\infty\ast}$ (resp.\ $\bfE i_{Z^\an_\infty!!}$) is left (resp.\ right) t-exact
with respect to the perverse t-structures,
the canonical morphism 
$${}^p\bfE i_{W^\an_\infty!!}K = {}^p\bfE i_{Z^\an_\infty!!}{}^p\bfE k_{!!}K
\to
{}^p\bfE i_{Z^\an_\infty\ast}{}^p\bfE k_{\ast}K = {}^p\bfE i_{W^\an_\infty\ast}K$$ can be decomposed
as follows:
\begin{align*}
{}^p\bfE i_{Z^\an_\infty!!}{}^p\bfE k_{!!}K
\twoheadrightarrow
{}^p\bfE i_{Z^\an_\infty!!}{}^p\bfE k_{!!\ast}K
\twoheadrightarrow
{}^p\bfE i_{Z^\an_\infty!!\ast}{}^p\bfE k_{!!\ast}K\hookrightarrow
{}^p\bfE i_{Z^\an_\infty\ast}{}^p\bfE k_{!!\ast}K
\hookrightarrow
{}^p\bfE i_{Z^\an_\infty\ast}{}^p\bfE k_{\ast}K.
\end{align*}
This implies that 
$$\Image({}^p\bfE i_{W^\an_\infty!!}K\to{}^p\bfE i_{W^\an_\infty\ast}K)
\simeq {}^p\bfE i_{Z^\an_\infty!!\ast}{}^p\bfE k_{!!\ast}K.$$
Hence the proof is completed.
\end{proof}

Recall that in the case when $Z$ is a closed smooth subvariety of $X$,
minimal extensions along $Z$ can be characterized
by Proposition \ref{prop3.29}, see also Remark \ref{rem3.37} (2). 
On the other hand, in the case when $Z$ is open whose complement is a smooth subvariety,
the minimal extensions along $Z$ can be characterized as follows.
Let $U$ be such an open subset of $X$ and set $W := X\setminus U$.
Namely $U$ is an open subset of $X$ and $W := X\setminus U$ is a closed smooth subvariety of $X$. 

\begin{proposition}\label{prop3.41}
In the situation as above,
the minimal extension ${}^p\bfE i_{UZ^\an_\infty!!\ast}K$ of $K\in\Perv(\I\CC_{U_\infty})$ along $U$
is characterized as the unique algebraic enhanced perverse ind-sheaf $L$ on $X_\infty$ satisfying the conditions
\begin{itemize}
\setlength{\itemsep}{-2pt}
\item[\rm(1)]
$\bfE i_{U^\an_\infty}^{-1}L\simeq K$,
\item[\rm(2)]
$\bfE i_{W^\an_\infty}^{-1}L\in\bfE^{\leq-1}_{\CC-c}(\I\CC_{W_\infty})$,
\item[\rm(3)]
$\bfE i_{W^\an_\infty}^{!}L\in\bfE^{\geq1}_{\CC-c}(\I\CC_{W_\infty})$.
\end{itemize}
\end{proposition}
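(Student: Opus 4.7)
The plan is to adapt the classical characterization of the intermediate extension of a perverse sheaf along an open embedding, cf.\ \cite{BBD} or \cite[Proposition 8.2.12]{HTT08}. Writing $j := i_{U^\an_\infty}$, $i := i_{W^\an_\infty}$, and $M := {}^p\bfE j_{!!\ast}K$, the argument splits into two parts: verifying that $M$ satisfies (1)-(3), and proving uniqueness.

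For the first part, condition (1) is Remark \ref{rem3.37} (1). For (2) and (3), note that the canonical epimorphism ${}^p\bfE j_{!!}K \twoheadrightarrow M$ and monomorphism $M \hookrightarrow {}^p\bfE j_\ast K$ have cokernel $N_1$ and kernel $N_2$ respectively supported on $W^\an$: the functor $\bfE j^{-1}$ is t-exact for the open embedding $j$ (Corollary \ref{cor3.27}, using $\bfE j^! \simeq \bfE j^{-1}$) and sends both ${}^p\bfE j_{!!}K$ and ${}^p\bfE j_\ast K$ to $K$. By Proposition \ref{prop3.29}, these $N_k$ are of the form $\bfE i_\ast G_k$ for some $G_k \in \Perv(\I\CC_{W_\infty})$. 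Applying $\bfE i^{-1}$ (respectively $\bfE i^!$) to the corresponding short exact sequences and using $\bfE i^{-1}\bfE i_\ast \simeq \id \simeq \bfE i^!\bfE i_\ast$ together with the t-exactness of $\bfE i_\ast$, one deduces that $\bfE i^{-1}M$ lies in $\bfE^{\leq -1}_{\CC-c}(\I\CC_{W_\infty})$ and $\bfE i^!M$ in $\bfE^{\geq 1}_{\CC-c}(\I\CC_{W_\infty})$.

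For uniqueness, suppose $L$ satisfies (1)-(3). Condition (1) combined with the adjunctions $(\bfE j_{!!}, \bfE j^{-1})$ and $(\bfE j^{-1}, \bfE j_\ast)$ (where the first uses $\bfE j^! \simeq \bfE j^{-1}$) produces canonical morphisms $\phi \colon {}^p\bfE j_{!!}K \to L$ and $\psi \colon L \to {}^p\bfE j_\ast K$ in $\Perv(\I\CC_{X_\infty})$ corresponding under adjunction to $\id_K$; by uniqueness of the adjoint, $\psi \circ \phi$ is the canonical morphism ${}^p\bfE j_{!!}K \to {}^p\bfE j_\ast K$ whose image is $M$. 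Now $\Coker(\phi)$ is a perverse quotient of $L$ that vanishes after $\bfE j^{-1}$, hence is of the form $\bfE i_\ast G$ for some $G \in \Perv(\I\CC_{W_\infty})$. The adjunction $(\bfE i^{-1}, \bfE i_\ast)$ combined with condition (2) then gives
\[
\Hom\bigl(L, \bfE i_\ast G\bigr) \simeq \Hom\bigl({}^p\SH^0\bfE i^{-1}L, G\bigr) = 0,
\]
so the quotient map $L \twoheadrightarrow \Coker(\phi)$ vanishes, forcing $\Coker(\phi) = 0$ and $\phi$ surjective. Dually, condition (3) via the adjunction $(\bfE i_\ast, \bfE i^!)$ forces $\psi$ to be injective, and then $L \simeq \psi(L) = \Image(\psi \circ \phi) = M$.

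The principal technical point is the translation of the degree-shift conditions (2) and (3) into the vanishing of the relevant $\Hom$-groups used in the surjectivity/injectivity arguments. This relies on the t-exactness assertions of Corollary \ref{cor3.27} for $\bfE i^{-1}, \bfE i^!, \bfE i_\ast$ with respect to the perverse t-structures, on the identification $\Perv_W(\I\CC_{X_\infty}) \simeq \Perv(\I\CC_{W_\infty})$ via $\bfE i_\ast$ from Proposition \ref{prop3.29}, and on the standard orthogonality $\Hom(A, B) = 0$ for $A \in {}^p\bfE^{\leq -1}_{\CC-c}$ and $B \in {}^p\bfE^{\geq 0}_{\CC-c}$ which lets one replace $\bfE i^{-1}L$ by its zeroth perverse cohomology in the $\Hom$-computation. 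Everything else is formal manipulation in the abelian category $\Perv(\I\CC_{X_\infty})$.
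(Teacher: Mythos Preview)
Your uniqueness argument is correct and essentially coincides with the paper's: both construct the factorization ${}^p\bfE j_{!!}K \xrightarrow{\phi} L \xrightarrow{\psi} {}^p\bfE j_\ast K$ and show $\phi$ is epi, $\psi$ is mono using conditions (2) and (3). Your use of the adjunctions $(\bfE i^{-1},\bfE i_\ast)$ and $(\bfE i_\ast,\bfE i^!)$ to kill the relevant $\Hom$-groups is a clean variant of the paper's direct computation that ${}^p\bfE i^{-1}L=0$ and ${}^p\bfE i^!L=0$.

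The verification that $M={}^p\bfE j_{!!\ast}K$ satisfies (2) and (3), however, has a gap. First, ``cokernel $N_1$ and kernel $N_2$'' is backwards: the cokernel of an epimorphism and the kernel of a monomorphism are zero; you mean the kernel of ${}^p\bfE j_{!!}K\twoheadrightarrow M$ and the cokernel of $M\hookrightarrow{}^p\bfE j_\ast K$. Second, even with that fix, applying $\bfE i^{-1}$ or $\bfE i^!$ to those short exact sequences and invoking only $\bfE i^{-1}\bfE i_\ast\simeq\id\simeq\bfE i^!\bfE i_\ast$ and t-exactness of $\bfE i_\ast$ does not yield the conclusion: from the triangle $G_1\to\bfE i^{-1}({}^p\bfE j_{!!}K)\to\bfE i^{-1}M\to G_1[1]$ you cannot deduce ${}^p\SH^0\bfE i^{-1}M=0$ without controlling $\bfE i^{-1}({}^p\bfE j_{!!}K)$.

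The missing input is the vanishing $\bfE i^{-1}\bfE j_{!!}=0$ and $\bfE i^!\bfE j_\ast=0$. The paper exploits this by applying the open/closed triangle
\[
\bfE j_{!!}K\to M\to\bfE i_\ast\bfE i^{-1}M\xrightarrow{+1}
\]
directly to $M$ (using $\bfE j^{-1}M\simeq K$): taking ${}^p\SH^0$ and using that ${}^p\bfE j_{!!}K\to M$ is epi forces ${}^p\SH^0(\bfE i_\ast\bfE i^{-1}M)=0$, hence ${}^p\SH^0\bfE i^{-1}M=0$. Your setup can be repaired along the same lines: from $\bfE i^{-1}\bfE j_{!!}K=0$ and the truncation triangle ${}^p\tau^{\leq-1}\bfE j_{!!}K\to\bfE j_{!!}K\to{}^p\bfE j_{!!}K\to$ one gets $\bfE i^{-1}({}^p\bfE j_{!!}K)\in{}^p\bfE^{\leq-2}$, and then right-exactness of ${}^p\bfE i^{-1}$ applied to the epimorphism ${}^p\bfE j_{!!}K\twoheadrightarrow M$ gives ${}^p\bfE i^{-1}M=0$; dually for (3). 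But this step must be made explicit --- the ingredients you list do not supply it.
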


\begin{proof}
Let $K$ be an object of $\Perv(\I\CC_{U_\infty})$ and 
set $L:={}^p\bfE i_{U^\an_\infty!!\ast}K\in\Perv(\I\CC_{X_\infty})$.
Then we have $\bfE i_{U^\an_\infty}^{-1}L\simeq K$ by the definition of $L$.
Since the functor $\bfE i_{W^\an_\infty}^{-1}$ (resp.\ $\bfE i_{W^\an_\infty}^{!}$)
is right (resp.\ left) t-exact with respect to the perverse t-structures by Corollary \ref{cor3.27} (3) (resp.\ (4)),
we obtain $\bfE i_{W^\an_\infty}^{-1}L\in\bfE^{\leq0}_{\CC-c}(\I\CC_{W_\infty})$
(resp.\ $\bfE i_{W^\an_\infty}^{!}L\in\bfE^{\geq0}_{\CC-c}(\I\CC_{W_\infty})$).
Recall that there exist canonical distinguished triangles in $\BEC(\I\CC_{X_\infty})$:
\begin{align*}
\bfE i_{U^\an_\infty!!}\bfE i_{U^\an_\infty}^{-1}L\to &L\to
\bfE i_{W^\an_\infty!!}\bfE i_{W^\an_\infty}^{-1}L\xrightarrow{+1},\\
\bfE i_{W^\an_\infty\ast}\bfE i_{W^\an_\infty}^{!}L\to &L\to
\bfE i_{U^\an_\infty\ast}\bfE i_{U^\an_\infty}^{!}L\xrightarrow{+1}
\end{align*}
by \cite[Lemma 2.7.7]{DK16-2}.
Then we obtain distinguished triangles in $\BEC(\I\CC_{X_\infty})$
\begin{align*}
\bfE i_{U^\an_\infty!!}K\to L\to
\bfE i_{W^\an_\infty!!}\bfE i_{W^\an_\infty}^{-1}L\xrightarrow{+1},\\
\bfE i_{W^\an_\infty\ast}\bfE i_{W^\an_\infty}^{!}L\to L\to
\bfE i_{U^\an_\infty\ast}K\xrightarrow{+1}.
\end{align*}
By taking the $0$-th cohomology, we obtain exact sequences in $\Perv(\I\CC_{X_\infty})$:
\begin{align*}
{}^p\bfE i_{U^\an_\infty!!}K\to L\to
&{}^p\SH^0\big(\bfE i_{W^\an_\infty!!}\bfE i_{W^\an_\infty}^{-1}L\big)
\to{}^p\SH^1\bfE i_{U^\an_\infty!!}K,\\
{}^p\SH^{-1}\bfE i_{U^\an_\infty\ast}K\to
&{}^p\SH^0\big(\bfE i_{W^\an_\infty\ast}{}^p\bfE i_{W^\an_\infty}^{!}L\big)\to L\to
{}^p\bfE i_{U^\an_\infty\ast}K.
\end{align*}
Moreover since the functor $\bfE i_{U^\an_\infty!!}$ (resp.\ $\bfE i_{U^\an_\infty\ast}$)
is right (resp.\ left) t-exact with respect to the perverse t-structures by Corollary \ref{cor3.27} (1) (resp.\ (2)),
we obtain ${}^p\SH^1\bfE i_{U^\an_\infty!!}K\simeq0$ and ${}^p\SH^{-1}\bfE i_{U^\an_\infty\ast}K\simeq0$.
Thus there exist exact sequences in $\Perv(\I\CC_{X_\infty})$:
\begin{align*}
{}^p\bfE i_{U^\an_\infty!!}K\to L\to
{}^p\SH^0\big(\bfE i_{W^\an_\infty!!}\bfE i_{W^\an_\infty}^{-1}L\big)\to0,\\
0\to
{}^p\SH^0\big(\bfE i_{W^\an_\infty\ast}{}^p\bfE i_{W^\an_\infty}^{!}L\big)\to L\to
{}^p\bfE i_{U^\an_\infty\ast}K.
\end{align*}
Since the morphism ${}^p\bfE i_{U^\an_\infty!!}K\to L$
(resp.\ $L\to{}^p\bfE i_{U^\an_\infty\ast}K$)
is an epimorphism (resp.\ a monomorphism),
we obtain ${}^p\SH^0\big(\bfE i_{W^\an_\infty!!}\bfE i_{W^\an_\infty}^{-1}L\big)\simeq0$
\big(resp.\ ${}^p\SH^0\big(\bfE i_{W^\an_\infty\ast}\bfE i_{W^\an_\infty}^{!}L\big)\simeq0$\big).
Therefore we have ${}^p\SH^0\big(\bfE i_{W^\an_\infty}^{-1}L\big)\simeq0$ and 
${}^p\SH^0\big(\bfE i_{W^\an_\infty}^{!}L\big)\simeq0$, and hence 
$$\bfE i_{W^\an_\infty}^{-1}L\in\bfE^{\leq-1}_{\CC-c}(\I\CC_{W_\infty}) \mbox{\ and\ } 
\bfE i_{W^\an_\infty}^{!}L\in\bfE^{\geq1}_{\CC-c}(\I\CC_{W_\infty}).$$

On the other hand, 
let $L$ be an object of $\Perv(\I\CC_{X_\infty})$ which satisfies the conditions (1), (2) and (3) as above.
Recall that there exist canonical morphisms
\[\bfE i_{U^\an_\infty!!}\bfE i_{U^\an_\infty}^{-1}L\to L\to\bfE i_{U^\an_\infty\ast}\bfE i_{U^\an_\infty}^{-1}L\]
in $\BEC_{\CC-c}(\I\CC_{X_\infty})$.
Since $L$ satisfies the condition (1) as above,
we have canonical morphisms $\bfE i_{U^\an_\infty!!}K\xrightarrow{} L\xrightarrow{} \bfE i_{U^\an_\infty\ast}K$
in $\BEC_{\CC-c}(\I\CC_{X_\infty})$, and hence we obtain morphisms
$${}^p\bfE i_{U^\an_\infty!!}K\xrightarrow{\ \alpha\ } L\xrightarrow{\ \beta\ } {}^p\bfE i_{U^\an_\infty\ast}K$$
in $\Perv(\I\CC_{X_\infty})$.
It is enough to show that 
the morphism $\alpha$ (resp.\ $\beta$) is an epimorphism (resp.\ a monomorphism).
Let us consider exact sequences
\begin{align*}
{}^p\bfE i_{U^\an_\infty!!}K\to L\to\Coker\alpha\to0,\\
0\to\Ker\beta\to L\to{}^p\bfE i_{U^\an_\infty\ast}K
\end{align*}
in $\Perv(\I\CC_{X_\infty})$.
Since $\bfE i_{U^\an_\infty}^{-1}(\Coker\alpha)\simeq0$ and $\bfE i_{U^\an_\infty}^{-1}(\Ker\beta)\simeq0$,
we have $\Coker\alpha, \Ker\beta\in\Perv_W(\I\CC_{X_\infty})$,
and hence there exist $L_{\alpha},L_\beta\in\Perv(\I\CC_{W_\infty})$ such that 
$\bfE i_{W^\an_\infty!!}(L_\alpha)\simeq\Coker\alpha$ and $\bfE i_{W^\an_\infty!!}(L_\beta)\simeq\Ker\beta$
by Proposition \ref{prop3.29}.
Moreover we have exact sequences in $\Perv(\I\CC_{W_\infty})$:
\begin{align*}
{}^p\bfE i_{W^\an_\infty}^{-1}{}^p\bfE i_{U^\an_\infty!!}K\to{}^p\bfE i_{W^\an_\infty}^{-1}L
\to{}^p\bfE i_{W^\an_\infty}^{-1}\bfE i_{W^\an_\infty!!}L_\alpha(\ \simeq L_\alpha)\to0,\\
0\to(L_\beta\simeq\ ){\ }^p\bfE i_{W^\an_\infty}^{!}\bfE i_{W^\an_\infty!!}L_\beta\to{}^p\bfE i_{W^\an_\infty}^{!}L
\to{}^p\bfE i_{W^\an_\infty}^{!}{}^p\bfE i_{U^\an_\infty\ast}K,
\end{align*}
where we used the fact that
the functor ${}^p\bfE i_{W^\an_\infty}^{-1}\colon \Perv(\I\CC_{X_\infty})\to\Perv(\I\CC_{W_\infty})$
(resp.\ ${}^p\bfE i_{W^\an_\infty}^{!}\colon \Perv(\I\CC_{X_\infty})\to\Perv(\I\CC_{W_\infty}$)
is right (resp.\ left) exact. 
Since $L$ satisfies the condition (2) (resp.\ (3)) as above,
we have ${}^p\bfE i_{W^\an_\infty}^{-1}L\simeq0$ (resp.\ ${}^p\bfE i_{W^\an_\infty}^{!}L\simeq0$).
Thus we obtain $L_\alpha\simeq0$ (resp.\ $L_\beta\simeq0$).
This implies that the morphism $\alpha$ (resp.\ $\beta$) is an epimorphism (resp.\ a monomorphism).
\end{proof}

Furthermore the minimal extensions along $U$ have following properties.
\begin{proposition}\label{prop3.36}
In the situation as above, for any $K\in \Perv(\I\CC_{U_\infty})$
\begin{itemize}
\item[\rm(1)]
${}^p\bfE i_{U^\an_\infty\ast}K\in\Perv(\I\CC_{X_\infty})$
has no non-trivial subobject in $\Perv(\I\CC_{X_\infty})$ whose support is contained in $W^\an$.
\item[\rm(2)]
${}^p\bfE i_{U^\an_\infty!!}K\in\Perv(\I\CC_{X_\infty})$
has no non-trivial quotient object in $\Perv(\I\CC_{X_\infty})$ whose support is contained in $W^\an$.
\end{itemize}
\end{proposition}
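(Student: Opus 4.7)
The plan is to establish both statements by showing that certain $\Hom$-groups in $\Perv(\I\CC_{X_\infty})$ vanish, via the standard mechanism of interpreting Homs into (or out of) an object lying in a half of the t-structure, and then exploiting the adjunctions for the open embedding $i_U \colon U \hookrightarrow X$. The two assertions are essentially dual to one another, so the strategies are parallel.

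For $(1)$, let $L \in \Perv_W(\I\CC_{X_\infty})$ be any subobject of ${}^p\bfE i_{U^\an_\infty\ast}K$; it suffices to show $\Hom_{\Perv(\I\CC_{X_\infty})}(L, {}^p\bfE i_{U^\an_\infty\ast}K) = 0$. By Corollary \ref{cor3.27}(2), $\bfE i_{U^\an_\infty\ast}K$ lies in ${}^p\bfE^{\geq 0}_{\CC-c}(\I\CC_{X_\infty})$, and the general feature of a t-structure that ${}^p\SH^0$ is right adjoint to the inclusion $\Perv(\I\CC_{X_\infty}) \hookrightarrow {}^p\bfE^{\geq 0}_{\CC-c}(\I\CC_{X_\infty})$ yields a canonical isomorphism
\[
\Hom_{\Perv}(L, {}^p\bfE i_{U^\an_\infty\ast}K)
\simeq \Hom_{\BEC}(L, \bfE i_{U^\an_\infty\ast}K).
\]
The adjunction $\bfE i_{U^\an_\infty}^{-1} \dashv \bfE i_{U^\an_\infty\ast}$ then identifies the right-hand side with $\Hom_{\BEC}(\bfE i_{U^\an_\infty}^{-1}L, K)$. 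Since $\supp(L)\subset W^\an = X^\an \setminus U^\an$, one has $\bfE i_{U^\an_\infty}^{-1}L \simeq 0$, and the Hom vanishes.

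For $(2)$, let $L \in \Perv_W(\I\CC_{X_\infty})$ be a quotient of ${}^p\bfE i_{U^\an_\infty!!}K$; I must show $\Hom_{\Perv}({}^p\bfE i_{U^\an_\infty!!}K, L) = 0$. By Corollary \ref{cor3.27}(1), $\bfE i_{U^\an_\infty!!}K \in {}^p\bfE^{\leq 0}_{\CC-c}(\I\CC_{X_\infty})$, and dually to the previous paragraph, ${}^p\SH^0$ being left adjoint to the inclusion $\Perv(\I\CC_{X_\infty}) \hookrightarrow {}^p\bfE^{\leq 0}_{\CC-c}(\I\CC_{X_\infty})$ gives
\[
\Hom_{\Perv}({}^p\bfE i_{U^\an_\infty!!}K, L)
\simeq \Hom_{\BEC}(\bfE i_{U^\an_\infty!!}K, L).
\]
Since $i_U\colon U\hookrightarrow X$ is an open embedding, $\bfE i_{U^\an_\infty}^{!}\simeq \bfE i_{U^\an_\infty}^{-1}$, so the adjunction $\bfE i_{U^\an_\infty!!} \dashv \bfE i_{U^\an_\infty}^{!}$ identifies the right-hand side with $\Hom_{\BEC}(K, \bfE i_{U^\an_\infty}^{-1}L) = 0$, again using $\supp(L)\subset W^\an$.

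All ingredients are available: the t-exactness properties of $\bfE i_{U^\an_\infty!!}$ and $\bfE i_{U^\an_\infty\ast}$ from Corollary \ref{cor3.27}, the standard Hom-identification between the heart and an appropriate half of the t-structure, and the adjunctions $\bfE i_{U^\an_\infty}^{-1} \dashv \bfE i_{U^\an_\infty\ast}$ and $\bfE i_{U^\an_\infty!!} \dashv \bfE i_{U^\an_\infty}^{!}$ for morphisms of bordered spaces. Consequently I do not anticipate any serious obstacle; the only thing to be careful about is keeping track of which functor is right/left t-exact and matching it with the corresponding adjoint of ${}^p\SH^0$, but this is mechanical.
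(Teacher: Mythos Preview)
Your proof is correct, but the argument differs from the paper's. The paper works through the closed complement $W$: given $L\in\Perv_W(\I\CC_{X_\infty})$ a subobject of ${}^p\bfE i_{U^\an_\infty\ast}K$, it invokes the Kashiwara-type equivalence (Proposition~\ref{prop3.29}) to write $L\simeq\bfE i_{W^\an_\infty!!}\,{}^p\bfE i_{W^\an_\infty}^{!}L$, then uses left t-exactness of ${}^p\bfE i_{W^\an_\infty}^{!}$ together with the vanishing $\bfE i_{W^\an_\infty}^{!}\bfE i_{U^\an_\infty\ast}K\simeq 0$ to conclude ${}^p\bfE i_{W^\an_\infty}^{!}L=0$, hence $L=0$. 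Your route instead stays on the open side: you show the stronger statement that $\Hom_{\Perv}(L,{}^p\bfE i_{U^\an_\infty\ast}K)=0$ for every $L\in\Perv_W(\I\CC_{X_\infty})$, by passing from the heart to the derived category via the t-structure adjunction and then applying $(\bfE i_{U^\an_\infty}^{-1},\bfE i_{U^\an_\infty\ast})$-adjunction. Your argument is slightly leaner in that it does not appeal to Proposition~\ref{prop3.29}; the paper's version, on the other hand, makes the recollement structure for $(U,W)$ more visible and mirrors the classical perverse-sheaf argument more closely.
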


\begin{proof}
Since the proof of (2) is similar, we only prove (1).

Let $L\in\Perv_W(\I\CC_{X_\infty})$ be a subobject of ${}^p\bfE i_{U^\an_\infty\ast}K\in\Perv(\I\CC_{X_\infty})$.
By Proposition \ref{prop3.29},
there exists an isomorphism
$\bfE i_{W^\an_\infty!!}{}^p\bfE i_{W^\an_\infty}^{!}L\simto L$
in $\Perv_W(\I\CC_{X_\infty})$.
Hence it is enough to prove ${}^p\bfE i_{W^\an_\infty}^{!}L\simeq0$.
Since $L$ is subobject of ${}^p\bfE i_{U^\an_\infty\ast}K$, there exists a monomorphism
$L\hookrightarrow {}^p\bfE i_{U^\an_\infty\ast}K$ in $\Perv(\I\CC_{X_\infty})$.
Moreover we have a monomorphism 
${}^p\bfE i_{W^\an_\infty}^{!}L\hookrightarrow {}^p\bfE i_{W^\an_\infty}^{!}{}^p\bfE i_{U^\an_\infty\ast}K$
in $\Perv(\I\CC_{X_\infty})$,
because the functor ${}^p\bfE i_{W^\an_\infty}^{!}$ is left t-exact with respect to the perverse t-structures
by Corollary \ref{cor3.27} (4).
Since ${}^p\bfE i_{W^\an_\infty}^{!}{}^p\bfE i_{U^\an_\infty\ast}K
\simeq {}^p\SH^0\big(\bfE i_{W^\an_\infty}^{!}\bfE i_{U^\an_\infty\ast}K\big)\simeq0$,
we obtain ${}^p\bfE i_{W^\an_\infty}^{!}L\simeq0$.
The proof is completed.
\end{proof}

\begin{corollary}\label{cor3.38}
In the situation as above,
the minimal extension ${}^p\bfE i_{U^\an_\infty!!\ast}K$ of $K\in\Perv(\I\CC_{U_\infty})$ along $U$
has neither non-trivial subobject nor non-trivial quotient object in $\Perv(\I\CC_{X_\infty})$
whose support is contained in $W^\an$.
\end{corollary}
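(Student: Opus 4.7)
The plan is to deduce this corollary directly from Proposition \ref{prop3.36} together with the factorization of the canonical morphism through the minimal extension. The key observation is that, by definition,
\[
{}^p\bfE i_{U^\an_\infty!!\ast}K = \Image\bigl({}^p\bfE i_{U^\an_\infty!!}K \to {}^p\bfE i_{U^\an_\infty\ast}K\bigr)
\]
sits in the abelian category $\Perv(\I\CC_{X_\infty})$ as a quotient of ${}^p\bfE i_{U^\an_\infty!!}K$ and as a subobject of ${}^p\bfE i_{U^\an_\infty\ast}K$. This immediately transfers each of the two properties of Proposition \ref{prop3.36} to the middle object.

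More precisely, for the first statement I would take any subobject $L \hookrightarrow {}^p\bfE i_{U^\an_\infty!!\ast}K$ in $\Perv(\I\CC_{X_\infty})$ with $\supp(L) \subset W^\an$. Composing with the monomorphism ${}^p\bfE i_{U^\an_\infty!!\ast}K \hookrightarrow {}^p\bfE i_{U^\an_\infty\ast}K$ realizes $L$ as a subobject of ${}^p\bfE i_{U^\an_\infty\ast}K$ supported in $W^\an$, hence $L \simeq 0$ by Proposition \ref{prop3.36} (1). For the second statement, any epimorphism ${}^p\bfE i_{U^\an_\infty!!\ast}K \twoheadrightarrow L$ in $\Perv(\I\CC_{X_\infty})$ with $\supp(L) \subset W^\an$ can be precomposed with the epimorphism ${}^p\bfE i_{U^\an_\infty!!}K \twoheadrightarrow {}^p\bfE i_{U^\an_\infty!!\ast}K$, exhibiting $L$ as a quotient of ${}^p\bfE i_{U^\an_\infty!!}K$ supported in $W^\an$, and Proposition \ref{prop3.36} (2) forces $L \simeq 0$.

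There is no real obstacle here: the corollary is a purely formal consequence of (a) the epi-mono factorization built into the definition of ${}^p\bfE i_{U^\an_\infty!!\ast}K$, and (b) the already established Proposition \ref{prop3.36}. The only mild point is to note that both $\Image(\cdot)$ and the formation of sub/quotient objects are computed inside the abelian category $\Perv(\I\CC_{X_\infty})$ (Theorem \ref{thm-main-3}), so the factorization makes sense and the argument is internal to this abelian category.
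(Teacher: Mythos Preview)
Your proposal is correct and follows essentially the same approach as the paper: both use the epi-mono factorization ${}^p\bfE i_{U^\an_\infty!!}K\twoheadrightarrow{}^p\bfE i_{U^\an_\infty!!\ast}K\hookrightarrow{}^p\bfE i_{U^\an_\infty\ast}K$ in $\Perv(\I\CC_{X_\infty})$ to reduce directly to Proposition~\ref{prop3.36}.
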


\begin{proof}
Recall that there exist the canonical morphisms in $\Perv(\I\CC_{X_\infty})$
\[{}^p\bfE i_{U^\an_\infty!!}K\twoheadrightarrow
{}^p\bfE i_{U^\an_\infty!!\ast}K\hookrightarrow
{}^p\bfE i_{U^\an_\infty\ast}K.\]
Since any non-trivial subobject (resp.\ quotient object) of ${}^p\bfE i_{U^\an_\infty!!\ast}K$ is also
non-trivial subobject (resp.\ quotient object) of ${}^p\bfE i_{U^\an_\infty\ast}K$
(resp.\ ${}^p\bfE i_{U^\an_\infty!!}K$),
this follows from Proposition \ref{prop3.36}.
\end{proof}

\begin{corollary}\label{cor3.44}
In the situation as above.
\begin{itemize}
\item[\rm(1)]
For an exact sequence $0\to K\to L$ in $\Perv(\I\CC_{U_\infty})$,
the associated sequence $0\to {}^p\bfE i_{U^\an_\infty!!\ast}K\to {}^p\bfE i_{U^\an_\infty!!\ast}L$ 
in $\Perv(\I\CC_{X_\infty})$
is also exact.
\item[\rm(2)]
For an exact sequence $K\to L\to 0$ in $\Perv(\I\CC_{U_\infty})$,
the associated sequence ${}^p\bfE i_{U^\an_\infty!!\ast}K\to {}^p\bfE i_{U^\an_\infty!!\ast}L\to0$
in $\Perv(\I\CC_{X_\infty})$
is also exact.
\end{itemize}
\end{corollary}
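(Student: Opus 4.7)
The plan is to deduce both assertions from the t-exactness properties of $\bfE i_{U^\an_\infty!!}$ and $\bfE i_{U^\an_\infty\ast}$ recorded in Corollary \ref{cor3.27}, combined with the description of the minimal extension as the image of the canonical morphism ${}^p\bfE i_{U^\an_\infty!!}(\cdot) \to {}^p\bfE i_{U^\an_\infty\ast}(\cdot)$. Naturality of this transformation produces, for every morphism $K \to L$ in $\Perv(\I\CC_{U_\infty})$, a commutative square in $\Perv(\I\CC_{X_\infty})$
\[\begin{array}{ccc}
{}^p\bfE i_{U^\an_\infty!!}K & \longrightarrow & {}^p\bfE i_{U^\an_\infty\ast}K \\
\big\downarrow & & \big\downarrow \\
{}^p\bfE i_{U^\an_\infty!!}L & \longrightarrow & {}^p\bfE i_{U^\an_\infty\ast}L,
\end{array}\]
in which the images of the horizontal arrows are the two minimal extensions and the induced map between these images is the morphism to be analysed. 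The argument then reduces to an elementary diagram-chase in an abelian category.

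For part (1), I would first observe that left t-exactness of $\bfE i_{U^\an_\infty\ast}$ (Corollary \ref{cor3.27} (2)) forces ${}^p\bfE i_{U^\an_\infty\ast}$ to be left exact on the perverse heart, so a monomorphism $K \hookrightarrow L$ yields a monomorphism ${}^p\bfE i_{U^\an_\infty\ast}K \hookrightarrow {}^p\bfE i_{U^\an_\infty\ast}L$; the right vertical arrow in the square above is therefore monic. The composition
\[{}^p\bfE i_{U^\an_\infty!!\ast}K \hookrightarrow {}^p\bfE i_{U^\an_\infty\ast}K \hookrightarrow {}^p\bfE i_{U^\an_\infty\ast}L\]
is then monic, and by functoriality of the image it also factors as ${}^p\bfE i_{U^\an_\infty!!\ast}K \to {}^p\bfE i_{U^\an_\infty!!\ast}L \hookrightarrow {}^p\bfE i_{U^\an_\infty\ast}L$. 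Since the second factor is monic and the whole composition is monic, the elementary fact that a morphism whose composition with a monomorphism is monic must itself be monic concludes the argument.

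Part (2) proceeds dually. Right t-exactness of $\bfE i_{U^\an_\infty!!}$ (Corollary \ref{cor3.27} (1)) gives right exactness of ${}^p\bfE i_{U^\an_\infty!!}$ on the heart, so $K \twoheadrightarrow L$ induces an epimorphism ${}^p\bfE i_{U^\an_\infty!!}K \twoheadrightarrow {}^p\bfE i_{U^\an_\infty!!}L$. Composing with the canonical surjection ${}^p\bfE i_{U^\an_\infty!!}L \twoheadrightarrow {}^p\bfE i_{U^\an_\infty!!\ast}L$ produces an epimorphism ${}^p\bfE i_{U^\an_\infty!!}K \twoheadrightarrow {}^p\bfE i_{U^\an_\infty!!\ast}L$, which factors through the epimorphism ${}^p\bfE i_{U^\an_\infty!!}K \twoheadrightarrow {}^p\bfE i_{U^\an_\infty!!\ast}K$, so the remaining factor ${}^p\bfE i_{U^\an_\infty!!\ast}K \to {}^p\bfE i_{U^\an_\infty!!\ast}L$ must be epic as well.

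I do not anticipate a genuine obstacle: both parts reduce to t-exactness facts already established plus purely formal abelian-category manipulations. The only point requiring mild care is the naturality of the transformation ${}^p\bfE i_{U^\an_\infty!!}(\cdot) \to {}^p\bfE i_{U^\an_\infty\ast}(\cdot)$, which is immediate from the analogous naturality on the triangulated level followed by an application of ${}^p\SH^0$.
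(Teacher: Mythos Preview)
Your argument is correct and takes a genuinely different route from the paper's. The paper proves (1) by showing directly that the kernel of the induced map ${}^p\bfE i_{U^\an_\infty!!\ast}K \to {}^p\bfE i_{U^\an_\infty!!\ast}L$ vanishes: restricting to $U$ via the t-exact functor $\bfE i_{U^\an_\infty}^{-1}$ recovers $\Ker\alpha\simeq 0$ (Remark~\ref{rem3.37}(1)), so the kernel is supported in $W^\an$; then Proposition~\ref{prop3.36} (equivalently Corollary~\ref{cor3.38}) rules out any nonzero subobject of ${}^p\bfE i_{U^\an_\infty!!\ast}K$ with support in $W^\an$. Your proof bypasses Proposition~\ref{prop3.36} entirely: you use only the left exactness of ${}^p\bfE i_{U^\an_\infty\ast}$ on the heart, the naturality of the canonical transformation, and the elementary fact that a left factor of a monomorphism is monic. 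This is a purely formal abelian-category argument that would work for the image of any natural transformation from a right-exact functor to a left-exact one; it is shorter and does not rely on the geometric characterisation of the minimal extension. The paper's approach, by contrast, makes the role of the complement $W$ and the defining property of the minimal extension (no sub/quotient objects supported on the boundary) explicit, which is conceptually illuminating even if logically heavier. One small remark: in your final sentence of part (1), the hypothesis that the second factor is monic is not actually needed---if $\iota_L\circ\bar f$ is monic then $\bar f$ is monic regardless---but the statement as written is of course still correct.
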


\begin{proof}
Since the proof of (2) is similar, we only prove (1).

Let $\alpha \colon K\hookrightarrow L$ be a monomorphism in $\Perv(\I\CC_{U_\infty})$.
It is enough to show $\Ker({}^p\bfE i_{U^\an_\infty!!\ast}\alpha)\simeq0$.
Recall that there exist isomorphisms in $\Perv(\I\CC_{U_\infty})$
$$\big({}^p\bfE i_{U^\an_\infty!!\ast}K\big)|_{U^\an_\infty}\simeq K,\hspace{3pt}
\big({}^p\bfE i_{U^\an_\infty!!\ast}L\big)|_{U^\an_\infty}\simeq L.$$
Hence we obtain 
$$\big(\Ker({}^p\bfE i_{U^\an_\infty!!\ast}\alpha)\big)|_{U^\an_\infty}
\simeq
\Ker\big(({}^p\bfE i_{U^\an_\infty!!\ast}\alpha)|_{U^\an_\infty}\big)
\simeq \Ker\alpha
\simeq 0,$$
where we used the fact that the functor $(\cdot)|_{U^\an_\infty} := \bfE i_{U^\an_\infty}^{-1}$ is t-exact
with respect to the perverse t-structures (see Corollary \ref{cor3.27} for the details).
This implies that the support of $\Ker({}^p\bfE i_{U^\an_\infty!!\ast}\alpha)$ is contained
in $W^\an = X^\an\setminus U^\an$.
Namely the kernel $\Ker({}^p\bfE i_{U^\an_\infty!!\ast}\alpha)$
of ${}^p\bfE i_{U^\an_\infty!!\ast}\alpha \colon {}^p\bfE i_{U^\an_\infty!!\ast}K\to {}^p\bfE i_{U^\an_\infty!!\ast}L$
is a subobject of ${}^p\bfE i_{U^\an_\infty!!\ast}K$ whose support is contained in $W^\an$.
Therefore $\Ker({}^p\bfE i_{U^\an_\infty!!\ast}\alpha)\simeq0$ by Proposition \ref{prop3.36} (1).
\end{proof}

\begin{corollary}\label{cor3.45}
In the situation as above,
for any simple object in $\Perv(\I\CC_{U_\infty})$,
its minimal extension along $U$
is also a simple object in $\Perv(\I\CC_{X_\infty})$.
\end{corollary}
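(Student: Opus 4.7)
The plan is to verify simplicity directly by a subobject analysis, leveraging the exactness of the restriction functor $\bfE i_{U^\an_\infty}^{-1}$ and the key support-theoretic property (Corollary \ref{cor3.38}) that ${}^p\bfE i_{U^\an_\infty!!\ast}K$ has no nonzero subobject or quotient object in $\Perv(\I\CC_{X_\infty})$ supported in $W^\an$. Let $L\subset {}^p\bfE i_{U^\an_\infty!!\ast}K$ be a nonzero subobject in $\Perv(\I\CC_{X_\infty})$; the goal is to show that $L \simeq {}^p\bfE i_{U^\an_\infty!!\ast}K$.

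First, I would restrict the inclusion $L\hookrightarrow {}^p\bfE i_{U^\an_\infty!!\ast}K$ to $U_\infty^\an$. By Corollary \ref{cor3.27}, the functor $\bfE i_{U^\an_\infty}^{-1}$ is t-exact with respect to the perverse t-structures, so $\bfE i_{U^\an_\infty}^{-1}L$ is a subobject of $\bfE i_{U^\an_\infty}^{-1}({}^p\bfE i_{U^\an_\infty!!\ast}K)\simeq K$ in $\Perv(\I\CC_{U_\infty})$ (cf.\ Remark \ref{rem3.37} (1)). Since $K$ is simple, we have either $\bfE i_{U^\an_\infty}^{-1}L\simeq 0$ or $\bfE i_{U^\an_\infty}^{-1}L\simeq K$.

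In the first case, the support of $L$ is contained in $W^\an$, hence by Corollary \ref{cor3.38} we get $L\simeq 0$, contradicting the assumption. In the second case, I would pass to the quotient $Q:={}^p\bfE i_{U^\an_\infty!!\ast}K/L$ in $\Perv(\I\CC_{X_\infty})$. By the exactness of $\bfE i_{U^\an_\infty}^{-1}$, the quotient $Q$ satisfies
\[\bfE i_{U^\an_\infty}^{-1}Q\simeq K/\bfE i_{U^\an_\infty}^{-1}L\simeq 0,\]
so the support of $Q$ is contained in $W^\an$. Since $Q$ is a quotient of ${}^p\bfE i_{U^\an_\infty!!\ast}K$, Corollary \ref{cor3.38} forces $Q\simeq 0$, i.e., $L\simeq {}^p\bfE i_{U^\an_\infty!!\ast}K$.

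No step in this argument is particularly delicate: the only two ingredients are t-exactness of $\bfE i_{U^\an_\infty}^{-1}$ on the perverse hearts and Corollary \ref{cor3.38}, both already established. The conceptual content has been absorbed into Proposition \ref{prop3.36} (and its Corollary \ref{cor3.38}), whose proof relied on the Kashiwara-type equivalence in Proposition \ref{prop3.29} together with the vanishings ${}^p\bfE i_{W^\an_\infty}^{!}\,{}^p\bfE i_{U^\an_\infty\ast}\simeq 0$ and ${}^p\bfE i_{W^\an_\infty}^{-1}\,{}^p\bfE i_{U^\an_\infty!!}\simeq 0$. Thus Corollary \ref{cor3.45} is essentially a formal consequence, and I do not anticipate an obstacle beyond carefully tracking the above dichotomy.
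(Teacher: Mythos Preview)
Your proposal is correct and follows essentially the same approach as the paper's proof: both restrict a given subobject $L\subset {}^p\bfE i_{U^\an_\infty!!\ast}K$ to $U$, use the t-exactness of $\bfE i_{U^\an_\infty}^{-1}$ (Corollary \ref{cor3.27}) and Remark \ref{rem3.37}(1) together with the simplicity of $K$ to force either $L$ or the quotient to have support in $W^\an$, and then invoke Corollary \ref{cor3.38}. The only cosmetic difference is that the paper phrases this via the short exact sequence $0\to L\to {}^p\bfE i_{U^\an_\infty!!\ast}K\to L'\to 0$ and treats the two cases symmetrically, whereas you start from a nonzero $L$ and argue by contradiction in one branch.
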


\begin{proof}
Let $K$ be a simple object in $\Perv(\I\CC_{U_\infty})$
and $L$ be a subobject of ${}^p\bfE i_{U^\an_\infty!!\ast}K$.
Then we have an exact sequence in $\Perv(\I\CC_{X_\infty})$:
\[0\to L\to{}^p\bfE i_{U^\an_\infty!!\ast}K\to L'\to0.\]
Since $\big({}^p\bfE i_{U^\an_\infty!!\ast}K\big)|_{U^\an_\infty}\simeq K$ (see Remark \ref{rem3.37}) and
the functor $\bfE i_{U^\an_\infty}^{-1}$ is t-exact
with respect to the perverse t-structures (see Corollary \ref{cor3.27} for the details),
there exists an exact sequence in $\Perv(\I\CC_{U_\infty})$:
\[0\to \bfE i_{U^\an_\infty}^{-1} L\to K\to \bfE i_{U^\an_\infty}^{-1}L'\to0.\]
Hence we have $\bfE i_{U^\an_\infty}^{-1}L \simeq0$ or $\bfE i_{U^\an_\infty}^{-1}L' \simeq0$
because $K$ is simple.
This implies that $\supp(L)\subset W^\an$ or $\supp(L')\subset W^\an$.
Namely $L\simeq0$ or $L'\simeq0$ by Corollary \ref{cor3.38}.
Hence ${}^p\bfE i_{U^\an_\infty!!\ast}K$ is a simple object in $\Perv(\I\CC_{X_\infty})$.
\end{proof}

Therefore by Proposition \ref{prop3.29} and Lemma \ref{lem3.41},
we obtain the following results.
\begin{theorem}\label{thm-main-4}
Let $X$ be a smooth algebraic variety over $\CC$ and $Z$ a locally closed smooth subvariety of $X$
$($not necessarily the natural embedding $i_Z\colon Z\hookrightarrow X$ is affine$)$.
We assume that $Z = U\cap W$
where $U\subset X$ is an open subset whose complement  $X\setminus U$ is smooth
and $W\subset X$ is a closed subvariety.

\begin{itemize}
\item[\rm(1)]
%
\begin{itemize}
\item[\rm(i)]
For an exact sequence $0\to K\to L$ in $\Perv(\I\CC_{Z_\infty})$,
the associated sequence $0\to {}^p\bfE i_{Z^\an_\infty!!\ast}K\to {}^p\bfE i_{Z^\an_\infty!!\ast}L$ 
in $\Perv(\I\CC_{X_\infty})$
is also exact.
\item[\rm(ii)]
For an exact sequence $K\to L\to 0$ in $\Perv(\I\CC_{Z_\infty})$,
the associated sequence ${}^p\bfE i_{Z^\an_\infty!!\ast}K\to {}^p\bfE i_{Z^\an_\infty!!\ast}L\to0$
in $\Perv(\I\CC_{X_\infty})$
is also exact.
\end{itemize}

\item[\rm(2)]
For any simple object in $\Perv(\I\CC_{Z_\infty})$,
its minimal extension along $Z$ is also a simple object in $\Perv(\I\CC_{X_\infty})$.
\end{itemize}
\end{theorem}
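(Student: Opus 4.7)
The plan is to factor the embedding $i_Z\colon Z\hookrightarrow X$ as $i_Z = i_U\circ k$, where $k\colon Z\hookrightarrow U$ is a closed embedding of smooth varieties and $i_U\colon U\hookrightarrow X$ is the open embedding whose complement $X\setminus U$ is smooth. This factorization is legitimate: since $W$ is closed in $X$ and $U$ is open in $X$, the intersection $Z = U\cap W$ is closed in $U$, and $Z$ is smooth by hypothesis. The strategy is then to reduce the theorem to the already-established assertions for the closed embedding $k$ (via Proposition \ref{prop3.29} and Remark \ref{rem3.37}(2)) and for the open embedding $i_U$ with smooth complement (via Corollaries \ref{cor3.44} and \ref{cor3.45}).

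By Lemma \ref{lem3.41}(2), applied with the lemma's role of $W$ played by the current $Z$ and the lemma's role of $Z$ played by the current $U$, for any $K\in\Perv(\I\CC_{Z_\infty})$ we obtain an isomorphism
\[{}^p\bfE i_{Z^\an_\infty!!\ast}K \simeq {}^p\bfE i_{U^\an_\infty!!\ast}\,{}^p\bfE k_{!!\ast}K.\]
Since $k$ is a closed embedding, Remark \ref{rem3.37}(2) gives ${}^p\bfE k_{!!\ast}K\simeq \bfE k_{!!}K\simeq \bfE k_{\ast}K$, and by Proposition \ref{prop3.29} the functor $\bfE k_{!!}$ is an equivalence between $\Perv(\I\CC_{Z_\infty})$ and the full subcategory $\Perv_{Z}(\I\CC_{U_\infty})\subset\Perv(\I\CC_{U_\infty})$. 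In particular $\bfE k_{!!}$ is exact and carries simple objects to simple objects.

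For part (1), given a one-sided exact sequence $0\to K\to L$ or $K\to L\to 0$ in $\Perv(\I\CC_{Z_\infty})$, applying the exact functor $\bfE k_{!!}$ produces a one-sided exact sequence in $\Perv(\I\CC_{U_\infty})$; then Corollary \ref{cor3.44}, applied to the open embedding $i_U$ whose complement is smooth, preserves the corresponding one-sided exactness under ${}^p\bfE i_{U^\an_\infty!!\ast}$, yielding the claimed exact sequence in $\Perv(\I\CC_{X_\infty})$ via the isomorphism above. For part (2), if $K$ is simple in $\Perv(\I\CC_{Z_\infty})$, then $\bfE k_{!!}K$ is simple in $\Perv(\I\CC_{U_\infty})$ by the Kashiwara-type equivalence of Proposition \ref{prop3.29}, and Corollary \ref{cor3.45} then ensures that ${}^p\bfE i_{U^\an_\infty!!\ast}(\bfE k_{!!}K)\simeq {}^p\bfE i_{Z^\an_\infty!!\ast}K$ is simple in $\Perv(\I\CC_{X_\infty})$.

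There is no substantive obstacle here once the factorization is in place; the proof is essentially a bookkeeping exercise that delegates the closed-embedding direction to the Kashiwara equivalence and the open-embedding-with-smooth-complement direction to the machinery already developed in \S\ref{sec-minimal}. The only point requiring care is the correct matching of variables when invoking Lemma \ref{lem3.41}, since the current $Z$ plays the role of the lemma's inner stratum $W$, while the auxiliary $U$ plays the role of the lemma's intermediate stratum.
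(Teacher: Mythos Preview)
Your proof is correct and follows essentially the same approach as the paper: factor $i_Z$ through the closed embedding $k\colon Z\hookrightarrow U$ and the open embedding $i_U\colon U\hookrightarrow X$, invoke Lemma~\ref{lem3.41}(2) to factor the minimal extension functor accordingly, use Proposition~\ref{prop3.29} (and Remark~\ref{rem3.37}(2)) to handle the closed part as an equivalence, and then apply Corollaries~\ref{cor3.44} and~\ref{cor3.45} for the open part. Your remark about the variable swap when applying Lemma~\ref{lem3.41} is exactly the only point of care, and you have it right.
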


\begin{proof}
Let us consider a commutative diagram:
\[\xymatrix@R=20pt@C=40pt{
U^\an_\infty\ar@{->}[r]^-{i_{U^\an_\infty}} & X^\an_\infty\\
Z^\an_\infty
\ar@{->}[ru]_-{i_{Z^\an_\infty}}\ar@{->}[u]^-{k} 
}.\]
Then we have ${}^p\bfE i_{Z^\an_\infty!!\ast} = {}^p\bfE i_{U^\an_\infty!!\ast} \circ {}^p\bfE k_{!!\ast}$
by Lemma \ref{lem3.41}.
Furthermore there exists an equivalence of categories
$${}^p\bfE k_{!!\ast} = \bfE k_{!!}\colon\Perv(\I\CC_{Z_\infty})\simto\Perv_Z(\I\CC_{U_\infty})$$
by Proposition \ref{prop3.29}.
Therefore the assertion (1) (resp.\ (2)) follows from Corollary \ref{cor3.44} (resp.\ Corollary \ref{cor3.45}).
\end{proof}

\end{document}